\definecolor{darkgreen}{rgb}{0,0.45,0} 
\DeclareMathAlphabet{\mathcalligra}{T1}{calligra}{m}{n}
\let\into\hookrightarrow
\let\fib\twoheadrightarrow
\def\acof{\mathrel{\mathrlap{\hspace{3pt}\raisebox{4pt}{$\scriptscriptstyle\sim$}}\mathord{\rightarrowtail}}}
\newtheorem{thm}{Theorem} 
\newtheorem{prop}[thm]{Proposition}  
\newtheorem{lem}[thm]{Lemma} 
\newtheorem{cor}[thm]{Corollary} 
\newtheorem{notn}[thm]{Notation} 
\newtheorem{eg}[thm]{\textit{Example}} 
\newtheorem{rmk}[thm]{\textit{Remark}} 
\theoremstyle{definition} 
\newtheorem{defi}[thm]{Definition}    
\numberwithin{thm}{section}
\newcommand{\fsix}[6]{\begin{array}{rcl} #1&\longrightarrow &#2\\ #3&\longmapsto &#4\\ #5&\longmapsto &#6\\\\ \end{array}} 
\newcommand{\ffour}[4]{\begin{array}{rcl} #1&\longrightarrow &#2\\ #3&\longmapsto &#4 \end{array}}
\newcommand{\pushoutcorner}[1][dr]{\save*!/#1+1.2pc/#1:(1,-1)@^{|-}\restore}
\newcommand{\pullbackcorner}[1][dr]{\save*!/#1-1.2pc/#1:(-1,1)@^{|-}\restore}
\let\ea\expandafter
\def\mdef#1#2{\ea\ea\ea\gdef\ea\ea\noexpand#1\ea{\ea\ensuremath\ea{#2}\xspace}}
\def\pr{\;\vdash\;}
\def\ty{\;\mathsf{type}}
\def\m#1{\llbracket#1\rrbracket}
\def\commutatif{\ar@{}[rrdd]|{\circlearrowright}} 
\def\pcom{\ar@{}[rd]|{\circlearrowright}} 
\def\ppcom{\ar@{}[ld]|{\circlearrowright}} 
\def\coml{\ar@{}[ldd]|{\circlearrowright}} 
\def\comr{\ar@{}[rdd]|{\circlearrowright}}
\def\dia{\ar@{}[rrdd]|{\rotatebox{-45}{$\Longrightarrow$}}} 
\def\sprod#1{{\textstyle\prod_{#1}}}
\def\ssum#1{{\textstyle\sum_{#1}}}
\mdef\Id{\mathsf{Id}}
\def\I{\bold{I}}
\def\tr{\bigtriangledown}
\def\pI{\bold{\dot{I}}}
\def\b0{\bold{0}}
\def\b1{\bold{1}}
\def\P{\bold{P}}
\def\Gpd{\mathbf{Gpd}}
\def\GGpd{\mathbf{Gpd^G}}
\def\f{_{\mathbf{f}}}
\def\G{\mathbf{G}}
\def\cI{\mathbf{\check{I}}}
\def\pr{\;\vdash\;}
\def\ty{\;\mathsf{type}}
\def\m#1{\llbracket#1\rrbracket}
\def\com1{\ar@{}[rd]|{\circlearrowright}} 
\DeclareRobustCommand{\rvdots}{%
    \vbox{
        \baselineskip4\p@\lineskiplimit\z@
        \kern-\p@
        \hbox{.}\hbox{.}\hbox{.}
    }}
\let\ea\expandafter
\mdef\el{\mathsf{El}}
\mdef\Id{\mathsf{Id}}
\def\id{\leadsto} 
\mdef\iscontr{\mathsf{isContr}} 
\def\mdef#1#2{\ea\ea\ea\gdef\ea\ea\noexpand#1\ea{\ea\ensuremath\ea{#2}\xspace}}
\def\alwaysmath#1{\ea\ea\ea\global\ea\ea\ea\let\ea\ea\csname your@#1\endcsname\csname #1\endcsname
  \ea\def\csname #1\endcsname{\ensuremath{\csname your@#1\endcsname}\xspace}}
\mdef\ep{\varepsilon}
\mdef\ph{\varphi}
\let\al\alpha
\let\be\beta
\let\gm\gamma
\let\de\delta
\let\la\lambda
\newcommand{\ffive}[5]{\begin{array}{rrcl} #1: & #2&\longrightarrow &#3\\ &#4&\longmapsto &#5  \end{array}}
\title{On lifting univalence to the equivariant setting}
\author{Anthony BORDG}
\begin{document}


\frontmatter 

 
\KOMAoptions{BCOR=0mm,twoside=off}
\recalctypearea

\begin{titlepage}
 
\centering

{\Large UNIVERSIT\'E NICE SOPHIA ANTIPOLIS   --   UFR Sciences}\\
\vspace*{0.5cm}
\'Ecole Doctorale Sciences Fondamentales et Appliqu\'ees\\
\vspace*{1.5cm}
{\Large\bf TH\`ESE}\\
\vspace*{0.3cm}
pour obtenir le titre de\\
\vspace*{0.1cm}
~~~{\Large\bf Docteur en Sciences}\\
\vspace*{0.1cm}
Sp\'ecialit\'e 
{\Large {\sc Math\'ematiques}}\\
\vspace*{0.8cm}
pr\'esent\'ee et soutenue par\\
\vspace*{0.1cm}
{\large\bf Anthony BORDG}\\
\vspace*{1.0cm}
{\LARGE\bf On lifting univalence to the equivariant setting}\\
\vspace*{1.0cm}
Th\`ese dirig\'ee par {\bf Andr\'e HIRSCHOWITZ}\\
\vspace*{0.2cm}
soutenue le 09 novembre 2015\\

\vspace*{2.3cm}
Membres du jury :\\
\vspace*{0.4cm}
\begin{tabular}{llllll}
M. & Denis-Charles CISINSKI & Professeur des universités & examinateur\\
M. & Thierry COQUAND & Professor & examinateur \\
M. & Andr\'e HIRSCHOWITZ & Professeur émérite & Directeur de th\`ese\\
M. & Peter LEFANU LUMSDAINE & Doctor & rapporteur et examinateur\\
M. & Michael SHULMAN & Associate Professor & rapporteur et examinateur\\
M. & Carlos SIMPSON & Directeur de Recherche  & examinateur \\
M. & Bertrand TOËN & Directeur de Recherche & examinateur \\
\end{tabular}

\vfill
Laboratoire Jean-Alexandre Dieudonn\'e, Universit\'e de Nice, Parc Valrose, 06108 NICE
\end{titlepage}
\KOMAoptions{BCOR=9mm,twoside=on}
\recalctypearea
\cleardoublepage


 \pagestyle{empty}


\textbf{\Large Abstract}\bigskip

This PhD thesis deals with some new models of intensional type theory and the \textit{Univalence Axiom} introduced by Vladimir Voevodsky. Our work takes place in the framework of the definitions of type-theoretic model categories, type-theoretic fibration categories (the notion of model under consideration in this thesis) and universe in a type-theoretic fibration category, definitions due to Michael Shulman. This universe does not correspond to a strict type-theoretic universe in the sense that it is closed up to type-theoretic operations only up to isomorphism. The goal of this thesis consists mainly in the exploration of the stability of the univalence axiom, in particular in the following sense: being given a type-theoretic fibration category $\mathscr{C}$ equipped with a univalent universe U, we are eager to endow the presheaf category $[\mathcal{D}^{op},\mathscr{C}]$, where $\mathcal{D}$ is a small category, with the structure of a type-theoretic fibration category plus a univalent universe.\\   
There are at least two ways to proceed, one using the so-called projective model structure on a functor category and the other by using the so-called injective one.\\
In the very specific case where $\mathscr{C}$ is $\Gpd$ and $\mathcal{D}$ is $\mathbf{B}(\mathbb{Z}/2\mathbb{Z})$ we reach this goal by giving two models in the same underlying category $[\mathbf{B}(\mathbb{Z}/2\mathbb{Z})^{op},\Gpd]$, models that share the construction of a common universe of (small) discrete groupoids with involution. To built the first model we use the projective model structure and prove that our universe is non-univalent. In the second one we use the injective model structure and prove that our universe is univalent.

\newpage

\textbf{\Large Résumé}\bigskip

Cette thèse de doctorat a pour sujet les modèles de la théorie intensionnelle des types avec l'\textit{axiome d'univalence}  introduit par Vladimir Voevodsky. L'auteur prend pour cadre de travail les definitions de \textit{type-theoretic model category}, \textit{type-theoretic fibration category} (cette dernière étant la notion de modèle considérée dans cette thèse) et d'univers dans une \textit{type-theoretic fibration category}, définitions dues à Michael Shulman. Cet univers ne correspond pas à un univers strict de la théorie des types en ce sens que les opérations de formation des types sont closes seulement à isomorphisme près. La problématique principle de cette thèse consiste à approfondir notre compréhension de la stabilité de l'axiome d'univalence en le sens précis suivant: soit $\mathscr{C}$ une \textit{type-theoretic fibration category} équippée d'un univers univalent, nous voulons munir la catégorie de préfaisceaux $[\mathcal{D}^{op}, \mathscr{C}]$ d'une structure de \textit{type-theoretic fibration category} équippée d'un univers univalent.\\
Il y a au moins deux manières de faire, l'une ayant recours à la structure de modèle projective sur une catégorie de foncteurs, l'autre à la structure de modèle injective.\\
Quand $\mathscr{C}$ est la catégorie $\Gpd$ et $\mathcal{D}$ est $\mathbf{B}(\mathbb{Z}/2\mathbb{Z})$ nous atteignons dans le présent travail le but énoncé plus haut en donnant deux modèles dans la même catégorie sous-jacente, à savoir $[\mathbf{B}(\mathbb{Z}/2\mathbb{Z})^{op},\Gpd]$, modèles qui partagent un même univers de (petits) groupoïdes discrets équippés d'une involution. Dans notre premier modèle construit à l'aide de la structure projective, nous prouvons que cet univers n'est pas univalent. Dans le second modèle, construit à l'aide de la structure injective, nous prouvons que cet univers est univalent.            

\newpage

\pagestyle{plain}

\textbf{\Large Acknowledgements}\bigskip

\begin{itemize}

\item First and above all I would like to thank my advisor André Hirschowitz. First, for his bravery  of supervising this thesis at a time the word was not spread about Univalent Foundations at least in France. Second, for uncountable discussions and help throughout this thesis. Last, for his energy and advice on any subject.
\item My parents to allow me to push forward in long studies.
\item Alexandra for her patience and help with LateX.
\item Vladimir Voevodsky for allowing me to take part in the special year on Univalent Foundations in Princeton (IAS) when I was in the first year of my PhD studies.
\item Michael Shulman for suggesting me an avenue of research that prompted this work, for numerous answers to email questions and for inviting me three months in San Diego through a Fulbright grant where I told him about my work and progress.
\item Steve Awodey for hosting me three months in Pittsburgh in the second half of my Fulbright grant and for allowing me to give a few talks at CMU about my work.
\item The Fulbright team in Paris for the opportunity for spending 6 months in the US as a Fulbright Fellow.
\item Yves Bertot, André Hirschowitz and Carlos Simpson for support in the Fulbright process. 
\item My office mate Eduard Balzin for numerous maths discussions.
\item Benedikt Ahrens for advice on LaTeX and for acting as an older academic brother.
\item My office mates Eduard Balzin and Charles Collot for making my stay in Nice more enjoyable.
\item LJAD and EDSFA administrative crew for their nice work.
\item The technical staff, Jean-Marc Lacroix and Roland Ruelle, for the logistics of my PhD defense partly by webconference.
\item I would like to thank the referees, Peter LeFanu Lumsdaine and Michael Shulman, for their careful readings. A special thank goes to André Hirschowitz, Peter LeFanu Lumsdaine and Michael Shulman for their list of suggestions and improvements. I also thank the other members of the dissertation defense, Denis-Charles Cisinski, Thierry Coquand, Carlos Simpson and Bertrand Toën.
\end{itemize}      

\newpage 

\textbf{\Large Abstract-Long Version}\bigskip

This PhD thesis deals with some new models of intensional type theory and the \textit{univalence axiom} introduced by Vladimir Voevodsky. Our work takes place in the framework of the definitions of type-theoretic model cateories, type-theoretic fibration categories (the notion of model under consideration in this thesis) and universe in a type-theoretic fibration category, definitions due to Michael Shulman. This universe does not correspond to a strict type-theoretic universe in the sense that it is closed up to type-theoretic operations only up to isomorphism. The goal of the author consists mainly in the exploration of the stability of the univalence axiom as initiated by Michael Shulman in a series of papers, in particular in the following sense: being given a type-theoretic fibration category $\mathscr{C}$ equipped with a univalent universe U, we are eager to endow the presheaf category $[\mathcal{D}^{op},\mathscr{C}]$, where $\mathcal{D}$ is a small category, with the structure of a type-theoretic fibration category plus a univalent universe.\\   
Under some slight assumptions on $\mathscr{C}$ there are at least two ways to proceed, one using the so-called projective model structure on a functor category and the other by using the so-called injective one. In some very specific cases the author finds a natural candidate to lift the univalent universe.\\
More specifically, we chose to start with one of the simplest non-trivial case, namely $\mathbf{B}(\mathbb{Z}/2\mathbb{Z})$ the groupoid associated with the group with two elements as an index category and the category of groupoids $\Gpd$ as a target category. Thus, our presheaf category is nothing but the groupoids equipped with an involution. This case is non-trivial since $\mathbf{B}(\mathbb{Z}/2\mathbb{Z})$ has a non-trivial  automorphism, in particular this is not a Reedy category but a generalized Reedy category. As a consequence this case was not covered by Shulman's work that deals with inverse diagrams and elegant Reedy categories which are particular cases of Reedy categories. In a Reedy category non-trivial isomorphisms are not allowed, hence generalized Reedy categories present the interesting difficulty of dealing with non-trivial isomorphisms.\\
The projective model structure requires a non-trivial characterization of the trivial cofibrations to provide the structure of a type-theoretic fibration category. We overcome this first difficulty. We also provide in this setting the categorical structure required for modelling universes. However we prove this projective model structure is not suitable for univalent universes since we are able to break univalence that holds in $\Gpd$ for a universe of small discrete groupoids. Even worse we have been able to break a weaker form of univalence, namely function extensionality, that holds in the internal language of the type-theoretic fibration category $\Gpd$. \\
On the other side the present work confirms that the injective model structure  is suitable for univalence, indeed in the case of functor categories on elegant reedy categories the injective model structure coincides with  the reedy model structure used by Shulman. But the injective model structure presents some technical difficulties when it comes to the universe due to the mysterious nature in general of fibrant objects and fibrations in the injective case. In the case under interest in this thesis we overcome this second difficulty.\bigskip

\textbf{\Large Contributions}\bigskip

To sum up our contributions we have:
\begin{itemize}

\item A model of  intensional type theory with $\ssum{}$, $\sprod{}$, and \Id-types and a universe in $[\mathbf{B}(\mathbb{Z}/2\mathbb{Z})^{op},\Gpd]$ using the projective model structure. We prove that function extensionality  does not hold in the internal type theory of our model (see chapter \ref{sec:chp4}). Moreover we also prove that our universe does not satisfy univalence (see chapter \ref{sec:chp4}). These two proofs rely on a non-trivial characterization of homotopy equivalences in  $[\mathbf{B}(\mathbb{Z}/2\mathbb{Z})^{op},\Gpd]$ equipped with the projective model structure.
\item A model of  intensional type theory with $\ssum{}$, $\sprod{}$, and \Id-types and a univalent universe in $[\mathbf{B}(\mathbb{Z}/2\mathbb{Z})^{op},\Gpd]$ using the injective model structure. In this model we succeed to deal with a generalized Reedy category with a non-trivial automorphism as an index category (see  chapter \ref{sec:chp5}). 
\end{itemize}

\newpage

\textbf{\Large Résumé-Version Longue}\bigskip

Cette thèse de doctorat a pour sujet les modèles de la théorie intensionnelle des types avec l'\textit{axiome d'univalence} introduit par Vladimir Voevodsky. L'auteur prend pour cadre de travail les définitions de \textit{type-theoretic model category}, \textit{type-theoretic fibration category} (cette dernière étant la notion de modèle considérée dans cette thèse) et d'univers dans une \textit{type-theoretic fibration category}, définitions dues à Michael Shulman. Cet univers ne correspond pas à un univers strict de la théorie des types en ce sens que les opérations de formation des types sont closes à isomorphisme près. La problématique principle de cette thèse consiste à approfondir notre compréhension de la stabilité de l'axiome d'univalence initiée par Michael Shulman dans une série d'articles, en le sens précis suivant: soit $\mathscr{C}$ une \textit{type-theoretic fibration category} équippée d'un univers univalent, nous voulons munir la catégorie de préfaisceaux $[\mathcal{D}^{op}, \mathscr{C}]$ d'une structure de \textit{type-theoretic fibration category} équippée d'un univers univalent.\\
A l'aide d'hypothèses raisonnables sur $\mathscr{C}$ il y a au moins deux manières de faire, l'une ayant recours à la structure de modèle projective sur une catégorie de foncteurs, l'autre à la structure de modèle injective. Dans des cas bien particuliers nous avons pu atteindre l'objectif énoncé plus haut et équipper notre catégorie de foncteurs d'un univers (univalent).\\
Plus précisément, l'auteur débute avec l'un des cas non triviaux les plus simples, à savoir $\mathbf{B}(\mathbb{Z}/2\mathbb{Z})$ le groupoïde associé au groupe à deux éléments comme choix pour la catégorie index et la catégorie des groupoïdes $\Gpd$ comme catégorie but. Nous choisissons donc de nous intéresser aux groupoïdes équippés d'une involution. Ce choix est non trivial en ce que la catégorie index $\mathbf{B}(\mathbb{Z}/2\mathbb{Z})$ possède un automorphisme non trivial, en particulier cette catégorie est une catégorie Reedy généralisée mais elle n'est pas Reedy (stricte). On peut notamment souligner que ce cas n'a pas été traité dans les travaux de Michael Shulman mentionnés plus haut qui se cantonnent aux cas des diagrammes inverses et des catégories Reedy élégantes, qui sont dans les deux cas des cas particuliers de catégories Reedy (strictes) en guise de catégories index. Rappelons que dans une catégorie Reedy (stricte) les isomorphismes non triviaux ne sont pas autorisés et leur gestion dans les catégories Reedy généralisées présente donc un défi technique intéressant.\\
La structure de modèle projective nécessite une caractérisation non triviale des cofibrations triviales pour équipper la catégorie $\mathbf{B}(\mathbb{Z}/2\mathbb{Z})$ d'une structure de \textit{type-theoretic fibration category}. Nous surmontons dans cette thèse cette première difficulté. Nous fournissons également dans ce contexte projectif la structure catégorique nécessaire pour avoir des univers. Toutefois, nous prouvons que notre univers de groupoïdes discrets equippés d'une involution n'est pas univalent, ainsi l'univalence qui est satisfaite dans $\Gpd$ pour l'univers des petits groupoïdes discrets a été brisée. De même, dans ce modèle l'extensionnalité des fonctions, pourtant satisfaite dans tout univers de groupoïdes, a été brisée.\\
 D'un autre côté le présent travail confirme le caractère adéquat de la structure de modèle injective par rapport à l'univalence, en effet pour des catégories de foncteurs sur une catégorie de Reedy élégante la structure de modèle injective coïncide avec la structure de modèle Reedy utilisée précédemment par Shulman. Toutefois la structure de modèle injective présente des difficultés techniques dans la construction de l'univers dues à la nature mystérieuse des objets fibrants et des fibrations pour la structure injective. Cette seconde difficulté est également surpassée dans notre travail. \bigskip

\textbf{\Large Contributions}\bigskip

Résumons nos contributions:
\begin{itemize}

\item Un modèle de la théorie des types intensionnelle avec types $\ssum{}$, $\sprod{}$, et \Id plus un univers dans la catégorie $[\mathbf{B}(\mathbb{Z}/2\mathbb{Z})^{op},\Gpd]$ en ayant recours à la structure de modèle projective. De plus, nous donnons une preuve que ni l'univalence ni l'extensionnalité des fonctions ne sont satisfaites dans ce modèle (\textit{cf} chapitre \ref{sec:chp4}) grâce notamment à une caractérisation non triviale des équivalences d'homotopie projectives. 
\item Un modèle de la théorie des types intensionnelle avec types $\ssum{}$, $\sprod{}$, et \Id plus un univers univalent dans la catégorie $[\mathbf{B}(\mathbb{Z}/2\mathbb{Z})^{op},\Gpd]$ en ayant recours à la structure de modèle injective. Notons que dans ce modèle la catégorie index est une catégorie Reedy généralisée avec un automorphisme non trivial (\textit{cf}  chapitre \ref{sec:chp5}). 
\end{itemize}

\newpage

\pagestyle{scrheadings}

\microtypesetup{protrusion=false}
 \tableofcontents
\microtypesetup{protrusion=true}

\mainmatter

\chapter{Introduction}
\label{intro}

\section{Introduction}
In the seventies Per Martin-L\"of set a framework out, suitable for constructive mathematics, called Martin-L\"of Type Theory (MLTT for short). It is well known that MLTT enjoys very nice computational properties and matches with logic via a famous correspondence, the so-called \textit{Curry-Howard correspondence}.\\
Recently Vladimir Voevodsky added an axiom to MLTT, the so-called \textit{Univalence Axiom}, that roughly asserts an equivalence between the type of propositional equalities between any two small types (\textit{i.e.} two types of a type-theoretic universe) and the type of equivalences between these two small types. This brave new world, MLTT plus the \textit{Univalence Axiom}, was coined \textit{Univalent Foundations} by Voevodsky (UF for short). \\
When a new axiom, like the univalence axiom, is added to a theory then a proof of relative consistency is desirable. To achieve this proof of equiconsistency, the standard way consists in giving a model (in the logical sense). In fact, UF with one universe are at least as consistent as ZFC together with two strongly inaccessible cardinals.
This proof was achieved by Voevodsky \cite{KLV} who gave a model of UF in the Quillen model category of simplicial sets.\\
Models of UF were pursued by Michael Shulman \cite{shulman:invdia,shul13}, David Gepner and Joachim Kock \cite{gepnerkock12} and lately by Denis-Charles Cisinski \cite{Cis14}. The groupoid model of Martin Hofmann and Thomas Streicher that contains a univalent universe of small discrete groupoids can be seen as an ancestor of this line of work \cite{groupoidmodel}.\\
The goal of this thesis consists in the exploration of the stability of the univalence axiom as initiated by Michael Shulman, in particular in the following sense: being given a type-theoretic fibration category $\mathscr{C}$ equipped with a univalent universe U, we are eager to lift this univalent universe in the presheaf category $[\mathcal{D}^{op},\mathscr{C}]$, where $\mathcal{D}$ is a small category, equipped with the structure of a type-theoretic fibration category.\\
This goal was achieved by Michael Shulman in some specific cases, especially in \cite{shulman:invdia} when $\mathcal{D}$ is an inverse category and in \cite{shul13} when $\mathscr{C}$ is $\mathbf{sSet}$ and $\mathcal{D}$ is any elegant Reedy category. Note that inverse categories and elegant Reedy categories being strict Reedy categories they do not allow non-trivial isomorphism.\\
In this thesis we explore the stability of the univalence axiom with respect to the projective and injective model structures on functor categories. More specifically, we treat the case where $\mathscr{C}$ is the category of groupoids $\Gpd$ and $\mathcal{D}$ is $\mathbf{B}(\mathbb{Z}/2\mathbb{Z})$ the groupoid associated with the group with two elements, that contains a non-trivial automorphism. In particular we give a new model of UF since we construct a univalent universe in the full subcategory of fibrant objects of $[\mathbf{B}(\mathbb{Z}/2\mathbb{Z})^{op},\Gpd]$ with respect to the injective structure. Moreover, more surprisingly we give a second model of type theory in the same underlying category with respect to the projective fibrations where the very same universe turns out to be non-univalent. To the best of our knowledge this second model is the first one derived from a Quillen model structure where not all objects are cofibrant. Since the projective and injective model categories, having the same weak equivalences, form a Quillen equivalence, this is also, again to our knowledge, the first instance of two Quillen equivalent model categories that host  different models of type theory. If it should happen that not all objects are fibrant-cofibrant then our method of proof makes it clear that even when a whole model structure is available at hand only the classes of fibrations, acyclic cofibrations and right homotopy equivalences are relevant for models of type theory. Our thesis stresses this fact and shows that a model invariance principle to the effect of "equivalent homotopy theories have equivalent internal type theories", as suggested here \url{https://ncatlab.org/homotopytypetheory/show/open+problems}, should be formulated in a more careful way.   
Last, note that our univalent model in the injective setting is not a special case of later results by Shulman in \cite{shulman:eiuniv}, since therein Shulman constructed a model of UF in a certain model category that presents the homotopy theory of presheaves on an EI-category, but it is not the injective model structure on a functor category (in the case of $\mathbb{Z}/2\mathbb{Z}$ or any other group $G$ Shulman's model specializes to the slice category $\Gpd/\mathbf{B}(G)$ with its natural model structure, which is well-known to be Quillen equivalent to, but not identical to, the injective model structure on $[\mathbf{B}(G),\Gpd]$). This is rightly one of the points of our PhD thesis that Quillen equivalent model categories can host different models of type theory. So in the context of the semantics of type theory through Shulman's notion of type-theoretic fibration category such an equivalence is not obvious or trivial.

\section{Organization}
The chapter 2 gives the necessary background on model categories. The chapter 3 details what we mean by a model of type theory with a univalent universe. In particular we review in this chapter the notions of type-theoretic fibration category and universe in a type-theoretic fibration category. Also we present the notion of type-theoretic model category and make explicit the link between the latter and the former. Though this chapter 3 is short and it could be merged with chapter 2 as one more section, we keep it separate to emphasize the distinction between type-theoretic fibration categories and model categories. Indeed we have already warned the reader familiar with model categories against the appeal of thinking in terms of model structure in the context of type theory since this could be misleading. In accord with this philosophy we introduce the relevant semantical notions to the reader in a separate chapter. The chapter 4 follows the projective way and presents a model of type theory with a non-univalent universe in the underlying category $[\mathbf{B}(\mathbb{Z}/2\mathbb{Z})^{op},\Gpd]$ , while the chapter 5 follows the injective way and proves that the previous universe becomes univalent with respect to the injective structure. 

\section{Notations}
\label{not}

For the rest of this thesis we denote the group $\mathbb{Z}/2\mathbb{Z}$ simply by $\G$ and by a slight abuse of notation we also denote by $\G$ the associated groupoid $\mathbf{B}(\mathbb{Z}/2\mathbb{Z})^{op}$. This choice is for convenience but the reader should be aware that it does not mean $\G$ is any group, however our results should be generalized to the case of a group action by any (commutative) group $G$ in a forthcoming work. As a consequence we will denote the presheaf category $[\mathbf{B}(\mathbb{Z}/2\mathbb{Z})^{op},\text{Gpd}]$ simply by $\GGpd$.\\
The reader should note that a presheaf valued in groupoids on $\G$ is nothing but a groupoid equipped with an involution, and a morphism in $\GGpd$ is nothing but an equivariant functor, namely a functor between groupoids compatible with the involutions on the domain and codomain. Such a groupoid with its involution will be denoted by a capital letter $A$ and the corresponding Greek letter $\al$ will be used to refer to its involution when needed (except when stated otherwise).\\
As usual in category theory, the initial object and the terminal object of $\Gpd$ will be denoted by $\bold{0}$ and $\bold{1}$ respectively.\\
We have an obvious functor from $\G$ to $\bold{1}$ denoted $!$ and an obvious inclusion functor from $\bold{1}$ to $\G$ denoted $I$. These two functors induce by precomposition the two following functors between $\GGpd$ and $\Gpd$,
$$\ffive{\_}{\GGpd}{\Gpd}{G}{\underline{G}}$$
namely the underlying/forgetful functor that maps a groupoid $G$ equipped with an involution to its underlying groupoid denoted $\underline{G}$ which is formaly $G\circ I$;
$$\ffive{() \circ !}{\Gpd}{\GGpd}{G}{G!} $$
that maps, being given the canonical isomorphism between $\Gpd$ and $\Gpd^{\bold{1}}$, a groupoid $G$ to $G\circ !$, shorten by $G!$ for convenience, which is the groupoid $G$ equipped with the identity involution.
The underlying functor has a left adjoint denoted $S$ that maps a groupoid $G$ to $S(G):= G\textstyle\coprod G$ equipped with the swap involution. Last, the trivial $G$-groupoid functor $!$ has a right adjoint, namely the  fixed-points functor, 
$$\ffive{()^{\G}}{\GGpd}{\Gpd}{F}{F^\G}$$ where $F^\G$ is the groupoid of fixed points and fixed morphisms in $F$. Note that $F^\G$ is $\text{lim} F$.\\
Since limits and colimits are pointwise in a presheaf category, $\bold{0}!$ and $\bold{1}!$ are the corresponding initial and terminal objects in $\GGpd$.\\
We will use the letter $\I$ for the groupoid with two distinct points and one isomorphism between them and we will denote by $\cI$ the same groupoid equipped with the involution that swaps the two points and maps the non-identity isomorphism to its inverse. When we need to refer to its elements we will use the symbols 0 and 1 and $\phi$ for its isomorphism. We will denote the obvious pushout $\bold{\I}\textstyle\coprod\limits_{\pI}\bold{\I}$ in $\Gpd$ by $\P$. \\
We will denote by $\tr$ the groupoid with involution which extends $\cI$ and its involution by a fixed third point denoted 2 and a second non-identity isomorphism between 1 and 2 denoted $\psi$ whose image by the involution is $\psi\circ\phi$ (there are only two non-identity isomorphisms and their composition).\\
Last, being given a groupoid $A$ equipped with an involution we will denote by 
$A\f$ the full subgroupoid of $A$ consisting in the fixed points.

\setchapterpreamble{
\begin{quote}
Together with chapter 3 this chapter 2 provides all the background required for this thesis, though without too much details. The reader familiar with model categories should feel free to skip this chapter. 
\end{quote}
}

\chapter{Background on model categories}
\label{sec:somc}

\section{Organization}

In section \ref{sec:motiv} we say a few words about the relevance of model structure with respect to type theory and the connection between homotopy theory and type theory is developed in section \ref{sec:connection}. The sections \ref{sec:basics} and \ref{sec:morebasics} present the basics of model categories. The section \ref{sec:msofc} presents two model structures on functor categories used in the rest of this thesis namely the injective and projective model structures.

\section{Motivations}
\label{sec:motiv}

As motivations the main points are:
\begin{itemize}
\item Model Categories are ubiquitous in Mathematics and they are a powerful tool for using homotopical methods in some abstract setting.\\
\item Any model category $\mathscr{C}$ that satisfies a few additional properties, \textit{i.e.} a type-theoretic model category, provides a model of MLTT with $\ssum,~\sprod{}$, \Id-types by taking the full subcategory of fibrant objects.\\
\item Some key ingredients of type theory (dependent types especially \Id-types, the J-rule...) have a counterpart in some key ingredients of model category (fibrations, path objects, diagonal filler \ldots).\\
\end{itemize}

\section{Basics of model categories}
\label{sec:basics}

This section is based on \cite{dwyer95theories}. The reader should also consult the seminal work of Daniel Quillen, the architect of model categories, for instance his lecture notes \cite{quillen:htpical-alg}. The books by Hirschhorn \cite{hir99} and Hovey \cite{hovey99} on model categories are also very useful references.
\begin{defi}\textbf{(diagonal filler)}\label{defi1}
Given a commutative diagram of the form $$\xymatrix{A\ar[rr]^f \ar[dd]_i&&X\ar[dd]^p\\&\circlearrowright&\\B\ar[rr]_g&&Y}$$ a diagonal filler is a map $h:B\rightarrow X$ such that \begin{align*} p\circ h=g~\\h\circ i=f~.
\end{align*} The maps $i$ and $p$ being fixed if there is a diagonal filler for any commutative diagram of the form above, we say that : $i$ has the left lifting property (LLP) with respect to $p$. Conversely we say that $p$ has the right lifting property (RLP) with respect to $i$.\\
A diagonal filler $h$ is also called a lift (of $g$ along $p$).
\end{defi}
\begin{defi}\textbf{(retract)}\label{defi2}
Let $\mathscr{C}$ be a category and $f,~g$ two morphisms of $\mathscr{C}$. One says that $f$ is a retract of $g$ if there is a commutative diagram $$\xymatrix{X\ar[rr]^r \ar[dd]_f&&Y\ar[dd]^g \ar[rr]^s&&X\ar[dd]^f\\&\circlearrowright&&\circlearrowright&\\X'\ar[rr]_{r'}&&Y'\ar[rr]_{s'}&&X'}$$ such that \begin{align*} s\circ r=1_{X}\quad\\s'\circ r'=1_{X'}~.
\end{align*}\\
\end{defi}
\begin{defi}\textbf{(model category)}\label{defi3}
A model category is a category $\mathscr{C}$ with three classes of maps : \begin{enumerate}[label=(\roman*)] \item the class of weak equivalences denoted $\mathcal{W}$ (in picture $\xrightarrow{\sim}$)\\
\item the class of fibrations denoted $\mathcal{F}$ (in picture $\fib$)\\
\item the class of cofibrations denoted $\mathcal{C}$ (in picture $\hookrightarrow$).\\
\end{enumerate} Maps in $W\cap \mathscr{F}$ are called trivial (or acyclic) fibrations.\\ 
Maps in $\mathcal{W}\cap \mathcal{C}$ are called trivial (or acyclic) cofibrations.\\\\
We require the following axioms : \\\\
MC1 All (small) limits and (small) colimits exist in $\mathscr{C}$.\\\\
MC2 ($2$ out of $3$) If $f$ and $g$ are morphisms of $\mathscr{C}$ such that $g\circ f$ is defined and two of $f,~g,~g\circ f$ are weak equivalences then so is the third.\\\\
MC3 (retract) If $f$ is a retract of $g$ and $g\in\mathcal{F}$ (\textit{resp.} $g\in \mathcal{W}$, \textit{resp.} $g\in \mathcal{C}$) then $f\in\mathcal{F}$ (\textit{resp.} $f\in \mathcal{W}$, \textit{resp.} $f\in \mathcal{C}$).\\\\
MC4 (lift) Given a commutative diagram as in \ref{defi1} a diagonal filler exists in either of the two situations : \begin{enumerate}[label=(\roman*)]
\item $i\in \mathcal{C}~\text{and}~p\in \mathcal{F}\cap \mathcal{W}$\\
\item $i\in \mathcal{C}\cap \mathcal{W}~\text{and}~p\in \mathcal{F}$.\\
\end{enumerate}
MC5 (factorization) Any map $f$ can be factored in two ways : \begin{enumerate}[label=(\roman*)]
\item $f=pi~\text{with}~i\in \mathcal{C} ~\text{and}~p\in\mathcal{F}\cap \mathcal{W}$\\
\item $f=pi~\text{with}~i\in \mathcal{C}\cap \mathcal{W}~\text{and}~p\in \mathcal{F}$.\\
\end{enumerate}
\end{defi}

\begin{rmk}\label{Duality principle}
We have the following \textbf{duality principle}. Let $P$ be a statement about model categories and $P^*$ be the statement obtained by reversing the arrows in $P$ and switching \enquote{cofibration} with \enquote{fibration}. If $P$ is true for all model categories then so is $P^*$.
\end{rmk}

\begin{rmk}\label{rmk1}
The duality principle follows from the fact that the axioms for a model category are self-dual (where \enquote{dual} has to be taken in the sense above).\\
\end{rmk}

\begin{prop}\textbf{(the retract argument)}\label{prop1}
Let $\mathscr{C}$ be a category. Assume we have a factorization $f=pi$ in $\mathscr{C}$ and assume that $f$ has the LLP with respect to $p$ then $f$ is a retract of $i$. In the same way if $f=pi$ has the RLP with respect to $i$ then $f$ is a retract of $p$.\\
\end{prop}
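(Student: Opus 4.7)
The plan is to produce the retract diagram directly by applying the LLP hypothesis to a carefully chosen square, and then to invoke the duality principle of Remark \ref{Duality principle} for the dual statement.

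First, I would set up the key lifting square. Write $f: X \to Y$ with factorization $f = p \circ i$, where $i: X \to Z$ and $p: Z \to Y$. Consider the square
$$\xymatrix{X\ar[rr]^i \ar[dd]_f & & Z\ar[dd]^p\\ & \circlearrowright & \\ Y \ar[rr]_{1_Y} & & Y}$$
which commutes since $p \circ i = f = 1_Y \circ f$. By the LLP hypothesis on $f$ with respect to $p$, there exists a diagonal filler $h: Y \to Z$ satisfying $h \circ f = i$ and $p \circ h = 1_Y$.

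Next, I would exhibit $f$ as a retract of $i$ using this filler. Concretely, I would display the diagram
$$\xymatrix{X\ar[rr]^{1_X} \ar[dd]_f & & X\ar[dd]^i \ar[rr]^{1_X} & & X\ar[dd]^f\\ & \circlearrowright & & \circlearrowright &\\ Y \ar[rr]_{h} & & Z \ar[rr]_{p} & & Y}$$
and verify the four commutativity conditions: the top-left square by $i \circ 1_X = i = h \circ f$, the top-right square by $f \circ 1_X = f = p \circ i$, together with $1_X \circ 1_X = 1_X$ and $p \circ h = 1_Y$ for the two horizontal composites required by Definition \ref{defi2}.

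For the second assertion, I would simply appeal to the duality principle stated in Remark \ref{Duality principle}: the statement just proved concerns the LLP of $f$ with respect to $p$ in the factorization $f = p \circ i$, and its dual swaps the roles of $i$ and $p$ as well as the direction of lifting, yielding exactly the claim that if $f = p \circ i$ has the RLP with respect to $i$, then $f$ is a retract of $p$. There is no real obstacle in this proof; the only mildly subtle point is recognizing that the correct square to feed into the LLP uses the factorization on the top row and the identity on the bottom, so that the filler lands in $Z$ and provides simultaneously the section $p \circ h = 1_Y$ and the compatibility $h \circ f = i$ needed to assemble the retract diagram.
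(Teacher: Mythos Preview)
Your proof is correct and follows essentially the same approach as the paper: construct the lifting square with $i$ on top and the identity on the bottom, extract a diagonal filler, and assemble the retract diagram from it. One very minor quibble: Remark~\ref{Duality principle} as stated is phrased for model categories, whereas Proposition~\ref{prop1} is about an arbitrary category $\mathscr{C}$; the duality you actually use is the elementary one of passing to $\mathscr{C}^{\mathrm{op}}$, which is perfectly valid here but not literally what that remark records. The paper itself simply says the second case is similar rather than invoking duality.
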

\begin{proof}\label{proofprop1}
Assume $f=pi$ and $f$ has the LLP with respect to $p$. One has the following diagram, $$\xymatrix{\ar[rr]^i \ar[dd]_f&&\ar[dd]^p\\&\circlearrowright&\\ \ar@{=}[rr]&&}$$ so one has a lift $j$, $$\xymatrix{\com1 \ar[rr]^i \ar[dd]_f&&\ar[dd]^p\\&\com1&\\ \ar@{=}[rr] \ar[rruu]^j&&}$$ hence $f$ is a retract of $i$, $$\xymatrix{\ar@{=}[rr] \ar[dd]_f&&\ar@{=}[rr] \ar[dd]^i&&\ar[dd]^f\\&\circlearrowright&&\circlearrowright&\\ \ar[rr]_j&&\ar[rr]_h&&}$$ The proof when $f$ has the RLP with respect to $i$ is similar.
\end{proof}\bigskip

\begin{defi}\label{weaklyortho}\textbf{(weakly orthogonal)}\label{defi4}
Let $\mathscr{C}$ be a category and $A$ be a class of morphisms of $\mathscr{C}$. \begin{align*} {}^{\boxslash}A=\lbrace{i~|\forall p\in \text{Mor}(A),~i~\text{has the LLP with respect to}~p}\rbrace~\\A^{\boxslash}=\lbrace{p~|\forall i\in \text{Mor}(A),~p~\text{has the RLP with respect to}~i}\rbrace.\\
\end{align*}
\end{defi}

\begin{prop}\label{2wfs}
Assume that $\mathscr{C}$ is a model category. One has : \begin{enumerate}[label=(\roman*)] \item $C={}^{\boxslash}(\mathcal{F}\cap \mathcal{W})~\text{and}~(\mathcal{C}\cap \mathcal{W})={}^{\boxslash}\mathcal{F}$\\
\item $\mathcal{F}=(\mathcal{C}\cap \mathcal{W})^{\boxslash}~\text{and}~(\mathcal{F}\cap \mathcal{W})=\mathcal{C}^{\boxslash}$.\\
\end{enumerate}
\end{prop}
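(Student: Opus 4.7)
My plan is to prove the four equalities by establishing each containment separately. In every case one containment is essentially a restatement of axiom MC4, and the reverse containment uses the factorization axiom MC5, the retract argument (Proposition \ref{prop1}), and the retract closure axiom MC3. I will prove (i) in detail and then observe that (ii) follows by the duality principle noted in Remark \ref{Duality principle}.

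First I would verify $\mathcal{C} \subseteq {}^{\boxslash}(\mathcal{F}\cap \mathcal{W})$, which is exactly MC4(i): every cofibration has the LLP with respect to every trivial fibration. For the reverse inclusion, take $f \in {}^{\boxslash}(\mathcal{F}\cap \mathcal{W})$. Apply MC5(i) to factor $f = pi$ with $i \in \mathcal{C}$ and $p \in \mathcal{F}\cap \mathcal{W}$. Since $f$ has the LLP with respect to this particular $p$, Proposition \ref{prop1} (the retract argument) yields that $f$ is a retract of $i$. Because $i$ is a cofibration, MC3 gives $f \in \mathcal{C}$, proving $\mathcal{C} = {}^{\boxslash}(\mathcal{F}\cap \mathcal{W})$.

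For the second equality in (i), namely $(\mathcal{C}\cap \mathcal{W}) = {}^{\boxslash}\mathcal{F}$, I would run the same argument with the roles of the two factorizations swapped. The inclusion $(\mathcal{C}\cap \mathcal{W}) \subseteq {}^{\boxslash}\mathcal{F}$ is MC4(ii). Conversely, given $f \in {}^{\boxslash}\mathcal{F}$, apply MC5(ii) to write $f = pi$ with $i \in \mathcal{C}\cap \mathcal{W}$ and $p \in \mathcal{F}$; since $f$ has the LLP with respect to this $p$, the retract argument shows $f$ is a retract of $i$, and MC3 applied separately to the cofibration class and the weak equivalence class gives $f \in \mathcal{C}\cap \mathcal{W}$.

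For part (ii), the two equalities $\mathcal{F} = (\mathcal{C}\cap \mathcal{W})^{\boxslash}$ and $(\mathcal{F}\cap \mathcal{W}) = \mathcal{C}^{\boxslash}$ are the duals of the statements in (i) under the involution exchanging cofibrations and fibrations and reversing arrows; since the model category axioms are self-dual (Remark \ref{rmk1}), I can either invoke the duality principle directly or simply repeat the argument using the RLP half of the retract argument, MC4, MC5, and MC3. The only minor point to keep in mind is that in the reverse-direction arguments one must apply MC3 to both classes simultaneously when the relevant half of the factorization is a trivial (co)fibration, but this is immediate since MC3 holds for each of $\mathcal{F}$, $\mathcal{C}$ and $\mathcal{W}$ separately. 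There is no real obstacle here; the only thing to be careful about is using the correct half of MC5 in each of the four sub-arguments so that the map being factored and the map whose retract it becomes land in the intended class.
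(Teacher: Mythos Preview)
Your proof is correct and follows essentially the same approach as the paper: one inclusion from MC4, the reverse via MC5 plus the retract argument and MC3, and (ii) by duality. The only difference is that you spell out the second equality of (i) and the remark about applying MC3 to both classes, whereas the paper simply says ``similar.''
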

\begin{proof}\label{proof2wfs}
We prove \textit{(i)}. The statement \textit{(ii)} follows by duality.\\
Let us prove the equality $C={}^{\boxslash}(\mathcal{F}\cap \mathcal{W})$. By CM4 one has $C\subseteq{}^{\boxslash}(\mathcal{F}\cap \mathcal{W})$. Conversely, let $f$ be a map that has LLP with respect to any trivial fibration. Then by MC5 \textit{(i)} $f=pi$ with $i\in \mathcal{C}$ and $p\in (\mathcal{F}\cap \mathcal{W})$, so by the retract argument $f$ is a retract of $i$. From MC3 one concludes $f\in \mathcal{C}$. The proof of the equality $(\mathcal{C}\cap \mathcal{W})={}^{\boxslash}\mathcal{F}$ is similar.
\end{proof}\bigskip

\begin{rmk}\label{rmk2}
The proposition above shows that in a model category any two of the three classes determine the third one. Indeed, if $\mathcal{F}\text{and}~\mathcal{W}$ are determined then one has $\mathcal{C}={}^{\boxslash}(\mathcal{F}\cap \mathcal{W})$; in the same way, if $\mathcal{C}$ and $\mathcal{W}$ are determined then one has $\mathcal{F}=(\mathcal{C}\cap \mathcal{W})^{\boxslash}$. Last, if $\mathcal{F}\text{and}~\mathcal{C}$ are determined then $\mathcal{W}$ is exactly the class of maps $f$ that factors as $pi$ with $i\in \mathcal{C}\cap \mathcal{W}$ and $p\in \mathcal{F}\cap \mathcal{W}$. Indeed, take $f=pi$ with $i\in \mathcal{C}\cap \mathcal{W}$ and $p\in \mathcal{F}\cap \mathcal{W}$, then by the $2$ out of $3$ property $f$ belongs to $\mathcal{W}$. Conversely, if $f$ belongs to $\mathcal{W}$ then by MC5 \textit{(ii)} one has $f=pi$ with $i\in \mathcal{C}\cap \mathcal{W}$ and $p\in \mathcal{F}$, but by the $2$ out of $3$ property $p$ belongs to $\mathcal{W}$ and so $p$ belongs to $\mathcal{F}\cap \mathcal{W}$. It means that the axioms for a model category are overdetermined. The semantics of Type Theory can be seen as a way to relax these axioms.\\
\end{rmk}

\begin{defi}\textbf{(weak factorization system)}\label{defi5}
A weak factorization system (WFS) on a category $\mathscr{C}$ is given by two classes $A,B$ of morphisms of $\mathscr{C}$ such that $$A={}^{\boxslash}B~\text{and}~B=A^{\boxslash}$$ and every morphism of $\mathscr{C}$ factors as a morphism in $A$ followed by a morphism in $B$.\\
\end{defi}
\begin{defi}\textbf{(concise definition of a model category)}\label{defi6}
A model category is a complete and cocomplete category $\mathscr{C}$ with three classes of maps $\mathcal{F},~\mathcal{C},~\mathcal{W}$ such that :\\\\
(WFS) $(\mathcal{C},~\mathcal{F}\cap \mathcal{W})~\text{and}~(\mathcal{C}\cap \mathcal{W},~\mathcal{F})$ form two weak factorization systems.\\\\
($2$ out of $3$) Given composable morphisms $f,g$ then as soon as two of the three morphisms in $\lbrace{f,~g,~g\circ f}\rbrace$ are in $\mathcal{W}$ so is the third one.\\
\end{defi}
\begin{rmk}\label{rmk3}
Thanks to \ref{2wfs} we have already proven that the first definition implies the concise one. It is pretty straightforward to prove that the concise definition implies the first one. Let us prove the less obvious fact in this implication, namely that a retract of a fibration (\textit{resp.} a cofibration, \textit{resp.} a weak equivalence) is a fibration (\textit{resp.} a cofibration, \textit{resp.} a weak equivalence). We do the job for fibrations, trivial fibrations and weak equivalences for instance. \\
First, we treat the case of fibrations. Assume $f$ is a retract of $g$ and $g$ is a fibration, $$\xymatrix{X\ar[rr]^r \ar[dd]_f&&Y\ar@{->>}[dd]^g \ar[rr]^s&&X\ar[dd]^f\\&\circlearrowright&&\circlearrowright&\\X'\ar[rr]_{r'}&&Y'\ar[rr]_{s'}&&X'}$$ with $s\circ r=1_{X}~\text{and}~s'\circ r'=1_{X'}$, since $\mathcal{F}=(\mathcal{C}\cap \mathcal{W})^{\boxslash}$ we need to check that $f$ lifts with respect to any trivial cofibration. Being given a lifting problem, $$\xymatrix{\underset{~}{Z}\ar[rr]^t \ar@{ >->}[dd]_*[@]{\hbox to 4pt{$\sim$}}_-{h\quad}&&X\ar[dd]^f\\&\circlearrowright&\\Z'\ar[rr]_u &&X'}$$ one has $$\xymatrix{\underset{~}{Z}\ar[rr]^t \ar@{ >->}[dd]_*[@]{\hbox to 4pt{$\sim$}}_-{h\quad}&&X\ar[rr]^r \ar[dd]^f&&Y\ar@{->>}[dd]^g\\&\circlearrowright&&\circlearrowright&\\Z'\ar[rr]_u&&X'\ar[rr]_{r'}&&Y'}$$ hence we have a lift, $$\xymatrix{\underset{~}{Z}\com1 \ar[rr]^{r\circ t} \ar@{ >->}[dd]_*[@]{\hbox to 4pt{$\sim$}}_-{h\quad}&&Y\ar@{->>}[dd]^g\\&\com1&\\Z'\ar[rruu]^j \ar[rr]_{r'\circ u}&&Y'\,.}$$ So $s\circ j$ is the lift we are looking for.\\
Second, we deal with the case of trivial fibrations. Assume that  $f$ is a retract of $g$ and $g$ is a trivial fibration, $$\xymatrix{X\ar[rr]^r \ar[dd]_f&&Y\ar@{->>}[dd]^g \ar[rr]^s&&X\ar[dd]^f\\&\circlearrowright&&\circlearrowright&\\X'\ar[rr]_{r'}&&Y'\ar[rr]_{s'}&&X'\,.}$$
Since $(\mathcal{F}\cap\mathcal{W})=\mathcal{C}^{\boxslash}$, we want to prove that $f$ belongs to $\mathcal{C}^{\boxslash}$. Consider the following lifting problem, 
$$\xymatrix@=1,5cm{Z\ar[r]^t \ar@{>->}[d]_h & X\ar[d]^f \\ Z'\ar[r]_u & X'\,. }$$
Since $g$ is a trivial fibration, the following lifting problem can be filled with a diagonal filler $j$,
$$\xymatrix@=1,5cm{Z\ar[r]^t\ar@{>->}[d]_h & X \ar[r]^r & Y\ar@{->>}[d]_*[@]{\hbox to 4pt{$\sim$}}^-{g\quad} \\ Z'\ar[r]_u\ar@{-->}[rru]^j & X'\ar[r]_{r'} & Y'\,.}$$
The reader can check that $s\circ j$ is a diagonal filler for the initial lifting problem.\\
Last, we work out the case of weak equivalences. Assume that  $f$ is a retract of $g$ and $g$ is a weak equivalence, $$\xymatrix{X\ar[rr]^r \ar[dd]_f&&Y\ar@{->>}[dd]^g \ar[rr]^s&&X\ar[dd]^f\\&\circlearrowright&&\circlearrowright&\\X'\ar[rr]_{r'}&&Y'\ar[rr]_{s'}&&X'\,.}$$
We proceed in two steps. The first step deals with the case where we have the additional assumption that $f$ is a fibration. In this case we want to prove  that $f$ is a trivial fibration. Note that the axiom (WFS) implies MC5 (i). So by MC5 (i) we can factor $g$ as $p\circ i$, where $i$ is a cofibration and $p$ is a trivial fibration. Note that by the axiom (2 out of 3) the morphism $i$ is a trivial cofibration. Hence a diagonal filler $j$ exists in the following lifting problem,
$$\xymatrix@=1,5cm{Y\ar[rr]^s\ar@{>->}[d]_*[@]{\hbox to 4pt{$\sim$}}_-{i\quad} && X\ar@{->>}[d]^f \\ Y''\ar@{->>}[r]_p\ar@{-->}[rru]^j & Y'\ar[r]_{s'} & X'\,.}$$
So we have the following retract diagram,
$$\xymatrix@=1,5cm{X\ar[r]^{i\circ r}\ar[d]_f & Y''\ar[r]^j \ar@{->>}[d]_*[@]{\hbox to 4pt{$\sim$}}^-{p\quad} & X\ar[d]^f \\ X'\ar[r]_{r'} & Y'\ar[r]_{s'} & X'}$$
by the previous case one concludes that $f$ is a trivial fibration. \\
The second step is the general case where $f$ is any morphism but of course we still assume that f is a retract of the weak equivalence $g$. Note that (WFS) implies MC5 (ii), hence one can factor $f$ as $p\circ i$, where $i$ is a trivial cofibration and $p$ is a fibration. In order to prove that $f$ is a weak equivalence, it suffices to prove that $p$ is a trivial fibration. Consider $t$ the pushout of $i$ along $r$. By two applications of the universal property of the pushout, one gets two maps denoted below by $v$ and $w$,
$$\xymatrix@=1,5cm{X\ar[r]^r\ar@{>->}[d]_*[@]{\hbox to 4pt{$\sim$}}_-{i\quad} & Y\ar[d]^t\ar@/
^/[rdd]^g & \\
X''\ar[r]_u\ar@/_/[rrd]_{r'\circ p} & Z\pushoutcorner \ar@{-->}[rd]^v & \\
& & Y'}$$ 
and 
$$\xymatrix@=1,5cm{X\ar[r]^r\ar@{>->}[d]_*[@]{\hbox to 4pt{$\sim$}}_-{i\quad} & Y\ar[d]^t\ar@/
^/[rdd]^{i\circ s} & \\
X''\ar[r]_u\ar@{=}@/_/[rrd] & Z\pushoutcorner\ar@{-->}[rd]^w & \\
& & X''\,.}$$
Now, we can display the initial retract diagram into four squares as follows,
$$\xymatrix@=2cm{X\ar[r]^r\ar@{>->}[d]_*[@]{\hbox to 4pt{$\sim$}}_-{i\quad} & Y\ar[r]^s\ar[d]^t & X\ar@{>->}[d]_*[@]{\hbox to 4pt{$\sim$}}^-{i\quad} \\
X''\ar[r]_u\ar@{->>}[d]_p & Z\ar[r]_w\ar[d]^v & X''\ar@{->>}[d]^p \\
X'\ar[r]_{r'} & Y'\ar[r]_{s'} & X'\,.}$$
Lastly, note successively that first the south-est square commutes thanks to the uniqueness part of the universal property of the pushout applied to the following universal problem,
$$\xymatrix@=1,5cm{X\ar[r]^r\ar@{>->}[d]_*[@]{\hbox to 4pt{$\sim$}}_-{i\quad} & Y\ar[d]^t\ar@/
^/[rdd]^{f\circ s} & \\
X''\ar[r]_u\ar@/_/[rrd]_p & Z\pushoutcorner\ar@{-->}[rd] & \\
& & X'\,;}$$
second $t$ is a trivial cofibration since this is the pushout of a trivial cofibration (the proof of this fact will be given later in \ref{pullbackpushoutstability}). Since one has $v\circ t$ equals $g$, by the (2 out of 3) axiom one concludes that $v$ is a weak equivalence. Hence one exhibits the fibration $p$ as a retract of the weak equivalence $v$, so by the first step $p$ is a trivial fibration.  

\end{rmk}\bigskip

\begin{eg}\label{eg1} 
Any complete and cocomplete category $\mathscr{C}$ can be provided with three model structures by choosing one of the distinguished classes of morphisms to be all the isomorphisms and the other two to be all morphisms of $\mathscr{C}$.\\
\end{eg}
\begin{eg}\label{eg2} On $\mathbf{Set}$:\\
\begin{itemize} \item Take for $\mathcal{W}$ all the maps.\\ \item Take for $\mathcal{F}$ the monomorphisms (\textit{i.e.} the injective maps).\\ \item Take for $\mathcal{C}$ the epimorphisms (\textit{i.e.} the surjective maps).\\
\end{itemize}
\end{eg}
\begin{eg}\label{eg3} On $\mathbf{Gpd}$ there is a natural model structure (this is the model structure on $\mathbf{Gpd}$ under consideration in this thesis) : \\
\begin{itemize} \item Take for $\mathcal{W}$ the equivalences of groupoids.\\ \item Take for $\mathcal{F}$ the isofibrations, where a functor $F:G\rightarrow H$ between two groupoids $G$ and $H$ is an isofibration if for any isomorphism $h:y\rightarrow y'$ in $H$ and any element $x\in G$ such that $F(x)=y$ there exists an isomorphism $g~\text{in}~G$ with domain $x$ such that $F(g)=h$. In picture, $$\xymatrix{x\ar@{-->}[rr]^g&\underset{~}{}\ar[dd]^F&x'\\\\y\ar[rr]_h&\overset{~}{}&y'}$$\\
\item Take for $C$ the functors that are injective on objects.\\
\end{itemize}
Note this model structure is cofibrantly generated (see \cite{hir99} def 11.1.1 for a definition of a cofibrantly generated model category and \cite{rezk00} 4 for a proof that $\Gpd$ is cofibrantly generated). The obvious inclusion :\\
$$i:\b1\hookrightarrow\I$$\\
forms a set of generating trivial cofibrations.\\
Moreover, together the following three morphisms of groupoids amount to a set of generators for cofibrations :\\ 
\begin{align*}
&{u:\bold{0} \rightarrow \b1}\\&{v:\bold{1}\textstyle\coprod\bold{1}\hookrightarrow \I}\\&{w:\P\rightarrow\I\,.}\\
\end{align*}
For details on this central example for us the reader can look at \cite{strickland} 6.1 and \cite{rezk00}.
\end{eg}

\begin{eg}\label{eg4} On $\mathbf{Top}$ : \\
\begin{itemize} \item Take $\mathcal{W}$ to be the weak homotopy equivalences where a map $f:X\rightarrow Y$ is a weak homotopy equivalence if $f_{*}:\pi_{0}(X)\rightarrow \pi_{0}(Y)$ is a bijection of the sets of path components and for each basepoint $x\in X$ and each $n\geqslant 1$ the morphism $f_{*}=\pi_{n}(X,x)\rightarrow \pi_{n}(Y,f(x))$ is an isomorphism of groups.\\ \item Take for $\mathcal{F}$ the Serre fibrations, where a map $f:X\rightarrow Y$ is a Serre fibration if for each $CW$-complex $A$, the morphism $f$ has the RLP with respect to the inclusion $A\times 0\rightarrow A\times [0,1]$.\\ \item Take for $\mathcal{C}$ the morphisms with the LLP with respect to trivial fibrations.\\
\end{itemize}
\end{eg}

\begin{eg}\label{eg5} On $\mathbf{sSet}$ : \\
\begin{itemize} \item Take for $\mathcal{W}$ the morphisms $f$ of simplicial sets such that $|f|$ is a weak homotopy equivalence, where $|~|$ is the functor of geometric realization.\\ \item Take for $\mathcal{C}$ the morphisms $f:X\rightarrow Y$ such that for all $n\geqslant0,~f_{n}:X_{n}\rightarrow Y_{n}$ is  a monomorphism.\\ \item Take for $\mathcal{F}$ the morphisms with the RLP with respect to trivial cofibrations.\\ 
\end{itemize}
This model structure on $\mathbf{sSet}$ due to Quillen is cofibrantly generated. A set 
of generating cofibrations is given by the boundary inclusions:
\begin{center}
$\partial \Delta^{n} \to \Delta^{n}$.
\end{center}
A set of generating trivial cofibrations is given by the horn inclusions:
\begin{center}
$\Lambda^{n}_{k} \to \Delta^{n}$.
\end{center}

\end{eg}
\begin{prop}\label{pullbackpushoutstability}
Let $\mathscr{C}$ be a model category. \begin{enumerate}[label=(\roman*)] \item $\mathcal{F}$ (\textit{resp.} $\mathcal{F}\cap \mathcal{W}$) is stable under pullback.\\
\item $\mathcal{C}$ (\textit{resp.} $\mathcal{C}\cap \mathcal{W}$) is stable under pushout.\\
\end{enumerate}
\end{prop}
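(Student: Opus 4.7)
The plan is to use Proposition \ref{2wfs}, which identifies each of the four classes $\mathcal{F}$, $\mathcal{F}\cap\mathcal{W}$, $\mathcal{C}$, $\mathcal{C}\cap\mathcal{W}$ as either a right-lifting-property class or a left-lifting-property class with respect to another class. The crucial general fact I would isolate first is: if $J$ is any class of morphisms in $\mathscr{C}$, then $J^{\boxslash}$ is stable under pullback, and ${}^{\boxslash}J$ is stable under pushout. Once this is established, part (i) follows by applying it to $J=\mathcal{C}\cap\mathcal{W}$ to get stability of $\mathcal{F}=(\mathcal{C}\cap\mathcal{W})^{\boxslash}$ under pullback, and to $J=\mathcal{C}$ to get stability of $\mathcal{F}\cap\mathcal{W}=\mathcal{C}^{\boxslash}$; part (ii) follows either by the Duality Principle (Remark \ref{Duality principle}) or by the dual application to $\mathcal{C}={}^{\boxslash}(\mathcal{F}\cap\mathcal{W})$ and $\mathcal{C}\cap\mathcal{W}={}^{\boxslash}\mathcal{F}$.

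To establish the pullback stability of $J^{\boxslash}$, I would consider a pullback square with $f\in J^{\boxslash}$ on the right, form its pullback $f'$ along some map $g\colon Y'\to Y$, and attempt to solve an arbitrary lifting problem against $f'$ with some $i\in J$ on the left. The idea is to paste the lifting problem with the pullback square on the right: this yields a lifting problem against $f$, for which a diagonal filler $h$ exists by assumption. Then $h$ together with the bottom map of the original square induces, by the universal property of the pullback, a morphism into the pullback object, and one checks that this morphism is the desired diagonal filler. The only thing requiring care is to verify the two commutativity conditions of Definition \ref{defi1} for this induced map, which reduces to chasing identities around the two triangles produced by the universal property.

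The pushout half is strictly dual: given $i\in {}^{\boxslash}J$ and its pushout $i'$ along some map, a lifting problem against $i'$ with a map in $J$ on the right is transported (via the pushout square) to a lifting problem against $i$, solved by assumption, and the resulting map out of the pushout object is obtained from the universal property of the pushout. Alternatively one can simply invoke Remark \ref{Duality principle} once (i) is proved.

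I do not expect any serious obstacle here: the only subtle point is the standard but easy bookkeeping with the universal property of the pullback (respectively pushout) to see that the map one constructs really fills both triangles of the lifting square, rather than merely one of them. In particular the argument uses nothing specific about weak equivalences — it is purely a formal property of lifting-property classes — which is precisely what makes the quadruple statement of (i) and (ii) a single argument applied four times.
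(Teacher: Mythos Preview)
Your proposal is correct and follows essentially the same route as the paper: the paper also proves (i) by pasting a lifting problem against the pullback with the pullback square, solving the resulting lifting problem against $f$, and then using the universal property of the pullback to produce the filler; it then invokes duality for (ii). The only cosmetic difference is that you first isolate the general lemma that $J^{\boxslash}$ is pullback-stable (and dually ${}^{\boxslash}J$ is pushout-stable) before applying Proposition~\ref{2wfs}, whereas the paper runs the argument directly for fibrations and then remarks that the same pattern works for trivial fibrations.
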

\begin{proof}\label{proofpullbackpushoutstability} Let us prove \textit{(i)}. The property \textit{(ii)} follows by duality. Let $g^{*}f$ be the pullback of a fibration $f$ along a morphism $g$, $$\xymatrix{Z\times_{Y}X\pullbackcorner \ar[rr] \ar[dd]_{g^{*}f}&&X\ar@{->>}[dd]^f\\\\Z\ar[rr]_g&&Y\,.}$$ Being given a lifting problem as follows, \def\commutatif{\ar@{}[rrdd]|{\circlearrowright}} \newdir{ >}{{}*!/-5pt/\dir{>}} $$\xymatrix{\underset{~}{V}\commutatif \ar[rr] \ar@{ >->}[dd]_*[@]{\hbox to 4pt{$\sim$}}_-{h\quad}&&Z\times_{Y}X \ar[dd]^{g^{*}f}\\\\W\ar[rr]&&Z\,,}$$ one has a lift, $$\xymatrix{\underset{~}{V} \ar[rr] \ar@{ >->}[dd]_*[@]{\hbox to 4pt{$\sim$}}_-{h\quad}&&Z\times_{Y}X \ar[rr]&&X\ar@{->>}[dd]^f\\\\W\ar[rr] \ar[rrrruu]^{j'}&&Z\ar[rr]&&Y\,.}$$ So using the universal property of the pullback square, one gets a map $j$ as follows, $$\xymatrix{\underset{~}{V} \ar[rr] \ar@{ >->}[dd]_*[@]{\hbox to 4pt{$\sim$}}_-{h\quad}&&Z\times_{Y}X \pullbackcorner \ar[dd]^{g^{*}f} \ar[rr]&&X\ar@{->>}[dd]^f\\\\W\ar[rr] \ar[rruu]^j&&Z\ar[rr]&&Y\,.}$$\\
The reader can easily check that $j$ is a lift for the initial lifting problem.\\
The proof follows the same pattern if the map $f$ one starts with is a trivial fibration then $g^{*}f$ is a trivial fibration.
\end{proof}\medskip
\begin{defi}\textbf{(path object)}\label{defi7} Let $\mathscr{C}$ be a model category and $X$ be an object of $\mathscr{C}$. A (very good) path object for $X$ is an object of $\mathscr{C}$ denoted $PX$ together with a commutative diagram : \newdir{ >}{{}*!/-5pt/\dir{>}} $$\xymatrix{X~\ar@{ >->}[rr]^{\sim} \ar[rrdd]_{\Delta}&&PX\ar@{->>}[dd]\\&&\circlearrowright\qquad\qquad\\&&X\times X}$$ where $\Delta=<id_{X},id_{X}>$.\\
\end{defi}
\begin{rmk}\label{rmk4}
For any $X\in \mathscr{C}$ a path object for $X$ always exists by MC5 \textit{(ii)}.\\
\end{rmk}

\section{The connection with intensional type theory}
\label{sec:connection}

This section relies on \cite{shulman:invdia} and develops in some details the connection with intensional type theory. Recall from 4.2 in \cite{shulman:invdia} that $\m{-}$ denotes the universal morphism from the syntactic category to any model.
One works out the connection with type theory as follows.\\
For $\Gamma\pr A\ty$, the identity type $$\Gamma,(x:A),(y:A)\pr Id_{A}(x,y)\ty$$ is interpreted by a path object $$P_{\m{\Gamma}}\m{\Gamma.A}\twoheadrightarrow \m{\Gamma.A}\times_{\m{\Gamma}}\m{\Gamma.A}$$ \textit{i.e.} a path object for $\m{\Gamma.A}\xrightarrow{\m{A}} \m{\Gamma}$ in the slice category $\mathscr{C}/\m{\Gamma}$, in picture :\\\\
$$\xymatrix{{\m{\Gamma.A}} \ar@{-->}[rd]^{\Delta} \ar@/^2pc/[rrrd]^{id} \ar@/_2pc/[rddd]_{id}&&&\\&{\m{\Gamma.A}\times_{\m{\Gamma}}\m{\Gamma.A}} \ar[rr]^{p_{2}} \ar[dd]_{p_{1}} \pullbackcorner&&{\m{\Gamma.A}} \ar[dd]^{\m{A}}\\\\&{\m{\Gamma.A}} \ar[rr]_{\m{A}}&&{\m{\Gamma}}}$$\\
\def\CD{\ar@{}[d]|{\circlearrowright}}
$$\xymatrix{{\underset{~}{\m{\Gamma.A}}} \ar[rr]^{\Delta} \ar@/_3pc/[rrddd]_{\m{A}} \ar@{ >->}[rd]^[@]{\hbox to 1pt{$\sim$}}_r&\CD&{\m{\Gamma.A}\times_{\m{\Gamma}}\m{\Gamma.A}} \ar[ddd]^{\m{A}\circ p_{1}}\\&{P_{\m{\Gamma}}\m{\Gamma.A}}\qquad\com1 \ar@{->>}[ru]_{\m{Id_{A}(x,y)}} \ar[rdd] \CD&\\&&\\&&{\m{\Gamma}}\,.}$$\\
The reflexivity constructor is interpreted by the acyclic cofibration $r:\xymatrix{\m{\Gamma.A}~~\ar@{ >->}[r]^-{\sim}&P_{\m{\Gamma}}\m{\Gamma.A}}$. Indeed, $r$ is a section of $$\m{\Gamma,(x:A)\pr r_{x}:Id_{A}(x,x)}~,$$ this last fibration being interpreted by the following pullback :\\
$$\xymatrix{{\Delta^{*}P_{\m{\Gamma}}\m{A}} \pullbackcorner \ar[rr] \ar@{->>}[dd]&&{P_{\m{\Gamma}}\m{\Gamma.A}} \ar@{->>}[dd]^{\m{Id_{A}(x,y)}}\\\\{\m{\Gamma.A}} \ar[rr]_-{\Delta}&&{\m{\Gamma.A}\times_{\m{\Gamma.A}}\m{\Gamma.A}}}$$ this pullback is (isomorphic to)\\
$$\xymatrix{{P_{\m{\Gamma}}\m{\Gamma.A}} \ar@{=}[rr] \ar@{->>}[dd]_{p_{1}\circ \m{Id_{A}(x,y)}}&&{P_{\m{\Gamma}}\m{\Gamma.A}} \ar@{->>}[dd]^{\m{Id_{A}(x,y)}}\\\\{\m{\Gamma.A}} \ar[rr]_-{\Delta}&&{\m{\Gamma.A}\times_{\m{\Gamma.A}}\m{\Gamma.A}}}$$ and by the uniqueness condition in the universal property of the pullback we prove that $r$ is indeed a section of $p_{1}\circ \m{Id_{A}(x,y)}=\m{\Gamma,(x:A)\pr r_{x}:Id_{A}(x,x)}$\\
$$\xymatrix{{\m{\Gamma.A}} \ar@{-->}[rd]^r \ar@/^2pc/[rrrd]^r \ar@/_4pc/[rddd]_{id}&&&\\&{P_{\m{\Gamma}}\m{\Gamma.A}} \ar@{=}[rr] \ar@{->>}[dd]_{p_{1}\circ \m{Id_{A}(x,y)}}&&{P_{\m{\Gamma}}\m{\Gamma.A}} \ar@{->>}[dd]^{\m{Id_{A}(x,y)}}\\\\&{\m{\Gamma.A}} \ar[rr]_-{\Delta}&&{\m{\Gamma.A}\times_{\m{\Gamma.A}}\m{\Gamma.A}}\,.}$$\\\\
In particular, on has the J-rule for identity types :\begin{align*} {\Gamma,(x:A),(y:A),(p:Id_{A}(x,y)),\Theta \pr B \ty}\quad\\ {\underline{\Gamma,(x:A),\Theta[x/y,{r_x/p] \pr d:B[x/y,{r_x/p]}}}}\qquad\\ {\Gamma,(x:A),(y:A),(p:Id_{A}(x,y)),\Theta \vdash J_d(x,y,p):B}.
\end{align*}\\
The interpretation of this rule is a lift in the following square,
$$\xymatrix{\underset{~}{\m{\Gamma.A.\Theta[x/y,\Gamma_{x}/p]}}\ar[rr]^d \ar@{ >->}[dd]_{\m{r}}&&{\m{\Gamma.A.A.P_{\Gamma}A.\Theta.B}} \ar@{->>}[dd]^{P_{B}}\\\\{\m{\Gamma.A.A.P_{\Gamma}A.\Theta}} \ar@{=}[rr] \ar@{-->}[rruu]_{\m{Id}}&&{\m{\Gamma.A.A.P_{\Gamma}A.\Theta}}\,.}$$ The left-hand morphism is a pullback of the acyclic cofibration $$\xymatrix{r:\m{P.A}~~ \ar@{ >->}[r]^{\sim}&P_{\m{\Gamma}}\m{\Gamma.A}}$$ hence is an acyclic cofibration (since in a model of type theory we require that acyclic cofibrations are stable under pullback along a fibration, actually the square above is the \textit{raison d'être} of this requirement).\\

\section{More basics of model categories}
\label{sec:morebasics}\bigskip

Through this section we present more basics of model categories and we use the natural model structure on $\Gpd$ introduced in \ref{eg3} as a leitmotiv in order to exemplify the notions introduced and for the acquaintance of the reader with these notions.\bigskip

\subsection{Homotopy equivalences}

\begin{defi}\textbf{(right homotopy)}\label{defi8} Two morphisms $f,g:A\rightarrow X~\text{in}~\mathscr{C}$ are said to be right homotopic (written $f\overset{r}{\sim} g$) if there exists a path object $PX$ for $X$ and a morphism $H:A\rightarrow PX$ such that $$\xymatrix{&&PX\ar@{->>}[dd]\\&&\circlearrowright\qquad\qquad\\A\ar[rr]_{<f,g>} \ar[rruu]^H&&X\times X}$$ commutes.\\
\end{defi}\bigskip

\begin{rmk}
Note that the definition of right homotopy given above does not depend on the choice of a path object. Indeed, the reader can notice that every path object factors through any other. Let $P'X$ (and the corresponding morphisms that factor the diagonal morphism) be any other path object for $X$, we can form the following lifting problem and denote by $j$ a diagonal filler,
$$\xymatrix@=1,5cm{X\ar@{ >->}[r]^*[@]{\hbox to 4pt{$\sim$}}\ar@{ >->}[d]_*[@]{\hbox to 4pt{$\sim$}} & P'X\ar@{->>}[d]\\
PX\ar@{->>}[r]\ar@{-->}[ru]_j & X\times X\,.}$$
Hence the morphism $j\circ H$ is a right homotopy,
$$\xymatrix{ & & P'X\ar@{->>}[dd]\\
 & PX\ar@{-->}[ru]^j & \\
A\ar[rr]_{<f,g>}\ar[ru]^H & & X\times X\,.}$$
\end{rmk}\bigskip

\begin{rmk}\label{rmk6}
We have the dual notions of cylinder object (dual to path object) and left homotopy $\overset{l}{\sim}$ (dual to right homotopy, see \cite{dwyer95theories} 4.1) but there are not relevant for type theory.\\
\end{rmk}\bigskip

\begin{prop}
If $X$ is fibrant (meaning that the unique morphism from $X$ to the terminal object $\bold{1}$ is a fibration) then the relation $\overset{r}{\sim}$ is an equivalence relation on $\mathscr{C}(A,X)$.
\end{prop}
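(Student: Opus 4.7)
The plan is to verify reflexivity, symmetry, and transitivity in turn; fibrancy of $X$ enters essentially through the preliminary observation that, for any path object $PX$, both projections $p_1, p_2: PX \to X$ are acyclic fibrations. Each $p_i$ is a fibration, as a composite of $(p_1,p_2): PX \twoheadrightarrow X \times X$ with the projection $\mathrm{pr}_i: X \times X \twoheadrightarrow X$, the latter being the pullback of $X \twoheadrightarrow \b1$ along $X \to \b1$ by Proposition \ref{pullbackpushoutstability} together with fibrancy of $X$. Each $p_i$ is also a weak equivalence by MC2 applied to $p_i \circ s = \mathrm{id}_X$ with $s$ a weak equivalence.

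Reflexivity is immediate: the composite $s \circ f: A \to PX$ witnesses $f \overset{r}{\sim} f$, since $(p_1,p_2) \circ s \circ f = \langle f, f\rangle$. For symmetry, given $H: A \to PX$ witnessing $f \overset{r}{\sim} g$, I construct an involution $\sigma: PX \to PX$ exchanging the two projections as a diagonal filler in
$$\xymatrix@=1cm{X \ar@{ >->}[r]^s \ar[d]_s & PX \ar@{->>}[d]^{(p_1,p_2)} \\ PX \ar[r]_-{(p_2,p_1)} \ar@{-->}[ru]^\sigma & X \times X,}$$
which exists by MC4 (the left map is an acyclic cofibration and the right map a fibration; the upper triangle commutes since $(p_2,p_1) \circ s = \langle \mathrm{id},\mathrm{id}\rangle = (p_1,p_2) \circ s$). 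Then $\sigma \circ H: A \to PX$ witnesses $g \overset{r}{\sim} f$.

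For transitivity, given $H_1: A \to PX$ realizing $f \overset{r}{\sim} g$ and $H_2: A \to QX$ realizing $g \overset{r}{\sim} h$, I form the pullback $R := PX \times_X QX$ of $q_1: QX \to X$ along $p_2: PX \to X$; the induced map $(H_1,H_2): A \to R$ is well-defined since $p_2 \circ H_1 = g = q_1 \circ H_2$. I then equip $R$ with the structure of a path object for $X$: the diagonal $(s_P, s_Q): X \to R$ and the outer projection $q_R: R \to X \times X$, $(\alpha,\beta) \mapsto (p_1 \alpha, q_2 \beta)$. The map $q_R$ is a fibration, as one sees by decomposing it through $X \times QX$ and invoking the preliminary observation applied to $p_2$ and $q_2$ plus Proposition \ref{pullbackpushoutstability}; and $(s_P, s_Q)$ is a weak equivalence by MC2 applied to $\pi_1 \circ (s_P, s_Q) = s_P$ and the acyclic fibration $\pi_1: R \to PX$ (pullback of the acyclic fibration $q_1$ along $p_2$). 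Factoring $(s_P, s_Q)$ as a cofibration followed by an acyclic fibration via MC5(i), and noting the cofibration is then a weak equivalence by MC2, promotes $R$ to a very good path object through which $(H_1, H_2)$ is witnessed, and thus exhibits $f \overset{r}{\sim} h$.

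The main obstacle is transitivity: one must exhibit the pullback $R = PX \times_X QX$ as a path object for $X$, and this is precisely where fibrancy of $X$ is essential---it is what promotes the projections $p_1, p_2, q_1, q_2$ to acyclic fibrations, which in turn ensures that the outer projection $q_R$ is a fibration and that the diagonal $(s_P, s_Q)$ is a weak equivalence. Without fibrancy of $X$, the concatenation $(H_1, H_2)$ need not live in a path object at all, and transitivity fails in general.
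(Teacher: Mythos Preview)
The paper states this proposition without proof (the section is based on Dwyer--Spalinski), so there is no argument to compare against directly. Your proof follows the standard route, and reflexivity, symmetry, and the construction of $R = PX \times_X QX$ as a \emph{good} path object (with $(s_P,s_Q)$ a weak equivalence and $q_R$ a fibration) are all correct.

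There is, however, a gap in the last sentence of your transitivity argument. After factoring $(s_P,s_Q)$ as $X \xrightarrow{j} R' \xrightarrow{p} R$ with $j$ an acyclic cofibration and $p$ an acyclic fibration, you obtain a very good path object $R'$---but the homotopy $(H_1,H_2)$ lands in $R$, not in $R'$. Your phrase ``through which $(H_1,H_2)$ is witnessed'' hides the problem: to transport $(H_1,H_2):A\to R$ to a map $A\to R'$ one must lift along the acyclic fibration $p:R'\to R$, and that requires $\emptyset\to A$ to be a cofibration, i.e.\ $A$ cofibrant, which is not assumed. Nor can one go the other way and produce a map $R \to PX$ over $X\times X$ by lifting, since $(s_P,s_Q):X\to R$ is only a weak equivalence, not an acyclic cofibration, so the lifting problem against $(p_1,p_2):PX\twoheadrightarrow X\times X$ need not admit a filler.

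This difficulty stems from the paper's Definition~\ref{defi7}, which insists on \emph{very good} path objects. The standard references (Dwyer--Spalinski, Hovey, Hirschhorn) use a weaker notion---requiring only that $X\to PX$ be a weak equivalence---under which your $R$ is already a path object and the proof closes immediately after constructing $(H_1,H_2)$. With the paper's stricter convention, the argument as written needs either the additional hypothesis that $A$ is cofibrant, or a separate lemma showing that right homotopy via good path objects coincides with right homotopy via very good ones when $X$ is fibrant; neither is supplied.
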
\bigskip

\begin{defi}\textbf{(homotopy equivalence)}\label{defi9}
Let $\mathscr{C}$ be a model category. A morphism $f:X\rightarrow Y~\text{in}~\mathscr{C}$ is a homotopy equivalence if there exists $g:Y\rightarrow X~\text{in}~\mathscr{C}$ and (right) homotopies from $f\circ g$ to $1_{Y}$ and from $g\circ f$ to $1_{X}$.\\
\end{defi}\bigskip

\begin{rmk}\label{rmk7}
Categorically the univalence axiom requires some morphism to be a homotopy equivalence as explained in the section \ref{sec:catunivaxiom}.\\
\end{rmk}\bigskip

\subsection{Transfinite sequences}

\begin{defi}\textbf{($\lambda$-sequence)}\label{defi10}
Let $\mathscr{C}$ be a category and $\la$ any ordinal. A $\la$-sequence in $\mathscr{C}$ is a cocontinuous functor $F$ from $[0,\la]$ to $\mathscr{C}$, where $[0,\la]$ is the closed interval of ordinals up to $\la$.\\
We denote the value of $F$ at $\be$ by $F_\be$ for any $\be\leqslant\la$ and the value of $F$ at the unique morphism from $\be$ to $\gm$ by $F_{\be,\gm}$ for every $\be\leqslant\gm\leqslant\la$. 
\end{defi}\bigskip

\begin{defi}\textbf{(transfinite sequence)}
A transfinite sequence is a $\la$-sequence for some ordinal $\la$.
\end{defi}\bigskip

\begin{defi}\textbf{(transfinite composition)}
Let $F$ be a $\la$-sequence. The (transfinite) composition of $F$ is the natural map $F_{0,\la}$ from $F_0$ to $F_\la$.  
\end{defi}\bigskip

\begin{defi}
Let $\mathscr{C}$ be a category, $\mathcal{A}$ a class of morphisms in $\mathscr{C}$ and $\la$ an ordinal. A $\la$-sequence $F$ in $\mathcal{A}$ is a $\la$-sequence in $\mathscr{C}$ such that for every $\be<\la$ the map $X_{\be,\be+1}$ belongs to $\mathcal{A}$.
\end{defi}\bigskip

\begin{defi}\textbf{(closure under transfinite composition)}
Let $\mathscr{C}$ be a category and $\mathcal{A}$ be a class of morphisms in $\mathscr{C}$. One says that the class $\mathcal{A}$ is closed under transfinite composition when the composition of any transfinite sequence in $\mathcal{A}$ is in $\mathcal{A}$ again.
\end{defi}\bigskip

\begin{prop}\label{prop4}
Let $\mathscr{C}$ be a category and $\mathcal{B}$ a class of morphisms in $\mathscr{C}$ and we take $\mathcal{A} = {}^\boxslash\mathcal{B}$. Then the class $\mathcal{A}$ is closed under transfinite composition.\\
In particular, if $\mathscr{C}$ carries the structure of a model category then  the class $\mathcal{C}$ of cofibrations (\textit{resp} the class $\mathcal{C}\cap \mathcal{W}$ of trivial cofibrations) is closed under transfinite composition.
\end{prop}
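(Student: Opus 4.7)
The plan is to establish the general statement about $\mathcal{A} = {}^{\boxslash}\mathcal{B}$; the corollary about cofibrations and trivial cofibrations then follows immediately from Proposition \ref{2wfs}, which identifies $\mathcal{C} = {}^{\boxslash}(\mathcal{F}\cap\mathcal{W})$ and $\mathcal{C}\cap\mathcal{W} = {}^{\boxslash}\mathcal{F}$.

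For the abstract statement, I would fix a $\lambda$-sequence $F : [0,\lambda]\to\mathscr{C}$ such that $F_{\be,\be+1}\in\mathcal{A}$ for every $\be<\lambda$, and then show that the transfinite composition $F_{0,\lambda}$ has the LLP with respect to every morphism in $\mathcal{B}$. So take an arbitrary $p : X\to Y$ in $\mathcal{B}$ together with a commutative square whose top edge is some $u : F_0\to X$, whose bottom edge is some $v : F_\lambda\to Y$, whose left edge is $F_{0,\lambda}$ and whose right edge is $p$. The goal is to build a diagonal filler $F_\lambda\to X$.

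The core of the argument is a transfinite induction producing a coherent family of maps $h_\be : F_\be\to X$ for $\be\leq\lambda$ satisfying $h_0=u$, $p\circ h_\be = v\circ F_{\be,\lambda}$, and $h_\gm\circ F_{\be,\gm}=h_\be$ whenever $\be\leq\gm\leq\lambda$; then $h_\lambda$ is the desired lift. At a successor stage $\be+1$, the square with top $h_\be$, bottom $v\circ F_{\be+1,\lambda}$, left $F_{\be,\be+1}\in\mathcal{A}$ and right $p\in\mathcal{B}$ commutes by the inductive hypothesis, so a lift $h_{\be+1}$ exists by definition of $\mathcal{A}={}^{\boxslash}\mathcal{B}$. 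At a limit stage $\mu\leq\lambda$, the cocontinuity of $F$ identifies $F_\mu$ with $\mathrm{colim}_{\be<\mu}F_\be$, and the previously constructed family $(h_\be)_{\be<\mu}$ forms a cocone over this diagram with apex $X$ by the coherence hypothesis, so the universal property of the colimit furnishes a unique $h_\mu : F_\mu\to X$ factoring all the $h_\be$.

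The main obstacle is bookkeeping: one must verify, at each stage, both the transitive coherence $h_\gm\circ F_{\be,\gm}=h_\be$ for every earlier $\be$ (not merely the immediate predecessor), and the compatibility $p\circ h_\mu = v\circ F_{\mu,\lambda}$ at limit ordinals. Both are handled by the uniqueness clause in the universal property of the colimit: the two parallel maps $F_\mu\rightrightarrows X$ (respectively $F_\mu\rightrightarrows Y$) one wants to identify agree after precomposition with each structural map $F_{\be,\mu}$, hence are equal. Once the induction terminates at $\beta=\lambda$, the map $h_\lambda$ satisfies $h_\lambda\circ F_{0,\lambda}=h_0=u$ and $p\circ h_\lambda=v$, so it is the required diagonal filler, proving $F_{0,\lambda}\in{}^{\boxslash}\mathcal{B}=\mathcal{A}$.
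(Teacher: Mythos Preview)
Your proof is correct and follows the standard route found in most references: a direct transfinite induction on $\beta\leq\lambda$, building the partial lifts $h_\beta$ step by step, using the lifting property at successors and the cocontinuity of $F$ at limits. The bookkeeping you flag (transitive coherence and compatibility with $p$ at limit stages, both settled by uniqueness in the colimit) is exactly what needs to be checked, and you handle it properly.

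The paper, by contrast, organizes the same argument through Zorn's lemma rather than transfinite induction. It forms the poset $S$ of pairs $(\beta,j_\beta)$ consisting of an ordinal $\beta\leq\lambda$ together with a partial diagonal filler $j_\beta:X_\beta\to Y$, ordered by extension; it shows every chain has an upper bound (taking the supremum of the ordinals and the induced map out of the colimit), extracts a maximal element $(\beta_{\max},j_{\max})$, and then argues that $\beta_{\max}<\lambda$ would contradict maximality because the lifting property of $X_{\beta_{\max},\beta_{\max}+1}\in\mathcal{A}$ against $f\in\mathcal{B}$ produces a strictly larger element of $S$. The two arguments are of course morally the same---the chain-upper-bound step in the Zorn argument is your limit-ordinal step, and the contradiction-at-a-successor is your successor step---but your presentation is more direct and avoids the appeal to choice beyond what is already implicit in transfinite recursion.
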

\begin{proof}\label{proofprop4}
Let $\lambda$ be an ordinal and $X$ a $\la$-sequence in $\mathcal{A}$. We need to prove that the transfinite composition $X_{0,\la}$ belongs to $\mathcal{A}$. It suffices to prove that this last map belongs to ${}^{\boxslash}\mathcal{B}$. Being given a lifting problem : \def\com2{\ar@{}[rrdd]|{\circlearrowright}}  $$\xymatrix{X_{0} \com2 \ar[rr]^g \ar[dd]&&Y\ar[dd]^{f}\\\\ X_\la \ar[rr]_h&&Z}$$ we need to construct a lift.\\
We define a set $S$ whose elements are the pairs $(\beta,j_{\beta})$ with $\beta$ an ordinal less than or equal to $\lambda$ and $j_\be$ a morphism from $X_\be$ to $Y$ satisfying that $f\circ j_\be$ is equal to the composition $h\circ X_{\be,\la}$ and $j_\be\circ X_{0,\be}$ is equal to $g$, in picture
$$\xymatrix@=1,5cm{X_0\ar[r]^g\ar[d]_{X_{0,\be}} &  Y\ar[d]^f \\
X_\be\ar[r]_{h\circ X_{\be,\la}}\ar[ru]_{j_\be} & Z\,.}$$\\
We equip this set with the preorder defined as follows,
$$(\be,j_\be)\leqslant (\gm,j_\gm)~\text{iff}~\be\leqslant\gm~\text{and}~j_\gm\circ X_{\be,\gm} = j_\be\,. $$
Note that  the set $S$ is not empty. Indeed, $(0,g)$ belongs to $S$. Let $C$ be a non-empty chain in $S$. The reader can easily check that $(\underset{(\be,j_\be)\in C}{\text{sup}}~\be,\underset{(\be,j_\be)\in C}{\text{colim}}~j_\be)$ belongs to $S$ and this is an upper bound of $C$.\\
By Zorn's lemma one has a maximal element in $S$ denoted $(\be_{\text{max}},j_{\text{max}})$. If $\be_{\text{max}}=\la$ then $j_{\text{max}}$ is a diagonal filler for our initial lifting problem.\\ Otherwise, $\be_{\text{max}}$ is strictly less than $\la$. Since $X_{\be_{\text{max}},\be_{\text{max}}+1}$ belongs to $^\boxslash B$ by assumption on the $\la$-sequence $X$, one gets a map $j_{\beta_{\text{max}}+1}$ as a diagonal filler in the following lifting problem,
$$\xymatrix@=2cm{X_{\be_{\text{max}}}\ar[r]^{j_{\text{max}}}\ar[d] & Y\ar[dd]^{f} \\  X_{\be_{\text{max}}+1}\ar[d]\ar@{-->}[ru]_{j_{\be_{\text{max}} +1}} & \\ X_\la\ar[r]_h & Z\,.}$$
The reader can easily check that $(\be_{\text{max}}+1,j_{\be_{\text{max}}+1})$ belongs to $S$ and it contradicts the maximality of $(\be_{\text{max}},j_{\text{max}})$.
\end{proof}\bigskip

To close this subsection we add for the curious reader the following proposition about the natural model structure on $\Gpd$ (see \ref{eg3}) that we were unable to find in the literature. Let us denote by $\text{cof}(\lbrace i\rbrace)$ the trivial cofibrations and by $\text{cell}(\lbrace i\rbrace)$ the transfinite compositions of pushouts in $\lbrace i\rbrace$, where $i$ is the generating trivial cofibration introduced \textit{ibid}.\medskip

\begin{prop}
For the natural model structure on $\Gpd$ one has the equality
$$\text{cof}(\lbrace i\rbrace) = \text{cell}(\lbrace i\rbrace)\,.$$
\end{prop}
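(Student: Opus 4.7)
The inclusion $\text{cell}(\lbrace i\rbrace)\subseteq \text{cof}(\lbrace i\rbrace)$ is the easy direction and I would dispatch it first. The map $i:\b1\into\I$ is itself a trivial cofibration: it is obviously injective on objects, and together with the unique functor $\I\to\b1$ it exhibits an equivalence of groupoids. The class of trivial cofibrations coincides with ${}^{\boxslash}\mathcal{F}$ in any model category, which is stable under pushout (by \autoref{pullbackpushoutstability} dualized) and closed under transfinite composition (by \autoref{prop4} applied with $\mathcal{B}=\mathcal{F}$). Hence every map in $\text{cell}(\lbrace i\rbrace)$ is automatically a trivial cofibration.

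For the reverse inclusion, the plan is to take an arbitrary trivial cofibration $f:A\to B$ and build $B$ explicitly from $A$ by a transfinite sequence of pushouts of $i$. Because $f$ is a weak equivalence between groupoids, it is fully faithful, and because it is injective on objects, the corestriction $A\to f(A)$ onto the full subgroupoid $f(A)\subseteq B$ is an isomorphism of groupoids, so one may identify $A$ with $f(A)$ and assume $f$ is the inclusion of a full subgroupoid. Using essential surjectivity, well-order the object set $\mathrm{ob}(B)\setminus\mathrm{ob}(A)$ as $\lbrace b_\al:\al<\la\rbrace$ and for each $\al$ select a pair $(a_\al,\psi_\al)$ consisting of an object $a_\al\in\mathrm{ob}(A)$ and an isomorphism $\psi_\al:a_\al\to b_\al$ in $B$.

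For each $\al\leqslant\la$, let $A_\al$ be the full subgroupoid of $B$ on the object set $\mathrm{ob}(A)\cup\lbrace b_\be:\be<\al\rbrace$. Then $A_0=A$ and $A_\la=B$, the assignment $\al\mapsto A_\al$ with the evident inclusions forms a $\la$-sequence whose transfinite composition is exactly $f$, and cocontinuity at limit ordinals is automatic since the full subgroupoid on a union of objects is the directed colimit of the full subgroupoids on the pieces.

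The substantive step---and the only real obstacle---is showing that each successor inclusion $A_\al\into A_{\al+1}$ is a pushout of $i$ along the functor $\b1\to A_\al$ picking the object $a_\al$, with the companion map $\I\to A_{\al+1}$ sending the generating isomorphism to $\psi_\al$. I would set $P:=A_\al\sqcup_{\b1}\I$ and produce the induced comparison functor $P\to A_{\al+1}$; this is patently bijective on objects, so everything reduces to fully faithfulness, which I would check by a hom-set calculation. On the pushout side, morphisms in $P$ with the newly adjoined object as source or target are, by the explicit description of free adjunction of an isomorphism, in bijection with morphisms out of or into $a_\al$ in $A_\al$ by pre- or post-composition with the new isomorphism. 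On the target side, because $A_\al$ is a \emph{full} subgroupoid of $B$ and $\psi_\al$ is invertible, morphisms in $A_{\al+1}$ involving $b_\al$ are in bijection with morphisms involving $a_\al$ in $A_\al$ by conjugation with $\psi_\al$. These two bijections agree via the comparison functor, yielding fullness and faithfulness. Once this universal property is nailed down, the conclusion that $f\in\text{cell}(\lbrace i\rbrace)$ follows immediately.
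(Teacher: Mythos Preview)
Your argument is correct and follows essentially the same strategy as the paper's: build $B$ from $A$ by a transfinite sequence that adjoins one missing object at a time, each step being a pushout of $i$. The main organizational difference is that the paper packages the construction via Zorn's lemma on a poset of partial $\la$-sequences of subgroupoids of $B$, whereas you well-order $\mathrm{ob}(B)\setminus\mathrm{ob}(A)$ up front and run a direct transfinite induction. Your route is a bit more economical, and you are more explicit than the paper about the one genuinely content-bearing step: verifying that the abstract pushout $A_\al\sqcup_{\b1}\I$ really is isomorphic to the full subgroupoid $A_{\al+1}\subseteq B$. The paper's proof asserts the analogous fact (that its pushout lands back inside $B$ as a subgroupoid) without spelling out the hom-set check, so your version is slightly more careful there. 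Either way, both proofs rest on the same idea and the same use of choice.
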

\begin{proof}
Since $i$ is a trivial cofibration, by stability of trivial cofibrations under pushouts and transfinite compositions one concludes $\text{cell}(\lbrace i\rbrace)\subseteq \text{cof}(\lbrace i\rbrace)$.\\
It remains to prove the reverse inclusion. Let $f:A\rightarrow B$ be a trivial cofibration. Let us denote by $\text{Push}(\lbrace i\rbrace)$ the class of maps in $\Gpd$ that are pushouts of $i$. We define a set $S$ whose set of objects are pairs $(\lambda, X)$ with $\lambda$ any ordinal and $X$ any $\lambda$-sequence in $\text{Push}(\lbrace i\rbrace)$ such that $X_0 = A$ and for every $\beta\leqslant\lambda$ the groupoid $X_\beta$ is a subgroupoid of $B$.\\
We provide $S$ with the structure of a preordered set as follows,
\begin{center}

$(\lambda,X)\leqslant (\lambda',X')$ \textit{iff} $\lambda\leqslant\lambda'$ and $X'_{|[0,\lambda]} = X$.
\end{center}
Note that $S$ is a non-empty set. Indeed, $(0, X)$ with $X_0 := A$ is an element of $S$.\\
Let $C$ be a non-empty chain of $S$. By using the universal property of the colimit $\underset{(\lambda,X^\la)\in C}{\text{colim}}\lambda$, one gets a map denoted $(\displaystyle{\bigcup_{(\la,X^{\la})\in C}X^{\la}})$ from $[0,\underset{(\lambda,X^\la)\in C}{\text{colim}}\lambda]$ to $\Gpd$,
$$\xymatrix@C=2cm@R=0,5cm{[0,\la]\ar[rdddd]^{X^{\la}}\ar[d] & \\
\vdots\ar[d] & \\
[0,\la']\ar[rdd]^{X^{\la'}}\ar[d] & \\
\vdots\ar[d] & \\
[0,\underset{(\lambda,X^\la)\in C}{\text{colim}}\lambda]\ar@{-->}[r] & \Gpd\,.}$$
Clearly $(\underset{(\lambda,X^\la)\in C}{\text{colim}}\lambda,\displaystyle{\bigcup_{(\la,X^{\la})\in C}X^{\la}})$ is an element of $S$ and $(\la,X^\la)\leqslant(\underset{(\lambda,X^\la)\in C}{\text{colim}}\lambda, \displaystyle{\bigcup_{(\la,X^{\la})\in C}X^{\la}})$ for every $(\la,X^\la)\in C$.\\
By Zorn's lemma one gets a maximal element in $S$ denoted $(\la_{\text{max}},X^{\text{max}})$. Note that the transfinite composition $X^{\text{max}}_{0,\la_{\text{max}}}$ is (isomorphic to) the inclusion from $A$ to $X^{\text{max}}_{\la_{\text{max}}}$. Since $f$ is a trivial cofibration, it is an injective-on-objects equivalence hence it is (isomorphic to) the inclusion of a full subgroupoid of B which is equivalent to B. So it suffices to prove that $X^{\text{max}}_{\la_{\text{max}}}$ which is (isomorphic to) a full subgroupoid of $B$ is $B$ itself. Assume this is not the case, then there exists $x$ in $B$ such that $x$ does not belong to $X^{\text{max}}_{\la_{\text{max}}}$. Hence by essential surjectivity of $f$ there exists $y\in A\subseteq X^{\text{max}}_{\la_{\text{max}}}$ and an isomorphism $\phi$ in $B$ from $y$ to $x$. \\
In this case define a $(\la_{\text{max}}+1)$-sequence $X$ with $X_{|[0,\la_{\text{max}}]}:= X^{\text{max}}$ and $X_{\la_{\text{max}}+1}$ is the following pushout,
$$\xymatrix@=1,5cm{\bold{1}\ar[r]^y\ar@{^{(}->}[d]_i & X^{\text{max}}_{\la_{\text{max}}}\ar[d] \\
\I\ar[r] & X_{\la_{\text{max}}+1}\pushoutcorner\,.}$$
Then $(\la_{\text{max}}+1,X)$ belongs to $S$ and $(\la_{\text{max}},X^{\text{max}})<(\la_{\text{max}}+1,X)$ which contradicts the maximality of $(\la_{\text{max}},X^{\text{max}})$.\\
So one concludes that $X^{\text{max}}_{\la_{\text{max}}}$ is (isomorphic to) $B$ and $f$ is (isomorphic to) the transfinite composition $X^{\text{max}}_{0,\la_{\text{max}}}$. 
\end{proof}\bigskip

\begin{rmk}
Note that the proposition above is non-trivial even though under some slight assumptions on a category $\mathscr{C}$ you always have $\text{cof}(I)=\text{cell}(\hat{I})$ for any subset $I$ of morphisms of $\mathscr{C}$ but usually with $I\subsetneqq \hat{I}$ (see \cite{simpson11}, theorem 8.6.1).
\end{rmk}\bigskip

\subsection{The small object argument}\bigskip

It is usually hard to provide a category $\mathscr{C}$ with a model structure. We introduce below a technical device called \enquote{the small object argument} due to Quillen which makes easier to provide the factorizations involved in MC5 with the lifting properties of MC4. We first look at this argument in an informal way in the case of $\Gpd$ equipped with the natural model structure of \ref{eg3}, and afterwards we state and prove the small object argument in a rigourous way.\bigskip

Below we explore in details the factorizations involved in the natural model structure on $\mathbf{Gpd}$ given in \ref{eg3}.\\
Note that an isofibration (\textit{cf.} \ref{eg3}) is nothing but a morphism that has the RLP with respect to the inclusion $i:\b1 \hookrightarrow \I$. Now, starting with any morphism $f:X\rightarrow Y$ in $\Gpd$, one can factor it as a trivial cofibration (the inclusion of a full subgroupoid into an equivalent groupoid) followed by an isofibration by the following gluing construction. First, consider the set $S$ which contains all the pairs of morphisms $(g,h)$ such that the following diagram commutes : $$\xymatrix{\underset{~}{\b1} \ar[rr]^g \ar@{^{(}->}[dd]_{i}&&X \ar[dd]^f \\&\circlearrowright&\\ \I \ar[rr]_h &&Y\,. }$$ Note that such a pair $(g,h)$ is nothing but a (right) lifting problem with respect to $i$, and with the definition of an isofibration in mind this pair can be correspondingly seen as a pair $(g(0),h(\phi))$ which consists in an isomorphism of $Y$ and a point in the fiber of its domain. To provide a solution (\textit{i.e.} a lift) for this lifting problem consists in gluing a cell $\I$ in $X$ at $g(0)$ which lies above $h(\phi)$. Let us define this gluing construction by a simple pushout : $$\xymatrix{\underset{(g,h)\in S}{\coprod}\b1\ar[rr]^{+g} \ar[dd]_{\coprod i}&&X\ar[dd]^j\\\\ \underset{(g,h)\in S}{\coprod}\I \ar[rr]&&X' \pushoutcorner\,.}$$ By the universal property of the pushout $X'$ one has a morphism $p$ as follows : $$\xymatrix{\underset{(g,h)\in S}{\coprod}\b1\ar[rr]^{+g} \ar[dd]_{\coprod i}&&X\ar[dd]^j \ar@/^2pc/[rddd]^f&\\&&\com1&\\ \underset{(g,h)\in S}{\coprod}\I \ar@/_2pc/[rrrd]_{+h} \ar[rr]&\com1&X'\ar@{-->}[rd]^p \pushoutcorner&\\&&&Y\,.}$$ Note that $i$ is a trivial cofibration so by \ref{pullbackpushoutstability} (and by stability under coproduct) $j$ is a trivial cofibration. One can prove that $p$ is a fibration of groupoids (\textit{i.e.} an isofibration) as follows. Being given a lifting problem : $$\xymatrix{\underset{~}{\b1} \ar[rr]^g \ar@{^{(}->}[dd]_{i}&&X' \ar[dd]^p \\&\circlearrowright&\\ \I \ar[rr]_h &&Y\,, }$$ if $g(0)\in X$ then $(g,h)\in S$, \textit{i.e.} $(g,h)$ is an index in the gluing construction. It means that in $X'$ we glued a copy of $\I$ along the map $g$ (copy of $\I$ which lies above $h(\phi)$ hence a lift exists). Otherwise $g(0)$ belongs to $X'\setminus X$, this new point comes from the gluing in $X'$ of a copy of $\I$ corresponding to an index $(g',h')\in S$. Let $\I_{(g',h')}$ denotes this copy. Since we have also $$(g'(0), h(\phi)\circ h'(\phi))\in S$$ we have the following picture :
$$\xymatrix{\\g'(0) \ar@/^2pc/[rrrr]^{\I_{(g'(0),~h(\phi)\circ h'(\phi))}} \ar[rr]^{\I_{(g',h')}}&\underset{~}{} \ar[d]^p&{g(0)=g'(1)}\ar@{-->}[rr]&\underset{~}{} \ar[d]^p&g(1)\\h'(0)\ar[rr]_{h'(\phi)}&\overset{~}{}&{h'(1)=h(0)}\ar[rr]_{h(\phi)}&\overset{~}{}&h(1)\,.}$$ So $\I_{(g'(0),~h(\phi)\circ h'(\phi))}\circ \I^{-1}_{(g',h')}$ is the lift we are looking for.\\
\bigskip

\begin{defi}\textbf{(sequentially small)}\label{defi11}
Let $\mathscr{C}$ be a category with all small colimits. An object A of $\mathscr{C}$ is said to be sequentially small if for every functor $B:\mathbb{N}\rightarrow \mathscr{C}$ the induced map $$\underset{n}{\text{colim}}~\mathscr{C}(A,B(n))\rightarrow ~\mathscr{C}(A,\underset{n}{\text{colim}}B(n))$$ is  an isomorphism.\\
\end{defi}\bigskip

\begin{prop}\textbf{(the small object argument)}\label{prop5}
Let $\mathscr{C}$ be a category with all small colimits and $\mathscr{F}=\lbrace{f_{i}:A_{i}\rightarrow B_{i}}\rbrace_{i\in I}$ be a set of maps in $\mathscr{C}$ such that for all $i\in I,~A_{i}$ is sequentially small. Then any morphism $p$ of $\mathscr{C}$ can be factored as a transfinite composition of pushouts of coproducts of elements in $\mathscr{F}$ followed by a morphism which has the RLP with respect to every element in $\mathscr{F}$.\\
\end{prop}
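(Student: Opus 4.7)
The plan is the standard transfinite induction that builds the factorization by iteratively solving all pending lifting problems. Given $p : X \to Y$, I will construct an $\om$-sequence $X_0 = X \to X_1 \to X_2 \to \cdots$ of objects over $Y$ such that each step is a pushout of a coproduct of maps in $\mathscr{F}$, take the colimit $X_\om$, and verify that the induced map $p_\om : X_\om \to Y$ has the right lifting property with respect to every element of $\mathscr{F}$.

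For the inductive step, suppose $p_n : X_n \to Y$ has been constructed as a factorization of $p$. Let $S_n$ be the set of all commutative squares of the form
$$\xymatrix{A_i \ar[r]^{g} \ar[d]_{f_i} & X_n \ar[d]^{p_n} \\ B_i \ar[r]_{h} & Y}$$
with $i \in I$. Form the coproduct $\coprod_{(i,g,h) \in S_n} f_i$, which fits into the pushout square
$$\xymatrix@=1.5cm{\coprod_{S_n} A_i \ar[r]^{+g} \ar[d]_{\coprod f_i} & X_n \ar[d] \\ \coprod_{S_n} B_i \ar[r] & X_{n+1}\pushoutcorner}$$
and, using the universal property together with the maps $+h : \coprod_{S_n} B_i \to Y$, obtain a canonical map $p_{n+1} : X_{n+1} \to Y$ compatible with $p_n$. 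Then set $X_\om := \mathrm{colim}_n X_n$ and let $p_\om : X_\om \to Y$ be the induced map. The composite $X \to X_\om$ is by construction a transfinite composition of pushouts of coproducts of elements of $\mathscr{F}$, yielding the first half of the factorization.

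The main point to verify, and the one where the hypothesis is used, is that $p_\om$ has the RLP with respect to every $f_i$. Given any square
$$\xymatrix{A_i \ar[r]^{g} \ar[d]_{f_i} & X_\om \ar[d]^{p_\om} \\ B_i \ar[r]_{h} & Y,}$$
the sequential smallness of $A_i$ guarantees that the map $g : A_i \to X_\om = \mathrm{colim}_n X_n$ factors through some $X_n$ via a map $g' : A_i \to X_n$, so that $(i,g',h) \in S_n$. By construction, at stage $n+1$ we have attached a copy of $B_i$ along $g'$, producing a canonical map $B_i \to X_{n+1} \to X_\om$ which serves as the required diagonal filler; the two triangles commute by the defining property of the pushout and by construction of $p_{n+1}$. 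The hardest step to state precisely (though essentially bookkeeping) is this last verification, since one has to keep track of how the gluing at stage $n+1$ provides the lift over $X_\om$ and is compatible with the structure map to $Y$; everything else is formal manipulation of colimits and pushouts.
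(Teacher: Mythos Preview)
Your proof is correct and follows essentially the same approach as the paper: both construct an $\omega$-sequence of gluing constructions, each stage being a pushout along a coproduct of all pending lifting squares, take the colimit, and then use sequential smallness of the $A_i$ to factor any map $A_i \to X_\omega$ through a finite stage where the corresponding square was already solved. The only differences are notational (the paper writes $G^k(\mathscr{F},p)$ for your $X_k$ and separates the indexing set as $\coprod_i S(i)$ rather than a single $S_n$).
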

\begin{proof}\label{proofprop5}
For each $i\in I$ consider the set $S(i)$ which contains all pairs of morphisms $(g,h)$ such that the following diagram commutes : $$\xymatrix{A_{i}\ar[rr]^g \ar[dd]_{f_{i}}&&X \ar[dd]^p\\&\circlearrowright&\\B_{i}\ar[rr]_h&&Y\,.}$$ We define the gluing construction $G^{1}(\mathscr{F},p)$ to be the object of $\mathscr{C}$ given by the pushout diagram $$\xymatrix{\underset{i\in I}\coprod~~\underset{(g,h)\in S(i)}\coprod A_{i} \ar[rr]^{+_{i}+_{(g,h)}g} \ar[dd]_{\coprod f_{i}}&&X \ar[dd]^{i_{1}}\\\\ \underset{i\in I}\coprod~~\underset{(g,h)\in S(i)}\coprod B_{i} \ar[rr]&&G^{1}(\mathscr{F},p)\pushoutcorner\,.}$$\\
Now by the universal property of the pushout one has a morphism $p_1$ as follows\\
$$\xymatrix{\underset{i\in I}\coprod~~\underset{(g,h)\in S(i)}\coprod A_{i} \ar[rr]^{+_{i}+_{(g,h)}g} \ar[dd]_{\coprod f_{i}}&&X\ar@/^2pc/[rddd]^p \ar[dd]^{i_{i}}&\\\\ \underset{i\in I}\coprod~~\underset{(g,h)\in S(i)}\coprod B_{i}\ar@/_2pc/[rrrd]_{+_{i}+_{(g,h)}h} \ar[rr]&&G^{1}(\mathscr{F},p)\pushoutcorner \ar@{-->}[rd]^{p_{1}}&\\&&&Y\,.}$$ Now by induction, for $k>1$ we define objects $G^{k}(\mathscr{F},p)$ and morphisms $$p_{k}:G^{k}(\mathscr{F},p)\rightarrow Y$$ by setting $G^{k}(\mathscr{F},p)=G^{1}(\mathscr{F},p_{k-1})$ and $p_{k}=(p_{k-1})_{1}$. By taking the colimit one has natural morphisms $i_{\infty}$ and $p_{\infty}$ as follows,\\
$$\xymatrix{X\ar[r]^{i_{1}} \ar@/^3pc/[rrrrr]^{i_{\infty}} \ar[dd]^p &G^{1}(\mathscr{F},p) \ar[r]^{i_{2}} \ar[dd]^{p_{1}}&G^{2}(\mathscr{F},p) \ar[r] \ar[dd]^{p_{2}}&\dots&\ar[r]&G^{\infty}(\mathscr{F},p) \ar[dd]^{p_{\infty}}\\\\
Y \ar@{=}[r]&Y \ar@{=}[r]&Y \ar@{=}[r]&\dots& \ar@{=}[r]&Y}$$ where $i_{\infty}$ is the transfinite composition of pushouts of coproducts of elements in $\mathscr{F}$ where $G^{\infty}(\mathscr{F},p):=\underset{n}{\text{colim}}~G^{n}(\mathscr{F},p)$. It remains to prove that $p_{\infty}$ has the RLP with respect to elements in $\mathscr{F}$.\\
Consider a lifting problem \def\com2{\ar@{}[rrdd]|{\circlearrowright}} $$\xymatrix{A_{i}\com2\ar[rr]^g \ar[dd]_{f_{i}}&&G^{\infty}(\mathscr{F},p) \ar[dd]^{p_{\infty}}\\\\B_{i}\ar[rr]_h&&Y\,.}$$ Since $A_{i}$ is sequentially small, there exists an integer $k$ such that $g$ is the composition of $g':A_{i}\rightarrow G^{k}(\mathscr{F},p)$ with the natural morphism $G^{k}(\mathscr{F},p)\rightarrow G^{\infty}(\mathscr{F},p)$. So one has $$\xymatrix{A_{i}\ar[rr]^{g'} \ar[dd]_{f_{i}}&&G^{k}(\mathscr{F},p)\ar[rr]^{i_{k+1}} \ar[dd]_{p_{k}}&&G^{k+1}(\mathscr{F},p)\ar[rr] \ar[dd]^{p_{k+1}}&&G^{\infty}(\mathscr{F},p)\ar[dd]^{p_{\infty}}\\\\B_{i}\ar[rr]_h&&Y\ar@{=}[rr]&&Y\ar@{=}[rr] &&Y}$$ where the composition of the top row is $g$. But $(g',h)$ contributes as an index in the construction of $G^{k+1}(\mathscr{F},p)$ from $G^{k}(\mathscr{F},p)$. Actually it indexes a gluing of $B_{i}$ to $G^{k}(\mathscr{F},p)$ along $A_{i}$ and so there exists a map $B_{i}\rightarrow G^{k}(\mathscr{F},p)$ with the required properties.
\end{proof}\bigskip

\section{Model structures on functor categories}
\label{sec:msofc}

Let $\mathcal{I}$ be a small index category and $\mathscr{C}$ be a model category, under slight assumptions there are at least two model structures on the functor category $[\mathcal{I}, \mathscr{C}]$.

\begin{prop}\textbf{(the projective model structure)}\label{prop6}
If $\mathscr{C}$ is cofibrantly generated there is the projective model structure on $[\mathcal{I},~\mathscr{C}]$ where a natural transformation $\varphi:F\Rightarrow G$ is : \begin{itemize} \item a weak equivalence if $\varphi_{i}:F_{i}\rightarrow G_{i}$ is a weak equivalence in $\mathscr{C}$ for every $i\in \mathcal{I}$. \item a fibration if $\varphi_{i}:F_{i}\rightarrow G_{i}$ is a fibration in $\mathscr{C}$ for every $i\in \mathcal{I}$. \item a cofibration if $\varphi$ has the LLP with respect to any trivial fibration.\\
\end{itemize}
\end{prop}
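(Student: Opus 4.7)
The plan is to build the projective structure by transferring the cofibrant generation of $\mathscr{C}$ along the adjunctions $F_i \dashv \mathrm{ev}_i$, where $\mathrm{ev}_i : [\mathcal{I},\mathscr{C}] \to \mathscr{C}$ is evaluation at $i \in \mathcal{I}$ and $F_i$ is its left adjoint, given explicitly by $F_i(X)(j) = \coprod_{\mathcal{I}(i,j)} X$. Let $I$ and $J$ denote sets of generating cofibrations and generating trivial cofibrations for $\mathscr{C}$, with small sources. The candidate generating (trivial) cofibrations for the projective structure will be
\[
I_{\mathrm{proj}} := \{F_i f \mid i \in \mathcal{I},\ f \in I\}, \qquad J_{\mathrm{proj}} := \{F_i g \mid i \in \mathcal{I},\ g \in J\}.
\]

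First I would dispatch the easy axioms. Since limits and colimits in $[\mathcal{I}, \mathscr{C}]$ are computed pointwise, (MC1) is inherited from $\mathscr{C}$. The 2-out-of-3 axiom (MC2) and the retract axiom (MC3) are also immediate, since weak equivalences and (trivial) fibrations are defined pointwise and the corresponding classes in $\mathscr{C}$ are stable under these operations. Cofibrations are closed under retracts by the retract argument (see \autoref{prop1}) applied to the lifting characterization of the class.

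For the two weak factorization systems and the lifting axiom (MC4, MC5), the key observation is that by adjointness a natural transformation $\varphi$ has the RLP against $F_i f$ if and only if $\mathrm{ev}_i \varphi$ has the RLP against $f$ in $\mathscr{C}$. Consequently, $\varphi$ is a pointwise fibration (resp.\ pointwise trivial fibration) if and only if it has the RLP against $J_{\mathrm{proj}}$ (resp.\ $I_{\mathrm{proj}}$). The class of projective cofibrations is defined as ${}^{\boxslash}(\mathcal{F} \cap \mathcal{W})$, so the identification $(\mathcal{F} \cap \mathcal{W}) = I_{\mathrm{proj}}^{\boxslash}$ yields the desired characterization of cofibrations as maps with the LLP against trivial fibrations. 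Applying the small object argument (\autoref{prop5}) to $I_{\mathrm{proj}}$ factors any morphism as a transfinite composition of pushouts of coproducts in $I_{\mathrm{proj}}$ followed by a map with the RLP against $I_{\mathrm{proj}}$, i.e.\ a (pointwise) trivial fibration; by the retract argument the left factor lies in $\mathcal{C} = {}^{\boxslash}(\mathcal{F} \cap \mathcal{W})$. An analogous argument with $J_{\mathrm{proj}}$ produces the other factorization and, combined with a $2$-out-of-$3$ check on pointwise weak equivalences (using that pushouts of generating trivial cofibrations are pointwise weak equivalences since they are pointwise coproducts of pushouts of elements of $J$), shows that the left factor is a trivial cofibration.

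The main obstacle is twofold. First, one must verify that the sources of the elements of $I_{\mathrm{proj}}$ and $J_{\mathrm{proj}}$ are small enough in $[\mathcal{I},\mathscr{C}]$ for the small object argument to apply; this reduces to the smallness of the sources of $I$ and $J$ in $\mathscr{C}$ together with the fact that $\mathrm{ev}_i$ preserves filtered colimits (colimits being pointwise), so smallness transfers through the adjunction $F_i \dashv \mathrm{ev}_i$. Second, and more delicately, one has to show that transfinite compositions of pushouts of coproducts of maps in $J_{\mathrm{proj}}$ are pointwise weak equivalences, so that the left factor of the $(J_{\mathrm{proj}})$-factorization is not merely a projective cofibration but a trivial one. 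This follows by evaluating pointwise: each $\mathrm{ev}_i$ of such a cell complex is a transfinite composition of pushouts of coproducts of maps in $J$ in $\mathscr{C}$, which are trivial cofibrations in $\mathscr{C}$ by \autoref{pullbackpushoutstability} and the closure of trivial cofibrations under transfinite composition (\autoref{prop4}).
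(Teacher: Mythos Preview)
Your sketch is correct and is precisely the standard transfer argument for the projective model structure; the paper itself does not give a proof but simply cites \cite{luriehtt}, Proposition~A.2.8.2, whose argument is exactly the one you outline (build $I_{\mathrm{proj}}$ and $J_{\mathrm{proj}}$ via the left adjoints $F_i$ to evaluation, identify $I_{\mathrm{proj}}^{\boxslash}$ and $J_{\mathrm{proj}}^{\boxslash}$ with pointwise trivial fibrations and fibrations by adjunction, and run the small object argument).
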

\begin{proof}\label{proofprop6}
See \cite{luriehtt} proposition A.2.8.2 .
\end{proof}\bigskip

\begin{prop}\textbf{(the injective model structure)}\label{prop7}
If $\mathscr{C}$ is combinatorial (see definition A.2.6.1 in \cite{luriehtt} for \textit{combinatorial}), there exists the injective model structure on 
$[\mathcal{I}, \mathscr{C}]$ where a natural transformation $\varphi : F\Rightarrow G$ is : \begin{itemize} \item a weak equivalence if $\varphi_{i}:F_{i}\rightarrow G_{i}$ is a weak equivalence in $\mathscr{C}$ for every $i\in \mathcal{I}$. \item a cofibration if $\varphi_{i}:F_{i}\rightarrow G_{i}$ is a cofibration in $\mathscr{C}$ for every $i\in \mathcal{I}$. \item a fibration if $\varphi$ has the RLP with respect to any trivial cofibration.\\
\end{itemize} 
\end{prop}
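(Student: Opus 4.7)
The plan is to invoke Jeff Smith's recognition theorem for combinatorial model categories (see for instance \cite{luriehtt}, Proposition A.2.6.8 and Proposition A.2.8.2), rather than verify Definition \ref{defi3} by hand. This is the standard route, and it passes the burden from "construct factorizations and lifts" to "exhibit a small set of generating cofibrations."

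First I would check that $[\mathcal{I},\mathscr{C}]$ is locally presentable: since $\mathscr{C}$ is combinatorial it is locally presentable, and a functor category from a small category into a locally presentable category is again locally presentable. In particular, small limits and colimits exist and are computed pointwise. Next I would verify the elementary closure properties: the class $\mathcal{W}$ of pointwise weak equivalences inherits the 2-out-of-3 property and closure under retracts from $\mathscr{C}$, and is accessible as a full subcategory of the arrow category because the weak equivalences of $\mathscr{C}$ are (this is the accessibility hypothesis hidden in "combinatorial"). The proposed class of injective cofibrations, being pointwise, is closed under pushouts, transfinite composition, and retracts by Proposition \ref{pullbackpushoutstability} and Proposition \ref{prop4} applied pointwise.

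The main obstacle, and the crucial place where the hypothesis "combinatorial" (rather than merely "cofibrantly generated") is used, is the production of a \emph{set} $I$ of generating cofibrations with $\mathrm{cof}(I)$ equal to the class of pointwise cofibrations. Unlike the projective case, one cannot simply transfer the generators of $\mathscr{C}$ along evaluation functors --- doing so produces the projective cofibrations, which are strictly fewer. Instead, one fixes a regular cardinal $\lambda$ larger than the presentability rank of $\mathscr{C}$ and the cardinality of $\mathcal{I}$, and takes $I$ to be a set of representatives for isomorphism classes of pointwise cofibrations $F\to G$ between $\lambda$-presentable objects of $[\mathcal{I},\mathscr{C}]$. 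The small object argument (Proposition \ref{prop5}) together with standard accessibility arguments in locally presentable categories then yields that every pointwise cofibration is a retract of a transfinite composition of pushouts of elements of $I$, and dually that $I^{\boxslash}$ consists exactly of the pointwise trivial fibrations. Smith's theorem then delivers the model structure and identifies the fibrations as precisely those natural transformations having the right lifting property against all pointwise trivial cofibrations, which is the description in the statement.
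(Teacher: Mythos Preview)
Your proposal is correct and matches the paper's approach: the paper simply cites \cite{luriehtt}, Proposition A.2.8.2, and your sketch is precisely an outline of the argument given there via Smith's recognition theorem. You have in fact gone further than the paper by unpacking the key step (producing a generating set of injective cofibrations from $\lambda$-presentable objects), which the paper leaves entirely to the reference.
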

\begin{proof}\label{proofprop7}
See \cite{luriehtt} proposition A.2.8.2 .
\end{proof}\bigskip

\setchapterpreamble{
\begin{quote}
In this third chapter we make clear what we mean by a model of intensional type theory with a universe. For full details we invite the reader to look through all the references provided. For the syntax and semantics of homotopy type theory \cite{shulman:invdia} is highly useful. 
\end{quote}
}

\chapter{From model categories to type-theoretic fibration categories}
\label{sec:chp1}

\section{Organization}
\label{sec:org}

In section \ref{sec:catstruct} we give what we mean by a model of type theory with dependent sums, dependent products and \Id-types. For this purpose we introduce the definition of a type-theoretic fibration category following Michael Shulman \cite{shulman:invdia}. In the next section \ref{sec:univ}  we recall the notion of a universe in a type-theoretic fibration category. The final section \ref{sec:catunivaxiom} underlines the categorical translation of the univalence axiom.

\section{The categorical structure needed to model intensional type theory}
\label{sec:catstruct}

In \cite{shulman:invdia}, under the label type-theoretic fibration category,  Michael Shulman reformulated the categorical structure needed to model intensional type theory (with dependent sums, dependent products and identity types) making its close connection to homotopy theory clearer. We recall here a few definitions and facts.\bigskip

\begin{defi}\label{def:ttfc}(Shulman \cite{shulman:invdia} def. 2.1).
  A \textbf{type-theoretic fibration category} is a category $\mathscr{C}$ with the following structure.
  \begin{enumerate}[leftmargin=*,label=(\arabic*)]
  \item A terminal object $\b1$.\label{item:cat1}
  \item A subcategory $\mathcal{F}\subset\mathscr{C}$ containing all the objects, all the isomorphisms, and all the morphisms with codomain $\b1$.\label{item:cat2a}
    \begin{itemize}[leftmargin=*]
    \item A morphism in $\mathcal{F}$ is called a \textbf{fibration}. We write fibrations as $A\fib B$.
    \item A morphism $i$ is called an \textbf{acyclic cofibration} if it has the left lifting property with respect to all fibrations (see \ref{defi1}).\\ We write acyclic cofibrations as $A \acof B$.
    \end{itemize}
  \item All pullbacks of fibrations 
    exist and are fibrations.\label{item:cat3}
  \item For every fibration $g: A\fib B$, 
    the pullback functor $g^*: \mathscr{C}/B \to \mathscr{C}/A$ has a partial right adjoint $\Pi_g$, defined at all fibrations over $A$, and whose values are fibrations over $B$.
    This implies that acyclic cofibrations are stable under pullback along $g$.\label{item:cat4}
  \item Every morphism factors as an acyclic cofibration followed by a fibration.\label{item:cat7}
  \item In the following commutative diagram:
    \[\vcenter{\xymatrix@C=1,5cm{
        X\ar[r]\ar[dd] \pullbackcorner &
        Y\ar[r]\ar[dd] \pullbackcorner &
        Z\ar[dd]\\\\
        A\ar@{ >->}[r]^{\sim} \ar@{->>}@(dr,dl)[rr] &
        B\ar@{->>}[r] &
        C\\
        & &
      }}\]
    if $B\fib C$ and $A\fib C$ are fibrations, $A\acof B$ is an acyclic cofibration, and both squares are pullbacks (hence $Y\to Z$ and $X\to Z$ are fibrations by~\ref{item:cat3}), then $X\to Y$ is also an acyclic cofibration.\label{item:cat8}
  \end{enumerate}
\end{defi}\bigskip

\begin{rmk}
In a type-theoretic fibration category the pair of classes of morphisms $(\mathcal{C}\cap \mathcal{W},\mathcal{F})$, where $\mathcal{C}\cap\mathcal{W}$ denotes the class of acyclic cofibrations namely $^{\boxslash}\mathcal{F}$, does not need to form a weak factorization system. Indeed, a morphism with the right lifting property with respect to the acyclic cofibrations needs not to be a fibration but by (5) together with the retract argument (see Lemma 1.1.9 in \cite{hovey99}) only a retract of a fibration. This is a difference from the structure involved in a model category that one can emphasize.\\
We could call such a pair $(\mathcal{C}\cap\mathcal{W},\mathcal{F})$ in a type-theoretic fibration category a \textit{very weak factorization system}.  
\end{rmk}\bigskip

The next definition makes explicit the link between type-theoretic fibration categories and type-theoretic model categories for the reader more familiar with homotopy theory. But note that we do not  really need the class of cofibrations nor the class of weak equivalences to model type theory (it is clear from the definition of a type-theoretic fibration category). \bigskip

The following definition and proposition are inspired by \cite{shulman:invdia}, definition 2.12 and proposition 2.13, but the definition of a type-theoretic model category is slightly simplified.

\begin{defi}\label{def:ttmc}
  A \textbf{type-theoretic model category} is a model category $\mathscr{C}$ with the following additional properties.
  \begin{enumerate}
  \item Acyclic cofibrations between fibrant objects are preserved by pullback along any fibration between fibrant objects.
  \item Pullback $g^*$ along any fibration between fibrant objects
    has a right adjoint $\Pi_g$.\label{item:m3}
  \end{enumerate}
\end{defi}\bigskip

The following proposition clarifies the link between type-theoretic fibration categories and type-theoretic model categories.\bigskip

\begin{prop}(Shulman \cite{shulman:invdia}, prop. 2.13)\label{shul2.13}.
  If $\mathscr{C}$ is a type-theoretic model category, then its full subcategory $\mathscr{C}\f$ of fibrant objects (equipped with the subcategory given by fibrations) is a type-theoretic fibration category.
\end{prop}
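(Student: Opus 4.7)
The strategy is to verify each of the six conditions of \autoref{def:ttfc} for the subcategory $\mathscr{C}\f$ equipped with the fibrations between fibrant objects. Conditions (1)--(5) are routine. For (1), the map $\b1\to\b1$ is an isomorphism, hence a fibration, so $\b1$ is fibrant. For (2), the fibrations between fibrant objects form a subcategory containing identities, isomorphisms, and every $X\to\b1$ with $X$ fibrant (by definition of fibrant). For (3), pullbacks of fibrations exist and are fibrations in any model category, and pulling back a fibration to a fibrant base lands in $\mathscr{C}\f$ since the composite of two fibrations is a fibration. Condition (4) is condition~(2) of \autoref{def:ttmc}; the stability of acyclic cofibrations under pullback along $g$ then follows from the adjointness argument already made in \ref{defi5}: $g^{*}i$ has LLP against a fibration $p$ over $A$ iff $i$ has LLP against $\Pi_{g}p$, which is a fibration over $B$. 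For (5), factor $f:X\to Y$ between fibrant objects in the ambient model structure as $X\acof D\fib Y$; the middle object $D$ is fibrant as the domain of a fibration into the fibrant $Y$, so the factorization already lives in $\mathscr{C}\f$.

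The substantive axiom is (6). First, the object $X$ makes sense inside $\mathscr{C}\f$ not as the pullback of the acyclic cofibration $A\acof B$ along $Y\to B$ (which axiom~(3) does not authorize, $A\to B$ not being a fibration) but via the pasting identification $X\cong A\times_{C}Z$, the pullback of the fibration $A\fib C$ along $Z\to C$. One must then show that $X\to Y$ has the LLP against every fibration $p:E\fib F$ between fibrant objects. The key device is a fibrant replacement of $Z\to C$: factor it as $Z\acof\widetilde{Z}\fib C$ and set $\widetilde{X}=A\times_{C}\widetilde{Z}$ and $\widetilde{Y}=B\times_{C}\widetilde{Z}$. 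Then $\widetilde{Y}\to B$ is a fibration between fibrant objects, being the pullback of the fibration $\widetilde{Z}\fib C$ along $B\fib C$, so condition~(1) of \autoref{def:ttmc} applied to the acyclic cofibration $A\acof B$ yields an acyclic cofibration $\widetilde{X}\acof\widetilde{Y}$. A second application of condition~(1), pulling $Z\acof\widetilde{Z}$ back along the fibrations $\widetilde{X}\fib\widetilde{Z}$ and $\widetilde{Y}\fib\widetilde{Z}$, shows that the canonical comparisons $X\acof\widetilde{X}$ and $Y\acof\widetilde{Y}$ are themselves acyclic cofibrations.

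Given a lifting square with top $u:X\to E$, bottom $v:Y\to F$, and right leg $p$, I would produce the diagonal in three stages. First, extend $v$ to $\widetilde{v}:\widetilde{Y}\to F$ using the LLP of $Y\acof\widetilde{Y}$ against the fibration $F\fib\b1$. Second, extend $u$ to $\widetilde{u}:\widetilde{X}\to E$ using the LLP of $X\acof\widetilde{X}$ against $p$, with compatible lower map $\widetilde{v}$ restricted along $\widetilde{X}\to\widetilde{Y}$. Third, apply the LLP of $\widetilde{X}\acof\widetilde{Y}$ against $p$ to the square with top $\widetilde{u}$ and bottom $\widetilde{v}$, producing a diagonal $\widetilde{w}:\widetilde{Y}\to E$. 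The composite $\widetilde{w}\circ(Y\to\widetilde{Y})$ is the desired lift $Y\to E$, as a short diagram chase confirms.

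The hard part is precisely condition~(6): identifying the correct fibrant replacement of $Z\to C$ and orchestrating the three-stage lifting argument. Everything else amounts to formal manipulation of fibrations, pullbacks, and the adjunction $g^{*}\dashv\Pi_{g}$ inherited from the type-theoretic model structure.
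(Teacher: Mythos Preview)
The paper does not supply its own proof of this proposition; it merely states the result and cites Shulman~\cite{shulman:invdia}, prop.~2.13. So there is no in-paper argument to compare against.

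Your argument is essentially correct, and in particular the three-stage lifting for condition~(6) is exactly the right idea and is carried out cleanly. One small wrinkle: in your treatment of condition~(4) you assert that ``$\Pi_g p$ is a fibration over $B$'' and then use adjointness to derive stability of acyclic cofibrations under $g^*$. But the fact that $\Pi_g$ sends fibrations to fibrations is precisely what needs to be shown as part of condition~(4); it is not given for free by condition~(2) of \autoref{def:ttmc}. The correct flow is the reverse of what you wrote: condition~(1) of \autoref{def:ttmc} tells you directly that $g^*$ preserves acyclic cofibrations (between fibrant objects, along a fibration between fibrant objects), and then adjointness yields that $\Pi_g$ preserves fibrations. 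You have all the pieces, just in the wrong order; invoke condition~(1) first. The reference to \ref{defi5} (weak factorization systems) also appears to be a slip.

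It may be worth noting, for comparison, that when the paper later verifies condition~(6) in the specific case of $\GGpd$ (Lemma~\ref{condition6}), it uses a different device: the special cancellation property of Lemma~\ref{3for2} (if $g\circ f$ and $g$ are trivial cofibrations then so is $f$), which holds in that concrete setting but is not available in a general type-theoretic model category. Your three-stage lifting argument is the right general approach.
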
\bigskip

\section{Universes}
\label{sec:univ}

Shulman also encapsulated in \cite{shulman:invdia} the categorical structure for modeling universes such that the coercion \el from terms of type Type to actual types respects type-theoretic operations up to isomorphism (they are not necessarily equal definitionally) as shown in fig. \ref{fig:univopbeta}. For the various options regarding the universes in type theory the reader can look at \url{http://ncatlab.org/homotopytypetheory/show/universe#Tarski}.

\begin{figure}[!h]
  \centering
  \begin{align}
    \el(\Sigma(A,B)) &\cong \sum_{x:\el(A)} \el(B(x)) \\
    \el(\Pi(A,B)) &\cong \prod_{x:\el(A)} \el(B(x)) \\
    \el(\Id(A,x,y)) &\cong (x\id y).
  \end{align}
\caption{Coercion isomorphisms for the universe type}
\label{fig:univopbeta}
\end{figure}

\begin{defi}\label{def:univ}(Shulman \cite{shulman:invdia}, def. 6.12).
  A fibration $p: \widetilde{U}\fib U$ in a type-theoretic fibration category $\mathscr{C}$ is a \textbf{universe} if the following hold, where ``small fibration'' means ``a pullback of $p$''.
  \begin{enumerate}
  \item Small fibrations are closed under composition and contain the identities.\label{item:u1}
  \item If $f: B\fib A$ and $g: A\fib C$ are small fibrations, so is $\Pi_g f \fib C$.\label{item:u2}
  \item If $A\fib C$ and $B\fib C$ are small fibrations, then any morphism $f: A\to B$ over $C$ factors as an acyclic cofibration followed by a small fibration.\label{item:u3}
  \end{enumerate}
\end{defi}

\section{The categorical univalence axiom}
\label{sec:catunivaxiom}

Following Shulman (\cite{shulman:invdia}, part 7), one says that a universe $p: \widetilde{U}\fib U$ in a type-theoretic fibration category is univalent if the morphism from $U$ to $E$ over $U\times U$, that maps a small type to its identity equivalence, is a (right) homotopy equivalence, where $E$ denotes the domain of the fibration over $U\times U$ interpreting the dependent type  $$(A:\text{Type}),(B:\text{Type})\pr\text{Equiv}(A,B)\ty.$$



\setchapterpreamble{
\begin{quote}
We recall that $\G = \mathbb{Z}/2\mathbb{Z}$.
In this chapter we give a model of intensional type theory with universes in the category $\GGpd$ of groupoids equipped with an involution. Our universe is the natural universe that lifts, with respect to the projective model structure on presheaves, the \textit{univalent} universe in the category of groupoids consisting of small discrete groupoids. But surprisingly our universe is not univalent. Moreover, the projective model structure also allows to break function extensionality (a weak form of the \textit{Univalence Axiom}). 
\end{quote}
}

\chapter{On the inadequacy of the projective model structure for univalence}
\label{sec:chp4}

\section{Organization}
\label{sec:organization}

To prove the inadequacy of the projective model structure with respect to univalence we consider the presheaf category $\GGpd$.  Having first endowed the category of groupoids $\Gpd$ with its natural model structure of section \ref{sec:basics}, we endow the presheaf category $\GGpd$ with its projective model structure. 
In section \ref{sec:tpmsme} we make the projective trivial cofibrations in $\GGpd$ explicit. This way we are able to equip in section \ref{sec:GGpdttfc} the category 
$\GGpd$ with the structure of a type-theoretic fibration category. In section \ref{sec:tcou} we equip this type-theoretic fibration category with one universe $U_{\text{Gpd}_{\Delta}(V_\kappa)}$ for each universe $\text{Gpd}_{\Delta}(V_\kappa)$ in $\Gpd$ consisting of $\kappa$-small discrete groupoids and isomorphisms between them for some inaccessible cardinal $\kappa$. This new universe $U_{\text{Gpd}_{\Delta}(V_\kappa)}$ classifies projective fibrations which are levelwise discrete fibrations of groupoids with $\kappa$-small fibers. For this reason our universe $U_{\text{Gpd}_{\Delta}(V_\kappa)}$ is the natural lift, with respect to projective fibrations, of the universe $\text{Gpd}_{\Delta}(V_\kappa)$. Despite the fact that the universe $\text{Gpd}_{\Delta}(V_{\kappa})$ in $\Gpd$ is univalent we surprisingly prove in section \ref{sec:tfou} that the resulting universe $U_{\text{Gpd}_{\Delta}(V_\kappa)}$ in $\GGpd$ is not univalent. This last proof relies on the characterization of projective homotopy equivalences given in section \ref{sec:heGGpd}.\\
Last, recall that function extensionality holds in the internal language of the type-theoretic fibration category $\Gpd$. But we prove in section \ref{sec:tffe} that function extensionality have been broken in the internal language of our type-theoretic fibration category $\GGpd$.

\section{The projective model structure made explicit}
\label{sec:tpmsme}

Since the model structure on $\Gpd$ (see \ref{eg3}) is cofibrantly generated and $\G$ is a small category, there exists the projective model structure on $\GGpd$ (see \ref{sec:msofc}). By a slight abuse of notation we denote (this notational convention will only hold in this chapter) both the underlying categories and the corresponding model categories by $\Gpd$ and $\GGpd$ respectively. Hereinafter by a levelwise weak equivalence (\textit{resp.} a levelwise fibration) one means a map whose underlying map of groupoids is a weak equivalence (\textit{resp.} a fibration) in $\Gpd$.\\
Recall that one can describe this projective model structure by :\\
\begin{itemize} 
\item Weak equivalences are the levelwise weak equivalences.\\
\item Fibrations are the levelwise fibrations.\\
\item Cofibrations are those maps with the left lifting property with respect to trivial fibrations.\\ 
\end{itemize}

\begin{rmk}\label{rmk1}
Note that in $\GGpd$ all objects are fibrant since all objects in $\Gpd$  are fibrant and projective fibrations are levelwise. So $(\GGpd)\f$ is the same as $\GGpd$, where $(\GGpd)\f$ is the full subcategory of fibrant objects of $\GGpd$.\\
\end{rmk}

\begin{rmk}\label{prop1} 
In $\GGpd$ a cofibrant object has no fixed point. In particular, $\b1!$, the terminal object, is not cofibrant.\\
Indeed, let A be an object of $\GGpd$ with a fixed point and consider the following lifting problem :\\
$$\xymatrix{\bold{0}!\ar[rr]\ar[dd]\commutatif&&\cI\ar@{->>}[dd]^*[@]{\hbox{$\sim$}}\\\\\text{A}\ar[rr]&&\b1!}$$\\
One can easily prove that the unique map from $\cI$ to the terminal object $\b1!$ is surjective on objects and fully faithful and so a projective trivial fibration. But a diagonal filler does not exist since it should map the fixed point in A to a fixed point in $\cI$ and there is no such fixed point. Thus A is not cofibrant.
\end{rmk}\bigskip

\begin{rmk}\label{rmk2} 
The previous remark shows that being a levelwise cofibration is not sufficient to be a projective cofibration.\\
\end{rmk}

\begin{rmk}\label{prop2} 
In $\GGpd$ being a levelwise trivial cofibration is not sufficient to be a projective trivial cofibration.\\
Indeed, consider the following lifting problem :\\
$$\xymatrix{\underset{~}{\cI}\ar@{=}[rr]\ar@{^{(}->}[dd]\commutatif&&\cI\ar@{->>}[dd]\\\\\tr\ar[rr]&&\b1!}$$\\
where the unique map from $\cI\text{ to }\b1!$ is a projective fibration since every object is fibrant. It is clear that $\cI\hookrightarrow \tr$ (see the \textbf{Notations} part at the end of the introduction \ref{intro} for the definition of $\tr$) is a trivial cofibration of groupoids when one forgets the involutions since this is the inclusion of a full subcategory equivalent to the target category. Again a diagonal filler does not exist since there is no fixed point in $\cI$.
\end{rmk}\bigskip

Knowing the generating trivial cofibrations and the generating cofibrations in $\Gpd$, by looking at the construction of the projective model structure one gets the generating projective trivial cofibrations and the generating projective cofibrations in $\GGpd$ (see \cite{luriehtt}, proof of proposition A.2.8.2). A set of generating projective trivial cofibrations is given by the following inclusion : $$S(i): S(\bold{1})\hookrightarrow S(\I)$$ where we recall that $S(\bold{1})$ (\textit{resp} $S(\I)$) is $\bold{1}\textstyle\coprod\bold{1}$ (\textit{resp} $\I\textstyle\coprod\I$) equipped with the involution which swaps the two components of the coproduct and $S(i)$ is $i\textstyle\coprod i$ with $i$ the generating trivial cofibration of groupoids (see \ref{eg3}).

\begin{prop}\label{prop3}
Let $f:A\rightarrow B$ be a morphism in $\GGpd$. The following are equivalent : 
\begin{enumerate}[label=(\roman*)]
\item $f$ is a trivial cofibration.
\item $f$ is a levelwise trivial cofibration and $f$ induces a bijection between the set of fixed points of $A$ and the set of fixed points of $B$.
\item $f$ is a levelwise trivial cofibration and $f$ induces an isomorphism between $A^{\G}$ and $B^{\G}$ the subgroupoids of fixed points and fixed morphisms.
\item $f$ is a levelwise trivial cofibration and $f$ induces an isomorphism between $A\f$ and $B\f$ the full subgroupoids of fixed points. 
\end{enumerate}
\end{prop}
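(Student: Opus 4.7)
The equivalences (ii) $\Leftrightarrow$ (iii) $\Leftrightarrow$ (iv) I would treat first, as they are essentially formal: a levelwise trivial cofibration is fully faithful on underlying groupoids, and by equivariance of $f$ a morphism $m$ between fixed objects of $A$ is fixed iff $f(m)$ is; combined with a bijection of fixed objects, this produces isomorphisms $A^{\G}\cong B^{\G}$ and $A\f \cong B\f$, while conversely either (iii) or (iv) trivially recovers the fixed-object bijection of (ii).

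For (i) $\Rightarrow$ (ii), the plan is to use that the class of projective trivial cofibrations is the smallest class of morphisms of $\GGpd$ containing the single generator $S(i):S(\b1)\hookrightarrow S(\I)$ and stable under pushouts, transfinite compositions, coproducts, and retracts. I would verify that $S(i)$ itself satisfies (ii) (it is a coproduct of two copies of $i$ from example \ref{eg3}, hence levelwise a trivial cofibration, and both $S(\b1)$ and $S(\I)$ are fixed-point free) and that the class defined by (ii) enjoys the same stability. The underlying functor $\underline{(-)}:\GGpd\to\Gpd$ preserves all colimits (they are computed pointwise in the presheaf category $\GGpd$), and trivial cofibrations of $\Gpd$ are already stable under these operations. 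For the fixed-point bijection, the key observation is an orbit analysis: in a pushout $A\sqcup_C D$ with $\iota:C\to D$ in (ii), every object of the pushout not coming from $A$ belongs to a $\G$-orbit of size $2$ inherited from $D\setminus\iota(C)$, hence is not a fixed point; the same stagewise argument handles transfinite compositions, and retracts of bijections are bijections.

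For (ii) $\Rightarrow$ (i), $f$ is automatically a projective weak equivalence, so I would exhibit $f$ as a projective cofibration by solving an arbitrary lifting problem against a projective trivial fibration $p:X\to Y$ (for which $\underline{p}$ is surjective on objects and fully faithful). The plan is to construct the equivariant lift $k:B\to X$ orbit by orbit. On $f(A)$, set $k\circ f := g$; equivariance at fixed objects is automatic, since every fixed object of $B$ lies in $f(A)$ by hypothesis. For each $\G$-orbit of objects of $B$ disjoint from $f(A)$---necessarily of size $2$, because equivariance together with injectivity on objects forces $b\in f(A)\Leftrightarrow \beta(b)\in f(A)$---I pick a representative $b$ and any $k(b)\in\underline{p}^{-1}(h(b))$ using surjectivity of $\underline{p}$, and set $k(\beta(b))$ to be the image of $k(b)$ under the involution of $X$; this lies over $h(\beta(b))$ by equivariance of $h$ and $p$. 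Finally I would extend $k$ to morphisms via the unique lifts provided by full faithfulness of $\underline{p}$, with equivariance on morphisms following from the uniqueness of such lifts between the prescribed endpoints.

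I expect the main obstacle to be the fixed-point bijection half of (i) $\Rightarrow$ (ii): verifying that pushouts and transfinite compositions in $\GGpd$ create no spurious fixed points requires a careful description of $\G$-orbits in a pushout of groupoids with involution, and is the place where the non-colimit-preservation of $(-)^{\G}$ must be circumvented by direct analysis.
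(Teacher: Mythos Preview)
Your proposal is correct, and for the equivalences among (ii), (iii), (iv) and for (i)$\Rightarrow$(ii) it matches the paper's approach essentially verbatim: the paper also reduces the fixed-point bijection to checking it on the generator $S(i)$ and verifying closure under pushout, transfinite composition, and retract.

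The genuine difference is in closing the cycle. The paper proves (iv)$\Rightarrow$(i) by lifting $f$ directly against an arbitrary \emph{fibration} $g$: it runs a Zorn's-lemma argument on full sub-$\G$-groupoids $B'\subseteq B$ containing $A$ together with partial lifts, and at the inductive step uses essential surjectivity of $f$ to find an isomorphism from an object of $A$ to a new object $x\notin B_{\max}$, then lifts that isomorphism along the isofibration $g$ via a diagonal filler for $S(i)$. You instead prove (ii)$\Rightarrow$(i) by observing that $f$ is already a weak equivalence and then exhibiting it as a \emph{cofibration}, i.e.\ lifting against a trivial fibration $p$; since $\underline{p}$ is surjective on objects and fully faithful, you can choose lifts of new objects freely, orbit by orbit, and then determine all morphisms uniquely. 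Your route is lighter---no Zorn's lemma, no need to invoke essential surjectivity of $f$ or to handle lifts of isomorphisms---at the cost of using the characterization $\mathcal{C}={}^{\boxslash}(\mathcal{F}\cap\mathcal{W})$ rather than $\mathcal{C}\cap\mathcal{W}={}^{\boxslash}\mathcal{F}$. The paper's route, while heavier here, is the template it reuses later (e.g.\ in the characterization of homotopy equivalences, Theorem~\ref{thm5}), which may explain the choice.
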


\begin{proof}\label{proofprop3}
We prove successively $(i)\Rightarrow (ii)$, $(ii)\Rightarrow (iii)$, $(iii)\Rightarrow (iv)$ and last $(iv)\Rightarrow (i)$.
\begin{itemize}
\item $(i)\Rightarrow (ii)$: assume that $f$ is a trivial cofibration. It is well-known that $f$ is a levelwise trivial cofibration (see proposition 11.6.2 in \cite{hir99}). Moreover note that if $x$ is a fixed point of $A$ one has $f(x) = f(\al(x)) = \be(f(x))$, where we recall that $\al$ (\textit{resp.} $\be$) denotes the involution on $A$ (\textit{resp.} $B$), hence $f(x)$ is a fixed point of $B$. We also know that $f$ is injective on objects as a cofibration between groupoids. We need to prove that any fixed point in $B$ is the image of a fixed point in $A$. To achieve this the reader can check this fact for the generating trivial cofibration $S(i)$ and he can check the stability of this fact under pushout, transfinite composition and retraction. One concludes that $f$ induces a bijection between the fixed points in $A$ and the fixed points in $B$.
\item $(ii)\Rightarrow (iii)$: it is a straighforward consequence of $f$ being fully faithful.
\item $(iii)\Rightarrow (iv)$: \textit{idem}.
\item $(iv)\Rightarrow (i)$: consider the following lifting problem,
$$\xymatrix@=1,5cm{A\ar[r]^h\ar[d]_f & C\ar@{->>}[d]^g \\
B\ar[r]_k & D\,,}$$
where $g$ is a fibration. We want to find a diagonal filler. Since $f$ is a levelwise trivial cofibration we may assume it is the inclusion of a full subgroupoid of $B$ and this inclusion is an equivalence.\\
We define a set $S$ as the set of triples $(B',j':B'\rightarrow C)$ where $B'$ is a full subgroupoid of $B$ which contains $A$, and $B'$ is stable under the involution $\be$ on $B$ (in other words the restriction of $\be$ to $B'$ is an involution on $B'$), last the morphism $j'$ in $\GGpd$ is a diagonal filler for the following diagram,
$$\xymatrix@=1,5cm{A\ar[rr]^h\ar@{^{(}->}[d] & & C\ar@{->>}[d]^g \\
B'\ar@{^{(}->}[r]\ar[rru]_{j'} &  B \ar[r]_k & D\,.}$$
One provides the set $S$ with the structure of a preordered set as follows:
$$(B',j')\leqslant (B'',j'')\qquad\textit{iff}\qquad B'\subseteq B''\quad\text{and}\quad\xymatrix{B'\ar@{^{(}->}[rr]\ar[rd]_{j'} & & B''\ar[ld]^{j''}\\ & C & }\quad\text{commutes}\,.$$
Note that $S$ is a non-empty set. Indeed, $(A,h)$ is an element of $S$.\\
Let $\mathfrak{C}$ be a non-empty chain of $S$. Consider $\displaystyle{\bigcup_{(B',j')\in \mathfrak{C}}B'}$,  \textit{ie} the full subgroupoid of $B$ whose set of objects is $\displaystyle{\bigcup_{(B',j')\in\mathfrak{C}}\text{Ob}(B')}$. One defines a map $j$ from $\displaystyle{\bigcup_{(B',j')\in\mathfrak{C}}B'}$ to $C$ as $\displaystyle{\bigcup_{(B',j')\in\mathfrak{C}}j'}$. Clearly, $(\displaystyle{\bigcup_{(B',j')\in \mathfrak{C}}B'},j)$ is an element of $S$ and one has $$(B',j')\leqslant (\displaystyle{\bigcup_{(B',j')\in \mathfrak{C}}B'},j)$$ for  every $(B',j')\in \mathfrak{C}$.\\
By Zorn's lemma we get a maximal element in $S$ denoted $(B_{\text{max}},j_{\text{max}})$. It suffices now to prove that $B_{\text{max}}$ is $B$. Assume this is not the case, then there exists an object $x$ in $B$ such that $x$ is not an object of $B_{\text{max}}$. Moreover, since $f$ induces an isomorphism between $A\f$ and $B\f$ and $B_{\text{max}}$ contains $A$, we know that $x$ is not a fixed point.\\
Since $f$ is a levelwise weak equivalence, by the essential surjectivity there exists $y$ in $A$ (hence $y$ is an element of $B_{\text{max}}$) and an isomorphism $\varphi$ in $B$ from $f(y)$, which is equal to $y$, to $x$. Therefore we define $B'$ as the following pushout in $\GGpd$,
$$\xymatrix@=1,5cm{S(\b1)\ar[r]^l\ar@{^{(}->}[d]_{S(i)} & B_{\text{max}}\ar[d] \\
S(\I)\ar[r] & B'\pushoutcorner\,,}$$
where $l(0)$ is $y$ and $l(1)$ is $b(y)$. The groupoid $B'$ is (isomorphic to) the full subgroupoid of $B$ whose set of objects is $\text{Ob}(B_{\text{max}})\bigcup\lbrace x, b(x)\rbrace$. Thanks to the universal property of the pushout one defines a map $j'$ from $B'$ to $C$ as follows. First, we define a map $m$ as a diagonal filler in the following lifting problem,
$$\xymatrix@=1,5cm{S(\b1)\ar[r]^p\ar@{^{(}->}[d]_{S(i)} & C\ar@{->>}[d]_g \\
S(\I)\ar[r]_q\ar@{-->}[ru]_m & D\,,}$$
where $p(0)$ is $h(y)$ and $p(1)$ is $\gm(h(y))$ and $q(\phi)$ is $k(\varphi)$.\\ Then we get the map $j'$,
$$\xymatrix@=1,5cm{S(\b1)\ar[r]^l\ar@{^{(}->}[d]_{S(i)} & B_{\text{max}}\ar[d]\ar@/^/[rdd]^{j_{\text{max}}} & \\
S(\I)\ar[r]\ar@/_/[rrd]_m & B'\ar@{-->}[rd]^{j'}\pushoutcorner & \\
 & & C\,.}$$
One has that $(B',j')$ belongs to the set $S$ and $(B_{\text{max}},j_{\text{max}})<(B',j')$ which contradicts the maximality of $(B_{\text{max}},j_{\text{max}})$. So $B_{\text{max}}$ is $B$ and $j_{\text{max}}$ is the diagonal filler we are looking for, hence we have exhibited $f$ as a trivial cofibration.
\end{itemize}
\end{proof}\bigskip

\begin{cor}\label{cor1}
Let $G$ be a groupoid equipped with an involution and $G'$ a subgroupoid of $G$ stable under this involution such that $G'\f = G\f$ and the inclusion map $\iota$ from $G'$ to $G$ is an equivalence of groupoids. Then the map $\iota$ is a trivial cofibration.
\end{cor}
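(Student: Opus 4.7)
The plan is to reduce this corollary directly to the characterization of trivial cofibrations given by the preceding Proposition, specifically to the implication $(iv) \Rightarrow (i)$.

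First I would verify that $\iota$ is a levelwise trivial cofibration. Recall that the trivial cofibrations in $\Gpd$ are exactly the functors that are both equivalences of groupoids and injective on objects. By assumption $\iota$ is an equivalence of groupoids, and since $G'$ is a \emph{subgroupoid} of $G$ the inclusion $\iota$ is injective on objects (in fact faithful too). Therefore the underlying functor of $\iota$ is a trivial cofibration of groupoids, which is precisely what it means for $\iota$ to be a levelwise trivial cofibration in $\GGpd$.

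Next I would address the fixed-point condition. Condition $(iv)$ of Proposition~\ref{prop3} asks that $\iota$ induce an isomorphism between the full subgroupoids of fixed points $G'\f$ and $G\f$. But by hypothesis we have the equality $G'\f = G\f$, and the map induced by $\iota$ on fixed points is just the inclusion $G'\f \hookrightarrow G\f$, which under this equality is the identity functor and in particular an isomorphism.

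Thus both clauses of condition $(iv)$ of Proposition~\ref{prop3} are satisfied, and the implication $(iv) \Rightarrow (i)$ proved there allows us to conclude that $\iota$ is a trivial cofibration. The main (and only) step is an appeal to the already-established characterization; there is no substantive obstacle beyond checking these two items.
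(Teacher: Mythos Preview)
Your proof is correct and follows essentially the same approach as the paper: the paper's own proof simply says ``Straightforward with \ref{prop3} since $G'\f = G\f$ and $\iota$ is a levelwise trivial cofibration,'' which is exactly the verification you have spelled out in more detail.
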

\begin{proof}
Straighforward with \ref{prop3} since $G'\f = G\f$ and $\iota$ is a levelwise trivial cofibration.
\end{proof}

\section{$\GGpd$ as a type-theoretic fibration category}
\label{sec:GGpdttfc}

Equipped with the proposition \ref{prop3} one can manage to prove that $\GGpd$, equipped with the subcategory given by projective fibrations, is a type-theoretic fibration category (see def \ref{def:ttfc}).\bigskip

\begin{lem}\label{rightproper}
In the natural model structure on $\Gpd$ trivial cofibrations are stable by pullback along any fibration.
\end{lem}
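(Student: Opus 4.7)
The plan is to unpack the natural model structure: a trivial cofibration in $\Gpd$ is precisely a functor that is injective on objects and an equivalence of groupoids (equivalently, the inclusion of an equivalent full subgroupoid). So given a pullback square
$$\xymatrix@=1.2cm{X\times_Y Z\ar[r]\ar[d]_{p^*f} & X\ar@{ >->}[d]^{f}_{\sim}\\ Z\ar@{->>}[r]_{p} & Y}$$
with $f$ a trivial cofibration and $p$ an isofibration, I would check separately that $p^*f$ is (i) injective on objects (hence a cofibration) and (ii) an equivalence of groupoids, which together give the desired trivial cofibration.

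For (i), an object of $X\times_Y Z$ is a pair $(x,z)$ with $f(x)=p(z)$, and $p^*f$ sends it to $z$. Two pairs $(x,z)$ and $(x',z)$ with the same image force $f(x)=p(z)=f(x')$, so $x=x'$ by injectivity of $f$ on objects. For (ii), fully faithfulness is immediate: a morphism $(x,z)\to(x',z')$ in the pullback is a pair $(\alpha,\beta)$ with $f(\alpha)=p(\beta)$, and since $f$ is fully faithful there is a unique $\alpha$ for each $\beta:z\to z'$ whose image satisfies $f(\alpha)=p(\beta)$.

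The main (and only slightly subtle) point is essential surjectivity of $p^*f$, which is where the isofibration hypothesis on $p$ is used. Given $z\in Z$, set $y:=p(z)$. Since $f$ is essentially surjective, there is $x\in X$ and an isomorphism $\varphi:f(x)\to y$ in $Y$. Consider $\varphi^{-1}:y\to f(x)$, whose domain is $y=p(z)$; by the isofibration property of $p$ we can lift it, as in \ref{eg3}, to an isomorphism $\psi:z\to z'$ in $Z$ with $p(\psi)=\varphi^{-1}$. Then $p(z')=f(x)$, so $(x,z')$ is an object of $X\times_Y Z$, and $\psi^{-1}:z'\to z$ exhibits $z$ as isomorphic to $p^*f(x,z')$ in $Z$. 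This establishes essential surjectivity and completes the proof. I expect no hard obstacle: the only nontrivial ingredient is the lifting of isomorphisms supplied by the definition of isofibration.
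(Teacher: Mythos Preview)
Your proof is correct and follows essentially the same approach as the paper's own proof: both verify injectivity on objects directly from that of $f$, use full faithfulness of $f$ to deduce full faithfulness of the pulled-back map, and establish essential surjectivity by lifting an isomorphism along the isofibration. The only cosmetic difference is that the paper treats essential surjectivity before full faithfulness.
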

\begin{proof}
Let $g$ be a fibration from $A$ to $B$ and $f$ a trivial cofibration from $C$ to $B$. Consider the pullback of $f$ along $g$ denoted $g^{*}f$,
$$\xymatrix@=1,5cm{A\times_{B}C\pullbackcorner \ar[r]\ar[d]_{g^{*}f} & C\ar@{ >->}[d]^{\rotatebox{90}{$\sim$}}_f\\
A\ar@{->>}[r]_g & B\,.}$$
First, $g^{*}f$ is an injective-on-objects functor. Indeed, let $(x,y)$ and $(x',y')$ be two objects of $A\times_B C$ with $x = x'$. In this case on has
$$ f(y) = g(x) = g(x') = f(y')$$
hence $f(y) = f(y')$ and by the injectivity on objects of $f$, one concludes that $y = y'$.\\
Second, we prove that $g^{*}f$ is a weak equivalence, namely an equivalence of groupoids. \\
The functor $g^{*}f$ is essentially surjective. Indeed, let $y$ be any object of $A$, then $g(y)$ is an object of $B$ hence there exist $x$ in $C$ and an isomorphism $\phi$ in $B$ from $f(x)$ to $g(y)$. Since $g$ is a fibration there exists a lift in $A$, denoted $\widetilde{\phi^{-1}}$, of $\phi^{-1}$ at $y$. Let us denote by $z$ the codomain of this lift. One has $g(z) = f(x)$, hence $(z,x)$ is an element of $A\times_B C$ and $\widetilde{\phi^{-1}}$ is an isomorphism in $A$ from $g^{*}f(z,x) = z$ to $y$.\\
We now prove that $g^{*}f$ is a fully faithful functor. Let $(x,y)$, $(x',y')$ be two elements in $A\times_B C$. We need to prove that the induced map by $g^{*}f$ from the homset $A\times_B C((x,y),(x',y'))$ to the homset $A(x,x')$ that maps a morphism $(\phi,\psi)$ to $\phi$ is bijective. Let us prove it is injective. Let $(\phi,\psi)$,$(\phi',\psi')$ be two elements in this first homset from $(x,y)$ to $(x',y')$ with $\phi = \phi'$. The map induced by $f$ from $C(y,y')$ to $B(f(y),f(y'))$ is in particular injective since $f$ is fully faithful. So one concludes from
$$f(\psi) = g(\phi) = g(\phi') = f(\psi')$$
the equality $\psi = \psi'$. We now prove the surjectivity of the map under consideration. Let $\phi$ be an element in $A(x,x')$ and consider $g(\phi)$ which is an element of $B(g(x),g(x')) = B(f(y),f(y'))$. By surjectivity of the map induced by $f$ there exists a (unique) map $\psi$ in $C(y,y')$ with $f(\psi) = g(\phi)$. Hence $(\phi,\psi)$ is an element of $A\times_B C((x,y),(x',y'))$ with $(g^{*}f)(\phi,\psi) = \phi $. So $g^{*}f$ is fully faithful, being also an injective-on-objects functor it is a trivial cofibration.     
\end{proof}\bigskip

We prove the analogous result for $\GGpd$ with respect to the projective model structure.\medskip

\begin{lem}\label{rightproperbis}
In the projective model structure on $\GGpd$ trivial cofibrations are stable under pullback along any fibration.
\end{lem}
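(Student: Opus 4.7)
The plan is to apply the characterization of projective trivial cofibrations given by Proposition \ref{prop3}(ii): a map in $\GGpd$ is a trivial cofibration if and only if it is a levelwise trivial cofibration and induces a bijection on the sets of fixed points. So given a fibration $g:A\fib B$ and a trivial cofibration $f:C\acof B$, I would form the pullback $g^{*}f:A\times_B C\to A$ and check these two conditions separately.

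For the levelwise part, I would observe that pullbacks in the presheaf category $\GGpd$ are computed pointwise, and that projective fibrations are precisely the levelwise fibrations of groupoids. Hence $g^{*}f$ underlies the pullback of the underlying morphism $f$ along the underlying morphism $g$ in $\Gpd$, and Lemma \ref{rightproper} immediately delivers that this underlying map is a trivial cofibration of groupoids. Thus $g^{*}f$ is a levelwise trivial cofibration in $\GGpd$.

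For the bijection on fixed points, I would use that the involution on $A\times_B C$ is pointwise, so a pair $(x,y)$ is a fixed point of the pullback precisely when $x$ is fixed in $A$ and $y$ is fixed in $C$, and $g^{*}f$ sends such a pair to $x$. Injectivity: if $(x,y)$ and $(x,y')$ are fixed points mapping to the same $x$, then $y,y'$ are fixed points of $C$ satisfying $f(y)=g(x)=f(y')$, so by Proposition \ref{prop3} applied to $f$ (whose restriction to fixed points is injective) one gets $y=y'$. Surjectivity: for a fixed point $x$ of $A$, equivariance of $g$ gives $g(x)\in B\f$; by Proposition \ref{prop3} there is a (unique) fixed point $y$ of $C$ with $f(y)=g(x)$, so $(x,y)$ is a fixed point of $A\times_B C$ mapping to $x$. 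Combined with the levelwise step, Proposition \ref{prop3} then forces $g^{*}f$ to be a trivial cofibration.

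I do not expect any serious obstacle here: Proposition \ref{prop3} reduces the statement entirely to the already-established stability in $\Gpd$ (Lemma \ref{rightproper}) together with an elementary verification on fixed points, and the mild point to keep in mind is simply that the bijection on fixed points for $f$ supplied by Proposition \ref{prop3} is precisely the extra input that transfers from $f$ to $g^{*}f$.
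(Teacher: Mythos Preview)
Your proof is correct and follows essentially the same approach as the paper: reduce to Proposition~\ref{prop3}(ii), use Lemma~\ref{rightproper} for the levelwise part, and check the fixed-point condition directly. The only minor difference is that the paper skips the injectivity check on fixed points, since a levelwise trivial cofibration is already injective on objects (trivial cofibrations in $\Gpd$ are injective-on-objects functors), so only surjectivity needs to be verified.
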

\begin{proof}
Consider the following pullback,
$$\xymatrix@=1,5cm{A\times_{B}C\pullbackcorner \ar[r]\ar[d]_{g^{*}f} & C\ar@{ >->}[d]^{\rotatebox{90}{$\sim$}}_f\\
A\ar@{->>}[r]_g & B\,.}$$
Since the underlying morphism of $g$ is a fibration of groupoids and the underlying morphism of $f$ is a trivial cofibration of groupoids by \ref{rightproper} we conclude that the underlying morphism of $g^{*}f$ is a trivial cofibration of groupoids. Thanks to \ref{prop3} it suffices to prove the surjectivity of $g^{*}f$ on the fixed points. Let $x$ be a fixed point in $A$. Then $g(x)$ is a fixed point in $B$, since $f$ is a trivial cofibration thanks to \ref{prop3} there exists a (unique) fixed point $y$ in $C$ with $f(y) = g(x)$. Hence $(x,y)$ is a fixed point in $A\times_B C$ whose image by $g^{*}f$ is $x$.   
\end{proof}\bigskip

\begin{lem}\label{pullbackstabilitytrivialcof}
The pullback functor along a fibration in $\GGpd$ preserves trivial cofibrations.
\end{lem}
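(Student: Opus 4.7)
The plan is to reduce this statement to the preceding Lemma~\ref{rightproperbis}, which already handles stability of trivial cofibrations under pullback along a fixed fibration. The difference is only one of reformulation: here we view the pullback as a functor $g^{*}: \GGpd/B \to \GGpd/A$ and we want to show that if $\phi:(X,h)\to(X',h')$ is a morphism of $\GGpd/B$ whose underlying map $\phi:X\to X'$ is a trivial cofibration, then the induced map $g^{*}\phi: A\times_{B}X \to A\times_{B}X'$ is again a trivial cofibration in $\GGpd$.

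First, I would form the two standard pullback squares
\[\xymatrix@=1,5cm{
A\times_{B}X \ar[r]\ar[d] \pullbackcorner &
A\times_{B}X' \ar[r]\ar[d] \pullbackcorner &
A\ar@{->>}[d]^{g}\\
X\ar[r]_{\phi} & X'\ar[r]_{h'} & B
}\]
and observe that by item~\ref{item:cat3} of Definition~\ref{def:ttfc} the right-hand projection $A\times_{B}X' \fib X'$ is a fibration, being a pullback of $g$. Next, I would invoke the standard pasting lemma for pullbacks to identify the left-hand square as a pullback of $\phi$ along this new fibration: concretely, $A\times_{B}X$ has the universal property of $(A\times_{B}X')\times_{X'} X$. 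The map $g^{*}\phi$ is then literally the pullback of the trivial cofibration $\phi$ along the fibration $A\times_{B}X' \fib X'$.

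Having set this up, the result is immediate from Lemma~\ref{rightproperbis} applied to this new pullback square. I expect no real obstacle here: the only substantive input is the characterization of projective trivial cofibrations (Proposition~\ref{prop3}) and the stability statement of Lemma~\ref{rightproperbis}, both already in hand. The only care needed is the verification that the left-hand square in the diagram above is genuinely a pullback in $\GGpd$, which follows pointwise from the pasting lemma for pullbacks in $\Gpd$ together with the fact that limits in $\GGpd$ are computed pointwise.
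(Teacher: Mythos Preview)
Your proposal is correct and follows essentially the same approach as the paper: both arguments identify $g^{*}\phi$ as the pullback of $\phi$ along the fibration $A\times_{B}X'\fib X'$ (itself a pullback of $g$), and then invoke Lemma~\ref{rightproperbis}. The only difference is that you spell out the pasting-lemma justification a bit more explicitly than the paper does.
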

\begin{proof}
Let $g$ be a fibration in $\GGpd$ from $A$ to $B$ and consider the pullback functor along $g$ denoted $g^{*}$ from the slice category $\GGpd/B$ to the slice $\GGpd/A$. Let $\phi$ be a map in $\GGpd/B$,
$$\xymatrix{C\ar[rr]^{\phi}\ar[rd]_f & & D\ar[dl]^h\\
 & B &\,.}$$
The pullback functor maps it to the following dotted arrow $g^{*}\phi$ in the slice category $\GGpd/A$,
$$\xymatrix@=1,5cm{A\times_B C\pullbackcorner\ar@{->>}[rr]\ar[dd]_{g^{*}f}\ar@{-->}[rd]^{g^{*}\phi} & & C\ar[dd]^f\ar[rd]^{\phi} & \\
 & A\times_B D\pullbackcorner\ar@{->>}[rr]\ar[dl]_{g^{*}h} & & D\ar[ld]^h\\
 A\ar@{->>}[rr]_g & & B & \,.}$$
Since $g^{*}\phi$ is the pullback of $\phi$ along the fibration from $A\times_B D$ to $D$ (this last map is a fibration since it is a pullback of the fibration $g$), it follows from \ref{rightproperbis} that $g^{*}\phi$ is a trivial cofibration. 
\end{proof} \bigskip

\begin{lem}\label{thm2}
For every fibration $g:A\twoheadrightarrow B$ in $\GGpd$, the pullback functor 
$$g^{*}:\GGpd/B \rightarrow \GGpd/A$$ 
has a right adjoint $\Pi_g$, and $\Pi_g$ maps fibrations over $A$ to fibrations over $B$.\\
\end{lem}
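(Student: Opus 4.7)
The plan is to construct $\Pi_g$ by lifting the classical dependent product in $\Gpd$ through the underlying-groupoid functor $\GGpd\to\Gpd$, $A\mapsto\underline{A}$, and then equipping the result with a compatible involution.

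First I would invoke the fact that $\Gpd$, with its natural model structure, is itself a type-theoretic fibration category (in Shulman's sense), so that the pullback functor $\underline{g}^*$ along the underlying isofibration $\underline{g}:\underline{A}\fib\underline{B}$ admits a (partial) right adjoint $\Pi_{\underline{g}}$ defined on fibrations and sending them to fibrations. From a fibration $h:C\fib A$ in $\GGpd$ one thereby obtains a fibration $\Pi_{\underline{g}}\underline{h}:P\fib\underline{B}$ in $\Gpd$, whose objects fibrewise over $b\in\underline{B}$ can be described as sections of $\underline{h}$ over the fibre $\underline{A}_b$ of $\underline{g}$.

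Second, I would equip $P$ with an involution. Writing $\al$, $\be$, $\gm$ for the involutions on $A$, $B$, $C$ respectively, the assignment $(b,s)\mapsto(\be(b),\gm\circ s\circ\al^{-1})$ defines an involution on $P$: equivariance of $g$ ensures that $\al^{-1}$ restricts to a functor $\underline{A}_{\be(b)}\to\underline{A}_b$, and equivariance of $h$ ensures that the composite is again a section of $\underline{h}$, now over $\underline{A}_{\be(b)}$. A short verification shows that this involution is compatible with the projection $P\to\underline{B}$ and with $\be$, producing an object $\Pi_g C$ of $\GGpd$ together with a morphism $\Pi_g h:\Pi_g C\to B$ in $\GGpd$ whose underlying map is precisely $\Pi_{\underline{g}}\underline{h}$.

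For the adjunction $g^*\dashv\Pi_g$ in $\GGpd$, the transpose of a morphism $k:g^*X\to C$ in $\GGpd/A$ is the morphism $\tilde{k}:X\to\Pi_g C$ sending $x$ lying over $b$ to the section $a\mapsto k(a,x)$; equivariance of $\tilde{k}$ follows directly from equivariance of $k$ together with the formula for the involution on $\Pi_g C$, and the unit and counit are induced from those of the underlying adjunction in $\Gpd$. Preservation of fibrations is then automatic: since fibrations in $\GGpd$ are by definition of the projective model structure the levelwise isofibrations and the underlying map of $\Pi_g h$ is the isofibration $\Pi_{\underline{g}}\underline{h}$, the morphism $\Pi_g h$ is a fibration in $\GGpd$. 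The main obstacle is purely bookkeeping—checking coherence of the involution on sections and verifying equivariance of the unit and counit—no deeper technical difficulty arises beyond what is already handled by the type-theoretic fibration structure available on $\Gpd$.
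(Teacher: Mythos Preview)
Your proposal is correct and follows the same underlying idea as the paper: construct $\Pi_g$ by taking the dependent product along $\underline{g}$ in $\Gpd$ and equipping the result with the conjugation involution $(b,s)\mapsto(\be(b),\gm\circ s\circ\al)$. The paper carries this out by unfolding the Giraud construction of $\Pi_{\underline{g}}$ completely---writing down objects, morphisms, composition, and the involution on $\text{dom}(\Pi_g f)$ by hand---whereas you treat the $\Gpd$-level right adjoint as a black box and only add the equivariance layer. Both are fine; yours is shorter, the paper's is self-contained.

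Two small points are worth flagging. First, you invoke only the \emph{partial} right adjoint coming from the type-theoretic fibration structure on $\Gpd$, defined on fibrations; but the lemma as stated asks for a genuine right adjoint on all of $\GGpd/A$. This is harmless, since in $\Gpd$ the Giraud construction works for an arbitrary $f:C\to A$ (not just isofibrations), and your involution formula and adjunction argument go through unchanged---but you should say so. Second, your argument for preservation of fibrations differs from the paper's: you observe directly that the underlying map of $\Pi_g h$ is $\Pi_{\underline{g}}\underline{h}$, which is an isofibration because $\Pi_{\underline{g}}$ preserves fibrations in $\Gpd$, and projective fibrations are levelwise. The paper instead argues via the adjunction: since $g^*$ preserves (projective) trivial cofibrations (their Lemma on pullback stability), the right adjoint $\Pi_g$ preserves fibrations. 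Your route is more direct here; the paper's route has the advantage of also yielding preservation of fibrations \emph{between} objects in the slice, not just fibrations to the base, which is occasionally useful later.
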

\begin{proof}\label{proofthm2}
We introduce the following notation : if $u$ is a morphism in $B$ then by $g^{*}u$ we mean the following pullback in $\Gpd$: 
$$\xymatrix{A\times_{B}\I\ar[rr]\ar[dd]_{g^{*}u}\pullbackcorner&&\I\ar[dd]^u\\\\A\ar[rr]_g&&B~.}$$ 
Where $u:\I\rightarrow B$ is the functor that maps the isomorphism $\phi: 0\xrightarrow{\cong}1$ to $u$ in $B$. Let $f:C\rightarrow A$ be a morphism in $\GGpd$. Define $\text{dom}(\Pi_{g}f)$ as the groupoid whose collection of objects denoted $(\text{dom}(\Pi_{g}f))_{0}$ are the pairs $(y,s)$ with 
$y\in B$, and $s:g^{-1}{\lbrace{y}\rbrace}\rightarrow C$ a partial section of $\underline{f}$,
where $g^{-1}\lbrace{y}\rbrace$ is the subgroupoid of $A$ whose objects are objects of $A$ above $y$ and morphisms are morphisms of $A$ above $1_{y}$. Define its collection $(\text{dom}(\Pi_{g}f))_{1}$ of morphisms as the pairs $(u,v)$ with $u$ a morphism in $B$ and $v:g^{*}u\rightarrow \underline{f}$ a morphism in $\Gpd/A$. Define two functions $\mathbf{s},~\mathbf{t}:(\text{dom}(\Pi_{g}f))_{1}\rightarrow(\text{dom}(\Pi_{g}f))_{0}$ as follows :\\
$$\ffour{\mathbf{s}:\quad{(\text{dom}(\Pi_{g}f))_{1}}}{(\text{dom}(\Pi_{g}f))_{0}}{(u,v)}{(\text{dom}~u,~v_{|_{ A\times_{B}{\lbrace{0}\rbrace}}})}$$
and 
$$\ffour{\mathbf{t}:\quad{(\text{dom}(\Pi_{g}f))_{1}}}{(\text{dom}(\Pi_{g}f))_{0}}{(u,v)}{(\text{cod}~u,~v_{|_{ A\times_{B}{\lbrace{1}\rbrace}}})}.$$ 
Define a partial function 
$$\circ:\quad(\text{dom}(\Pi_{g}f))_{1}\times(\text{dom}(\Pi_{g}f))_{1}\rightarrow(\text{dom}(\Pi_{g}f))_{1}$$ 
as follows : being given $(u,v),(u',v')\in(\text{dom}(\Pi_{g}f))_{1}$ such that $\mathbf{s}(u',v')=\mathbf{t}(u,v)$, define $v'':g^{*}(u'\circ u)\rightarrow f$ in $\Gpd/A$ by\\
$$v''(x,0)=v(x,0)\quad\text{for}\quad(x,0)\in A\times_{B}\I$$
$$v''(x,1)=v'(x,1)\quad\text{for}\quad(x,1)\in A\times_{B}\I$$
$$v''(h,1_{0})=v(h,1_{0})\quad\text{for}\quad(h,1_{0})\in A\times_{B}\I$$
$$v''(h,1_{1})=v'(h,1_{1})\quad\text{for}\quad(h,1_{1})\in A\times_{B}\I\,.$$\\
It remains to define $v''(h:x\rightarrow x',\phi)$ with $g(h)=u'\circ u$. Let $\tilde{u}$ be a lift of $u$ at $x$ by $g$ (\textit{i.e.} $\text{dom}~\tilde{u}=x$ and $g(\tilde{u})=u$), such a lift exists since $g$ is an isofibration.\\
One takes 
$$v''(h,\phi)=v'(h\circ{\tilde{u}}^{-1},\phi)\circ v(\tilde{u},\phi)$$ 
which is a well-defined composition in $C$. For the sake of readibility and for the purpose of avoiding lengthy but straighforward calculations we do not give further details but the reader can check that the defined composition in $\text{dom}(\Pi_{g}f)$ is associative. At least note that $v''(h,\phi)$ does not depend on the choice of the lift $\tilde{u}$. Indeed, from the assumption $\mathbf{s}(u',v')=\mathbf{t}(u,v)$ one concludes that $v'_{|A\times_B \lbrace 0\rbrace} =  v_{|A\times_B \lbrace 1\rbrace}$. Let $\hat{u}$ be an other lift of $u$ at $x$, one has the following sequence of equalities,
\begin{align*}
v'(h\circ \hat{u}^{-1},\phi)\circ v(\hat{u},\phi)\circ [v'(h\circ \tilde{u}^{-1},\phi)\circ v(\tilde{u},\phi)]^{-1} &=  v'(h\circ \hat{u}^{-1},\phi)\circ v(\hat{u},\phi) \circ v(\tilde{u},\phi)^{-1} \\
& \circ v'(h\circ \tilde{u}^{-1},\phi)^{-1} \\
 &=  v'(h\circ \hat{u}^{-1},\phi)\circ v(\hat{u},\phi) \circ v(\tilde{u}^{-1},\phi^{-1})\\
 & \circ v'(\tilde{u}\circ h^{-1},\phi^{-1})\\
 &= v'(h\circ \hat{u}^{-1},\phi)\circ v(\hat{u}\circ \tilde{u}^{-1},1_1)\\
 & \circ v'(\tilde{u}\circ h^{-1},\phi^{-1})\\
 &= v'(h\circ \hat{u}^{-1},\phi)\circ v'(\hat{u}\circ \tilde{u}^{-1},1_0)\\
 & \circ v'(\tilde{u} \circ h^{-1},\phi^{-1})\\
&=  v'(h\circ \hat{u}^{-1},\phi)\circ v'(\hat{u}\circ h^{-1},\phi^{-1})\\
&= v'(1_{x'}, 1_1)\\
&= 1_{v''(x',1)}\,.
\end{align*}
Last, define a function $\text{id}:(\text{dom}(\Pi_{g}f))_{0}\rightarrow(\text{dom}(\Pi_{g}f))_{1}$, let $(y,s)\in(\text{dom}(\Pi_{g}f))_{0}$, take $\text{id}_{(y,s)}:=(1_{y},s)$ which is a slight abuse of notation since by the second member in the pair $(1_{y},s)$ we really mean the functor between $g^{*}(1_{y})$ and $f$ in $\Gpd/A$ which maps $(x,i)$ with $i=0,1$ to $s(x)$ and $(h,-)$ to $s(h)$. The reader can check that $(\text{dom}(\Pi_{g}f))$ is a groupoid with $(u,v)^{-1}$ given by : 
$$\text{fst}[(u,v)^{-1}]:=u^{-1}$$ 
$$\text{snd}[(u,v)^{-1}](x,i):=v(x,1-i)$$ $$\text{snd}[(u,v)^{-1}](h,1_{i}):=v(h,1_{1-i})$$
$$\text{snd}[(u,v)^{-1}](h,\phi):=v(h,\phi^{-1})$$\\
(where fst and snd denote the first and the second projections).\\
Recall that we use a Greek letter to denote the involution of a groupoid denoted by the corresponding uppercase letter , for instance $\al$ denotes the involution of the groupoid $A$.\\ 
Now, one can equip $\text{dom}(\Pi_{g}f)$ with an involution denoted $\pi_{g}f$ :\\
$$\fsix{\pi_{g}f:\quad{\text{dom}(\Pi_{g}f)}}{\text{dom}(\Pi_{g}f)}{(y,s)\quad}{(\be(y),~\pi_{g}f(s):g^{-1}\lbrace{\be(y)}\rbrace\rightarrow C)}{(u,v)\quad}{(\be(u),~\pi_{g}f(v):g^{*}\lbrace{\be(u)}\rbrace\rightarrow \underline{f})}$$\\
with  
$$\fsix{\pi_{g}f(s):\quad{g^{-1}\lbrace{\be(y)}\rbrace}}{C}{x\qquad}{\gm(s(\al(x)))}{h\qquad}{\gm(s(\al(h)))}$$ 
and 
$$\fsix{\pi_{g}f(v):\quad{g^{*}(\lbrace{\be(u)}\rbrace)}}{\underline{f}}{(x,i=0,1)}{\gm(v(\al(x),i))}{(h,-)\quad}{\gm(v(\al(h),-))}$$\\
Last, one defines $\Pi_{g}f$ as follows : 
$$\fsix{\Pi_{g}f:\quad{\text{dom}(\Pi_{g}f)}}{B}{(y,s)\quad}{y}{(u,v)\quad}{u}\,.$$ 
This is immediate that $\Pi_{g}f$ is compatible with the involutions $\pi_{g}f$ and $\be$ (respectively on $\text{dom}(\Pi_{g}f)$ and $B$). \\
Now we define $\Pi_g$ on morphisms. Let $i$ be a morphism from $f$ to $h$ in the slice category $\GGpd/A$,
$$\xymatrix{C\ar[rr]^i\ar[rd]_f & & D\ar[dl]^h \\
& A & \,.}$$
We define $\Pi_{g}i$ from $\Pi_{g}f$ to $\Pi_{g}h$ as follows.\\
For any element $(y,s)$ in $\text{dom}(\Pi_{g}f)$ we take 
$$(\Pi_{g}i)(y,s) = (y, i\circ s)$$
and 
$$(\Pi_{g}i)(u,v) = (u, i\circ v).$$
\\It remains to check that $\Pi_{g}$ is \enquote{the} right adjoint to the pullback functor along $g$ denoted $g^{*}$.
Let $h:D\rightarrow B$ be a morphism in $\GGpd$ and define a natural bijection $\varphi_{g,f,h}$ (shorten by $\varphi$):\bigskip
$$\ffour{\varphi: \GGpd/A(g^{*}h,f)}{\GGpd/B(h,\Pi_{g}f)}{\xymatrix{g^{*}D\ar[rr]^{v} \ar[rdd]_{g^{*}h}&&C\ar[ldd]^{f}\\&\circlearrowright&\\&A&}}{\xymatrix{D\ar[rr]^{\varphi(v)}\ar[rdd]_{h}&&**[r]{\text{dom}(\Pi_{g}f)}\ar[ldd]^{\Pi_{g}f}\\&\circlearrowright&\\&B&}}$$
where $\varphi(v)(x)=(h(x),s_{x})$ for $x\in D$ with\\
$$\fsix{s_{x}:\quad{g^{-1}\lbrace{h(x)}\rbrace}}{C}{z\qquad}{v(z,x)}{t\qquad}{v(t,1_{x})}$$ 
and $\varphi(v)(u:x\rightarrow x')=(h(u),~w_{u})$ for $u$ in $D$ with $w_{u}:g^{*}(h(u))\rightarrow \underline{f}$ in $\Gpd/A$ defined by :\\
$$w_{u}(z,0)=s_{x}(z)=v(z,x)$$ 
$$w_{u}(z,1)=s_{x'}(z)=v(z,x')$$ 
$$w_{u}(-,1_{0})=s_{x}(-)=v(-,1_{x})$$
$$w_{u}(-,1_{1})=s_{x'}(-)=v(-,1_{x'})$$ 
$$w_{u}(-, \phi)=v(-,u)~.$$\\
It is a matter of easy calculations to check that $\varphi(v)$ is compatible with the involutions $\de$ (on $D$) and $\pi_{g}f$ (on $\text{dom}(\Pi_{g}f)$). We have to check that $\varphi$ is a bijection. Let define $\varphi^{-1}$ as follows :\\
$$\ffour{\varphi^{-1}: \GGpd/B(h, \Pi_{g}f)}{\GGpd/A(g^{*}h,f)}{\xymatrix{D\ar[rr]^{k}\ar[rdd]_{h}&&**[r]{\text{dom}(\Pi_{g}f)}\ar[ldd]^{\Pi_{g}f}\\&\circlearrowright&\\&B&}}{\xymatrix{g^{*}D\ar[rr]^{\varphi^{-1}(k)} \ar[rdd]_{g^{*}h}&&C\ar[ldd]^{f}\\&\circlearrowright&\\&A&}}$$ 
where $\varphi^{-1}(k)(z,x):=[\text{snd}(k(x)](z)$ for every $z\in A$ and $x\in D$ such that\\
$g(z)=h(x)$ and $\varphi^{-1}(k)(t:z\rightarrow z',~u:x\rightarrow x'):=[\text{snd}(k(u))](t,\phi)$ for every morphisms $t~\text{in}~A$ and $u\text{ in }D$ such that $g(t)=h(u)$. One can easily check that $\varphi^{-1}(k)$ is compatible with the involutions $\al\times \de$ on $g^{*}D$ and $\gm$ on $C$, and futhermore 
$$\varphi^{-1}\circ\varphi=1_{\GGpd/A}(g^{*}h,f)$$
and 
$$\varphi\circ\varphi^{-1}=1_{\GGpd/B}(h,\Pi_{g}f)$$ 
and the bijection $\varphi$ is natural in its arguments. \\
Finally, by \ref{pullbackstabilitytrivialcof} and by adjunction, one concludes that $\Pi_{g}$ preserves fibrations in the slice categories. In particular as a corollary $\Pi_{g}$ maps fibrations over $A$ in $\GGpd$ to fibrations over $B$ in $\GGpd$.\\ When the involutions involved in the statement of our present lemma  are identities we recover a theorem by Giraud (see \cite{Giraud64}, lemma 4.3 and theorem 4.4).
\end{proof}\bigskip

\begin{rmk}\label{rmk5}
The above lemma \ref{thm2} proves the required condition (4) in the definition \ref{def:ttfc} of a type-theoretic fibration category.
\end{rmk}\bigskip

It remains to prove condition (6) in \ref{def:ttfc}. We get rid of this with the two following lemmas.\medskip

\begin{lem}\label{3for2}
In $\GGpd$ if $g\circ f$ and $g$ are trivial cofibrations then $f$ is a trivial cofibration.  
\end{lem}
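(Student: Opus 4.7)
The plan is to invoke the characterization of trivial cofibrations given in Proposition \ref{prop3}: it suffices to show that $f$ is a levelwise trivial cofibration and that $f$ induces a bijection between the fixed points of its source and target. Write $f: A \to B$ and $g: B \to C$.

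First I would show that $f$ is a levelwise trivial cofibration. By Proposition \ref{prop3} applied to $g$ and to $g\circ f$, both underlying morphisms in $\Gpd$ are trivial cofibrations of groupoids, i.e.\ equivalences of groupoids that are injective on objects. The ``two out of three'' axiom for weak equivalences in the natural model structure on $\Gpd$ (see Definition~\ref{defi3} MC2, applied to $\Gpd$ as in Example~\ref{eg3}) gives that the underlying morphism of $f$ is an equivalence of groupoids. For the cofibration part, if $g\circ f$ is injective on objects then $f$ is automatically injective on objects: from $f(x)=f(y)$ one gets $(g\circ f)(x)=(g\circ f)(y)$ and hence $x=y$. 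Thus $f$ is a levelwise trivial cofibration.

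Next I would prove that $f$ induces a bijection between the fixed points. Since every morphism of $\GGpd$ is equivariant, any morphism sends fixed points to fixed points, so $f$ induces a well-defined map $f\f : A\f_{0} \to B\f_{0}$ on sets of fixed objects; similarly for $g$ and for $g\circ f$, and functoriality gives $(g\circ f)\f = g\f \circ f\f$. By Proposition~\ref{prop3} applied to $g$ and to $g\circ f$, both $g\f$ and $(g\circ f)\f$ are bijections. A standard two out of three argument for bijections of sets then forces $f\f$ to be a bijection. Combining this with the first step, Proposition~\ref{prop3} yields that $f$ is a trivial cofibration, as required.

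There is no real obstacle here; the statement reduces by Proposition~\ref{prop3} to a two out of three property for two decorations (levelwise trivial cofibrations in $\Gpd$, and bijections of fixed-point sets), each of which is either already established for $\Gpd$ or purely set-theoretic.
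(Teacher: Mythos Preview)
Your proposal is correct and follows essentially the same approach as the paper: reduce via Proposition~\ref{prop3} to the levelwise trivial cofibration condition (handled by 2-out-of-3 for weak equivalences plus injectivity on objects inherited from $g\circ f$) and the fixed-point bijection condition. The only cosmetic difference is that the paper spells out the surjectivity on fixed points explicitly (given a fixed point $x$ in $B$, use surjectivity of $(g\circ f)\f$ to find a preimage $y$, then use injectivity of $g$ on objects to conclude $f(y)=x$), whereas you invoke 2-out-of-3 for bijections of sets; these are the same argument.
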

\begin{proof}
Since weak equivalences are levelwise and they have the 2-out-of-3 property, $f$ is a weak equivalence. Moreover $f$ is an injective-on-objects functor, hence $f$ is a levelwise trivial cofibration. Thanks to \ref{prop3} it suffices to prove that $f$ is surjective on fixed points. Let $x$ be a fixed point in $\text{cod}(f)$ then $g(x)$ is a fixed point in $\text{cod}(g) = \text{cod}(g\circ f)$ so, since $g\circ f$ is a trivial cofibration, there exists a fixed point $y$ in $\text{dom}(g\circ f) = \text{dom}(f)$ such that $g(f(y)) = g(x)$. Since $g$ is a levelwise cofibration one concludes that $f(y) = x$.   
\end{proof}

\begin{lem}\label{condition6}
In the following commutative diagram:
    \[\vcenter{\xymatrix@C=1,5cm{
        X\ar[r]\ar[dd] \pullbackcorner &
        Y\ar[r]\ar[dd] \pullbackcorner &
        Z\ar[dd]\\\\
        A\ar@{ >->}[r]^{\sim} \ar@{->>}@(dr,dl)[rr] &
        B\ar@{->>}[r] &
        C\\
        & &
      }}\]
    if $B\fib C$ and $A\fib C$ are fibrations, $A\acof B$ is an acyclic cofibration, and both squares are pullbacks (hence $Y\to Z$ and $X\to Z$ are fibrations by~\ref{item:cat3}), then $X\to Y$ is also an acyclic cofibration. In other words any pullback along any morphism of a trivial cofibration between fibrations is again a trivial cofibration between fibrations.
\end{lem}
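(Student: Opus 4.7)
My plan is to invoke Proposition \ref{prop3}, which characterizes projective acyclic cofibrations in $\GGpd$ as those morphisms whose underlying map is an acyclic cofibration of groupoids and which induce a bijection on the set of fixed objects. I will verify these two conditions for $X \to Y$.

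For the bijection on fixed objects, the fixed-point functor $(-)^{\G}$ is right adjoint to $-!$ (as noted in the notation section of the introduction), hence preserves pullbacks. Applying $(-)^{\G}$ to both squares of the diagram thus gives $X^{\G} \cong Z^{\G} \times_{C^{\G}} A^{\G}$ and $Y^{\G} \cong Z^{\G} \times_{C^{\G}} B^{\G}$, with the map $X^{\G} \to Y^{\G}$ induced by $A^{\G} \to B^{\G}$. By Proposition \ref{prop3}(iii) applied to $A \acof B$, the latter is an isomorphism of groupoids, and hence so is its pullback $X^{\G} \to Y^{\G}$, which in particular provides the desired bijection on objects.

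For the levelwise condition, I will exhibit $Y \to X$ as a pullback of a trivial fibration of groupoids, so that $X \to Y$ appears as a section of a trivial fibration. Since $A \acof B$ has the LLP with respect to $B \fib C$ and the two triangles into $C$ commute, there exists a retraction $r: B \to A$ in $\GGpd$ with $r \circ f = \mathrm{id}_A$ and $p_A \circ r = p_B$ (writing $f$ for $A \to B$ and $p_A, p_B$ for the given maps into $C$). By 2-out-of-3 and the surjectivity of $r$ on objects (from $r \circ f = \mathrm{id}_A$), the underlying functor of $r$ is a trivial fibration in $\Gpd$. A direct verification shows that the commutative square with horizontal maps $Y \to B$, $X \to A$ and vertical maps $Y \to X$ (sending $(z, b)$ to $(z, r(b))$) and $r$ is a pullback in $\GGpd$; since the underlying functor $\GGpd \to \Gpd$ preserves pullbacks, $\underline{Y \to X}$ is a trivial fibration in $\Gpd$. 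Its section $\underline{X \to Y}$ is then a weak equivalence by 2-out-of-3, and being injective on objects (inherited from $\underline{f}$, which is injective on objects as a trivial cofibration of groupoids), it is a trivial cofibration in $\Gpd$. Proposition \ref{prop3} then yields the desired conclusion. The main technical point is the pasting identification recognising $Y \to X$ as a pullback of $r$, which lets us invoke the unconditional pullback-stability of trivial fibrations of groupoids rather than requiring the ambient map $Z \to C$ to be a fibration.
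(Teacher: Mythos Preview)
Your proof is correct and takes a genuinely different route from the paper's argument. The paper factors the morphism $Z\to C$ as an acyclic cofibration followed by a fibration (invoking axiom~\ref{item:cat7}), which inserts an intermediate row into the diagram; it then applies Lemma~\ref{rightproperbis} (stability of trivial cofibrations under pullback along fibrations) to the new middle row, and finishes with the cancellation Lemma~\ref{3for2}. Your argument instead stays entirely at the level of the explicit characterisation in Proposition~\ref{prop3}: you use the right-adjointness of $(-)^{\G}$ to handle fixed points, and for the levelwise condition you produce a retraction $r:B\to A$ over $C$ via the lifting property, identify $Y\to X$ as a pullback of $r$ (hence a levelwise trivial fibration), and recognise $X\to Y$ as its section. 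This avoids the factorisation axiom altogether and replaces the appeal to Lemma~\ref{rightproperbis} by the unconditional pullback-stability of trivial fibrations in $\Gpd$. The paper's approach is the more structural one---it would transfer to any setting where the analogues of Lemmas~\ref{rightproperbis} and~\ref{3for2} hold---whereas yours is a hands-on verification specific to $\GGpd_{\mathbf{proj}}$ that makes the mechanism (section of a trivial fibration) quite transparent.
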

\begin{proof}
According to (5) in \ref{def:ttfc} one can factor the morphism from $Z$ to $C$ as a trivial cofibration followed by a fibration. Thanks to the stability of fibrations under pullback and knowing that trivial cofibrations are stable under pullback along any fibration (\textit{cf} \ref{rightproperbis}), we can display the diagram given in the statement as follows,
$$\xymatrix@C=1,5cm@R=1,2cm{X\pullbackcorner\ar[dd]\ar[rr]\ar@{ >->}[rd]^*[@]{\hbox to -10pt{\rotatebox[origin=c]{-20}{$\sim$}}} & & Y\pullbackcorner\ar[dd]\ar@{ >->}[rd]^*[@]{\hbox to -8pt{\rotatebox[origin=c]{-20}{$\sim$}}}\ar@{->>}[rr] & & Z\ar@{ >->}[d]^*[@]{\hbox to 5pt{\rotatebox[origin=c]{0}{$\sim$}}}\\
 & \pullbackcorner\ar@{->>}[dl]\ar@{ >-->}[rr]^*[@]{\hbox to -10pt{$\sim$}}\ar@(ur,ul)@{->>}[rrr] & & \pullbackcorner\ar@{->>}[dl]\ar@{->>}[r] & Z'\ar@{->>}[d]\\
 A\ar@{ >->}[rr]^{\sim}\ar@{->>}@(dr,dl)[rrrr] & & B\ar@{->>}[rr] & & C\,. }$$\medskip
 
Hence \ref{3for2} allows us to conclude that $X\to Y$ is a trivial cofibration.  
\end{proof}\bigskip

\begin{thm}\label{cor1}
The category $\GGpd$, equipped with the subcategory given by projective fibrations, namely the levelwise isofibrations of groupoids, is a type-theoretic fibration category.\\
We denote this type-theoretic fibration category by $\GGpd_{\mathbf{proj}}$ and one says that the category $\GGpd$ is equipped with its projective type-theoretic fibration structure. \\
Assuming the initiality of the syntactic category of type theory it gives us a model of dependent type theory with dependent sums, dependent products and identity types in $\GGpd$.
\end{thm}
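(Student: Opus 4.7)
The plan is to verify each of the six clauses of Definition \ref{def:ttfc} in order, in each case relying on the lemmas and propositions already established in this chapter so that the remaining work is purely organisational.

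First, for clauses (1)--(3) I would dispatch the routine verifications. The terminal object is $\b1!$, namely the terminal groupoid equipped with the trivial involution. The projective fibrations clearly form a subcategory containing all identities and isomorphisms, because a natural transformation that is levelwise an isomorphism is \emph{a fortiori} levelwise an isofibration. Moreover, any morphism with codomain $\b1!$ is levelwise a morphism into the terminal groupoid $\b1$, hence an isofibration since every object of $\Gpd$ is fibrant. For the pullback stability in (3) I would observe that limits in $\GGpd = [\G^{op},\Gpd]$ are computed pointwise, that isofibrations of groupoids are stable under pullback, and that the involution on the pullback is inherited componentwise; the pullback of a projective fibration is then again a projective fibration.

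Next, for clauses (4) and (6) I would invoke directly the two lemmas that do the heavy lifting: Lemma \ref{thm2} supplies the right adjoint $\Pi_g$ to pullback along any projective fibration and verifies that it sends fibrations to fibrations, while Lemma \ref{condition6} provides exactly the three-column diagram chase of clause (6). For clause (5) I would appeal to the factorisation axiom MC5(ii) for the projective model structure on $\GGpd$: any morphism factors as a projective trivial cofibration followed by a projective fibration. Since any trivial cofibration in a model category has the left lifting property with respect to every fibration (a fact already used in Proposition \ref{2wfs}), every such trivial cofibration is, by definition, an acyclic cofibration in the sense of clause (2) of Definition \ref{def:ttfc}, so the factorisation produced by the model structure is of the required type.

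The main conceptual obstacle has in fact already been cleared before reaching this point: it resides in the non-trivial characterisation of projective trivial cofibrations given in Proposition \ref{prop3} (namely, that they are exactly the levelwise trivial cofibrations inducing an isomorphism on fixed-point subgroupoids), which is what makes available Lemmas \ref{rightproperbis}, \ref{pullbackstabilitytrivialcof}, \ref{3for2}, and ultimately Lemmas \ref{thm2} and \ref{condition6}. Given these tools, the present theorem reduces to assembling them into a single statement and checking the short items (1)--(3); the concluding remark about modelling intensional type theory with $\ssum{}$-, $\sprod{}$-, and \Id-types is then a direct application of the semantics of type-theoretic fibration categories recalled in Chapter 3.
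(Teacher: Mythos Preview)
Your proof is correct and follows essentially the same approach as the paper: the paper's own proof simply states that conditions (1), (2), (3), and (5) are straightforward and that conditions (4) and (6) follow from Lemmas \ref{thm2} and \ref{condition6}, respectively. You have expanded on the ``straightforward'' parts with helpful detail (in particular the derivation of (5) from MC5(ii) of the projective model structure), but the overall structure and the key lemmas invoked are identical.
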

\begin{proof}
The required conditions (1),(2),(3) and (5) are straighforward. The lemmas \ref{thm2} and \ref{condition6} allow us to conclude that conditions (4) and (6) hold.
\end{proof}

\section{The construction of universes}
\label{sec:tcou}

We now move on to constructing universes (in the sense of \ref{def:univ}) in $\GGpd$ for each inaccessible cardinal $\kappa$ and each universe $\text{Gpd}_\Delta(V_\kappa)$ in $\Gpd$. Recall from \cite{groupoidmodel} that the universe $\text{Gpd}_\Delta(V_\kappa)$ in $\Gpd$ consists in the groupoid made of discrete groupoids with $\kappa$-small sets of objects and isomorphisms between them. \\ 
In this section we will lift this specific universe $\text{Gpd}_\Delta(V_\kappa)$ in $\Gpd$ by constructing a universe in $\GGpd$ denoted accordingly by $\widetilde{U_{\text{Gpd}_\Delta(V_\kappa)}}$, $U_{\text{Gpd}_\Delta(V_\kappa)}$ and $p_{\text{Gpd}_\Delta(V_\kappa)}:\widetilde{U_{\text{Gpd}_\Delta(V_\kappa)}}\rightarrow U_{\text{Gpd}_\Delta(V_\kappa)}$ (shorten later by $\widetilde{U_\Delta}$, $U_\Delta$ and $p_\Delta$) and prove the requirement 
$(i)$ of \ref{def:univ} in \ref{lem6}, requirement $(ii)$ in \ref{lem7} and requirement $(iii)$ in \ref{lem8} respectively. \bigskip

\begin{rmk}\label{rmk6} 
One has a \enquote{natural} candidate for lifting in $\GGpd$ any universe $V_\kappa$ in $\Gpd$ consisting in the groupoid whose objects are $\kappa$-small (not necessarily discrete) groupoids with isomorphisms between them, where the $\kappa$-smallness means that both the set of objects and the homsets have cardinality strictly less than $\kappa$. Indeed, for any inaccessible cardinal $\kappa$ the universe $V_\kappa$ in the groupoid model classifies split isofibrations with $\kappa$-small fibers. So, projective fibrations being levelwise fibrations we want to find a universal fibration in $\GGpd$ which classifies projective fibrations that are levelwise split isofibrations with $\kappa$-small fibers. We achieve this result in \ref{lem5}.\\
\end{rmk}
To achieve this goal define ${\widetilde{U_{V_\kappa}},U_{V_\kappa}}\in\GGpd$ and $p_{V_\kappa}:\widetilde{U_{V_\kappa}}\rightarrow U_{V_\kappa}$, shortened immediately by $\widetilde{U}$,$U$ and $p:\widetilde{U}\rightarrow U$ :\\
\begin{itemize}
\item Objects of the groupoid $\widetilde{U}$ are dependent tuples of the form $(A_{0},A_{1},a,\varphi)$ where $A_{0},A_{1}$ are $\kappa$-small groupoids, $a\in A_{0}$, and $\varphi:A_{0}\xrightarrow{\cong}A_{1}$ is an isomorphism in $\Gpd$.\\\item Morphisms in $\widetilde{U}$ between $(A_{0},A_{1},a,\varphi)$ and $(B_{0},B_{1},b,\psi)$ are tuples of the form $(\rho_{0},\rho_{1},\tau,\alpha)$ : 
$$\xymatrix{(A_{0},a)\dia^{\alpha}\ar[rr]^\cong_\varphi\ar[dd]^{\rotatebox[origin=c]{-90}{$\cong$}}_{\rho_{0}}&&A_{1}\ar[dd]_{\rho_{1}}^{\rotatebox[origin=c]{-90}{$\cong$}}\\\\(B_{0},b)\ar[rr]^\cong_\psi&&B_{1}}$$\\
such that :\\
\begin{savenotes}
\begin{tabular}{ll}\\{$\rho_{0}:A_{0}\xrightarrow{\cong}B_{0}$}{~is an isomorphism in $\Gpd$}\\\\{$\rho_{1}:A_{1}\xrightarrow{\cong}B_{1}$}{~is an isomorphism in $\Gpd$}\\\\{$\tau:\rho_{0}(a)\xrightarrow{\cong}b$}{~is an isomorphism in $B_{0}$}\\\\{$\alpha:\psi\circ\rho_{0}\Rightarrow\rho_{1}\circ\varphi$}{~is a natural isomorphism}.\\ 
\end{tabular}
\end{savenotes}
\end{itemize}
The composition in $\tilde{U}$ is given by : 
$$(\rho'_{0},\rho'_{1},\tau',\alpha')\circ(\rho_{0},\rho_{1},\tau,\alpha):=(\rho'_{0}\circ\rho_{0},\rho'_{1}\circ\rho_{1},\tau'\circ\rho'_{0}(\tau),\rho'_{1}(\alpha)\circ\alpha'_{\rho_{0}})$$ 
where the component of $\rho'_{1}(\alpha)\circ\alpha'_{\rho_{0}}$ at $x \in A_{0}$ is $\rho'_{1}(\alpha_{x})\circ\alpha'_{\rho_{0}(x)}$. Note that $\widetilde{U}$ is a groupoid. Indeed, the inverse of the morphism $(\rho_{0},\rho_{1},\tau,\alpha)$ is given by : 
$$(\rho_{0},\rho_{1},\tau,\alpha)^{-1}:=(\rho_{0}^{-1},\rho_{1}^{-1},\rho_{0}^{-1}(\tau^{-1}),\rho_{1}^{-1}(\alpha^{-1}_{\rho_{0}^{-1}}))$$
where the component of $\rho_{1}^{-1}(\alpha^{-1}_{\rho_{0}^{-1}})$ at $x\in B_{0}$ is $\rho_{1}^{-1}(\alpha^{-1}_{\rho_{0}^{-1}(x)})$.\\
We provide $\widetilde{U}$ with the involution $\tilde{u}$ : 
$$\fsix{\tilde{u}:\qquad\qquad\widetilde{U}\qquad}{\qquad\widetilde{U}}{(A_{0},A_{1},a,\varphi)}{(A_{1},A_{0},\varphi(a),\varphi^{-1})}{(A_{0},A_{1},a,\varphi)\xrightarrow{(\rho_{0},\rho_{1},\tau,\alpha)}(B_{0},B_{1},b,\psi)}{(\rho_{1},\rho_{0},\psi(\tau)\circ\alpha^{-1}_{a},\psi^{-1}(\alpha^{-1}_{\varphi^{-1}}))}$$ 
where the component of $\psi^{-1}(\alpha^{-1}_{\varphi^{-1}})$ at $x\in A_{1}$ is $\psi^{-1}(\alpha^{-1}_{\varphi^{-1}(x)})$. One denotes by $U$ the \enquote{unpointed} version of $\widetilde{U}$, \textit{i.e.} objects are of the form $(A_{0},A_{1},\varphi)$ and morphisms of the form $(\rho_{0},\rho_{1},\alpha)$, with its corresponding involution $u$. We define our universal morphism $p$ in $\GGpd$ by : 
$$\fsix{p:\qquad\qquad{\widetilde{U}}\qquad}{\qquad U}{(A_{0},A_{1},a,\varphi)}{(A_{0},A_{1},\varphi)}{(\rho_{0},\rho_{1},\tau,\alpha)}{(\rho_{0},\rho_{1},\alpha)~.}$$
We want to prove that $p:\widetilde{U}\rightarrow U$ is a universe.\\
\begin{lem}\label{lem4}
Our morphism $p:\widetilde{U}\rightarrow U$ is a fibration in $\GGpd$ between fibrant objects.\\
\end{lem}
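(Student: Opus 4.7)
The plan is short because almost everything is packaged into unfolding the definitions of $\widetilde{U}$, $U$ and $p$. First I would invoke Remark \ref{rmk1}: since every object of $\GGpd$ is fibrant, the codomain $U$ and the domain $\widetilde{U}$ are automatically fibrant, so the only thing to check is that $p$ is a projective fibration, that is, its underlying functor of groupoids $\underline{p}\colon \underline{\widetilde{U}}\to \underline{U}$ is an isofibration.

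Next I would verify the isofibration condition directly. Fix an object $(A_{0},A_{1},a,\varphi)$ of $\widetilde{U}$ and an isomorphism
\[
(\rho_{0},\rho_{1},\alpha)\colon (A_{0},A_{1},\varphi)\xrightarrow{\ \cong\ }(B_{0},B_{1},\psi)
\]
of $U$ whose source agrees with $\underline{p}(A_{0},A_{1},a,\varphi)=(A_{0},A_{1},\varphi)$. The \emph{canonical lift} is obtained by pushing $a$ forward along $\rho_{0}$: set $b:=\rho_{0}(a)$, $\tau:=1_{\rho_{0}(a)}$, and take the morphism
\[
(\rho_{0},\rho_{1},1_{\rho_{0}(a)},\alpha)\colon (A_{0},A_{1},a,\varphi)\longrightarrow (B_{0},B_{1},\rho_{0}(a),\psi).
\]
I would then check three things, each of which is a one-line unpacking: the tuple is a well-formed morphism of $\widetilde{U}$ (the type-theoretic conditions on $\rho_{0},\rho_{1},\tau,\alpha$ reduce to those already satisfied by $(\rho_{0},\rho_{1},\alpha)$ as a morphism of $U$, together with $\tau=1_{\rho_{0}(a)}$); its image under $p$ is exactly $(\rho_{0},\rho_{1},\alpha)$; and it is itself invertible in $\widetilde{U}$, with inverse read off from the explicit formula $(\rho_{0},\rho_{1},\tau,\alpha)^{-1}=(\rho_{0}^{-1},\rho_{1}^{-1},\rho_{0}^{-1}(\tau^{-1}),\rho_{1}^{-1}(\alpha^{-1}_{\rho_{0}^{-1}}))$ given just before the lemma (which trivializes since $\tau=1$ and $(\rho_{0},\rho_{1},\alpha)$ is already assumed invertible).

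There is no real obstacle here: because all the data for the lift is forced by the input, no choice or coherence argument is needed beyond what is already built into the definition of a morphism of $\widetilde{U}$. The only place one has to be slightly careful is compatibility with the involutions $\tilde u$ and $u$, but since $p$ is defined by simply forgetting the basepoint/isomorphism $\tau$ and commutes strictly with $\tilde u$ and $u$ on the level of the formulas, this is automatic and does not affect the isofibration check, which is performed at the level of the underlying groupoids.
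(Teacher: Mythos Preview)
Your proof is correct and takes essentially the same approach as the paper: both invoke the fact that all objects of $\GGpd$ are projectively fibrant, and both construct the same canonical lift $(\rho_{0},\rho_{1},1_{\rho_{0}(a)},\alpha)$ to verify the isofibration condition on $\underline{p}$. The only additional observation the paper makes is that this particular choice of lift in fact defines a \emph{split} cleavage $c_{p}$ for $p$, which is not needed for the lemma as stated but is used later when characterizing small fibrations as pullbacks of $p$.
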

\begin{proof}\label{prooflem4}
According to our \ref{rmk1} the groupoids $\widetilde{U}$ and $U$ are fibrant objects. Now, $p$ is a levelwise fibration, actually we can even equip $p$ with a split cleavage denoted $c_{p}$ as follows : being given $(\rho_{0},\rho_{1},\alpha)$ an isomorphism in $U$
$$\xymatrix{A_{0}\dia^{\alpha}\ar[rr]^\cong_\varphi\ar[dd]^{\rotatebox[origin=c]{-90}{$\cong$}}_{\rho_{0}}&&A_{1}\ar[dd]_{\rho_{1}}^{\rotatebox[origin=c]{-90}{$\cong$}}\\\\B_{0}\ar[rr]^\cong_\psi&&B_{1}}$$ 
and $(A_{0},A_{1},a,\varphi)$ an element in the fiber of $(A_{0},A_{1},\varphi):=\text{dom}(\rho_{0},\rho_{1},\alpha)$ by $p$, we take $c_{p}((\rho_{0},\rho_{1},\alpha),(A_{0},A_{1},a,\varphi)):=(\rho_{0},\rho_{1},1_{\rho_{0}(a)},\alpha)$ namely 
$$\xymatrix{(A_{0},a)\dia^{\alpha}\ar[rr]^\cong_\varphi\ar[dd]^{\rotatebox[origin=c]{-90}{$\cong$}}_{\rho_{0}}&&A_{1}\ar[dd]_{\rho_{1}}^{\rotatebox[origin=c]{-90}{$\cong$}}\\\\(B_{0},\rho_{0}(a))\ar[rr]^\cong_\psi&&B_{1}~.}$$ 
It is easily seen that $c_{p}$ is a split cleavage. \end{proof}\bigskip
\begin{lem}\label{lem5} 
 The morphism $p$ classifies the projective fibrations in $\GGpd$ that are levelwise split fibrations in $\Gpd$ with $\kappa$-small fibers.
\end{lem}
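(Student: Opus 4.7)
The plan is to exhibit, for each projective fibration $q: E \epi B$ in $\GGpd$ equipped with a levelwise split cleavage $c_q$ and $\ka$-small fibres, a classifying morphism $\chi_q : B \to U$ together with an isomorphism $\chi_q^{*}p \cong q$ over $B$ in $\GGpd$, and conversely to verify that every pullback of $p$ has the stated properties.

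\textbf{Construction of $\chi_q$.} Writing $\ep$ for the involution on $E$ and $\be$ for the involution on $B$, define on objects
\[
\chi_q(b) := \bigl(q^{-1}\{b\},\; q^{-1}\{\be(b)\},\; \ep_b\bigr),
\]
where $\ep_b$ denotes the restriction of $\ep$ to the fibre over $b$ (which lands in the fibre over $\be(b)$ by equivariance of $q$, and is an isomorphism of groupoids). The $\ka$-smallness of the fibres is exactly what makes $\chi_q(b)$ lie in $U$. On a morphism $u: b\to b'$ of $B$, use $c_q$ to form the induced functors on fibres $u_{*}: q^{-1}\{b\}\to q^{-1}\{b'\}$ and $\be(u)_{*}: q^{-1}\{\be(b)\}\to q^{-1}\{\be(b')\}$, and set
\[
\chi_q(u) := \bigl(u_{*},\; \be(u)_{*},\; \al_u\bigr), \qquad \al_u(e) := c_q(\be(u),\ep(e))\circ \ep(c_q(u,e))^{-1},
\]
the second factor being taken inside $q^{-1}\{\be(b')\}$: both $c_q(\be(u),\ep(e))$ and $\ep(c_q(u,e))$ are lifts of $\be(u)$ starting at $\ep(e)$, so their comparison is a morphism above the identity. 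Functoriality of $\chi_q$ follows from the split condition on $c_q$; naturality of $\al_u$ in $e$ is a direct unwinding using that $\ep$ is a groupoid functor; and equivariance $\chi_q(\be(u)) = u(\chi_q(u))$ (for $u$ the involution on $U$) follows from $\ep\circ\ep = \mathrm{id}_E$ applied to $\al_u$ and to the two fibre groupoids.

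\textbf{Pullback identification.} An object of $\chi_q^{*}\widetilde{U}$ is a pair $(b, (A_0,A_1,a,\ph))$ with $\chi_q(b) = (A_0,A_1,\ph)$, hence exactly a pair $(b,a)$ with $q(a)=b$, i.e.\ an object of $E$ over $b$. The analogous computation on morphisms, using the definition of $\chi_q(u)$ together with $c_q$, yields a functor $E \to \chi_q^{*}\widetilde{U}$ over $B$ that is bijective on objects and fully faithful, and which is equivariant by construction of the involutions on $\chi_q^{*}\widetilde{U}$ and $E$. Conversely, given any $f: B\to U$ in $\GGpd$, the pullback $f^{*}p$ is a projective fibration by stability of fibrations under pullback (item~\ref{item:cat3} of Definition~\ref{def:ttfc} applied to $\GGpd_{\mathbf{proj}}$), its fibres are in bijection with objects of the $A_0$-component of $f(b)$ and hence $\ka$-small, and the split cleavage $c_p$ provided by Lemma \ref{lem4} pulls back levelwise to a split cleavage on $f^{*}p$.

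\textbf{Expected main obstacle.} The conceptual content of the statement is straightforward, but the technical heart lies in the bookkeeping for the natural isomorphism $\al_u$: checking naturality in $e$, functoriality in $u$ (via the split condition), and compatibility with the $U$-involution $u$ as spelled out in the formula $(\rho_0,\rho_1,\al)\mapsto (\rho_1,\rho_0,\psi^{-1}(\al^{-1}_{\ph^{-1}}))$. Each of these reduces to a pair of straightforward diagram chases in the fibres, using only that $\ep^2 = 1_E$ and that $c_q$ is split, but the componentwise calculations are lengthy and will take up most of the formal write-up.
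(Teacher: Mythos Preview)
Your proposal is correct and follows essentially the same approach as the paper: your $\chi_q$ coincides with the paper's classifying map $g$ (in particular your $\al_u(e) = c_q(\be(u),\ep(e))\circ\ep(c_q(u,e))^{-1}$ is exactly the paper's $\be_{\sigma,z}$), and your pullback identification matches the paper's explicit isomorphism $\chi$. The only cosmetic difference is that the paper writes out $\chi$ and $\chi^{-1}$ explicitly rather than arguing bijective-on-objects and fully-faithful, and you correctly flag the equivariance and functoriality checks for $\al_u$ as the main bookkeeping burden, which the paper likewise leaves to the reader.
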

\begin{proof}\label{prooflem5}
We have to prove that for a morphism $f:A\rightarrow C$ in $\GGpd$, the following are equivalent:\\
\begin{enumerate}[label=(\roman*)] 
\item $f$ is a pullback of $p$\\
\item $\underline{f}$ is a split fibration in $\Gpd$ with $\kappa$-small fibers.
\end{enumerate} 
Assume that $f$ is a pullback of $p$,
$$\xymatrix{A\ar[rr]^{p^{*}g}\ar[dd]_f\pullbackcorner&&\widetilde{U}\ar[dd]^p\\\\C\ar[rr]_g&&U~.}$$ 
Let $\alpha:x\xrightarrow{\cong}y$ be an isomorphism in $C$ and $z\in A$ such that $f(z)=x$. One denotes by $c_{f}$ the intended split cleavage of $\underline{f}$. One takes $c_{f}(\alpha,z):=(\alpha,~c_{p}(g(\alpha),~p^{*}g(z)))$ which is an isomorphism in $A$ above $\alpha$ by $f$ with domain $x$ (note that we identify $A$ with the isomorphic groupoid $C\times_{U}\widetilde{U}$). This cleavage is split, indeed one has
\begin{align*} 
{c_{f}(1_{x},z)}&={(1_{x},~c_{p}(1_{g(x)},~p^{*}g(z)))}\\&={(1_{x},~1_{p^{*}g(z)})}
\end{align*} 
and for $x \xrightarrow{\alpha}y\xrightarrow{\beta}w$ and $z\in f^{-1}\lbrace{x}\rbrace$ one has :\\
\begin{align*} c_{f}(\beta\circ\alpha,z)&=(\beta\circ\alpha,~c_{p}(g(\beta\circ\alpha),~p^{*}g(z)))\\
&=(\beta\circ\alpha,~c_{p}(g(\beta)\circ g(\alpha),~p^{*}g(z)))\\
&=(\beta\circ\alpha,~c_{p}(g(\beta),~\text{cod}(c_{p}(g(\alpha),~p^{*}g(z))))\circ c_{p}(g(\alpha),~p^{*}g(z)))\\
&=(\beta,~c_{p}(g(\beta),~\text{cod}(c_{p}(g(\alpha),~p^{*}g(z)))))\circ(\alpha,~c_{p}(g(\alpha),~p^{*}g(z)))\\
&=(\beta,~c_{p}(g(\beta),~p^{*}g(\text{cod}(c_{f}(\alpha,z)))))\circ(\alpha,~c_{p}(g(\alpha),~p^{*}g(z)))\\
&=c_{f}(\beta,~\text{cod}(c_{f}(\alpha,z)))\circ c_{f}(\alpha,z)~.
\end{align*}\\
The reader can check that the fibers of $\underline{f}$ are $\kappa$-small.
Thus one has $(i)\Rightarrow(ii)$. Conversely, assume that $\underline{f}$ is a fibration of groupoids, with $\kappa$-small fibers, equipped with a split cleavage denoted $c_{f}$. One has to display $f$ as a pullback of $p$ in $\GGpd$ along a morphism we choose to denote $g$. So define $g$ as follows : 
$$\fsix{g:\qquad C\quad}{U}{x\quad}{(f^{-1}\lbrace{x}\rbrace,~f^{-1}\lbrace{\gm(x)}\rbrace,~\varphi_{x})}{x\xrightarrow{\sigma}y}{(\rho_{\sigma,0},\rho_{\sigma,1},\be_{\sigma})}$$ 
where by $f^{-1}\lbrace{x}\rbrace$ (\textit{resp.}~$f^{-1}\lbrace{\gm(x)}\rbrace$) we denote the subgroupoid of $A$ whose objects are objects of $A$ above $x$ (\textit{resp.} $\gm(x)$) and morphisms are morphisms in $A$ above $1_{x}$ (\textit{resp.} $1_{\gm(x)}$). Moreover for $x\in C,~\varphi_{x}$ is the following isomorphism : 
$$\fsix{\varphi_{x}:\qquad f^{-1}\lbrace{x}\rbrace}{f^{-1}\lbrace{\gm(x)}\rbrace}{z~\quad}{\al(z)}{z\xrightarrow{\tau}z'}{\al(\tau)~.}$$ 
The inverse isomorphism of $\varphi_{x}$ is given by : 
$$\fsix{\varphi^{-1}_{x}:\qquad f^{-1}\lbrace{\gm(x)}\rbrace}{f^{-1}\lbrace{x}\rbrace}{z~\quad}{\al(z)}{z\xrightarrow{\tau}z'}{\al(\tau)~.}$$ 
Actually one has $\varphi^{-1}_{x}:=\varphi_{\gm(x)}.$ For $\sigma:x\rightarrow y$ in $C$ one defines $\rho_{\sigma,0}$ by : 
$$\fsix{\rho_{\sigma,0}:\qquad f^{-1}\lbrace{x}\rbrace}{f^{-1}\lbrace{y}\rbrace}{z~\quad}{\text{cod}(c_{f}(\sigma,z))}{z\xrightarrow{\tau}z'}{c_{f}(\sigma,z')\circ\tau\circ c_{f}(\sigma,z)^{-1}~.}$$ 
Since $f(\tau)=1_{x}$ one has $f(c_{f}(\sigma,z')\circ\tau\circ c_{f}(\sigma,z)^{-1})=1_{y}$. In the same way one has 
$$\fsix{\rho_{\sigma,1}:\qquad f^{-1}\lbrace{\gm(x)}\rbrace}{f^{-1}\lbrace{\gm(y)}\rbrace}{z~\quad}{\text{cod}(c_{f}(\gm(\sigma),z))}{z\xrightarrow{\tau}z'}{c_{f}(\gm(\sigma),z')\circ\tau\circ c_{f}(\gm(\sigma),z)^{-1}~.}$$\\
Last, we define the natural isomorphism $\be_{\sigma}$ by choosing for the component at $z\in f^{-1}\lbrace{x}\rbrace$ the isomorphism $\be_{\sigma,z}:=c_{f}(\gm(\sigma),\al(z))\circ \al(c_{f}(\sigma,z))^{-1}$. The suspicious reader can check in order that $\rho_{\sigma,0}$ and $\rho_{\sigma,1}$ are functors and more specifically isomorphisms, that $\be_{\sigma}$ is a natural isomorphism, and $g$ is functorial and compatible with the involutions. It remains to check that $A$ is isomorphic to $C\times_{U}\widetilde{U}$ above $C$, \textit{i.e.} we need to provide an isomorphism $\chi:A\rightarrow C\times_{U}\widetilde{U}$ such that $pr_{1}\circ\chi=f$ where $pr_{1}:C\times_{U}\widetilde{U}\rightarrow C$ is the first projection. Let define $\chi$ as follows : 
$$\fsix{\chi:\qquad A\quad}{C\times_{U}\widetilde{U}}{x\quad}{(f(x),~(f^{-1}\lbrace{f(x)}\rbrace,~f^{-1}\lbrace{\gm(f(x))}\rbrace,x,\varphi_{f(x)}))}{x\xrightarrow{\sigma}y}{(f(\sigma),~(\rho_{f(\sigma),0},~\rho_{f(\sigma),1},~\sigma\circ c_{f}(f(\sigma),x)^{-1},~\be_{f(\sigma)}))~.}$$
The functor $\chi$ is compatible with the involutions $\al$ and $\gm\times\widetilde{u}$ and it is actually an isomorphism with $\chi^{-1}$ given by : 
$$\fsix{\chi^{-1}:\qquad\qquad\qquad\qquad\qquad C\times_{U}\widetilde{U}\qquad}{A}{(x,~(f^{-1}\lbrace{x}\rbrace,~f^{-1}\lbrace{\gm(x)}\rbrace,z,~\varphi_{x}))}{z}{(\sigma,(\rho_{\sigma,0},\rho_{\sigma,1},\tau,\be))}{\tau\circ c_{f}(\sigma,z)~.}$$
Thus one has $(ii)\Rightarrow(i)$. 
\end{proof}\bigskip

\begin{rmk}\label{rmk7}
The previous lemma expresses in which sense we have the \enquote{right} candidate for being a universe in $\GGpd$ (see \ref{rmk6}).\\
\end{rmk}\bigskip
\begin{defi}\label{defi4}(Shulman)
A pullback of the universal fibration $p$ is called a small fibration. We denote such a small fibration with a three ends arrow $\xymatrix{\ar[r]|->{\SelectTips{2cm}{}\object@{>>}}|-->{\SelectTips{eu}{}}&}$.\\
\end{defi}\bigskip

\begin{lem}\label{lem6}
Small fibrations are closed under composition and contain the identities.\\
\end{lem}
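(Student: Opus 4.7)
The plan is to reduce both assertions to the characterization given by Lemma \ref{lem5}: a morphism $f$ in $\GGpd$ is a small fibration if and only if $\underline{f}$ is a split fibration in $\Gpd$ with $\kappa$-small fibers. Once this criterion is available, both claims become statements about split fibrations of groupoids with bounded fibers, so I avoid working directly with pullback squares of $p_{V_\kappa}$.

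For the identity on an object $A$ of $\GGpd$, I would note that $\underline{1_A}$ admits the trivial split cleavage $c(\alpha,z) := \alpha$ (so $c(1_x,z) = 1_z$ and $c(\beta\circ\alpha,z) = c(\beta,\text{cod}(c(\alpha,z)))\circ c(\alpha,z)$ hold tautologically), and each fiber $1_A^{-1}\{x\} = \{x\}$ is a singleton, hence $\kappa$-small since $\kappa$ is inaccessible (in particular $\kappa > 1$).

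For composition, let $f: A \epi B$ and $g: B \epi C$ be small fibrations with split cleavages $c_f$ and $c_g$. The natural candidate for a split cleavage of $g\circ f$ is
\[
c_{g\circ f}(\alpha, z) \;:=\; c_f\bigl(c_g(\alpha, f(z)),\, z\bigr),
\]
defined for $\alpha : g(f(z)) \to w$ in $C$. The first key check is that this is a cleavage (which follows because $c_g(\alpha,f(z))$ lies above $\alpha$ and starts at $f(z)$, so $c_f$ applies and produces a morphism above $c_g(\alpha,f(z))$, hence above $\alpha$, starting at $z$). The splitness for identities is immediate from the splitness of $c_f$ and $c_g$, and the splitness under composition follows by two applications of the splitness axiom together with the functoriality property $f(\text{cod}\,c_f(\sigma,z)) = \text{cod}(c_g(\alpha,f(z)))$ built into the definition.

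The remaining point is the size bound on the fibers of $g\circ f$. For $x \in C$ one has
\[
(g\circ f)^{-1}\{x\} \;=\; \bigsqcup_{y \in g^{-1}\{x\}} f^{-1}\{y\},
\]
with an analogous description at the level of morphisms. Since $\kappa$ is inaccessible, it is closed under $\kappa$-small unions of $\kappa$-small sets, so this fiber is again $\kappa$-small. I expect the only mildly delicate step to be the verification of splitness of $c_{g\circ f}$ under composition; this is a straightforward bookkeeping calculation using that $c_f$ and $c_g$ are themselves split, with no conceptual obstacle. Applying Lemma \ref{lem5} in the reverse direction then yields that both identities and the composite $g\circ f$ are pullbacks of $p_{V_\kappa}$, concluding the proof.
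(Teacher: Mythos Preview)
Your approach is exactly the paper's: the proof there reads simply ``Straightforward with \ref{lem5} in mind,'' and you have filled in the details that make it straightforward.

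One small caution on the fiber-size argument: the displayed decomposition
\[
(g\circ f)^{-1}\{x\} \;=\; \bigsqcup_{y \in g^{-1}\{x\}} f^{-1}\{y\}
\]
is correct at the level of object sets, but not as groupoids. A morphism $\alpha:z\to z'$ in $(g\circ f)^{-1}\{x\}$ only satisfies $g(f(\alpha))=1_x$, so $f(\alpha)$ can be any morphism in $g^{-1}\{x\}$, not necessarily an identity; thus there are morphisms between objects lying over distinct $y,y'\in g^{-1}\{x\}$. The homset bound still goes through: for fixed $z,z'$ with $f(z)=y$, $f(z')=y'$, the set of such $\alpha$ fibers over $g^{-1}\{x\}(y,y')$ (which is $\kappa$-small), and each nonempty preimage is in bijection with the automorphism set $f^{-1}\{y'\}(z',z')$ (also $\kappa$-small) via $\alpha\mapsto \alpha\circ\alpha_0^{-1}$ for any fixed $\alpha_0$ in the preimage. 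Inaccessibility of $\kappa$ then finishes the count. So your conclusion is correct; only the phrase ``analogous description at the level of morphisms'' hides a small extra step.
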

\begin{proof}\label{prooflem6}
Straighforward with \ref{lem5} in mind.
\end{proof}\bigskip
\begin{lem}\label{lem7}
If $\xymatrix{f:B\ar[r]|->{\SelectTips{2cm}{}\object@{>>}}|-->{\SelectTips{eu}{}}&A}$ and $\xymatrix{g:A\ar[r]|->{\SelectTips{2cm}{}\object@{>>}}|-->{\SelectTips{eu}{}}&C}$ are small fibrations in $\GGpd$, so is $\Pi_{g}f$.\\
\end{lem}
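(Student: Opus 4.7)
By lemma \ref{lem5}, a map in $\GGpd$ is a small fibration if and only if its underlying map of groupoids is a split isofibration with $\kappa$-small fibers. So the plan is to show that $\underline{\Pi_g f}$ is a split isofibration with $\kappa$-small fibers, using the split cleavages $c_f$ and $c_g$ furnished by the same lemma applied to $f$ and $g$.

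First I would address the cardinality of the fibers. Unwinding the construction of $\text{dom}(\Pi_g f)$ given in the proof of lemma \ref{thm2}, the fiber of $\underline{\Pi_g f}$ over $y \in C$ has as objects the partial sections $s:\underline{g}^{-1}\lbrace y\rbrace\rightarrow \underline{B}$ of $\underline{f}$, and as morphisms the pairs $(1_y,v)$ with $v$ a natural isomorphism over $\underline{g}^{-1}\lbrace y\rbrace$ between two such sections. Because $g$ has $\kappa$-small fibers, the indexing groupoid $\underline{g}^{-1}\lbrace y\rbrace$ is $\kappa$-small; because $f$ has $\kappa$-small fibers, for each object $x$ in $\underline{g}^{-1}\lbrace y\rbrace$ the possible values and morphisms are drawn from $\kappa$-small fibers of $\underline{f}$. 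Since $\kappa$ is inaccessible, a $\kappa$-indexed product of $\kappa$-small sets is $\kappa$-small, giving the bound both on objects and on homsets of the fiber.

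Next I would construct a split cleavage $c_{\Pi_g f}$ for $\underline{\Pi_g f}$. Given an isomorphism $\sigma: y \xrightarrow{\cong} y'$ in $C$ and an object $(y,s)$, I define the target object $(y',s')$ as follows: for $x'\in \underline{g}^{-1}\lbrace y'\rbrace$, let $\widetilde{\sigma^{-1}}_{x'} := c_g(\sigma^{-1},x')$, which is an isomorphism in $A$ from $x'$ above $\sigma^{-1}$ with codomain $x\in \underline{g}^{-1}\lbrace y\rbrace$, and set
\[
s'(x') := \text{cod}\bigl(c_f(\widetilde{\sigma^{-1}}_{x'}^{\,-1},\, s(x))\bigr),
\]
extending to morphisms by conjugating with the chosen lifts and using functoriality of $s$. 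The accompanying morphism $v:g^{*}\sigma \to \underline{f}$ over $A$ is defined on the \enquote{0}-slice by $s$, on the \enquote{1}-slice by $s'$, and on a generic morphism $(h,\phi)$ by composing the lift $c_f(\widetilde{\sigma^{-1}}_{\text{cod}(h)}^{\,-1}, s(\text{dom}(h')))$ with the image under $s$ of the horizontal component, where $h'$ is the factorization of $h$ along the chosen lifts. This is the analogue of the formula $v''(h,\phi)=v'(h\circ \widetilde u^{-1},\phi)\circ v(\widetilde u,\phi)$ from the proof of lemma \ref{thm2}.

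Finally I would verify that this cleavage is split, i.e. $c_{\Pi_g f}(1_y,(y,s)) = 1_{(y,s)}$ and $c_{\Pi_g f}(\tau\circ\sigma,(y,s))=c_{\Pi_g f}(\tau,(y',s'))\circ c_{\Pi_g f}(\sigma,(y,s))$. This reduces to the corresponding split identities for $c_f$ and $c_g$, together with the fact that the chosen lifts of $\sigma$ and $\sigma^{-1}$ in $A$ compose correctly under splitness of $c_g$. The main obstacle will be the bookkeeping in this last verification: because the definition of $v$ involves a choice of lift through $c_g$ and then a further lift through $c_f$, checking splitness requires showing that these nested choices compose compatibly, essentially the same argument as the \enquote{does not depend on the choice of the lift $\tilde u$} calculation in the proof of lemma \ref{thm2}, but now carried out with the splitness hypotheses turning natural composites into strict equalities.
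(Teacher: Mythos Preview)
Your proposal is correct and follows essentially the same approach as the paper: both construct a split cleavage for $\underline{\Pi_g f}$ by transporting sections along the chosen lifts furnished by $c_g$ and $c_f$, with your formula $s'(x') = \text{cod}\bigl(c_f(c_g(\sigma^{-1},x')^{-1}, s(\text{cod}(c_g(\sigma^{-1},x'))))\bigr)$ matching the paper's exactly. You are slightly more explicit than the paper about the $\kappa$-smallness of the fibers and about the splitness verification (both of which the paper leaves to the reader), but the argument is the same.
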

\begin{proof}\label{prooflem7}
It suffices to prove that $\Pi_{g}f$ can be equipped with a split cleavage (we know this is a fibration thanks to \ref{thm2}). Let $u:y\rightarrow y'$ be an isomorphism in $C$ and $(y,s)$ be an object of $\text{dom}(\Pi_{g}f)$. Since $f$ and $g$ are small fibrations we denote by $c_{f}$ (\textit{resp.} $c_{g}$) a split cleavage for $f$ (\textit{resp.} $g$). Let define $s':g^{-1}\lbrace{y'}\rbrace\rightarrow B$ a partial section of $f$ by :\\\\
\begin{tabular}{lll}
$s':\qquad g^{-1}\lbrace{y'}\rbrace$&$\longrightarrow$&$B$\\
$\qquad\qquad\quad x$&$\longmapsto$&$\text{cod}(c_{f}(c_{g}(u^{-1},x)^{-1},~s(\text{cod}(c_{g}(u^{-1},x)))))$\\
$\qquad \qquad x\xrightarrow{h}x'$&$\longmapsto$&$c_{f}(c_{g}(u^{-1},x')^{-1},s(\text{cod}(c_{g}(u^{-1},x'))))$\\
&&$\circ~s(c_{g}(u^{-1},x')\,\circ\, h\,\circ c_{g}(u^{-1},x)^{-1})$\\
&&$\circ~c_{f}(c_{g}(u^{-1},x)^{-1},~s(\text{cod}(c_{g}(u^{-1},x))))^{-1}$
\end{tabular}\\
In picture one has 
$$\xymatrix{s(\text{cod}(c_{g}(u^{-1},x'))))\ar[rrrrrrrr]^-{c_{f}(c_{g}(u^{-1},x')^{-1},~s(\text{cod}(c_{g}(u^{-1},x'))))}&&&&&&&&s'(x')\\\\s(\text{cod}(c_{g}(u^{-1},x)))\ar[rrrrrrrr]^-{c_{f}(c_{g}(u^{-1},x)^{-1},~s(\text{cod}(c_{g}(u^{-1},x))))}\ar[uu]|{s(c_{g}(u^{-1},x')\,\circ\, h\,\circ c_{g}(u^{-1},x)^{-1})}&&\ar[d]|->{\SelectTips{2cm}{}\object@{>>}}|-->{\SelectTips{eu}{}}_{f}&&&&&&s'(x)\ar@{-->}_{s'(h)}[uu]\\\text{cod}(c_{g}(u^{-1},x))\ar[rrrrrrrr]^-{c_{g}(u^{-1},x)^{-1}}&&\ar[d]|->{\SelectTips{2cm}{}\object@{>>}}|-->{\SelectTips{eu}{}}_{g}&&&&&&x\\y\ar[rrrrrrrr]^-{u}&&&&&&&&y'}$$ 
Now, we want to define $v:g^{*}u\rightarrow \underline{f}$ in $\Gpd/A$ with $g^{*}u$ defined as usual (see the proof of \ref{thm2}) by the following pullback
$$\xymatrix{A\times_{C}\I\ar[rr]\ar[dd]_{g^{*}u}\pullbackcorner&&\I\ar[dd]^u\\\\A\ar[rr]_g&&C}$$ 
so we will get $(u,v):(y,s)\rightarrow(y',s')$ a morphism in $\text{dom}(\Pi_{g}f)$ above $u$. Take $v_{|A\times_{C}\lbrace{0}\rbrace}:=s$ and $v_{|A\times_{C}\lbrace{1}\rbrace}:=s'$ and for every $h:z\rightarrow z'$ in $A$ such that $g(h)=u$ take 
$$v(h,\phi):=c_{f}(c_{g}(u^{-1},z')^{-1},~s(\text{cod}(c_{g}(u^{-1},z'))))~\circ~s(c_{g}(u^{-1},z')~\circ~h)$$
where $\phi$ is the non-identity isomorphism in $\I$.
In picture one has 
$$\xymatrix{s(z)\ar[d]|{s(c_{g}(u^{-1},z')\circ ~h)}&&&&&&&&\\s(\text{cod}(c_{g}(u^{-1},z')))\ar[rrrrrrrr]^-{c_{f}(c_{g}(u^{-1},z')^{-1},~s(\text{cod}(c_{g}(u^{-1},z'))))}&&&&\ar[d]|->{\SelectTips{2cm}{}\object@{>>}}|-->{\SelectTips{eu}{}}^{f}&&&&s'(z')\\z\ar@/^/[rrrrrrrrd]^-{h}\ar@{-->}[dd]|{c_{g}(u^{-1},z')\circ~h}&&&&&&&&\\&&&&&&&&z'\\\text{cod}(c_{g}(u^{-1},z'))\ar@/_/[rrrrrrrru]^-{c_{g}(u^{-1},z')^{-1}}&&&&\ar[d]|->{\SelectTips{2cm}{}\object@{>>}}|-->{\SelectTips{eu}{}}^{g}&&&&\\y\ar[rrrrrrrr]^-{u}&&&&&&&&y'~.}$$\\
By taking $c_{\Pi_{g}f}(u,(y,s)):=(u,v)$ the reader can check that $\Pi_{g}f$ is equipped with a split cleavage. Moreover the map $\Pi_{g}f$ has $\kappa$-small fibers and so is a small fibration. Indeed, remember from the construction in \ref{thm2} that the underlying map of $\Pi_{g}f$ is nothing but $\Pi_{\underline{g}}\underline{f}$ and $\Pi_{\underline{g}}$ stands for the right adjoint of the pullback functor $\underline{g}^*$ in $\Gpd$. Hence one concludes the smallness of $\Pi_{g}f$ from the groupoid model using the smallness of $\underline{g}$ and the smallness of $\underline{f}$.  
\end{proof}\bigskip

In the following paragraph we want to prove for $p_\Delta$ the required condition $(iii)$ in definition 6.12 of the universe in \cite{shulman:invdia} (for a general candidate for universe $p: \widetilde{U}\rightarrow U$ it is unlikely that this required condition $(iii)$ holds due to the smallness requirement of the fibers in the characterization of small fibrations). Be aware that from now on and until \ref{thm4} by a small fibration we mean a pullback of $p_\Delta$. Following the remark 6.13 in \cite{shulman:invdia} we prove the equivalent condition that the following diagonal map $\Delta_f$,
$$\xymatrix@=1,5cm{E\ar@/^/@{=}[rrd]\ar@/_/@{=}[rdd]\ar@{-->}[rd]^{\Delta_f} & & \\
 & E\times_B E\pullbackcorner\ar[r]^{p_2}\ar[d]_{p_1} & E\ar[d]^f \\
 & E\ar[r]_f & B\,,}$$
factors as a trivial cofibration followed by a small fibration, for any small fibration $f$ in $\GGpd$. \\
\bigskip

\begin{defi}\label{smallfib}
A fibration in a model category $\mathscr{C}$ with a unique solution to any right lifting problem with respect to any trivial cofibration is called a discrete fibration.
\end{defi}\bigskip

\begin{notn}
Let $\mathscr{C}$ be a cocomplete category and $J$ be a set of morphisms in $\mathscr{C}$. We denote by $\overline{J}$ the weakly saturated class of morphisms generated by $J$, namely the smallest class of morphisms containing the elements in $J$ and which is weakly saturated (see \cite{luriehtt} def. A.1.2.2). Remember from chapter \ref{sec:somc} (see definition \ref{weaklyortho}) that the notation $J^\boxslash$ denotes the class of morphisms with the right lifting property with respect to any morphism in $J$. Now let $J^{\blacksquare}$ denotes the class of morphisms with the unique right lifting property with respect to any morphism in $J$.
\end{notn}\bigskip

\begin{prop}\label{smallfibbis}
An element of $J^{\blacksquare}$ is also an element of $\overline{J}^{\blacksquare}$
\end{prop}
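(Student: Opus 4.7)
The plan is to fix a morphism $p \in J^{\blacksquare}$ and consider the class
\[
L_p := \{\, i \in \text{Mor}(\mathscr{C}) \mid p \text{ has the unique RLP with respect to } i \,\}.
\]
By hypothesis $J \subseteq L_p$, so it will suffice to show that $L_p$ is weakly saturated; then by minimality of $\overline{J}$ we get $\overline{J} \subseteq L_p$, which is exactly the statement $p \in \overline{J}^{\blacksquare}$. Thus the task reduces to verifying that $L_p$ is closed under pushouts, transfinite compositions and retracts (see \cite{luriehtt}, def. A.1.2.2).

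For each closure property, existence of a lift is the classical statement $\overline{J}^{\boxslash} = J^{\boxslash}$ (Proposition \ref{prop4} and its pushout/retract analogues), so only the uniqueness clause requires attention. For \emph{pushouts}, suppose $i: A \to B$ is the pushout of $j: C \to D \in L_p$ along some $f: C \to A$, and suppose $h_1, h_2$ are two diagonal fillers in a given square with $p$ on the right. Precomposing both with the canonical map $D \to B$ yields two lifts in the square obtained by pasting with the pushout; uniqueness for $j$ forces these composites to agree. Since moreover $h_1 \circ i = h_2 \circ i$ by construction, the uniqueness part of the universal property of the pushout yields $h_1 = h_2$. For \emph{retracts}, if $i$ is a retract of $i' \in L_p$, any two lifts against $p$ can be transported along the retract diagram to yield two lifts against $i'$, which must coincide, and transporting back gives the equality of the original lifts.

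For \emph{transfinite compositions}, let $X: [0,\lambda] \to \mathscr{C}$ be a $\lambda$-sequence with each $X_{\beta,\beta+1} \in L_p$, and suppose two lifts $h_1, h_2: X_\lambda \to \text{dom}(p)$ are given for a square. I would show by transfinite induction on $\beta \leqslant \lambda$ that $h_1 \circ X_{0,\beta} = h_2 \circ X_{0,\beta}$. The base case $\beta = 0$ is the assumption that both lifts restrict to the given map on $X_0$; the successor step follows from the uniqueness clause applied to $X_{\beta, \beta+1} \in L_p$; the limit step uses the universal property of the colimit (cocontinuity of $X$) to assemble equalities at stages $\beta' < \beta$ into equality at stage $\beta$. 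The case $\beta = \lambda$ then gives $h_1 = h_2$ as desired.

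The main conceptual obstacle — really the only one — is the uniqueness part of the pushout step, where one must be careful to invoke \emph{both} the commutativity on the $i$-side and the equality on the $D$-side in order to apply the universal property; everything else is either formal or a direct transfinite induction. Once the three closure properties are established, $L_p$ is weakly saturated and contains $J$, hence contains $\overline{J}$, completing the proof.
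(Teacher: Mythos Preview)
Your proof is correct and follows essentially the same strategy as the paper: show that the class $L_p$ of maps against which $p$ has the unique right lifting property is closed under pushouts, transfinite compositions, and retracts, with existence handled by the classical $\overline{J}^{\boxslash}=J^{\boxslash}$ and only uniqueness requiring work. One notational slip in the transfinite step: since $h_1,h_2\colon X_\lambda\to\text{dom}(p)$, the composite $h_k\circ X_{0,\beta}$ does not type-check; you mean $h_k\circ X_{\beta,\lambda}$ (the restriction to $X_\beta$), and with this correction your induction goes through exactly as you describe.
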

\begin{proof}
Let $f$ be an element of $J^\blacksquare$. We know (see \cite{luriehtt} Corollary A.1.2.7) that $f$ is an element of $\overline{J}^{\boxslash}$ (or equivalently any element $j$ of $\overline{J}$ belongs to ${}^{\boxslash}f$). It remains to prove  the uniqueness condition. \\
To achieve this we consider successively the case of a pushout of a map in $J$, the case of a transfinite composition of such maps, and last the case of a retract.\bigskip

First, assume that g belongs to $J$ and consider the pushout $i$ of $g$ along a morphism $h$,
$$\xymatrix@=1,5cm{\ar[r]^h\ar[d]_g & \ar[d]^i \\
\ar[r]_j & \pushoutcorner \;.}$$
Consider the following lifting problem,
$$\xymatrix@=1,5cm{\ar[r]^k\ar[d]_i & \ar[d]^f \\
\ar[r]_l & \;.}$$
Let $m$, $n$ be two diagonal fillers for this lifting problem. Note that we can form the following lifting problem,
$$\xymatrix@=1,5cm{\ar[d]_g\ar[r]^{k\circ h} & \ar[d]^f \\
\ar[r]_{l\circ j} & \;.}$$
Also note that $m\circ j$, $n\circ j$ are two diagonal fillers for this lifting problem. Hence $m\circ j$ equals $n\circ j$.\\
By the uniqueness part in the universal property of the pushout and considering the following universal problem, one concludes that $m$ equals $n$,
$$\xymatrix@=2cm{\ar[d]_g\ar[r]^h & \ar[d]^i\ar@/^/[rdd]^k & \\
\ar@/_/[rrd]_{m\circ j}\ar[r]_j & \ar@{-->}@/^/[rd]^{m} \ar@{-->}@/_/[rd]_{n} & \\
& & \text{dom}(f)\,.}$$\bigskip

Second, consider an ordinal $\lambda$ and $X$ a $\lambda$-sequence in ${}^{\blacksquare}f$.\\
Our goal consists in proving that $X_{0,\la}$ belongs to ${}^{\blacksquare}f$.\\
We do so by a transfinite induction on $\lambda$.
\begin{itemize}
\item $\la = 0$\\
In this case the transfinite composition is $id_{\bold{0}}$ and the uniqueness is obvious. Indeed, for two diagonal fillers $j_1$ and $j_2$ as follows,
$$\xymatrix@=1,5cm{\bold{0}\ar[r]^g\ar@{=}[d] & \ar[d]^f \\
\bold{0}\ar[r]_h\ar@{-->}@/^/[ru]^{j_1}\ar@{-->}@/_/[ru]_{j_2} & }$$
one has the equality $j_1 = j_2 = g$.
\item $\la = \gm+1$\\
Consider two diagonal fillers in the following lifting problem,
$$\xymatrix@=1,5cm{X_0\ar[r]^g\ar[d] & \ar[d]^f\\
X_\la \ar[r]_h\ar@/^/@{-->}[ru]^{j_1}\ar@/_/@{-->}[ru]_{j_2} & \;.} $$
The induction hypothesis states that for every $\gm$-sequence $X$ in ${}^{\blacksquare}f$ the composition $X_{0,\gm}$ belongs to ${}^{\blacksquare}f$.\\
We can display this lifting problem as follows,
$$\xymatrix@=1,5cm{X_0\ar[d]\ar[r]^g & \ar[dd]^f \\
X_{\gm}\ar[d] & \\
 X_{\gm+1}\ar@{-->}@/^/[ruu]^{j_1}\ar@{-->}@/_/[ruu]_{j_2}\ar[r]_h & }$$
where $X_{0,\gm}$ is the composition of $X_{|[0,\gm]}$. By induction hypothesis one has that $j_1\circ X_{\gm,\gm+1}$ equals $j_2\circ X_{\gm,\gm+1}$.\\
Now consider the following lifting problem,
$$\xymatrix@=1,5cm{X_{\gm}\ar[d]_{X_{\gm,\gm+1}}\ar[r]^{j_1\circ X_{\gm,\gm+1}} & \ar[d]^f \\
X_{\gm+1}\ar[r]_h & } $$ 
and note that $j_1$ and $j_2$ are two diagonal fillers for this lifting problem. By assumption on the $\la$-sequence $X$ one concludes that $j_1$ equals $j_2$.
\item $\la$ limit\\
The induction hypothesis states that for every $\be<\la$ and every $\be$-sequence $X$ in ${}^{\blacksquare}f$ the composition $X_{0,\be}$ belongs to ${}^{\blacksquare}f$.\\
Let $j_1$ and $j_2$ be two diagonal fillers as follows,
$$\xymatrix@=1,5cm{X_0\ar[d]\ar[r]^g & \ar[d]^f \\
X_\la \ar[r]_h\ar@{-->}@/^/[ru]^{j_1}\ar@{-->}@/_/[ru]_{j_2} & \;.}$$
Let $\be$ be any ordinal strictly less than $\la$, we can display the square above as follows,
$$\xymatrix@=1,5cm{X_0\ar[r]^g\ar[d]_{X_{0,\be}} & \ar[dd]^f \\
X_\be\ar[d] & & \\
X_\la \ar[r]_h\ar@{-->}@/^/[ruu]^{j_1}\ar@{-->}@/_/[ruu]_{j_2} & \;.}$$
Note that $X_{0,\be}$ is the composition of the $\be$-sequence $X_{|[0,\be]}$. The maps $j_1\circ X_{\be,\la}$ and $j_2\circ X_{\be,\la}$ are two diagonal fillers for the following lifting problem,
$$\xymatrix@=1,5cm{X_0\ar[rr]^g \ar[d]_{X_{0,\be}} & & \ar[d]^f \\
X_\be\ar[r]_{X_{\be,\la}} & X_\la \ar[r]_h\ar@{-->}@/^/[ru]^{j_1}\ar@{-->}@/_/[ru]_{j_2} & \;.}$$
Hence by the induction hypothesis $j_1\circ X_{\be,\la}$ equals $j_2\circ X_{\be,\la}$ for any $\be<\la$. Note that $X_\la$ is isomorphic to $\underset{\be<\la}{\text{colim}}X_\be$. Together the maps $j_1\circ X_{\be,\la}$ form a cocone with cobase $\text{dom}(f)$. Indeed, let $\be'$ and $\be$ be two ordinals satisfying $\be'\leqslant\be<\la$ one has 
$$(j_1\circ X_{\be,\la})\circ X_{\be',\be} = j_1\circ (X_{\be,\la}\circ X_{\be',\be}) = j_1\circ X_{\be',\la}\,.$$ 
So by the uniqueness part in the universal property of the cocone one has $j_1$ equals $j_2$.
\end{itemize}\bigskip

Last, assume that $f$ belongs to $\lbrace g\rbrace^{\blacksquare}$ and consider a retract $h$ of $g$,
$$\xymatrix@=1,5cm{\ar[r]^i\ar[d]_h & \ar[r]^j\ar[d]^g & \ar[d]^h \\
\ar[r]_k & \ar[r]_l & \;.}$$
Consider the following lifting problem,
$$\xymatrix@=1,5cm{\ar[r]^m\ar[d]_h & \ar[d]^f \\
\ar[r]_n\ar@{-->}@/^/[ru]^{r_1}\ar@{-->}@/_/[ru]_{r_2} & }$$
with $r_1$ and $r_2$ two diagonal fillers.\\
We can look at the following lifting problem,
$$\xymatrix@=1,5cm{\ar[r]^{m\circ j}\ar[d]_g & \ar[d]^f \\
\ar[r]_{n\circ l} & \;.}$$
The maps $r_1\circ l$ and $r_2\circ l$ are two diagonal fillers for this lifting problem. Hence $r_1\circ l$ equals $r_2\circ l$. So $r_1\circ l\circ k$ equals $r_2\circ l\circ k$, namely $r_1$ equals $r_2$.
\end{proof}\bigskip

\begin{rmk}
Note that a small fibration $f$ in $\Gpd$, namely a fibration (with small fibers) equipped with a split cleavage, is nothing but a lluf subcategory $\mathscr{C}$ of $\text{dom}(f)$ such that $f_{|\mathscr{C}}$ is a discrete fibration.
\end{rmk}\bigskip

\begin{lem}\label{minimalfib}
Let $f$ be any morphism in $\GGpd$. The morphism $f$ is a pullback of $p_\Delta$ if and only if $\underline{f}$ is a discrete fibration in $\Gpd$ and it has (discrete) fibers with small sets of objects.
\end{lem}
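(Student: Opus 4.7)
The plan is to reduce the statement to the analogue of Lemma \ref{lem5} for $p_\Delta$ (whose proof is a verbatim restriction of the one already given, replacing ``$\kappa$-small groupoid'' by ``$\kappa$-small \emph{discrete} groupoid'' throughout the construction of $\widetilde U_\Delta$, $U_\Delta$ and $p_\Delta$), combined with the following key observation: for a morphism of groupoids, admitting a split cleavage with discrete fibers is equivalent to being a discrete fibration in the sense of Definition \ref{smallfib}. Granting this equivalence, the restricted Lemma \ref{lem5} characterizes pullbacks of $p_\Delta$ as morphisms $f$ in $\GGpd$ whose underlying $\underline{f}$ is a split fibration with discrete $\kappa$-small fibers, which by the equivalence is precisely the condition in the statement.

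For the forward direction, assume $f$ is a pullback of $p_\Delta$. The restricted Lemma \ref{lem5} gives a canonical split cleavage $c_f$ on $\underline f$, and the fibers are discrete and $\kappa$-small since the fibers of $p_\Delta$ are. To upgrade this to a discrete fibration, I would consider a right lifting problem with respect to the generating trivial cofibration $i:\mathbf{1}\hookrightarrow \I$ (and then propagate to all trivial cofibrations by the small object argument): given an isomorphism $\phi:a\to b$ in the base and $x\in f^{-1}\{a\}$, two lifts $\tilde\phi_1:x\to y_1$ and $\tilde\phi_2:x\to y_2$ of $\phi$ would produce a morphism $\tilde\phi_2\circ\tilde\phi_1^{-1}:y_1\to y_2$ lying in the discrete fiber $f^{-1}\{b\}$, hence an identity; this forces $y_1=y_2$ and $\tilde\phi_1=\tilde\phi_2$. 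Uniqueness with respect to more general trivial cofibrations then follows from Proposition \ref{smallfibbis} once one checks unique lifting against the set $S(i):S(\mathbf{1})\hookrightarrow S(\I)$ of generating projective trivial cofibrations, which reduces to two copies of the $\Gpd$-level case just treated.

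For the converse, suppose $\underline f$ is a discrete fibration with small fibers. The unique lifting property furnishes a canonical cleavage, defined by $c_f(\phi,x):=$ the unique lift of $\phi$ at $x$, and the identities $c_f(1_x,x)=1_x$ and $c_f(\psi\circ\phi,x)=c_f(\psi,\mathrm{cod}\,c_f(\phi,x))\circ c_f(\phi,x)$ follow automatically from uniqueness (both sides are lifts of the same base morphism at the same source), so this cleavage is split. The restricted Lemma \ref{lem5} then exhibits $f$ as a pullback of $p_\Delta$: the classifying map $g:C\to U_\Delta$ constructed in the proof of Lemma \ref{lem5} sends $x\in C$ to $(f^{-1}\{x\},f^{-1}\{\gamma(x)\},\varphi_x)$, and by assumption these fibers are already discrete $\kappa$-small groupoids, so $g$ factors through $U_\Delta\hookrightarrow U$.

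The only real content here is the equivalence ``split cleavage with discrete fibers $\Leftrightarrow$ discrete fibration,'' which hinges on the discreteness of the fibers to kill any ambiguity in lifts; everything else is bookkeeping transferred from Lemma \ref{lem5}. I expect no genuine obstacle, only care in reading off discreteness from the $U_\Delta$ side and in invoking Proposition \ref{smallfibbis} to pass from unique lifting against the generators to unique lifting against all trivial cofibrations.
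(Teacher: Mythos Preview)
Your proof is correct and follows essentially the same architecture as the paper's: reduce to (the restricted) Lemma \ref{lem5} plus the equivalence between ``split isofibration with discrete fibers'' and ``discrete fibration,'' and check unique lifting against the generating trivial cofibration $i$.

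Two minor remarks. First, in the forward direction the paper argues slightly differently: rather than using discreteness of the fibers of $f$ directly, it observes that $\underline{p_\Delta}$ itself is a discrete fibration and pushes the two candidate lifts forward through the pullback square to $\widetilde{U}_\Delta$, where they must agree, then concludes $j_1=j_2$ by the uniqueness clause of the pullback. Your fiber argument (the composite $\tilde\phi_2\circ\tilde\phi_1^{-1}$ lies in a discrete fiber, hence is an identity) is equally valid and arguably more elementary. Second, your invocation of $S(i)$ is a slip: the discrete fibration condition is on $\underline{f}$ in $\Gpd$, so the generators you want are $\{i\}$ in $\Gpd$, not $\{S(i)\}$ in $\GGpd$. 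Proposition \ref{smallfibbis} applied with $J=\{i\}$ in $\Gpd$ is what propagates unique lifting from $i$ to all trivial cofibrations; $S(i)$ plays no role here.
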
\bigskip
\begin{proof}
Assume that $f$ is a pullback of $p_\Delta$,
$$\xymatrix@=1,5cm{E\pullbackcorner\ar[r]^k\ar[d]_f & \widetilde{U_\Delta}\ar[d]^{p_\Delta}\\
B\ar[r]_l & U_\Delta\,.}$$
Thanks to \ref{lem5} it is enough to prove that two diagonal fillers in any lifting problem with respect to $\underline{f}$ are equal. Hence consider a right lifting problem with two diagonal fillers,
$$\xymatrix@=1,5cm{\bold{1}\ar[r]^g\ar[d]_i & E\ar[d]^{\underline{f}} \\
\I\ar[r]_h\ar@/^/[ru]^{j_1}\ar@/_/[ru]_{j_2} & B\,,}$$
where the map $i$ is our generating trivial cofibration in $\Gpd$. One can display the following lifting problem with respect to $\underline{p_\Delta}$,
$$\xymatrix@=1,5cm{\bold{1}\ar[r]^{\underline{k}\circ g}\ar[d]_i & \widetilde{U_\Delta}\ar[d]^{\underline{p_\Delta}} \\
\I\ar[r]_{\underline{l}\circ h} & U_\Delta\,.}$$
Note that the maps $j_1\circ \underline{k}$ and $j_2\circ \underline{k}$ are two diagonal fillers for this last lifting problem but the reader can easily check that $\underline{p_\Delta}$ is a discrete fibration, hence $j_1\circ \underline{k}$ and $j_2\circ \underline{k}$ are equal. So the maps $j_1$ and $j_2$ are equal by the uniqueness part in the universal property of the pushout applied to the following universal problem,
$$\xymatrix@=1,5cm{\I\ar@{-->}[rd]\ar@/^/[rrd]^{j_1\circ \underline{k}}\ar@/_/[rdd]_h & & \\
& E\pullbackcorner\ar[r]^{\underline{k}}\ar[d]_{\underline{f}} & \widetilde{U_\Delta}\ar[d]^{\underline{p_\Delta}} \\
& B\ar[r]_{\underline{l}} & U_\Delta\,.}$$\medskip

Conversely, assume that the underlying map of $f$ is a discrete fibration with small discrete fibers then the lemma \ref{lem5} allows us to conclude that $f$ is a pullback of $p_{\Delta}$.
\end{proof}

\begin{lem}\label{lem8}
For any small fibration f, the diagonal map $\Delta_f$ is a small fibration. As an obvious consequence $\Delta_f$ factors as a trivial cofibration (the identity map) followed by a small fibration ($\Delta_f$ itself).
\end{lem}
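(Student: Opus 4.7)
My plan is to invoke Lemma \ref{minimalfib}, which characterizes the small fibrations in $\GGpd$ (i.e.\ pullbacks of $p_\Delta$) as exactly those morphisms whose underlying functor is a discrete fibration in $\Gpd$ with small discrete fibers. Since the hypothesis on $f$ already provides this for $\underline{f}$, the whole argument can be carried out at the level of underlying groupoids, applied to $\underline{\Delta_f}: \underline{E} \to \underline{E\times_B E}$.

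The key step will be to unpack a right lifting problem for $\underline{\Delta_f}$ against the generating trivial cofibration $i: \bold{1}\hookrightarrow \I$. Such a problem is given by an object $x \in \underline{E}$ together with an isomorphism $(g_1, g_2): (x,x) \to (x',y')$ in $\underline{E\times_B E}$, meaning $g_1: x \to x'$ and $g_2: x \to y'$ are isomorphisms in $\underline{E}$ with $\underline{f}(g_1)=\underline{f}(g_2)$. Because $\underline{f}$ is a discrete fibration, the isomorphism $\underline{f}(g_1)$ admits a unique lift with source $x$, so $g_1 = g_2$ (and in particular $x'=y'$), and the common value $h := g_1 = g_2$ is the unique preimage of $(g_1, g_2)$ under $\underline{\Delta_f}$. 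This establishes both that $\underline{\Delta_f}$ is a fibration and that lifts of isomorphisms are unique.

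I would then examine the fibers of $\underline{\Delta_f}$ over a point $(x,y) \in \underline{E\times_B E}$: by definition these consist of the $z \in \underline{E}$ with $(z,z)=(x,y)$, which forces $x=y$ and $z=x$. Hence the fiber is empty when $x\ne y$ and reduces to the terminal groupoid otherwise, so it is in any case small and discrete. Lemma \ref{minimalfib} then delivers that $\Delta_f$ is a small fibration in $\GGpd$. The factorization claim of the statement is immediate: writing $\Delta_f = \Delta_f \circ \mathrm{id}_E$ exhibits it as an acyclic cofibration (the identity) followed by a small fibration ($\Delta_f$ itself).

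I do not anticipate a real obstacle here: the entire argument rests on the observation that the discreteness of $\underline{f}$ collapses any two parallel candidate lifts into one. The only subtlety worth a line of reassurance is that the criterion of Lemma \ref{minimalfib} transfers correctly across the involution structure, but since that lemma is phrased purely in terms of the underlying functor and both $E\times_B E$ and $\Delta_f$ are constructed by universal properties in $\GGpd$, the involutions take care of themselves.
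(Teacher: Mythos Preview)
Your proposal is correct and takes essentially the same approach as the paper: both reduce via Lemma~\ref{minimalfib} to showing that $\underline{\Delta_f}$ is a discrete fibration with small fibers, and both derive the uniqueness of lifts for $\underline{\Delta_f}$ from the discreteness of $\underline{f}$ (the paper phrases this as $p_1\circ g = p_2\circ g$, which is exactly your $g_1 = g_2$). Your explicit computation of the fibers is slightly more detailed than the paper's one-line remark that they are ``obviously small'', but the argument is the same.
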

\begin{proof}
Let $f$ be a small fibration from $E$ to $B$. We prove that $\underline{\Delta_f}$ is a discrete fibration in $\Gpd$. Indeed, consider the following lifting problem in $\Gpd$,
$$\xymatrix@=1,5cm{\bold{1}\ar[r]\ar[d]_i & E\ar[d]^{\underline{\Delta_f}} \\
\I\ar[r]_g & E\times_B E\,.}$$
We display the following associated lifting problem,
$$\xymatrix@=1,5cm{\bold{1}\ar[r]\ar[d]_i & E\ar@{=}[r]\ar[d]^{\underline{\Delta_f}} & E\ar[d]^{\underline{f}} \\
\I\ar[r]_g & E\times_B E\ar[r]_{\underline{f}\circ p_1} & B\,,}$$
where $p_1$ is the first projection.\\
The maps $p_1\circ g$ and $p_2\circ g$ are two diagonal fillers for this last lifting problem, since $f$ is a small fibration then thanks to \ref{minimalfib} $\underline{f}$ is a discrete fibration, hence $p_1\circ g$ and $p_2\circ g$ are equal. So the initial lifting problem with respect to $\underline{\Delta_f}$ has a unique solution, namely $p_1\circ g$. Since $\underline{\Delta_f}$ is a discrete fibration its fibers are discrete and their sets of objects are obviously small. By \ref{minimalfib} one concludes that $\Delta_f$ is a pullback of $p_\Delta$, namely a small fibration.
\end{proof}

\begin{thm}\label{thm4}
The morphism $p_\Delta:\widetilde{U_\Delta}\rightarrow U_\Delta$ is a universe for the projective type-theoretic fibration category $\GGpd_{\mathbf{proj}}$.
\end{thm}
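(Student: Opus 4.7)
The plan is to verify the three conditions in Definition \ref{def:univ} for the morphism $p_\Delta$. By Lemma \ref{lem4} (applied to the universe $p_{V_\kappa}$ restricted to the discrete subuniverse) the map $p_\Delta$ is a fibration, so it remains to check closure of small fibrations under composition and identity (condition (1)), closure under $\Pi_g$ (condition (2)), and the factorization property (condition (3)).

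For conditions (1) and (2) I would adapt Lemmas \ref{lem6} and \ref{lem7} to the discrete setting, using the characterization of pullbacks of $p_\Delta$ given in Lemma \ref{minimalfib}: a morphism $f$ in $\GGpd$ is a pullback of $p_\Delta$ if and only if $\underline{f}$ is a discrete fibration of groupoids with (small) discrete fibers. For condition (1), identities obviously satisfy this property, and composition of discrete fibrations with small discrete fibers is again a discrete fibration with small discrete fibers; alternatively, one reuses the argument of Lemma \ref{lem6} and notes that the extra discreteness condition is preserved. For condition (2), Lemma \ref{lem7} already shows $\Pi_g f$ is a levelwise split fibration with $\kappa$-small fibers when $f$ and $g$ are, so one only needs to check that when the fibers of $f$ and $g$ are discrete then so are those of $\Pi_g f$. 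This is read off the explicit construction of $\text{dom}(\Pi_g f)$ in the proof of Lemma \ref{thm2}: when $\underline{f}$ and $\underline{g}$ are discrete, a morphism $(u,v):(y,s)\to(y',s')$ in $\text{dom}(\Pi_g f)$ is determined by $u$ and by the action of $v$ on objects, so any non-identity morphism in the fiber over an identity $1_y$ must itself be an identity, making the fibers discrete.

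For condition (3), I would invoke Shulman's Remark 6.13 (see \cite{shulman:invdia}), which reduces the factorization property to showing that for every small fibration $f$ the diagonal morphism $\Delta_f$ factors as a trivial cofibration followed by a small fibration. This is exactly Lemma \ref{lem8}: the trivial factorization $\Delta_f = \Delta_f \circ \mathrm{id}$ works because $\Delta_f$ is itself a small fibration.

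The main obstacle is condition (2). While the classifying property of $p_\Delta$ guarantees the levelwise split structure with small fibers via Lemma \ref{lem7}, preservation of the discreteness of fibers under $\Pi_g$ is not automatic and requires inspecting the explicit formulas; conditions (1) and (3) are essentially immediate from the preceding lemmas once the characterization in Lemma \ref{minimalfib} is in hand.
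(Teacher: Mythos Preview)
Your proposal is correct and takes essentially the same approach as the paper, whose proof simply cites Lemmas \ref{lem6}, \ref{lem7}, and \ref{lem8}. Your extra care in adapting conditions (1) and (2) to the discrete setting is reasonable, since those lemmas are literally stated for the general $p_{V_\kappa}$; the paper treats this adaptation as implicit, and in the proof of Lemma \ref{lem7} the smallness of $\Pi_g f$ is deduced from the groupoid model (so, for $V_\kappa=\text{Gpd}_\Delta(V_\kappa)$, from the known closure of the discrete universe under $\Pi$), which amounts to the discreteness check you spell out directly from the construction.
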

\begin{proof}\label{proofthm4}
Straighforward with \textcolor{darkgreen}{Lemmas} \ref{lem6}, \ref{lem7}, \ref{lem8}.
\end{proof}\bigskip

For the record we prove below the fact that one can factor any morphism $f$ in $\GGpd$ as a trivial cofibration followed by a fibration whose underlying fibration of groupoids is equipped with a split cleavage. Unfortunately a priori this last fibration is not a small fibration since there is no guarantee for the fibers to be small even if the morphism $f$ we start with has small fibers. Hence this result does not imply the condition $(iii)$ in \ref{def:univ} for our general candidate $p: \widetilde{U}\rightarrow U$ for universe.

\begin{prop}\label{propgiraudlike}
Let $f:E\rightarrow B$ be any morphism in $\GGpd$, one can factor $f$ as a trivial cofibration $i_{\infty}$ followed by a fibration $f_{\infty}$ whose underlying fibration of groupoids is equipped with a split cleavage
$$\xymatrix{&E_{\infty}\ar@{->>}[rdd]^{f_{\infty}} &\\&\circlearrowright&\\\overset{~}E\ar@{>->}[ruu]^*[@]{\hbox to -1pt{$\sim$}}^-{i_{\infty}~} \ar[rr]_{f}&&B~.}$$
\end{prop}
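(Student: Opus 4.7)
The plan is to combine the small object argument with a Giraud-style explicit construction tailored to yield a split cleavage on the underlying fibration of groupoids. First, I would apply the small object argument (Proposition \ref{prop5}) to the singleton family $\mathscr{F} = \lbrace S(i)\rbrace$ consisting of the generating projective trivial cofibration. This yields a factorization $f = f_{\infty}\circ i_{\infty}$ in which $i_{\infty}: E\rightarrow E_{\infty}$ is a transfinite composition of pushouts of coproducts of $S(i)$, and $f_{\infty}: E_{\infty}\rightarrow B$ has the right lifting property with respect to $S(i)$.

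To see that $i_{\infty}$ is a trivial cofibration in $\GGpd$, I would use the characterization of Proposition \ref{prop3}. Levelwise, the underlying morphism $\underline{i_{\infty}}$ is a transfinite composition of pushouts of coproducts of the trivial cofibration $i$ of groupoids, hence a levelwise trivial cofibration. The fixed-point condition requires more care: crucially, neither $S(\bold{1})$ nor $S(\I)$ has fixed points for the swap involution, so the pushouts and transfinite colimits involved in the construction of $E_{\infty}$ introduce no new fixed points above those already in $E$. Hence $i_{\infty}$ restricts to a bijection on fixed points, and \ref{prop3} lets us conclude. For $f_{\infty}$, the adjunction $S \dashv \underline{(-)}$ converts the right lifting property against $S(i)$ into that of $\underline{f_{\infty}}$ against $i: \bold{1}\rightarrow \I$, so $\underline{f_{\infty}}$ is an isofibration of groupoids and $f_{\infty}$ is a projective fibration.

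The remaining, and most delicate, task is to equip $\underline{f_{\infty}}$ with a split cleavage. The idea is to exploit the explicit transfinite construction: each cell glued at stage $n$ provides a canonical lift of a specific isomorphism of $B$ at a specific object of $E_n$. Using these canonical lifts as basic building blocks, one defines the cleavage inductively by $c(y,1_{f_{\infty}(y)}) := 1_y$ and $c(y,\tau\circ\sigma) := c(\mathrm{cod}(c(y,\sigma)),\tau)\circ c(y,\sigma)$, which immediately forces the split cleavage axioms. When the underlying isofibration has no preferred lifts of its own, the axiom of choice is invoked to select one lift per ``atomic'' generating isomorphism of $B$.

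The main obstacle is the coherence of this inductive definition: an isomorphism of $B$ may be written as a composite in many different ways, and the formula must yield the same lift for every factorization. To overcome this I would proceed in one of two ways: either (i) refine the small object argument so that cells are glued only along a well-ordered generating family of isomorphisms of $B$, deriving every other lift by the compositional formula, or (ii) replace the small object argument by a direct Giraud-style construction in which $E_{\infty}$ is the total category of an explicit strict functor $B \rightarrow \Gpd$ adjoint to $f$, inheriting the tautological split cleavage, then quotiented so as to preserve the bijection of fixed points. In either route the technical heart of the proof is to reconcile the split cleavage on $\underline{f_{\infty}}$ with the fixed-point bijection demanded by \ref{prop3}, the latter being what forces the use of the equivariant generating trivial cofibration $S(i)$ rather than the non-equivariant $i$.
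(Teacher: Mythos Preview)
Your approach diverges substantially from the paper's, and the coherence obstacle you identify is genuine and left unresolved. The paper does not use the small object argument for this factorization at all. Instead it proceeds by a chain of reductions to elementary cases where the split cleavage is built in by hand.

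First, the paper reduces to the case where $\underline{E}$ is discrete: one factors $f_{|\Delta(E)}$ as required, then pushes the resulting trivial cofibration $i_\infty^\Delta$ forward along $\Delta(E)\hookrightarrow E$; the universal property of the pushout yields $f_\infty$, and the split cleavage on $\underline{f_\infty^\Delta}$ transports because $\text{Ob}(E_\infty)=\text{Ob}(\Delta(E)_\infty)$. Second, the discrete case splits as a coproduct $E^{\G}\coprod E^0$ of the fixed and non-fixed parts, and split cleavages on the summands assemble to one on the coproduct. For $E^0$ (no fixed points) the paper takes $E_\infty=\coprod_{x\in\text{Ob}(E)} f(x)/B$ with the evident swap involution and codomain projection; the split cleavage is postcomposition and is tautologically split. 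For $E^{\G}$ one reduces further to $E=\mathbf{1}!$, where the paper builds $E_\infty$ from a groupoid $A$ whose objects are finite sequences of composable arrows in $B$ starting at $f(\star)$ (modulo identities), with a unique morphism between any two objects; the split cleavage appends the new arrow or composes it into the last one according to the parity of the sequence length, and the involution is arranged via a pushout of two copies of $A\times S(\mathbf{1})$.

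The point is that in each elementary case the split cleavage is explicit and manifestly functorial, so your coherence problem never arises. Your small-object-argument route does not enjoy this: the cells glued at successive stages of $G^n(\mathscr{F},f)$ record arbitrary lifting problems, not composable ones, and there is no reason the canonical lifts they provide should satisfy $c(\tau\circ\sigma)=c(\tau)\circ c(\sigma)$. Your route (ii) gestures in the right direction, but the paper's construction is not a quotient of a strictification; it is a bespoke object designed so that both the split cleavage and the fixed-point bijection required by \ref{prop3} are visible by inspection.
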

\begin{proof}
First, we show that one can reduce the proof to the case where $\underline{E}$ is a discrete groupoid. Indeed, assume we have an object $\Delta(E)_{\infty}$ in $\GGpd$ and two maps $i_{\infty}^\Delta$ , $f_{\infty}^\Delta$ such that the following square commutes,
$$\xymatrix{ & \Delta(E)_{\infty}\ar@{->>}[rdd]^{f_{\infty}^\Delta} & \\
& \circlearrowright & \\
\Delta(E)\ar@{>->}[ruu]^*[@]{\hbox to -4pt{$\sim$}}^-{i_{\infty}^\Delta}\ar[rr]_{f_{|\Delta(E)}} & & B\,.}$$
Consider the pushout of $i_{\infty}^\Delta$ along the inclusion from $\Delta(E)$ to $E$,
$$\xymatrix@=1,5cm{\Delta(E)\ar@{>->}[d]_*[@]{\hbox to -10pt{$\sim$}}_-{i_{\infty}^\Delta}\ar@{^{(}->}[r] & E\ar[d]^{i_{\infty}} \\
\Delta(E)_\infty\ar[r] & \pushoutcorner E_\infty\,.}$$
We denote $i_\infty$ the resulting map. As a pushout of a trivial cofibration $i_\infty$ is a trivial cofibration. By using the universal property of this pushout we get a map $f_\infty$ as follows,
$$\xymatrix@=1,5cm{\Delta(E)\ar@{>->}[d]_*[@]{\hbox to -10pt{$\sim$}}_-{i_{\infty}^\Delta}\ar@{^{(}->}[r] & E\ar[d]^{i_{\infty}}\ar@/^/[rdd]^f & \\
\Delta(E)_\infty\ar[r]\ar@/_/[rrd]_{f_\infty^\Delta} & \pushoutcorner E_\infty\ar@{-->}[rd]^{f_\infty}  & \\
 & & B\,.}$$
It remains to prove that $f_\infty$ is a levelwise fibration equipped with a split cleavage. Consider the following lifting problem ,
 $$\xymatrix@=1,5cm{\bold{1}\ar[d]_i\ar[r]^g & E_\infty \ar[d]^{\underline{f_\infty}} \\
 \I\ar[r]_h & B}$$ 
 where $\lbrace i\rbrace$ is a set of generating trivial cofibrations in $\Gpd$ (see \ref{eg3}). Since $\text{Ob}(E_\infty)$ is $\text{Ob}(\Delta(E)_\infty)$, $g$ factorizes through $\Delta(E)_\infty$. So we are able to associate with our initial lifting problem a new lifting problem with respect to $\underline{f_\infty^\Delta}$,
$$\xymatrix@=1,5cm{\bold{1}\ar[dd]_i\ar[rr]^g\ar@{.>}[rd] & & E_\infty \ar[dd]^{\underline{f_\infty}} \\
& \Delta(E)_\infty\ar[ru]\ar[rd]_{\underline{f_\infty^\Delta}} & \\
 \I\ar[rr]_h & & B\,.}$$
Now by postcomposition of the diagonal filler (the dashed arrow below) chosen by the split cleavage of $\underline{f_\infty^\Delta}$ for this last lifting problem we get a diagonal filler for our initial lifting problem,
$$\xymatrix@=1,5cm{\bold{1}\ar[dd]_i\ar[rr]^g\ar@{.>}[rd] & & E_\infty \ar[dd]^{\underline{f_\infty}} \\
& \Delta(E)_\infty\ar[ru]\ar[rd]_{\underline{f_\infty^\Delta}} & \\
 \I\ar[rr]_h\ar@{-->}[ru] & & B}$$
and this process defines a split cleavage for $\underline{f_\infty}$.\bigskip

Second, we prove that one can reduce the case of a discrete groupoid $E$ to two more basic cases, namely the case of a discrete groupoid with only fixed points and the case of a discrete groupoid with no fixed point. Indeed, the discrete groupoid $E$ is (isomorphic to) $E^{\mathbb{Z}/2\mathbb{Z}}\coprod E^0$, where $E^{\mathbb{Z}/2\mathbb{Z}}$ is the discrete subgroupoid of the fixed points in $E$ and $E^0$ is the discrete subgroupoid of $E$ made of the points that are not fixed by the involution. Also the map $f$ is (isomorphic to) $f^{\mathbb{Z}/2\mathbb{Z}}\coprod f^0$. Let us assume that we have the following commutative diagrams,
$$\xymatrix{&E_\infty^{\mathbb{Z}/2\mathbb{Z}}\ar@{->>}[rdd]^{f_{\infty}^{\mathbb{Z}/2\mathbb{Z}}} &\\&\circlearrowright&\\\overset{~}E^{\mathbb{Z}/2\mathbb{Z}}\ar@{>->}[ruu]^*[@]{\hbox to -1pt{$\sim$}}^-{i_{\infty}^{\mathbb{Z}/2\mathbb{Z}}~} \ar[rr]_{f^{\mathbb{Z}/2\mathbb{Z}}}&&B\,,}$$
$$\xymatrix{&E_{\infty}^0\ar@{->>}[rdd]^{f_{\infty}^0} &\\&\circlearrowright&\\\overset{~}E^0\ar@{>->}[ruu]^*[@]{\hbox to -1pt{$\sim$}}^-{i_{\infty}^0~} \ar[rr]_{f^0}&&B\,.}$$
Hence one has the following commutative diagram,
$$\xymatrix{&E_\infty^{\mathbb{Z}/2\mathbb{Z}}\coprod E_\infty^0\ar@{->>}[rdd]^{f_{\infty}^{\mathbb{Z}/2\mathbb{Z}}\coprod f_\infty^0} &\\&\circlearrowright&\\\overset{~}E^{\mathbb{Z}/2\mathbb{Z}}\coprod E^0\ar@{>->}[ruu]^*[@]{\hbox to -1pt{$\sim$}}^-{i_{\infty}^{\mathbb{Z}/2\mathbb{Z}}\coprod i_\infty^0~} \ar[rr]_f&&B\,.}$$   
Clearly $i_{\infty}^{\mathbb{Z}/2\mathbb{Z}}\coprod i_\infty^0$ is a trivial cofibration as a coproduct of trivial cofibrations.  Finally the map $f_{\infty}^{\mathbb{Z}/2\mathbb{Z}}\coprod f_\infty^0$ is a levelwise fibration equipped with a split cleavage. Indeed, consider a lifting problem in $\Gpd$ for $\underline{f_{\infty}^{\mathbb{Z}/2\mathbb{Z}}}\coprod \underline{f_\infty^0}$,
$$\xymatrix@=2cm{\bold{1}\ar[d]_i\ar[r]^g & E_\infty^{\mathbb{Z}/2\mathbb{Z}}\coprod E_\infty^0\ar[d]^{\underline{f_{\infty}^{\mathbb{Z}/2\mathbb{Z}}}\coprod \underline{f_\infty^0}} \\
\I\ar[r]_h & B\,.}$$
There are two mutually exclusive possibilities. Either $g$ takes values in $E_\infty^{\mathbb{Z}/2\mathbb{Z}}$ or it takes values in $E_\infty^0$. If $g$ takes values in $E_\infty^{\mathbb{Z}/2\mathbb{Z}}$ one can display the following diagram,
$$\xymatrix@=2cm{\bold{1}\ar[dd]_i\ar[rr]^g\ar@{.>}[rd] & & E_\infty^{\mathbb{Z}/2\mathbb{Z}}\coprod E_\infty^0\ar[dd]^{\underline{f_{\infty}^{\mathbb{Z}/2\mathbb{Z}}}\coprod \underline{f_\infty^0}} \\
 & E_\infty^{\mathbb{Z}/2\mathbb{Z}}\ar[ru]\ar[rd]_{\underline{f_{\infty}^{\mathbb{Z}/2\mathbb{Z}}}} & \\
\I\ar[rr]_h\ar@{-->}[ru] & & B\,.}$$
Otherwise one can display the following diagram,
$$\xymatrix@=2cm{\bold{1}\ar[dd]_i\ar[rr]^g\ar@{.>}[rd] & & E_\infty^{\mathbb{Z}/2\mathbb{Z}}\coprod E_\infty^0\ar[dd]^{\underline{f_{\infty}^{\mathbb{Z}/2\mathbb{Z}}}\coprod \underline{f_\infty^0}} \\
 & E_\infty^0\ar[ru]\ar[rd]_{\underline{f_{\infty}^0}} & \\
\I\ar[rr]_h\ar@{-->}[ru] & & B\,.}$$
These diagrams not only prove that $f_{\infty}^{\mathbb{Z}/2\mathbb{Z}}\coprod f_\infty^0$ is a fibration  but they also prove how to equip its underlying fibration with a split cleavage assuming that the dashed arrow is the diagonal filler chosen by the split cleavage of $\underline{f_{\infty}^{\mathbb{Z}/2\mathbb{Z}}}$ (resp. of $\underline{f_\infty^0}$).\bigskip

So we need to treat these two more basic cases. \\
First, let us assume that $E$ is a discrete groupoid with no fixed point. One defines the groupoid $E_\infty$ as $\displaystyle{\coprod_{x\in \text{Ob}(E)}f(x)/B}$  (where $f(x)/B$ denotes the under groupoid) with the involution that maps the component $f(x)/B$ to $f(\epsilon(x))/B$ thanks to the involution $\be$ on $B$ (where $\epsilon$ denotes the involution on $E$). The map $i_\infty$ maps $x\in \text{Ob}(E)$ to $id_{f(x)}$ in the corresponding copy of $f(x)/B$ and $f_\infty$ is the codomain functor. Using the characterization of trivial cofibrations given in \ref{prop3} the reader can check that $i_\infty$ is a trivial cofibration. Last, we need to prove that $f_\infty$ is a levelwise fibration equipped with a split cleavage denoted $c_\infty$. Let 
$x\in \text{Ob}(E)$ and $\varphi:f(x)\rightarrow y$ in $\text{Ob}(f(x)/B)$ and $h:y\rightarrow y'$ a morphism in $B$ (one has $f_\infty(\varphi) = y$), take $c_\infty(\varphi,h)$ to be the map $h$ from $\varphi$ to $h\circ\varphi$. Clearly $c_\infty$ defined this way is a split cleavage.\medskip

Second, let us assume that $E$ is a discrete groupoid with only fixed points. We start by proving that one can reduce this case to the case of $\bold{1}!$ (the terminal object). Indeed, $E$ is (isomorphic to) $\displaystyle{\coprod_{x\in\text{Ob}(E)}\bold{1}!}$. Also note that $f:E\rightarrow B$ is (isomorphic to) $\displaystyle{\coprod_{x\in\text{Ob}(E)}f_x:(\coprod_{x\in\text{Ob}(E)}\bold{1}!)\rightarrow B}$ and let us assume that for each $x\in \text{Ob}(E)$ we have morphisms $i_\infty^x$,$f_\infty^x$ such that
$$\xymatrix{ & E_{\infty}^x\ar@{->>}[rdd]^{f_{\infty}^x} & \\
& \circlearrowright & \\
\bold{1}!\ar@{>->}[ruu]^*[@]{\hbox to -4pt{$\sim$}}^-{i_{\infty}^x}\ar[rr]_{f_x} & & B\,.}$$
In this case we have the following factorization of $f$,
$$\xymatrix@=1,5cm{ & \displaystyle{\coprod_{x\in\text{Ob}(E)}E_{\infty}^x}\ar[rdd]^{\displaystyle{\coprod_{x\in\text{Ob}(E)} f_\infty^x}} & \\
& \circlearrowright & \\
E\ar[ruu]^{\displaystyle{\coprod_{x\in\text{Ob}(E)}i_{\infty}^x}}\ar[rr]_{f} & & B\,.}$$
Moreover $\displaystyle{\coprod_{x\in\text{Ob}(E)} i_\infty^x}$ is a trivial cofibration as a coproduct of trivial cofibrations. It remains to prove that $\displaystyle{\coprod_{x\in\text{Ob}(E)}f_\infty^x}$ is a levelwise fibration equipped with a split cleavage. We proceed as usual. Consider the following lifting problem in $\Gpd$,
$$\xymatrix@=2cm{\bold{1}\ar[r]^g\ar[d]_i & \displaystyle{\coprod_{x\in\text{Ob}(E)}E_\infty^x}\ar[d]^{\displaystyle{\coprod_{x\in\text{Ob}E}\underline{f_\infty^x}}}\\
\I\ar[r]_h & B\,.}$$
We can display the following square with the dashed arrow being the chosen one by the split cleavage of $\underline{f_\infty^x}$,
$$\xymatrix@=2cm{\bold{1}\ar@{.>}[rd]\ar[rr]^g\ar[dd]_i & & \displaystyle{\coprod_{x\in\text{Ob}(E)}E_\infty^x}\ar[dd]^{\displaystyle{\coprod_{x\in\text{Ob}E}\underline{f_\infty^x}}}\\
 & E_\infty^x\ar[ru]\ar[rd]_{\underline{f_\infty^x}} & \\
\I\ar@{-->}[ru]\ar[rr]_h & & B\,.}$$
So $\displaystyle{\coprod_{x\in\text{Ob}(E)}f_\infty^x}$ is a levelwise fibration  equipped with a split cleavage.\bigskip

Thus the ultimate case consists in the case where $E$ is $\bold{1}!$. We define a groupoid $A$ whose set of objects consists in the finite (possibly empty) sequences of composable arrows in $B$ starting at $f(\star)$ (where $\star$ denotes the unique object of $\bold{1}!$) quotiented by the relation that identifies such a sequence with the sequence you get when you erase the identity arrows in it. Moreover there is exactly one arrow for any ordered pair of objects in $A$. For a pair $((g_1,\hdots,g_n), (h_1,\hdots,h_m))$ with $n,m$ elements of $\mathbb{N}$, let us denote this unique map by $u_{g_{i,n}}^{h_{i,m}}$. Let us denote by $\varnothing$ the empty sequence of arrows in $B$. We equip $A$ with the involution induced by the involution $\be$ on $B$. We define $E_\infty$ thanks to the following pushout in $\GGpd$,
$$\xymatrix@=1,5cm{S(\b1)\ar[r]^l\ar[d]_k & A\times S(\b1) \ar[d] \\
A\times S(\b1)\ar[r] & E_\infty \pushoutcorner\,,}$$
where 
\begin{align*}
k(0) &= (\varnothing, 0) \\
k(1) &=  (\varnothing,1)\\
l(0) &= (\varnothing,1)\\
l(1) &= (\varnothing,0)\,.
\end{align*}
We define the functor $i_\infty$ as the functor that maps $\star$ to $(\varnothing,0)$ and the reader can easily check thanks to \ref{prop3} that $i_\infty$ is a trivial cofibration. We define $f_\infty$ thanks to the universal property of the pushout,
$$\xymatrix@=1,5cm{S(\b1)\ar[r]^l\ar[d]_k & A\times S(\b1) \ar[d]\ar@/^/[rdd]^\rho & \\
A\times S(\b1)\ar[r]\ar@/_/[rrd]_\rho & E_\infty \pushoutcorner\ar@{-->}[rd]_{f_\infty} & \\
 & & B\,,}$$
where one defines $\rho$ as follows,
\begin{align*}
\rho(\varnothing,j) &= f(\star) \quad\text{with}\quad j=0,1 \\
\rho(f(\star)\xrightarrow{g_1}y_1\rightarrow\hdots\xrightarrow{g_n}y_n,j) &= y_n\quad\text{for}\quad n\geqslant 1\quad\text{and}\quad j=0,1 \\
\rho(id_\varnothing,id_j) &= id_{f(\star)}\quad\text{with}\quad j=0,1\\
\rho(u_{g_{i,n}}^{h_{i,m}},id_j) &= (h_m\circ\hdots\circ h_1)\circ (g_n\circ\hdots\circ g_1)^{-1}\quad\text{with}\quad j=0,1\\
\rho(u_{\varnothing}^{g_{i,n}},id_j) &= g_n\circ\hdots\circ g_1\quad\text{with}\quad j=0,1\,.
\end{align*}
It remains to prove that $f_\infty$ is a levelwise fibration equipped with a split cleavage. We define a split cleavage $c_\infty$ for $\underline{f_\infty}$ as follows. Consider the following lifting problem in $\Gpd$,
$$\xymatrix@=1,5cm{\bold{1}\ar[r]^g\ar[d]_i & E_\infty\ar[d]^{\underline{f_\infty}}\\
\I\ar[r]_h & B\,.}$$
We are looking for a diagonal filler $c_\infty(g,h)$ in such a way that $c_\infty$ will be a split cleavage. Recall that $\phi$ denotes the unique non trivial isomorphism in $\I$ from $0$ to $1$. If $h(\phi)$ is an identity then take for $c_\infty(g,h)$ the identity arrow of $g(\star)$. Otherwise, if $g(\star)$ is $((g_1,\hdots,g_n),j)$ with $n$ greater than or equal to 0 and $j$ equal to 0 or 1 then our diagonal filler depends on the parity of $n$. If $n$ is even then define $c_\infty(g,h)$ as the unique map in $E_\infty$ from $((g_1,\hdots, g_n),j)$ to $((g_1,\hdots,g_n,h(\phi)),j)$, otherwise ($n$ is odd) define $c_\infty(g,h)$ as the unique map in $E_\infty$ from $((g_1,\hdots,g_n),j)$ to $((g_1,\hdots,g_{n-1},h(\phi)\circ g_n),j)$. The reader can easily check that $c_\infty$ is a split cleavage.
\end{proof}

\section{Homotopy equivalences in $\GGpd$}
\label{sec:heGGpd}

Now, we develop a few basic facts about homotopy equivalences, then we give an explicit characterization of homotopy equivalences in $\GGpd$.\bigskip

\begin{prop}\label{prop5}
If $\mathscr{C}$ is a model category and $\xymatrix{f:A~~\ar@{ >->}[r]^-{\sim}&B}$ is a trivial cofibration such that $A$ is fibrant then $f$ is a (right) homotopy equivalence.\\
\end{prop}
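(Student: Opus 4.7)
The plan is to construct a one-sided inverse $g : B \rightarrow A$ for $f$ and then exhibit the two right homotopies required by Definition \ref{defi9}.

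First, I would produce $g$ by direct lifting. Since $A$ is fibrant the unique map $A \rightarrow \b1$ is a fibration, so MC4 applied to
\[\xymatrix@=1,5cm{A\ar[r]^{1_A}\ar@{ >->}[d]_{f} & A\ar@{->>}[d]\\ B\ar[r] \ar@{-->}[ru]^g & \b1}\]
supplies a diagonal filler $g : B \rightarrow A$ with $g \circ f = 1_A$ (the left-hand arrow is a trivial cofibration by hypothesis).

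Second, because $g \circ f = 1_A$ strictly, the first of the two required right homotopies is free. Indeed, by MC5 (ii) applied to $\Delta_A : A \rightarrow A \times A$, any path object $PA$ for $A$ carries a reflexivity map $r_A : A \rightarrow PA$, and $r_A \circ 1_A : A \rightarrow PA$ is a right homotopy from $1_A$ to itself, hence from $g \circ f$ to $1_A$.

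The heart of the proof is the other right homotopy $f \circ g \overset{r}{\sim} 1_B$. I would fix a path object $B \rightarrow PB \rightarrow B \times B$ for $B$ whose first map is a trivial cofibration $r_B$, whose second map $\langle p_0,p_1\rangle$ is a fibration, and which factors $\Delta_B$. I then consider the square
\[\xymatrix@=1,5cm{A\ar[rr]^{r_B\circ f}\ar@{ >->}[d]_{f} && PB\ar@{->>}[d]^{\langle p_0,p_1\rangle}\\ B\ar[rr]_-{\langle f\circ g,\, 1_B\rangle} && B\times B\,.}\]
It commutes: the top-right leg evaluates to $\Delta_B \circ f = \langle f,f\rangle$, and the bottom-left leg to $\langle f\circ g\circ f,\, f\rangle = \langle f,f\rangle$ by the identity $g \circ f = 1_A$. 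Since $f$ is an acyclic cofibration and the right-hand vertical is a fibration, MC4 provides a diagonal filler $H : B \rightarrow PB$, which is precisely a right homotopy from $f \circ g$ to $1_B$.

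The only step with genuine content is the commutativity check for this last square, and it hinges precisely on the fact that $g \circ f = 1_A$; I do not expect any real obstacle, as this is the standard one-sided-inverse argument for model categories adapted to the fibrancy hypothesis on the source.
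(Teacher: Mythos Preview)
Your proof is correct and follows essentially the same approach as the paper: obtain $g$ by lifting against the fibration $A\to\b1$, observe that $g\circ f=1_A$ gives the first right homotopy for free, and then fill the square with $f$ against $PB\twoheadrightarrow B\times B$ (using $g\circ f=1_A$ to verify commutativity) to obtain the homotopy $f\circ g\overset{r}{\sim}1_B$. The paper's argument is identical, only slightly terser in the second step where it simply asserts $g\circ f\overset{r}{\sim}1_A$ without naming the reflexivity map.
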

\begin{proof}\label{proofprop5}
One has the following lifting problem : 
$$\xymatrix{\underset{~}{A}\ar@{=}[rr]\ar@{ >->}[dd]_*[@]{\hbox{$\sim$}}_{f\quad}\commutatif&&A\ar@{->>}[dd]\\\\B\ar[rr]&&\bold{1}}$$
Since $f$ is a trivial cofibration and $A$ is fibrant there exists a diagonal filler $g:B\rightarrow A$ such that :
$$\xymatrix{\underset{~}{A}\ar@{=}[rr]\ar@{ >->}[dd]_*[@]{\hbox{$\sim$}}_{f\quad}&&A\ar@{->>}[dd]\\\qquad\qquad\circlearrowright&&\circlearrowright\qquad\qquad\\B\ar[rruu]^g\ar[rr]&&\bold{1}}$$ 
with $g\circ f=1_{A}$, so one immediately concludes that $g\circ f\overset{r}{\sim}1_{A}$. One defines the following lifting problem :
$$\xymatrix{\underset{~}{A}~\ar@{ >->}[r]^[@]{\hbox{$\underset{\sim}{f}$}}\ar@{ >->}[dd]_[@]{\hbox{$\sim$}}_{f\quad}\commutatif&B~\ar@{ >->}[r]^[@]{\hbox{$\sim$}}&PB\ar@{->>}[dd]\\\\B\ar[rr]_{<f\circ g,~1_{B}>}&&**[r]{B\times B}\\}$$ 
where $PB$ is any path object for $B$. Note that the square above commutes since  
\begin{align*} 
<f\circ g,~1_{B}>\circ f&=~<f\circ g\circ f,f>\\&=~<f,f>
\end{align*} 
and $\Delta\circ f=~<1_{B},~1_{B}>\circ f=~<f,f>$ where $\Delta$ denotes the diagonal map $<1_{B},1_{B}>$ which is the composition
$$\xymatrix{B~\ar@{ >->}[r]^-{\sim}&PB\ar@{->>}[r]&B\times B}.$$ 
So there exists a diagonal filler $h$,
$$\xymatrix{\underset{~}{A~}\ar@{ >->}[r]^[@]{\hbox{$\sim$}}\ar@{ >->}[dd]_[@]{\hbox{$\sim$}}_{f\quad}&B~\ar@{ >->}[r]^[@]{\hbox{$\sim$}}&PB\ar@{->>}[dd]\\\qquad\qquad\circlearrowright&&\circlearrowright\qquad\qquad\\B\ar[rruu]^h\ar[rr]_{<f\circ g,~1_{B}>}&&**[r]{B\times B}\\\\}$$ and such a $h$ is exactly a (right) homotopy between $f\circ g$ and $1_{B}$, so $f\circ g\overset{r}{\sim}1_{B}$ and as a consequence $f$ is a (right) homotopy equivalence.
\end{proof}\bigskip
\begin{prop}\label{prop6}
If $f:A\rightarrow B$ is a trivial cofibration in $\GGpd$ then $f$ is a (right) homotopy equivalence.\\
\end{prop}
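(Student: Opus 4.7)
The plan is to deduce this as an immediate corollary of the preceding Proposition \ref{prop5}, which asserts that in any model category $\mathscr{C}$, a trivial cofibration whose source is fibrant is automatically a right homotopy equivalence. So I only need to verify the two hypotheses of Proposition \ref{prop5} in the context of $\GGpd$ equipped with its projective model structure.

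First, I would recall that $\GGpd$ is a model category: since $\Gpd$ with its natural model structure (see Example \ref{eg3}) is cofibrantly generated and $\G$ is a small category, the projective model structure on $\GGpd$ exists by Proposition \ref{prop6}. Second, I would invoke Remark \ref{rmk1}, which records that every object of $\GGpd$ is fibrant, because all objects in $\Gpd$ are fibrant and projective fibrations are detected levelwise. In particular, the source $A$ of our given trivial cofibration $f: A \to B$ is fibrant.

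Thus both hypotheses of Proposition \ref{prop5} are satisfied, and applying it directly gives that $f$ is a right homotopy equivalence. There is no genuine obstacle here; the content of the statement has already been absorbed into the earlier general fact together with the observation that $\GGpd_{\mathbf{proj}}$ has the convenient property that every object is fibrant. In effect, this proposition is recorded separately only to emphasize the consequence tailored to $\GGpd$, which will be used in the subsequent characterization of projective homotopy equivalences and in the analysis of function extensionality and univalence for $U_{\text{Gpd}_\Delta(V_\kappa)}$.
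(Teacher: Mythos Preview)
Your proposal is correct and follows exactly the paper's own argument: the paper's proof reads ``Straightforward with the previous proposition and the fact that all objects of $\GGpd$ are fibrant,'' which is precisely your application of Proposition~\ref{prop5} together with Remark~\ref{rmk1}.
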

\begin{proof}\label{proofprop6}
Straightforward with the previous proposition and the fact that all objects of $\GGpd$ are fibrant.
\end{proof}\bigskip
Since not all objects are cofibrant here, homotopy equivalences are not the same as the weak equivalences of the model structure.\\

\begin{defi}
Let $G$ be a groupoid equipped with an involution $\eta$. On says that $G$ is weakly connected if and only if for every pair $(x,y)$ in $\text{Ob}(G)^2$ either $x$ and $y$ are in the same connected component of the groupoid $\underline{G}$ or $x$ and $\eta(y)$ are in the same connected component of the groupoid $\underline{G}$.
\end{defi}\bigskip

\begin{prop}\label{coprodofweaklyconnected}
Every groupoid equipped with an involution is (isomorphic to) a coproduct in $\GGpd$ of weakly connected groupoids with involutions.
\end{prop}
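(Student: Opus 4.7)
The plan is to decompose $G$ using the connected components of its underlying groupoid $\underline{G}$ together with the action induced by the involution.

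First, I would write $\underline{G}$ as the coproduct (in $\Gpd$) of its connected components $\{C_i\}_{i\in I}$. Because the involution $\eta$ is an automorphism of $\underline{G}$, it permutes the connected components, hence induces a $\mathbb{Z}/2\mathbb{Z}$-action on the index set $I$. I would partition $I$ into its orbits under this action, separating the fixed orbits (singletons $\{i\}$ with $\eta(C_i) = C_i$) from the free orbits (two-element orbits $\{i,j\}$ with $\eta(C_i) = C_j$ and $i \neq j$).

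Next, I would assemble a groupoid-with-involution out of each orbit. For a fixed orbit $\{i\}$, the restriction $\eta_{|C_i}$ is an involution on $C_i$, turning $C_i$ itself into an object $G_i$ of $\GGpd$; this $G_i$ is weakly connected by definition since its underlying groupoid $C_i$ is connected. For a free orbit $\{i,j\}$, I set $G_{\{i,j\}} := C_i \coprod C_j$ equipped with the involution that uses $\eta_{|C_i} : C_i \xrightarrow{\cong} C_j$ and its inverse to swap the two components. To check weak connectedness: for any pair of objects $(x,y)$, either both lie in the same $C_k$ (so $x$ and $y$ belong to the same component of the underlying groupoid), or they lie in different components, but then $y$ and $\eta(y)$ lie in opposite components, so $x$ and $\eta(y)$ are in the same $C_k$ and are connected in the underlying groupoid.

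Finally, I would verify that $G$ is the coproduct in $\GGpd$ of the family of $G_i$ (for fixed orbits) and $G_{\{i,j\}}$ (for free orbits). The underlying coproduct in $\Gpd$ is clearly $\underline{G}$ by the chosen decomposition, and the involutions match by construction since $\eta$ restricts on each orbit piece exactly to the involution placed on it. Because coproducts in $\GGpd = [\G, \Gpd]$ are computed pointwise, this underlying isomorphism upgrades automatically to an isomorphism of equivariant groupoids, giving the desired decomposition.

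The only mildly delicate point is the verification of weak connectedness for free orbits, but this follows immediately from the observation that $\eta$ swaps the two components, so applying $\eta$ moves any element to the opposite side of the coproduct where it becomes comparable, in the underlying groupoid, to the other element.
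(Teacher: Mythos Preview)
Your proof is correct and follows essentially the same approach as the paper: both decompose $G$ by grouping connected components of $\underline{G}$ according to the action of the involution, obtaining the weak connected components $C_x \cup C_{\eta(x)}$. The only cosmetic difference is that you explicitly separate the fixed-orbit and free-orbit cases, whereas the paper handles them uniformly by taking $C_x^{\G}$ to be the full subgroupoid on $\text{Ob}(C_x)\cup\text{Ob}(C_{\eta(x)})$ (which automatically covers both).
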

\begin{proof}
Let $G$ be a groupoid equipped with an involution $\eta$. Being given $x$ in $\text{Ob}(G)$, let us denote by $C_x$ the connected component of $x$ in the groupoid $\underline{G}$. Now we denote by $C_x^{\G}$ the full subgroupoid of $G$ whose set of objects is $\text{Ob}(C_x)\bigcup\text{Ob}(C_{\eta(x)})$. This full subgroupoid, which we call the weak connected component of $x$, has a natural involution, induced by the involution $\eta$ on $G$, hence it turns out to be a groupoid with an involution. By choosing a representative for each set $\text{Ob}(C_x)\bigcup\text{Ob}(C_{\eta(x)})$ one can display $G$ as a coproduct over these representatives of the $C_x^{\G}$'s in $\GGpd$. 
\end{proof}\bigskip

\begin{lem}\label{lem2}
Let $G$ be a groupoid equipped with an involution $\eta$ and $$\varnothing\subset G'\subseteq G''$$ two full subgroupoids of $G$ stable under $\eta$ such that $\text{Ob}(G'') = \text{G'}\cup \lbrace x,\eta(x)\rbrace$ with $x\in G$ and $\eta(x)\neq x$. Moreover assume that $G$ is weakly connected. Since $G'\neq \varnothing$, let $z$ be an element of $G'$. By weak connectedness there exist an element $y$ of $\lbrace z,\eta(z)\rbrace$ and an isomorphism $c$ from $y$ to $x$. Then the following square is a pushout square in $\GGpd$,
$$\xymatrix{S(\b1)\ar[rr]^l\ar@{^{(}->}[dd]_{S(i)}&&G'\ar@{^{(}->}[dd]\\\\ S(\I)\ar[rr]_k&& G''}$$
with $l(0) = y$, $l(1) = \eta(y)$, $k(0) = y$, $k(1) = \eta(y)$, $k(0') = x$, $k(1') = \eta(x)$, $k(\phi) = c$ and $k(\psi) = \eta(c)$ where 
\begin{align*}
S(\b1):= \b1\textstyle\coprod\b1 = \xymatrix{0 & 1}
\end{align*}
and
\begin{align*}
S(\I):= \I\textstyle\coprod\I = \xymatrix{0\ar[d]_\phi & 1\ar[d]^\psi \\ 0' & 1'} 
\end{align*}
with the swap involutions.
\end{lem}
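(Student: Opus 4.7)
My plan is to exploit that $\GGpd = [\G,\Gpd]$ is a presheaf category, so a square is a pushout in $\GGpd$ if and only if its underlying square is a pushout in $\Gpd$ \emph{and} the involution induced on the pushout agrees (under the evident identification) with the one on the target. Thus the proof breaks into three tasks: commutativity, computation of the underlying pushout in $\Gpd$, and compatibility of the involutions.

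First, commutativity is immediate: $k \circ S(i)$ sends $0 \mapsto y$ and $1 \mapsto \eta(y)$, which equal the values of $l$, and composing either with the full-subgroupoid inclusion $G' \hookrightarrow G''$ gives the same thing. Next, for the underlying pushout in $\Gpd$, I would use the standard description: gluing $S(\I)$ to $\underline{G'}$ along $S(\b1)$ amounts to freely adjoining two new objects $0', 1'$ together with isomorphisms $\phi : y \xrightarrow{\cong} 0'$ and $\psi : \eta(y) \xrightarrow{\cong} 1'$. Call this groupoid $P$. I would define a functor $P \to \underline{G''}$ by the identity on $G'$ and by $0' \mapsto x$, $1' \mapsto \eta(x)$, $\phi \mapsto c$, $\psi \mapsto \eta(c)$. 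In the reverse direction, any morphism in $G''$ with an endpoint in $\{x,\eta(x)\}$ can be sandwiched between $c^{\pm 1}$ and $\eta(c)^{\pm 1}$ to produce a morphism of $G'$ (using fullness of $G' \subseteq G''$ and the fact that $y, \eta(y) \in G'$); this gives an inverse functor. Checking that the two composites are the identity is routine because $G''$ is generated over $G'$ by the two isomorphisms $c$ and $\eta(c)$ subject to no relations beyond those imposed by $G'$ itself.

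Finally, I would verify that the involution on $P$ induced from the swap involution on $S(\I)$ and from $\eta$ on $G'$ corresponds, under the isomorphism $P \cong \underline{G''}$ above, to the restriction of $\eta$ to $G''$. On $G'$ this is tautological; on the new data, the induced involution sends $0' \leftrightarrow 1'$ and $\phi \leftrightarrow \psi$, while $\eta$ on $G''$ sends $x \leftrightarrow \eta(x)$ and $c \leftrightarrow \eta(c)$, and the two match by construction of the isomorphism. Combined with pointwise-ness of colimits in $\GGpd$, this upgrades $P \cong \underline{G''}$ to an isomorphism in $\GGpd$, which exhibits the given square as a pushout.

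The only subtle point is Step 2, where one must be precise about what the pushout in $\Gpd$ looks like; I would either appeal to the general description of adjoining free isomorphisms to a groupoid, or verify the universal property directly by sending a compatible pair $(f : G' \to H,\, g : S(\I) \to H)$ to the functor defined on $G'$ by $f$ and on a morphism $\alpha$ of $G''$ involving $x$ (resp.\ $\eta(x)$) by $g(\phi) \circ f(c^{-1}\alpha) \circ g(\phi)^{-1}$ or its obvious variants, with uniqueness forced by the prescribed values on $c, \eta(c)$ and on $G'$.
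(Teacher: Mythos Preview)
Your approach is correct, and the underlying case analysis is the same as the paper's, but the packaging differs. The paper verifies the universal property of the square directly in $\GGpd$: given $m:G'\to H$ and $n:S(\I)\to H$ agreeing on $S(\b1)$, it constructs the unique $j:G''\to H$ by setting $j_{|G'}=m$, $j(x)=n(0')$, $j(\eta(x))=n(1')$, $j(c)=n(\phi)$, $j(\eta(c))=n(\psi)$, and then filling in the remaining hom-sets by writing every morphism of $G''$ touching $x$ or $\eta(x)$ as a composite of $c$, $\eta(c)$, and a morphism of $G'$ (using fullness of $G'\subseteq G''$). Your primary route instead invokes pointwise colimits to reduce to $\Gpd$, identifies the abstract pushout $P$ as ``$G'$ with two free isomorphisms adjoined'', and builds an explicit equivariant isomorphism $P\cong \underline{G''}$. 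Your fallback suggestion in the last paragraph (verify the universal property directly via formulas like $g(\phi)\circ f(c^{-1}\alpha)$) \emph{is} the paper's proof.

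What each buys: the paper's direct verification avoids naming the abstract pushout $P$ and any appeal to pointwise colimits, at the cost of a slightly longer case-by-case definition of $j$. Your route is conceptually cleaner (the equivariance check is isolated at the end), but relies on the reader accepting the ``free adjunction of isomorphisms'' description of the $\Gpd$-pushout; if you spell that out, you end up doing the same hom-set bookkeeping as the paper. One small wording issue: your formula $g(\phi)\circ f(c^{-1}\alpha)\circ g(\phi)^{-1}$ is only the right shape for automorphisms of $x$; for $\alpha:z\to x$ with $z\in G'$ you want $g(\phi)\circ f(c^{-1}\alpha)$, as the paper writes. Your ``obvious variants'' disclaimer covers this, but it is worth stating the cases separately.
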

\begin{proof}
We will check that our square satisfies the universal property of a pushout. Hence consider the following commutative square,
$$\xymatrix{S(\b1)\ar[rr]^l\ar@{^{(}->}[dd]_{S(i)}&&G'\ar^{m}[dd]\\\\S(\I)\ar[rr]_n&& H}$$ 
We want to prove there exists a unique map $j:G''\rightarrow H$ such that $j_{|G'} = m$ and $j\circ k = n$. We define $j$ as follows. Take $j_{|G'} = m$, $j(x) = n(0')$, $j(\eta(x)) = n(1')$ and $j(c) = n(\phi)$, $j(\eta(c)) = n(\psi)$. It remains to define consecutively $j$ on morphisms from any $z\in G'$ to $x$, on morphisms from any $z\in G'$ to $\eta(x)$, on the automorphisms of $x$ and $\eta(x)$ and on morphisms from $x$ to $\eta(x)$.\\
Let $f$ be a morphism from $z\in G'$ to $x$. Note that $G'$ being a full subgroupoid and $z$ and $y$ being elements of $G'$, the morphism $c^{-1}\circ f$ belongs to $G'$. Hence take $j(f) = j(c)\circ j(c^{-1}\circ f)$. Now let $f$ be a morphism from any $z\in G'$ to $\eta(x)$, to make sure that $j$ is compatible with the involutions take $j(f) = \theta(j(\eta(f)))$. Next, let $f$ be an automorphism of $x$, take $j(f) = j(c)\circ j(c^{-1}\circ f)$. Again for the sake of the compatibility with the involutions take $j(f) = \theta(j(\eta(f)))$ for any automorphism $f$ of $\eta(x)$. Last, for any morphism $f$ from $x$ to $\eta(x)$, take $j(f) = j(f\circ c)\circ j(c)^{-1}$. The reader can easily check that $j$ is unique.   
\end{proof}\medskip

One gives the following full characterization of the homotopy equivalences in $\GGpd$.
\begin{thm}\label{thm5}
Let $f:G\rightarrow H$ be a morphism in $\GGpd$. The following are equivalent :\\
\begin{enumerate}[label=(\roman*)]
\item $f$ is a (right) homotopy equivalence.
\item $f$ is a levelwise weak equivalence and $f$ induces an isomorphism between the full subgroupoids of fixed points $G\f$ and $H\f$.
\item $f$ is a levelwise weak equivalence and $f$ induces an isomorphism between the subgroupoids of fixed points and fixed morphisms $G^{\G}$ and $H^{\G}$.
\item $f$ is a levelwise weak equivalence and $f$ induces a bijection between the set of fixed points in $G$ and the set of fixed points in $H$.
\end{enumerate}
\end{thm}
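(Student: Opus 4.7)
The plan is to verify (ii)$\Leftrightarrow$(iii)$\Leftrightarrow$(iv) by a direct argument and then prove (i)$\Leftrightarrow$(iv), where the latter contains all the substance. A levelwise weak equivalence has fully faithful underlying functor $\underline{f}$, so the restrictions $f_{|G\f}$ and $f_{|G^{\G}}$ are themselves fully faithful; equivariance of $f$ together with faithfulness of $\underline{f}$ show that a morphism $\psi$ in $G$ is fixed if and only if $f(\psi)$ is fixed. Hence each of (ii), (iii), (iv) reduces to bijectivity of $f$ on the set of fixed objects.

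For (i)$\Rightarrow$(iv), recall that a right homotopy equivalence in any model category is a weak equivalence, so $f$ is a levelwise weak equivalence. Choose an equivariant homotopy inverse $g: H\to G$ and a right homotopy $h: G\to PG$ from $g\circ f$ to $1_G$ through an arbitrary projective path object $G\acof PG\fib G\times G$. For every $x\in G\f$, equivariance of $h$ makes $h(x)$ a fixed object of $PG$. But Proposition~\ref{prop3} applied to the trivial cofibration $G\acof PG$ forces every fixed object of $PG$ to come from $G$ and hence project to the diagonal of $G\times G$. Since $h(x)$ projects to $(g(f(x)),x)$, we conclude $g(f(x))=x$. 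Equivariance of $g$ ensures $g(H\f)\subseteq G\f$, and the symmetric argument on a path object for $H$ gives $f(g(y))=y$ for $y\in H\f$. Hence $f$ induces a set-theoretic bijection between $G\f$ and $H\f$.

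For (iv)$\Rightarrow$(i), I construct an equivariant quasi-inverse $g: H\to G$ together with equivariant natural isomorphisms $\alpha: f\circ g\Rightarrow 1_H$ and $\beta: g\circ f\Rightarrow 1_G$. On objects, for each $y\in H\f$ let $g(y)$ be the unique fixed preimage of $y$ under $f$ given by (iv), and set $\varphi_y:=1_y$; for each non-fixed orbit $\{y,\eta(y)\}$ pick $g(y)$ and an isomorphism $\varphi_y: f(g(y))\xrightarrow{\cong} y$ using essential surjectivity of $\underline{f}$, then set $g(\eta(y)):=\gm(g(y))$ and $\varphi_{\eta(y)}:=\eta(\varphi_y)$. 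Extend $g$ uniquely to morphisms via the characterisation $f(g(\psi))=\varphi_{y'}^{-1}\circ\psi\circ\varphi_y$ coming from fully faithfulness. A direct check using equivariance of $f$ and faithfulness of $\underline{f}$ shows $g$ is equivariant, and the family $(\varphi_y)$ forms an equivariant natural isomorphism $\alpha$ with the crucial property that $\alpha_y=1_y$ for every $y\in H\f$; the isomorphism $\beta$ is obtained symmetrically.

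The main obstacle is to promote $\alpha$ and $\beta$ to genuine right homotopies with respect to projective path objects. The naive candidate $H^{\I}$ (with pointwise involution) is not a projective path object in general, since $H\to H^{\I}$ may overlook fixed triples $(a,b,\phi)$ with $a\neq b$, violating Proposition~\ref{prop3}. The remedy is to apply MC5, realised by the small object argument with respect to the generating projective trivial cofibration $S(i)$, producing a factorisation $H\acof PH\fib H\times H$, and then to exhibit an equivariant map $H\to PH$ realising $\alpha$. The degeneracy property $\alpha_y=1_y$ for $y\in H\f$ ensures that on fixed points the required homotopy is forced to land in the image of $H\to PH$, so the homotopy can be built inductively along the stages of the small object argument by treating one non-fixed orbit of paths at a time. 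A symmetric argument produces the right homotopy $g\circ f\overset{r}{\sim}1_G$, completing the proof.
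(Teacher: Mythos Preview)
Your treatment of (ii)$\Leftrightarrow$(iii)$\Leftrightarrow$(iv) and of (i)$\Rightarrow$(iv) is correct and matches the paper's (the paper packages the latter as two short lemmas, but the content is the same use of Proposition~\ref{prop3} on the trivial cofibration $G\acof PG$).

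For (iv)$\Rightarrow$(i) your route is genuinely different from the paper's. The paper first reduces to the weakly connected and surjective case and then runs a Zorn's-lemma argument that builds a partial section $g'$ of $f$ and a right homotopy $h':G'\to PG'$ \emph{simultaneously}, extending one orbit at a time and using functorial very-good path objects together with Proposition~\ref{prop3} to obtain the lift at each stage. You instead construct $g$ and the equivariant natural isomorphisms $\alpha,\beta$ globally first---which is indeed straightforward---and push all the difficulty into the final paragraph.

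That final paragraph is where you have a genuine gap. You correctly identify that $H^{\I}$ is not a \emph{very good} projective path object, but your proposed remedy (``build the homotopy inductively along the stages of the small object argument by treating one non-fixed orbit of paths at a time'') is not a proof; carrying it out honestly would essentially reproduce the paper's Zorn argument. There is, however, a clean way to finish along your lines. Observe that $H^{\I}\fib H\times H$ \emph{is} a projective fibration and $H\to H^{\I}$ is a levelwise weak equivalence, so factoring $H\to H^{\I}$ projectively as $H\acof PH\fib H^{\I}$ yields a very good path object $PH$, with $PH\fib H^{\I}$ a projective \emph{trivial} fibration by 2-out-of-3. Your degeneracy condition $\alpha_y=1_y$ for $y\in H\f$ says exactly that $\alpha'|_{H\f}$ factors through $H\hookrightarrow PH$. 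The key fact you are missing is that the inclusion $H\f\hookrightarrow H$ is a projective \emph{cofibration}: one checks the LLP against any levelwise trivial fibration directly by choosing, on each non-fixed orbit, a preimage of one representative and transporting by the involution, then using full faithfulness for morphisms. Lifting $\alpha':H\to H^{\I}$ against the trivial fibration $PH\fib H^{\I}$ relative to $H\f$ then produces the required right homotopy $H\to PH$. With this step supplied, your argument goes through and is arguably more conceptual than the paper's transfinite construction; without it, the proof is incomplete.
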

\begin{proof}\label{proofthm5}
We will prove successively $(i)\Rightarrow (iv)$, $(iv)\Rightarrow (iii)$, $(iii)\Rightarrow (ii)$ and $(ii)\Rightarrow (i)$.\\
Before turning to $(i)\Rightarrow (iv)$ we prove the following two lemmas.
\begin{itemize}

\item Lemma 1. Let $X$ be a groupoid equipped with an involution such that $X^{\G} = X$ and $w:Y\acof Z$ a projective trivial cofibration in $\GGpd$. Then for any morphism $v: X\rightarrow Z$ there exists a map $\widehat{v}:X\rightarrow Y$ that makes the following square commutes,
$$\xymatrix{X~\ar[rr]^{\widehat{v}} \ar[rrdd]_{v}&&Y\ar^{w}[dd]\\&&\circlearrowright\qquad\qquad\\&&Z\,.}$$
Indeed, define $\widehat{v}$ as follows. Let $x$ be an element of $\text{Ob}(X)$, since $v(x)\in Z\f$ by the characterization of trivial cofibrations given in \ref{prop3} there exists a unique $y\in Y\f$ such that $v(x) = w(y)$. Take $\widehat{v}(x) = y$. Now let $f:x\rightarrow x'$ be a morphism in $X$, since $w$ is fully faithful the induced map by $w$ from $Y(y,y')$ to $Z(v(x),v(x'))$ is a bijection, hence there exists a unique map $\widehat{v}(f)$ such that $w(\widehat{v}(f)) = v(f)$.
\item Lemma 2. Let $f,g: X\rightarrow H$ be two (right) homotopic maps in $\GGpd$ such that $X^{\G} = X$. Then one has $f = g$.\\
Indeed, let $PH$ be a path object for H,
$$\xymatrix{H~\ar@{ >->}[rr]^{\sim}_{w} \ar[rrdd]_{\Delta}&&PH\ar@{->>}[dd]\\&&\circlearrowright\qquad\qquad\\&&H\times H}$$
and $\gamma$ a (right) homotopy,
$$\xymatrix{&&**[r]{PH}\ar@{->>}[dd]\\&\pcom&\\X\ar[rruu]^{\gamma}\ar[rr]_{<f,g>}&&**[r]{H\times H\,.}\\}$$
By the previous lemma 1 applied with $Y = H$, $Z = PH$ and $v = \gamma$, there exists $\widehat{\gamma}$ such that $w\circ \widehat{\gamma} = \gamma$. So we have $\Delta\circ \widehat{\gamma} = <f,g>$, hence $\widehat{\gamma} = f = g$.
\end{itemize}
We prove $(i)\Rightarrow (iv)$. Assume $(i)$, so there exists $g:H\rightarrow G$ in $\GGpd$ such that $f\circ g\overset{r}{\sim}1_{H}$ and $g\circ f\overset{r}{\sim}1_{G}$. Hence $(f\circ g)_{|H^{\G}}\overset{r}{\sim}1_{H^{\G}}$ and $(g\circ f)_{|G^{\G}}\overset{r}{\sim}1_{G^{\G}}$, so by the lemma 2 above one has $(f\circ g)_{|H^{\G}} = 1_{H^{\G}}$ and $(g\circ f)_{|G^{\G}} = 1_{G^{\G}}$ and we conclude that $f_{G^{\G}}: G^{\G}\rightarrow H^{\G}$ is an isomorphism. Hence $f$ induces a bijection between the set of fixed points in $G$ and the set of fixed points in $H$. \medskip

We prove $(iv)\Rightarrow (iii)$. It is straighforward using the fact that $f$ is a levelwise weak equivalence, in particular a fully faithful functor. \medskip

Next we prove $(iii)\Rightarrow (ii)$. Note that $f_{|G\f}$ is bijective on objects since $f_{|G^{\G}}$ is by assumption. Moreover $f$ is fully faithful hence $f_{|G\f}$ is an isomorphism. \medskip 

Last we prove $(ii)\Rightarrow (i)$. Note that by \ref{coprodofweaklyconnected} we can assume without loss of generality that G is weakly connected. Let $f:G\rightarrow H$ in $\GGpd$ be a weak equivalence (it means that $f$ is a levelwise weak equivalence of groupoids) such that $f$ induces an isomorphism between $G\f$ and $H\f$.\\ We start by proving one can assume that $f$ is surjective. Indeed, first note that one can factorize $f$ through its image $\text{Im}~f$, the full subgroupoid of $H$ whose objects are of the form $f(x)$ for some $x\in G$. The groupoid $\text{Im}~f$ can be equipped with an involution thanks to (the restriction of) $\theta$ the involution on $H$. Indeed, being given $y\in H$ such that there exists $x\in G,~f(x)=y$ then $f(\eta(x))=\theta(f(x))=\theta(y)$, hence $\theta(y)\in\text{Im}~f$. First we prove that the inclusion morphism $\text{Im}~f\overset{i}{\hookrightarrow}H$ is a projective trivial cofibration. Indeed since $(\text{Im}~f)\f = H\f$ and $\text{Im}~f$ is equivalent to $H$, we conclude by \ref{cor1}. So thanks to \ref{prop6} this inclusion is a homotopy equivalence.  One concludes that $f:G\rightarrow H$ is a homotopy equivalence if and only if $G\rightarrow\text{Im}~f$ is (since the homotopy equivalences have the \enquote {$2$ out of $3$} property). The morphism $G\rightarrow\text{Im}~f$ is still a levelwise weak equivalence (since weak equivalences have the \enquote {$2$ out of $3$} property) and this morphism still induces an isomorphism between $G\f$ and $(\text{Im}~f)\f$, since $(\text{Im}~f)\f=H\f$. So without loss of generality one can assume that our map $f:G\rightarrow H$ in $\GGpd$, which is a levelwise weak equivalence and induces an isomorphism between $G\f$ and $H\f$, is also surjective on objects (hence on morphisms).\medskip

One wants to prove that $f$ is a homotopy equivalence. We will provide a morphism $g:H\rightarrow G$ in $\GGpd$ such that $f\circ g=1_H$ and $g\circ f\overset{r}{\sim}1_G$. Also note we will use that our projective model structure comes with functorial factorizations of any map as a trivial cofibration followed by a fibration since the small object argument applies in our model category (see for details \url{ncatlab.org/nlab/show/path+space+object}). In the next we use the letter $P$ to refer to our functorial path objects. To achieve our goal we rely on Zorn's lemma, namely we construct a preordered set of partial homotopy equivalences then we apply Zorn's lemma to get a maximal element and last we prove that this maximal element is the required (total) homotopy equivalence. One defines a set $S$ as the set of triples 
$$(G'\subseteq G,~g': f(G')\rightarrow G',~h': G'\rightarrow PG')$$ 
such that $G'$ is a full subgroupoid of $G$ with $\eta'=\eta_{|G'}$ and $G_{\text{f}}\subseteq G'$ and the following squares commute

\begin{tabular}{lll} 
{$\qquad\qquad\xymatrix{\underset{~}{f(G')}\commutatif\ar[rr]^{g'}\ar@{^{(}->}[rdd]&&G'\ar[ldd]^{f':=f_{|_{G'}}}\\&&\\&H&\\\\}$}&{$\qquad\qquad$}&{$\xymatrix{&&**[r]{PG'}\ar@{->>}[dd]\\&\pcom&\\G'\ar[rruu]^{h'}\ar[rr]_{<g'\circ f',~1_{G'}>}&&**[r]{G'\times G'}~.}$} 
\end{tabular}\\
Note that by $f(G')$ we denote the full subgroupoid of $H$ whose objects are the $f(x)$'s with $x\in G'$. Note this last groupoid is equipped with an involution, namely $\theta_{|f(G')}$, since $\theta(f(x)) = f(\eta(x)) = f(\eta'(x))$ with $\eta'(x)\in G'$ whenever $x\in G'$ (hence $\theta(f(x))\in f(G')$). 
The two commutative diagrams above mean respectively that $g'$ is a partial section of $f$ and $h'$ is a (right) homotopy between $g'\circ f'$ and $1_{G'}$. One can provide $S$ with the structure of a preordered set : 
$$(G',g',h')\leqslant(G'',g'',h'')\text{ if and only if }(G'\subseteq G'',~g''_{|f(G')} = g',$$ 
$$\xymatrix@C=1,5cm{G'~\comr\ar[r]^{h'}\ar@{^{(}->}[dd]&PG'\ar[dd]\\\\**[l]{G''}~\ar[r]_{h''}&**[r]{PG'')}}.$$.\\
\begin{itemize}
\item Reflexivity :\\\\
One has $(G',g',h')\leqslant(G',g',h')$ since $G'\subseteq G',~g'_{|f(G')} = g',$ 
$$\xymatrix@C=1,5cm{G'\comr\ar[r]^{h'}\ar@{=}[dd]&PG'\ar@{=}[dd]\\\\**[l]{G'}\ar[r]^{h'}&**[r]{PG'.}}$$.\\
\item Transitivity :\\\\
Assume $(G',g',h')\leqslant(G'',g'',h'')$ and $(G'',g'',h'')\leqslant(G''',g''',h''')$
then $G'\subseteq G''\subseteq G'''$ hence $G'\subseteq G'''$,
and $g'''_{|f(G'')}=g'',~g''_{|f(G')}=g'$ hence $g'''_{|f(G')}=g'$,
last the commutativity of the diagram with $h', h'''$ is obvious by functoriality of $P$. 
So one concludes $(G',g',h')\leqslant (G''',g''',h''')$.
\end{itemize}
Let $C\subseteq S$ be a chain of $S\textit{ i.e. }C$ is a subset of $S$ which is totally ordered by $\leqslant$. One has to distinguish two cases depending on the emptiness of $C$.
\begin{itemize}
\item First, assume that $C=\varnothing$. Takes $G' = G_f$. In this case $f(G') = f(G_{f}) = H_{f}$ since $f$ induces an isomorphism between $G_f$ and $H_f$. Takes $f_{|G_f}^{-1}$ for $g'$ then $g'\circ f' = 1_{G'}$ and $<g'\circ f', 1_{G'}> = \Delta$, hence takes for $h'$ the trivial cofibration that comes with the path object $PG_f$.\\
\item Second, assume $C\ne\varnothing$. One takes $G'_{C}:=\underset{(G',g',h')\in C}{\text{colim}} G'$, more specifically $G'_C$ is the full subgroupoid of $G$ whose set of objects is given by $$\text{Ob}(G'_C) = \underset{(G',g',h')\in C}{\bigcup}\text{Ob}(G')\,.$$ This is easy to check that $G'_C$ contains $G_f$ (since $C\neq \varnothing$) and is equipped with the restriction of $\eta$ as an involution.\\
In this case $f(G'_C)$ is the full subgroupoid of $H$ whose set of objects is $$\text{Ob}(f(G'_C))~=~\underset{(G',g',h')\in C}{\bigcup} f(G')\,.$$ \\
We are looking for $g'_C: f(G'_C)\rightarrow G'_C$. For each $(G',g',h')\in C$ one has $g'_{(G',g',h')}$ from $f(G')$ to $G'$. Recall that a functor $F:\mathscr{C}\rightarrow\mathscr{D}$ from a category $\mathscr{C}$ to a category $\mathscr{D}$ consists in the following functions: 

\begin{center}
$F_0: \text{Ob}(\mathscr{C})\rightarrow\text{Ob}(\mathscr{D})$ \\
$\forall(x,y)\in \text{Ob}(\mathscr{C})^2, F_{(x,y)}: \mathscr{C}(x,y)\rightarrow\mathscr{D}(F_0(x),F_0(y))$
\end{center}

such that \begin{center}
$\forall x\in \text{Ob}(\mathscr{C}), F_{(x,x)}(1_x) = 1_{F_0(x)}$\\
$\forall (x,y,z)\in \text{Ob}(\mathscr{C})^3,\forall(f,g), F_{(x,z)}(g\circ f) = F_{(y,z)}(g)\circ F_{(x,y)}(f)$
\end{center} 
where in the last equation $f$ goes from $x$ to $y$ and $g$ from $y$ to $z$.\\
In other words a functor is nothing than a bunch of functions satisfying some conditions, hence the union below has to be understood as the functor whose underlying functions are obtained by the union of the graphs of the functions involved. With this in mind takes $g'_C = \underset{(G',g',h')\in C}{\bigcup}g'$ which makes sense since $C$ is totally ordered and for $(G',g',h')\leqslant (G'',g'',h'')$ two elements of $\mathscr{C}$, one has $g''_{|f(G')} = g'$.\\  One has the following commutative square, 
$$\xymatrix{\underset{~}{f(G'_C)}\ar[rr]^{g'_{C}}\ar@{^{(}->}[rdd]&&G'_C\ar[ldd]^{f}\\&\circlearrowright&\\&H \, .&}$$ 
Now we are looking for a homotopy $h'_C$  such that the following square commutes, 
$$\xymatrix@C=1,5cm{&&PG'_{C}\ar@{->>}[dd]\\&&\circlearrowright\qquad\qquad\\G'_{C}\ar[rruu]^{h'_{C}}\ar[rr]_{<g'_{C}\circ f_{|G'_C},~1_{G'_{C}}>}&&**[r]{G'_{C}\times G'_{C}\, .}\\}$$\\
For each $(G',g',h')\in C$ one has an inclusion $G'\xhookrightarrow{} G'_C$, hence by functoriality one has a map $PG'\rightarrow PG'_C$ and by precomposition of this last morphism with $h':G'\rightarrow PG'$ we get a map from $G'$ to $PG'_C$. By taking the colimit over these morphisms one gets a map $h'_C: G'_C\rightarrow PG'_C$ satisfying the following commutative square,
$$\xymatrix@C=1,5cm{&&PG'_{C}\ar@{->>}[dd]\\&&\circlearrowright\qquad\qquad\\G'_{C}\ar[rruu]^{h'_{C}}\ar[rr]_{<g'_{C}\circ f_{|G'_C},~1_{G'_{C}}>}&&**[r]{G'_{C}\times G'_{C}\, .}\\}$$
(the commutativity of this square results from the fact that for each $(G',g',h')\in C$ one has the commutative square 
$$\xymatrix@C=1,5cm{&&PG'\ar@{->>}[dd]\\&&\circlearrowright\qquad\qquad\\G'\ar[rruu]^{h'}\ar[rr]_{<g'\circ f_{|G'},~1_{G'}>}&&**[r]{G'\times G'\, .}\\}$$
and $<g'_C\circ f_{|G'_C}, 1_{G'_C}>_{|G'} = <g'\circ f_{|G'}, 1_{G'}>$).\\ Moreover $(G',g',h')\leqslant (G'_C,g'_C,h'_C)$ for each $(G',g',h')\in C$ since the diagram
$$\xymatrix@C=1,5cm{G'~\comr\ar[r]^{h'}\ar@{^{(}->}[dd]&PG'\ar[dd]\\\\**[l]{G'_C}~\ar[r]_{h'_C}&**[r]{PG'_C}}.$$
commutes by construction of $h'_C$.
\end{itemize}
By Zorn's lemma $S$ has a maximal element $(G_{\text{max}},~g_{\text{max}},~h_{\text{max}})$. We want to prove that  $G_{\text{max}} = G$ (it will follow that $f$ is a homotopy equivalence).\medskip

Assume that $G_{\text{max}}\ne G$. We can distinguish two cases:
\begin{itemize}
\item First, let assume that $G_{\text{max}} = \varnothing$.\medskip

Since $G_{\text{max}}\ne G$ then there exists $x\in G$ such that $x\not\in G_{\text{max}}$ and $x$ is not a fixed point since $G_{\text{f}}\subseteq G_{\text{max}}$.\\ 
Now consider $\widetilde{G}$ the full subgroupoid of $G$ whose set of objects is \\
$\lbrace{x,\eta(x)}\rbrace$. This groupoid has a natural involution namely the restriction of $\eta$. Since $G_{\text{f}}\subseteq G_{\text{max}}$ and $f_{|G_f}: G_f\rightarrow H_f$ is an isomorphism, we conclude that neither $G$ nor $H$ has a fixed point.
We are looking for $\widetilde{g}:f(\widetilde{G})\rightarrow \widetilde{G}$ such that, 
$$\xymatrix{\underset{~}{f(\widetilde{G})}\ar[rr]^{\widetilde{g}}\ar@{^{(}->}[rdd]&&\widetilde{G}\ar[ldd]^{f_{|\widetilde{G}}}\\&\circlearrowright&\\&H& .}$$ 
Note that $f(\widetilde{G})$ is the full subgroupoid of $H$ with objects $f(x)$ and $f(\eta(x))$.  Since $f$ is fully faithful, its restriction $f_{|\widetilde{G}}: \widetilde{G}\rightarrow f(\widetilde{G})$ is an isomorphism from $\widetilde{G}$ to $f(\widetilde{G})$. Thus take $\widetilde{g} = [f_{|\widetilde{G}}]^{-1}$. Finally, we have a (right) homotopy between $\widetilde{g}\circ f_{|\widetilde{G}}$ and $1_{\widetilde{G}}$ since actually we have the equality $\widetilde{g}\circ f_{|\widetilde{G}} = 1_{\widetilde{G}}$.\medskip

\item Second case, let us assume that $G_{\text{max}}\ne\varnothing$.\medskip

Thanks to $G_{\text{max}}\ne G$ there exists $x\in \text{Ob}(G)-\text{Ob}(G_{\text{max}})$. We denote by $\widetilde{G}$ the full subgroupoid of $G$ generated by $\text{Ob}(G_{\text{max}})\cup \lbrace x,\eta(x)\rbrace$. Note that $x\ne \eta(x)$ since $G\f\subseteq G_{\text{max}}$.\\
We denote by $H_{\text{max}}$ the full subgroupoid of $H$ generated by the set of objects $\lbrace f(z)|z\in \text{Ob}(G_{\text{max}}) \rbrace$.\\
We need to distinguish two cases depending on whether $f(x)$ belongs to $H_{\text{max}}$ or it does not.
\begin{itemize}\medskip

\item First, assume that $f(x)\notin H_{\text{max}}$.\medskip

One has $f(x)\notin H\f$ since $H\f\subseteq H_{\text{max}}$. We denote by $\widetilde{H}$ the full subgroupoid of $H$ generated by $\text{Ob}(H_{\text{max}})\cup\lbrace f(x), \theta(f(x))\rbrace$. Note that $G_{\text{max}}\ne \varnothing$, hence let $z$ denote an element of $G_{\text{max}}$. By weak connectedness, there exist an element $y$ of $\lbrace z,\eta(z)\rbrace$ and an isomorphism $c$ in $G$ from $g_{\text{max}}(f(y))$ to $x$. Thanks to lemma \ref{lem2} the following square is a pushout square in $\GGpd$, 
$$\xymatrix{\underset{~}{S(\b1)}\ar[rr]^l\ar@{^{(}->}[dd]_{S(i)}&&G_{\text{max}}\ar[dd]\\\\S(\I)\ar[rr]_k&&\widetilde{G}\pushoutcorner\\}$$
where with the notations of lemma \ref{lem2} for the domain and codomain of $S(i)$ one has
\begin{align*}
l(0) = g_{\text{max}}(f(y)), l(1)= \eta(g_{\text{max}}(f(y))) \\
k(0) = l(0), k(1) = l(1), k(0') = x, k(1') = \eta(x) \\
k(\phi) = c, k(\psi) = \eta(c).
\end{align*}
Again thanks to lemma \ref{lem2} the following square is a pushout square,
$$\xymatrix{\underset{~}{S(\b1)}\ar[rr]^l\ar@{^{(}->}[dd]_{S(i)}&&H_{\text{max}}\ar[dd]\\\\S(\I)\ar[rr]_k&&\widetilde{H}\pushoutcorner\\}$$
where 
\begin{align*}
l(0) = f(y), l(1) = \theta(f(y))\\
k(0)= l(0), k(1)= l(1), k(0')= f(x), k(1')= \theta(f(x))\\
k(\phi)= f(c), k(\psi)= \theta(f(c)).
\end{align*} 
Now we want to use the universal property of the pushout square above to get $\widetilde{g}$ as required. One has the following commutative square, 
$$\xymatrix{\underset{~}{S(\b1)}\ar[rr]^l\ar@{^{(}->}[dd]_{S(i)}&&H_{\text{max}}\ar[dd]^{\iota\circ g_{\text{max}}}\\\\S(\I)\ar[rr]_j&&\widetilde{G}}$$
where $\iota$ is the inclusion from $G_{\text{max}}$ to $\widetilde{G}$ and $j$ is defined by
\begin{align*}
j(0) = g_{\text{max}}(l(0)), j(1)= \eta(j(0))\\
j(0')= x, j(1') = \eta(x)\\
j(\phi)= c, j(\psi) = \eta(c).
\end{align*}
Now thanks to the universal property of the pushout we get a map $\widetilde{g}$ as follows, 
$$\xymatrix{\underset{~}{S(\b1)}\ar[rr]^l\ar[dd]_{S(i)}&&H_{\text{max}}\ar@{^{(}->}[dd]\ar@/^/[rddd]^{\iota\circ g_{\text{max}}}\\\\S(\I)\ar[rr]^k \ar@/_/[rrrd]_j&&\widetilde{H}\pushoutcorner\ar@{-->}[rd]^{\widetilde{g}}\\&&&\widetilde{G}\,.}$$ 
The reader can easily check that $f\circ \widetilde{g}$ is the inclusion from $\widetilde{H}$ to $H$. 
Last, we need to provide a homotopy $\widetilde{h}$ between $\widetilde{g}\circ f_{|\widetilde{G}}$ and $1_{\widetilde{G}}$. \\
Recall that one has $h_{\text{max}}:G_{\text{max}}\rightarrow PG_{\text{max}}$ such that
$$\xymatrix@C=1,5cm{&&PG_{\text{max}}\coml\ar@{->>}[dd]\\\\G_{\text{max}}\ar[rruu]^{h_{\text{max}}}\ar[rr]_-{<g_{\text{max}}\circ f_{\text{max}},~1_{G_{\text{max}}}>}&&**[r]{G_{\text{max}}\times G_{\text{max}}}}$$ 
where $f_{\text{max}}:= f_{|{G_{\text{max}}}}$.\medskip

Now, consider the following square, 
$$\xymatrix{\underset{~}{G_{\text{max}}}\ar[r]^{h_{\text{max}}}\ar@{^{(}->}_{\iota}[dd]&PG_{\text{max}}\ar[r]^{P\iota}&P\widetilde{G}\ar@{->>}[dd]\\\\\widetilde{G} \ar[rr]_{<\widetilde{g}\circ f_{|\widetilde{G}},~1_{\widetilde{G}}>}&&\widetilde{G}\times\widetilde{G}\,.\\}$$ 
The inclusion $\iota$ from $G_{\text{max}}$ to $\widetilde{G}$ is a trivial cofibration thanks to our characterization of trivial cofibrations (see proposition \ref{prop3}). The above square can be rewritten as  
$$\xymatrix @C=3cm {\underset{~}{G_{\text{max}}}\ar@/_2pc/[rrdd]|{<g_{\text{max}}\circ f_{\text{max}},~1_{G_{\text{max}}}>}\ar[rr]^{h_{\text{max}}}\ar@{^{(}->}_{\iota}[dddd]&&PG_{\text{max}}\comr\coml\ar@{->>}[dd]\ar[r]^{P\iota}&P\widetilde{G}\ar@{->>}[dddd]\\\\&&G_{\text{max}}\times G_{\text{max}}\ar@/-1pc/[rdd]&&\\\\\widetilde{G}\ar[rrr]_{<\widetilde{g}\circ
f_{|\widetilde{G}},~1_{\widetilde{G}}>}&&&\widetilde{G}\times\widetilde{G}\,.\\}$$ To prove this square is commutative it suffices to prove that its bottom triangle commutes, 
$$\xymatrix{G_{\text{max}}\ar[rrr]^{<g_{\text{max}}\circ f_{\text{max}},~1_{G_{\text{max}}}>}
\ar@{^{(}->}_{\iota}[dd]&&&**[r]{G_{\text{max}}\times G_{\text{max}}} \ar[dd]\\\\\widetilde{G}\ar[rrr]_{<\widetilde{g}\circ f_{|\widetilde{G}},~1_{\widetilde{G}}>}&&&**[r]{\widetilde{G}\times\widetilde{G}}\\}$$ 
and this diagram indeed commutes since $\widetilde{g}_{|_{H_{\text{max}}}}= g_{\text{max}}$\\
and $(f_{|{\widetilde{G}}})_{|{G_{\text{max}}}}= f_{|{G_{\text{max}}}}:= f_{\text{max}}$. So one has a diagonal filler $\widetilde{h}$, 
$$\xymatrix @C=2cm {\underset{~}{G_{\text{max}}}\ar[r]^{h_{\text{max}}}\ar@{^{(}->}_{\iota}[dd]&PG_{\text{max}}\ar[r]^{P\iota}&P\widetilde{G}\ar@{->>}[dd]\\\\\widetilde{G} \ar[rruu]^{\widetilde{h}}\ar[rr]_{<\widetilde{g}\circ f_{|\widetilde{G}},~1_{\widetilde{G}}>}&&
\widetilde{G}\times\widetilde{G}\\}$$
and $\widetilde{h}$ is the homotopy we are looking for.\\ We have $(G_{\text{max}}, g_{\text{max}}, h_{\text{max}})<(\widetilde{G},\widetilde{g},\widetilde{h})\in S$ which contradicts the maximality of $(G_{\text{max}}, g_{\text{max}}, h_{\text{max}})$.\medskip

\item Second case,  assume that $f(x)$ belongs to $H_{\text{max}}$.\medskip

In this case the full subgroupoid of $H$ generated by $\text{Ob}(H_{\text{max}})\cup \lbrace f(x), \theta(f(x))\rbrace$ is still $H_{\text{max}}$. We still denote by $\iota$ the inclusion from $G_{\text{max}}$ to $\widetilde{G}$.\\
Take $\widetilde{g} = \iota\circ g_{\text{max}}$ which makes the following square commutes,
$$\xymatrix{\underset{~}{H_{\text{max}}}\ar[rr]^{\widetilde{g}}\ar@{^{(}->}[rdd]&&\widetilde{G}\ar[ldd]^{f_{|\widetilde{G}}}\\&\circlearrowright&\\&H\, .&}$$
Note that we have a homotopy $h_{\text{max}}$,
$$\xymatrix@C=2cm{&&PG_{\text{max}}\coml\ar@{->>}[dd]\\\\G_{\text{max}}\ar[rruu]^{h_{\text{max}}}\ar[rr]_-{<g_{\text{max}}\circ f_{\text{max}},~1_{G_{\text{max}}}>}&&**[r]{G_{\text{max}}\times G_{\text{max}}}\,.}$$ 
By a previous argument the following square commutes,
$$\xymatrix@C=2cm{\underset{~}{G_{\text{max}}}\ar[r]^{h_{\text{max}}}\ar@{^{(}->}[dd]_{\iota}&PG_{\text{max}}\ar[r]^{P\iota}&P\widetilde{G}\ar@{->>}[dd]\\\\\widetilde{G} \ar[rr]_{<\widetilde{g}\circ f_{|\widetilde{G}},~1_{\widetilde{G}}>}&&\widetilde{G}\times\widetilde{G}\,.\\}$$ 
Since the inclusion $\iota$ is a trivial cofibration we get the desired homotopy $\widetilde{h}$ as a diagonal filler,
$$\xymatrix @C=2cm {\underset{~}{G_{\text{max}}}\ar[r]^{h_{\text{max}}}\ar@{^{(}->}[dd]_{\iota}&PG_{\text{max}}\ar[r]^{P\iota}&P\widetilde{G}\ar@{->>}[dd]\\\\\widetilde{G} \ar[rruu]^{\widetilde{h}}\ar[rr]_{<\widetilde{g}\circ f_{|\widetilde{G}},~1_{\widetilde{G}}>}&&
\widetilde{G}\times\widetilde{G}\\}$$
with $(G_{\text{max}}, g_{\text{max}}, h_{\text{max}})< (\widetilde{G},\widetilde{g},\widetilde{h})$
which contradicts the maximality of $(G_{\text{max}}, g_{\text{max}}, h_{\text{max}})$.\\
Thus, eventually $G_{\text{max}} = G$ and $(G_{\text{max}}, g_{\text{max}}, h_{\text{max}})$ exhibits $f$ as a (right) homotopy equivalence.

\end{itemize}
\end{itemize} 
\end{proof}

\section{The failure of univalence}
\label{sec:tfou}

Now, we want to determine in our model the space of equivalences, \textit{i.e.} we want to determine the fibration denoted by $E\rightarrow U\times U$ in $\GGpd$ that corresponds to the interpretation of the dependent type 
$$(A:\text{Type}),(B:\text{Type})\pr\text{Equiv}(A,B)\ty.$$ It will allow us to check if univalence holds for the universe of the previous section.\bigskip

\begin{notn}
When $V_\kappa$ is $\text{Gpd}_\Delta(V_\kappa)$ we denote $U_{V_\kappa}$ (respectively $\widetilde{U_{V_\kappa}}$) by $U_\Delta$ (respectively $\widetilde{U}_\Delta$) and $p_\Delta:\widetilde{U}_{\Delta}\rightarrow U_\Delta$ our universe. Recall that $\text{Gpd}_\Delta(V_\kappa)$ denotes the univalent universe of $\kappa$-small discrete groupoids in the groupoid model of Hofmann and Streicher.
\end{notn}\bigskip

\begin{prop}\label{prop:fiberspaceofeq}
The set of objects of the fiber of $E\rightarrow U_{\Delta}\times U_{\Delta}$ over a pair $(A,B)\in U_{\Delta}\times U_{\Delta}$ is the set of isomorphisms in $U_{\Delta}$ between $A$ and $B$. Moreover, the involution on $E$ maps $(A,B,\rho)$, where $\rho$ is an isomorphism from $A$ to $B$, to $(u(A),u(B),u(\rho))$.
\end{prop}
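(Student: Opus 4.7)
The plan is to unwind the categorical interpretation of the type $\text{Equiv}(A,B)$ in our type-theoretic fibration category $\GGpd_{\mathbf{proj}}$ using the universe $p_\Delta:\widetilde{U}_\Delta\fib U_\Delta$ constructed in Section \ref{sec:tcou}. Syntactically, $\text{Equiv}(A,B)$ is a $\Sigma$-type packaging a function $f:A\to B$ together with a proof that $f$ is an equivalence. Categorically, one first forms the dependent product of the small fibration encoding $B$ along the small fibration encoding $A$, both pulled back to $U_\Delta\times U_\Delta$ via the two projections, applying Lemma \ref{thm2}; this yields the small fibration of \emph{functions} $A\to B$. One then carves out the subfibration picked out by the \texttt{isEquiv} predicate. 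The fiber of the resulting total space $E$ over a pair $(A,B)$ will then be identified with the set of equivalences between the small types $A$ and $B$.

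First I would exploit the fact, coming from Lemma \ref{minimalfib}, that every small fibration in $\GGpd_{\mathbf{proj}}$ has discrete underlying fibers in $\Gpd$. Hence, working pointwise, a term of the function type $A\to B$ over $(A,B)=((A_0,A_1,\varphi),(B_0,B_1,\psi))$ is, at the underlying level, just a pair of functions $A_0\to B_0$ and $A_1\to B_1$ together with a natural transformation realising compatibility with $\varphi$ and $\psi$ (strictly, an equality once discreteness is used). Because the relevant fibers are discrete, the homotopical \texttt{isEquiv} predicate collapses to the ordinary condition that these underlying functions be bijections, i.e. isomorphisms in $\Gpd$. Reading this back through the explicit description of morphisms in $\widetilde{U}$ given at the start of Section \ref{sec:tcou}, one obtains precisely a tuple $(\rho_0,\rho_1,\alpha)$ with $\rho_0,\rho_1$ isomorphisms in $\Gpd$ and $\alpha:\psi\circ\rho_0\Rightarrow\rho_1\circ\varphi$ a natural iso, that is, an isomorphism between $A$ and $B$ in $U_\Delta$.

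Second, I would compute the involution on $E$. Since $E$ is built functorially from $U_\Delta$ and $\widetilde{U}_\Delta$ by the dependent product construction followed by a subobject cut out by a proposition, the $\mathbb{Z}/2\mathbb{Z}$-action on $E$ is determined by the involution $u$ on $U_\Delta$, which sends $(A_0,A_1,\varphi)$ to $(A_1,A_0,\varphi^{-1})$, together with the fact that $\text{Equiv}$ is symmetric in its arguments. Chasing through how the involution is inherited on the $\Pi$-construction (formula for $\pi_g f$ in the proof of Lemma \ref{thm2}), and then restricting to the invertible sub-data, yields that $(A,B,\rho)$ is sent to $(u(A),u(B),u(\rho))$, where $u(\rho)$ is the morphism in $U_\Delta$ obtained by applying $u$ componentwise to $\rho=(\rho_0,\rho_1,\alpha)$.

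I expect the main obstacle to be the clean reduction of \texttt{isEquiv} to "isomorphism in $U_\Delta$": one must be careful that the extra groupoidal data coming from the involution (the square $\alpha$) does not introduce nontrivial homotopical content on top of the discreteness of the underlying fibers. This rests on the fact that $p_\Delta$ classifies \emph{discrete} fibrations (Lemma \ref{minimalfib}), so the only coherence freedom sits in $\alpha$, and any invertible such datum is uniquely determined once $\rho_0$ and $\rho_1$ are fixed in a manner compatible with the involutions. Once this is established, both assertions of the proposition follow by unwinding the definitions.
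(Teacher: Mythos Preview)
Your approach is essentially the same as the paper's: both unfold the categorical interpretation of $\text{Equiv}(A,B)$, use the discreteness of the fibers of $p_\Delta$ to collapse the identity types (the paper does this by observing that the diagonal $\Delta:\widetilde{U}_\Delta\to\widetilde{U}_\Delta\times_{U_\Delta}\widetilde{U}_\Delta$ is already a small fibration, so one may take $P_{U_\Delta}\widetilde{U}_\Delta=\widetilde{U}_\Delta$), and then trace the involution through the $\pi_g f$ formula.

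One point where your sketch diverges from the paper's computation: the fiber of $\m{A\to B}=\Pi_{\m{A}}\m{B}$ over a point $(A,B)=((A_0,A_1,\varphi),(B_0,B_1,\psi))$ is \emph{not} a pair of functions $A_0\to B_0$, $A_1\to B_1$ with a compatibility transformation. The $\Pi$-construction of Lemma~\ref{thm2} gives the underlying-groupoid fiber, which consists only of sections over $\m{A}^{-1}\{(A,B)\}$, i.e.\ maps $s:A_0\to B_0$; the $A_1,B_1$ data and the equivariance enter only through the involution $\pi_g f$, not through the fiber itself. The paper's detailed unfolding makes this explicit, eventually identifying the fiber of $E$ over $(A,B)$ with bijections $A_0\xrightarrow{\cong}B_0$. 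The passage from there to ``isomorphisms in $U_\Delta$'' is then the observation you allude to in your last paragraph: in the discrete case the component $\alpha$ of a morphism $(\rho_0,\rho_1,\alpha)$ in $U_\Delta$ must be the identity, forcing $\rho_1=\psi\circ\rho_0\circ\varphi^{-1}$, so that an isomorphism in $U_\Delta$ is determined by $\rho_0$ alone. Your worry that $\alpha$ might carry nontrivial homotopical content is thus unfounded for a simpler reason than you suggest: between discrete groupoids the only natural transformation is the identity.
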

\begin{proof}
To achieve this goal we have to unfold all the interpretation of this judgement. Recall that 
$$\text{Equiv}(A,B):=\underset{f:A\rightarrow B}{\textstyle\sum\limits}[\:(\underset{s:B\rightarrow A}{\textstyle\sum\limits}~\underset{b:B}{\textstyle\prod\limits}f(s(b))=b)\times(\underset{r:B\rightarrow A}{\textstyle\sum\limits}~\underset{a:A}{\textstyle\prod\limits}r(f(a))=a)\:]$$ 
where $f(s(b))=b$ (\textit{resp. }$r(f(a))=a)$ stands for the identity type $\text{Id}_{B}(f(s(b)),b)$\\
(\textit{resp. }$\text{Id}_{A}(r(f(a)),a))$.\\\\
We shorten $\m{(A:\text{Type}),(B:\text{Type})\pr\text{Equiv}(A,B)\ty}\text{ by }\m{\text{Equiv}(A,B)}$ \\
(and similary for other types). One has 
$$\m{\text{Equiv}(A,B)}=\m{A\rightarrow B}\circ\m{(\underset{s:B\rightarrow A}{\textstyle\sum\limits}~\underset{b:B}{\textstyle\prod\limits}f(s(b))=b)\times(\underset{r:B\rightarrow A}{\textstyle\sum\limits}~\underset{a:A}{\textstyle\prod\limits}r(f(a))=a)}$$ 
where 
$$\m{A\rightarrow B}:\m{\text{Type}.\text{Type}.A\rightarrow B}\twoheadrightarrow U\times U$$\\
is the interpretation of the judgement 
$$(A:\text{Type}),(B:\text{Type})\pr A\rightarrow B\ty.$$ 
Moreover\\\\
$\m{(\underset{s:B\rightarrow A}{\textstyle\sum\limits}~\underset{b:B}{\textstyle\prod\limits}f(s(b))=b)\times(\underset{r:B\rightarrow A}{\textstyle\sum\limits}~\underset{a:A}{\textstyle\prod\limits}r(f(a))=a)}:\\
\m{\text{Type}.\text{Type}.A\rightarrow B.(\textstyle\sum\limits-)\times(\textstyle\sum\limits-)}\twoheadrightarrow\m{\text{Type}.\text{Type}.A\rightarrow B}$\\\\
is the interpretation of the following judgement 
$$(A:\text{Type}),(B:\text{Type}),(f:A\rightarrow B)\pr(\underset{s:B\rightarrow A}{\textstyle\sum\limits}~\underset{b:B}{\textstyle\prod\limits}f(s(b))=b)\times(\underset{r:B\rightarrow A}{\textstyle\sum\limits}~\underset{a:A}{\textstyle\prod\limits}r(f(a))=a)\ty\,.$$
One has $\m{A\rightarrow B}=\Pi_{\m{A}}\m{B}$ where $\m{A}:\m{\text{Type}.\text{Type}.A}\twoheadrightarrow U\times U$ corresponds to the interpretation of $(A:\text{Type}),(B:\text{Type})\pr A\ty$,\\\\
and $\m{B}:\m{\text{Type}.\text{Type}.A.B}\twoheadrightarrow\m{\text{Type}.\text{Type}.A}$ corresponds to the interpretation of $(A:\text{Type}),(B:\text{Type}),(a:A)\pr B\ty$.\\\\
Actually, $\m{A}$ is the pullback of $p$ along the first projection $pr_{1}$, 
$$\xymatrix{{U\times\widetilde{U}}\pullbackcorner\ar[rr]^{pr_{2}}\ar@{->>}[dd]_{\m{A}}&&{\widetilde{U}}\ar@{->>}[dd]^p\\\\{U\times U}\ar[rr]_{pr_{1}}&&U}$$
with $\m{A}=<p\circ pr_{2},~pr_{1}>$.
\\Moreover, $\m{B}$ is the pullback of $p$ along the first projection $pr_{1}$, 
$$\xymatrix{{\widetilde{U}\times\widetilde{U}}\ar[rr]^{pr_{2}}\ar@{->>}[dd]_{\m{B}}&&{\widetilde{U}}\ar@{->>}[dd]^p\\\\{U\times\widetilde{U}}\ar[rr]_{pr_{1}}&&U\\}$$ 
with $\m{B}=<p\circ pr_{2},~pr_{1}>$.\\
Hence $\m{A\rightarrow B}=\Pi_{\m{A}}\m{B}:\m{\text{Type}.\text{Type}.A\rightarrow B}\twoheadrightarrow U\times U$ is such that above $(A,B)\in U\times U$ there is the groupoid of partial sections $s:\m{A}^{-1}\lbrace{(A,B)}\rbrace\rightarrow\widetilde{U}\times\widetilde{U}$ of $\m{B}$.\\\\
Next, note that : 
$$\m{(\underset{s:B\rightarrow A}{\textstyle\sum\limits}~\underset{b:B}{\textstyle\prod\limits}f(s(b))=b)\times(\underset{r:B\rightarrow A}{\textstyle\sum\limits}~\underset{a:A}{\textstyle\prod\limits}r(f(a))=a)}$$
$$=\m{\underset{s:B\rightarrow A}{\textstyle\sum\limits}~\underset{b:B}{\textstyle\prod\limits}f(s(b))=b}\circ\m{\underset{r:B\rightarrow A}{\textstyle\sum\limits}~\underset{a:A}{\textstyle\prod\limits}r(f(a))=a}$$\\\\\\
with $\m{\underset{r:B\rightarrow A}{\textstyle\sum\limits}~\underset{a:A}{\textstyle\prod\limits}r(f(a))=a}:\\\m{\text{Type}.\text{Type}.A\rightarrow B.\underset{s:B\rightarrow A}{\textstyle\sum\limits}~\underset{b:B}{\textstyle\prod\limits}f(s(b))=b.\underset{r:B\rightarrow A}{\textstyle\sum\limits}~\underset{a:A}{\textstyle\prod\limits}r(f(a))=a}\\
\twoheadrightarrow \m{\text{Type}.\text{Type}.A\rightarrow B.\underset{s:B\rightarrow A}{\textstyle\sum\limits}~\underset{b:B}{\textstyle\prod\limits}f(s(b))=b}$\\\\\\
and $\m{\underset{s:B\rightarrow A}{\textstyle\sum\limits}~\underset{b:B}{\textstyle\prod\limits}f(s(b))=b}:\\\m{\text{Type}.\text{Type}.A\rightarrow B.\underset{s:B\rightarrow A}{\textstyle\sum\limits}~\underset{b:B}{\textstyle\prod\limits}f(s(b))=b}\twoheadrightarrow \m{\text{Type}.\text{Type}.A\rightarrow B}.$\\\\\\
Since these two expressions are very similar we determine only\\
$\m{\underset{s:B\rightarrow A}{\textstyle\sum\limits}~\underset{b:B}{\textstyle\prod\limits}f(s(b))=b}$.\\\\\\
Once again, $\m{\underset{s:B\rightarrow A}{\textstyle\sum\limits}~\underset{b:B}{\textstyle\prod\limits}f(s(b))=b}=\m{B\rightarrow A}\circ\m{\underset{b:B}{\textstyle\prod\limits}f(s(b))=b}$\\\\\\
where $\m{B\rightarrow A}:\m{\text{Type}.\text{Type}.A\rightarrow B.B\rightarrow A}\twoheadrightarrow\m{\text{Type}.\text{Type}.A\rightarrow B}$\\\\\\
and $\m{\underset{b:B}{\textstyle\prod\limits}f(s(b))=b}:\\\m{\text{Type}.\text{Type}.A\rightarrow B.B\rightarrow A.\underset{b:B}{\textstyle\prod\limits}f(s(b))=b}\twoheadrightarrow \m{\text{Type}.\text{Type}.A\rightarrow B.B\rightarrow A}.$\\\\\\
One has 
$$\m{B\rightarrow A}=\Pi_{\m{B}}\m{A}$$
where 
$$\m{B}:\m{\text{Type}.\text{Type}.A\rightarrow B.B}\twoheadrightarrow\m{\text{Type}.\text{Type}.A\rightarrow B}$$ 
is the interpretation of 
$$(A:\text{Type}),(B:\text{Type}),(f:A\rightarrow B)\pr B\ty$$
and
$$\m{A}:\m{\text{Type}.\text{Type}.A\rightarrow B.B.A}\twoheadrightarrow\m{\text{Type}.\text{Type}.A\rightarrow B.B}$$ 
is the interpretation of 
$$(A:\text{Type}),(B:\text{Type}),(f:A\rightarrow B),(b:B)\pr A\ty.$$\\
The morphism $\m{B}$ is the pullback of $p$ along $pr_{2}\circ\m{A\rightarrow B}$, 
$$\xymatrix{\m{\text{Type}.\text{Type}.A\rightarrow B.B}\ar[rr]\ar@{->>}[dd]_{\m{B}}&&{\widetilde{U}}\ar@{->>}[dd]^p\\\\{\m{\text{Type}.\text{Type}.A\rightarrow B}}\ar[rr]_{\qquad\qquad pr_{2}\circ\m{A\rightarrow B}}&&U\\}$$\\
where $pr_{2}\circ\m{A\rightarrow B}$ maps an element $(A,B,s)$ to $B$. More explicitely above $(A,B,s)$ by $\m{B}$ one has the elements of the form $(A,\dot{B},s)$ where $\dot{B}$ denotes an element of $\widetilde{U}$ such that $p(\dot{B})=B\in U$. To be specific $s$ is such that it maps an element  of the form $(B,\dot{A})\in U\times\widetilde{U}$ to an element $s(B,\dot{A})\in\widetilde{U}\times\widetilde{U}$ such that $s(B,\dot{A})$ is of the form $(\dot{A},\dot{B})$.\\\\
The morphism $\m{A}$ is the pullback of $p$ along the map : 
$$\ffour{pr_{1}\circ\m{A\rightarrow B}\circ\m{B}:{\m{\text{Type}.\text{Type}.A\rightarrow B.B}}}{U}{(A,\dot{B},s)}{A}$$\\
$$\xymatrix{\m{\text{Type}.\text{Type}.A\rightarrow B.B.A}\pullbackcorner\ar[rr]\ar@{->>}[dd]_{\m{A}}&&{\widetilde{U}}\ar@{->>}[dd]^p\\\\{\m{\text{Type}.\text{Type}.A\rightarrow B.B}}\ar[rr]_{\quad\qquad\qquad pr_{1}\circ\m{A\rightarrow B}\circ\m{B}}&&U\\}$$\\\\
so above $(A,\dot{B},s)\in \m{\text{Type}.\text{Type}.A\rightarrow B.B}$ by $\m{A}$ one has the elements of the form $(\dot{A},\dot{B},s)$.\\
Hence $\m{B\rightarrow A}:\m{\text{Type}.\text{Type}.A\rightarrow B.B\rightarrow A}\twoheadrightarrow\m{\text{Type}.\text{Type}.A\rightarrow B}$ is such that above an element $(A,B,s)\in\m{\text{Type}.\text{Type}.A\rightarrow B}$ there is the groupoid of partial sections $t$, 
$$\xymatrix{\underset{~}{\m{B}^{-1}\lbrace{(A,B,s)}\rbrace}\ar[rr]^t\ar@{^{(}->}[rdd]&&\m{\text{Type}.\text{Type}.A\rightarrow B.B\rightarrow A} \ar[ldd]^{\m{A}}\\&\circlearrowright&\\&\m{\text{Type}.\text{Type}.A\rightarrow B.B}&\\}$$ 
in other words, such a section $t$ maps an element of the form $(A,\dot{B},s)$ to an element of  the form $(\dot{A},\dot{B},s)$.\\\\
One has to determine $\m{\underset{b:B}{\textstyle\prod\limits}f(s(b))=b}$ which is the interpretation of the judgement 
$$(A:\text{Type}),(B:\text{Type}),(f:A\rightarrow B),(s:B\rightarrow A)\pr\underset{b:B}{\textstyle\prod\limits}f(s(b))=b\ty~~.$$
One has 
$$\m{\underset{b:B}{\textstyle\prod\limits}f(s(b))=b}={\textstyle\prod\limits}_{\m{B}}\m{f(s(b))=b}$$ 
where 
$$\m{B}:\m{\text{Type}.\text{Type}.A\rightarrow B.B\rightarrow A.B}\twoheadrightarrow\m{\text{Type}.\text{Type}.A\rightarrow B.B\rightarrow A}$$\\
is the interpretation of the judgement 
$$(A:\text{Type}),(B:\text{Type}),(f:A\rightarrow B),(s:B\rightarrow A)\pr B\ty\,,$$\\
and $\m{f(s(b))=b}:\\
\m{\text{Type}.\text{Type}.A\rightarrow B.B\rightarrow A.B.f(s(b))=b}\twoheadrightarrow\m{\text{Type}.\text{Type}.A\rightarrow B.B\rightarrow A.B}$\\\\
is the interpretation of 
$$(A:\text{Type}),(B:\text{Type}),(f:A\rightarrow B),(s:B\rightarrow A)(b:B)\pr {f(s(b))=b}\ty~~.$$\\
The morphism $\m{B}$ is the pullback of $p$ along the projection 
$$\ffour{\m{\text{Type}.\text{Type}.A\rightarrow B.B\rightarrow A}}{U}{(A,B,s,t)}{B}$$ 
$$\xymatrix{\m{\text{Type}.\text{Type}.A\rightarrow B.B\rightarrow A.B}\pullbackcorner\ar[rr]\ar@{->>}[dd]_{\m{B}}&&{\widetilde{U}}\ar@{->>}[dd]^p\\\\{\m{\text{Type}.\text{Type}.A\rightarrow B.B\rightarrow A}}\ar[rr]&&U\\\\}$$
in other words, $\m{B}$ maps $(A,B,s,t,b)$ to $(A,B,s,t)$ with $b\in B$. The interpretation of  the judgement 
$$(B:\text{Type}),(b:B),(b':B)\pr{b=b'}\ty$$ is given by the small fibration $\xymatrix{P_{U}\widetilde{U}\ar[r]|->{\SelectTips{2cm}{}\object@{>>}}|-->{\SelectTips{eu}{}}&\widetilde{U}\times_{U}\widetilde{U}}$ in the following diagram, 
$$\xymatrix{\underset{~}{\widetilde{U}}\ar@{-->}[rrd]|{\Delta=<\text{id},\text{id}>}\ar@/^2pc/[rrrrd]^{\text{id}}\ar@/_4pc/[rrddd]_{\text{id}}\ar@{ >->}[rdd]_[@]{\hbox{$\sim$}}&&&&\\&\circlearrowright&\widetilde{U}\times_{U}~\widetilde{U}\pullbackcorner\ar@{->>}[rr]\ar@{->>}[dd]&&\widetilde{U}\ar@{->>}[dd]^p\\&P_{U}\widetilde{U}\ar[ru]|->{\SelectTips{2cm}{}\object@{>>}}|-->{\SelectTips{eu}{}}&&&\\&&\widetilde{U}\ar@{->>}[rr]_p&&U\,.}$$ 
Now assume that the groupoids involved in the definition of $U$ and $\widetilde{U}$ are discrete, in this case $\Delta$ is itself a small fibration (\textit{i.e.} an isofibration with $\kappa$-small fibers equipped with a split cleavage), thus for $\xymatrix{P_{U}\widetilde{U}\ar[r]|->{\SelectTips{2cm}{}\object@{>>}}|-->{\SelectTips{eu}{}}&\widetilde{U}\times_{U}\widetilde{U}}$ on can choose $\Delta$ itself. The fibration $\m{f(s(b))=b}$ is the pullback of $\Delta:\widetilde{U}\rightarrow\widetilde{U}\times_{U}\widetilde{U}$ along the projection $$\ffour{\m{\text{Type}.\text{Type}.A\rightarrow B.B\rightarrow A.B}}{\widetilde{U}\times_{U}\widetilde{U}}{(A,\dot{B},s,t)}{(pr_{2}[s(B,~pr_{1}[t(A,\dot{B},s)])],~\dot{B})}$$\\
$$\xymatrix{\m{\text{Type}.\text{Type}.A\rightarrow B.B\rightarrow A.B.f(s(b))=b}\pullbackcorner\ar[rr]\ar[dd]_{\m{f(s(b))=b}}|->{\SelectTips{2cm}{}\object@{>>}}|-->{\SelectTips{eu}{}}&&{\widetilde{U}}\ar[dd]^{\Delta}|->{\SelectTips{2cm}{}\object@{>>}}|-->{\SelectTips{eu}{}}\\\\{\m{\text{Type}.\text{Type}.A\rightarrow B.B\rightarrow A.B}}\ar[rr]&&\widetilde{U}\times_{U}\widetilde{U}\\\\}$$ in other words $\m{\text{Type}.\text{Type}.A\rightarrow B.B\rightarrow A.B.f(s(b))=b}$ consists in the tuples $(A,\dot{B},s,t)$ with $A\in U$, let say $A:=(A_{0},A_{1},\varphi_{A}),~B\in U$, let say $B:=(B_{0},B_{1},\varphi_{B})$ and $s:A_{0}\rightarrow B_{0},~t:B_{0}\rightarrow A_{0}$, such that $s\circ t=1_{B_{0}}$, and $\m{f(s(b))=b}$ maps such an element $(A,\dot{B},s,t)$ to itself.\\
Hence $\m{\underset{b:B}{\textstyle\prod\limits}f(s(b))=b}={\textstyle\prod\limits}_{\m{B}}\m{f(s(b))=b}$ is such that above an element $(A,B,s,t)$ one has $(A,B,s,t)$ itself if $s\circ t=1_{B_{0}}$ (otherwise the fiber is empty).\\
Last, above $(A,B,s)$ by $\m{\underset{s:B\rightarrow A}{\sum}~\underset{b:B}{\textstyle\prod\limits}f(s(b))=b}$ one has the right inverses of $s$ (with $s$ seen as a morphism between the discrete groupoids $A_{0}$ and $B_{0}$). In the same way, above $(A,B,s)$ by $\m{(\underset{s:B\rightarrow A}{\sum}~\underset{b:B}{\textstyle\prod\limits}f(s(b))=b)\times(\underset{r:A\rightarrow B}{\sum}~\underset{a:A}{\textstyle\prod\limits}r(f(a))=a)}$ one has the tuples $(A,B,s,t,u)$ where $t$ is a right inverse of $s$ and $u$ is a left inverse of $s$. But $A_{0}$ and $B_{0}$ being sets (discrete groupoids) one has $t=u$.\\
Finally, $\m{\text{Equiv}(A,B)}$ is a fibration such that above $(A,B)\in U\times U$ one has the isomorphisms in $U$ between $A$ and $B$. We can determine the involution on $E:=\text{dom}(\m{\text{Equiv}(A,B)})$.\\
Indeed, in order from the beginning, $\text{dom}(\m{A})=\text{dom}(<p\circ pr_{2},~pr_{1}>)$ is equipped with the involution $u\times\tilde{u}$ on $U\times \widetilde{U}$ and $\text{dom}(\m{B})=\text{dom}(<p\circ pr_{2},~pr_{1}>)$ is equipped with the involution $\tilde{u}\times\tilde{u},$\\
and $\text{dom}(\m{A\rightarrow B})=\text{dom}(\Pi_{\m{A}}\m{B})=\m{\text{Type}.\text{Type}.A\rightarrow B}$ is equipped with the involution that maps $(A,B,s)$ to $(u(A),u(B),s')$ where $s'$ maps an element of the form $(u(B),\dot{u(A)})$ to $(\dot{u(A)},~\dot{u(B)})$, more specifically :\\ 
\begin{align*} 
s'(u(B),\dot{u(A)})&=(\tilde{u}\times\tilde{u})(s(u\times\tilde{u}(u(B),\dot{u(A)})))\\&=(\tilde{u}\times\tilde{u})(s(B,\tilde{u}(\dot{u(A)})))\\&=(\dot{u(A)},~\tilde{u}(\dot{B})).\\
\end{align*}
Let determine the involution on $\text{dom}(\m{B\rightarrow A})=\m{\text{Type}.\text{Type}.A\rightarrow B.B\rightarrow A}$. One has $\text{dom}(\m{B\rightarrow A})=\text{dom}(\Pi_{\m{B}}\m{A})$ with $\text{dom}(\m{B})=\m{\text{Type}.\text{Type}.A\rightarrow B.B}$ equipped with the involution that maps $(A,\dot{B},s)$ to $(u(A),\tilde{u}(\dot{B}),s')$ with $s'$ as above. Hence $\text{dom}(\m{B\rightarrow A})=\text{dom}(\Pi_{\m{B}}\m{A})=\m{\text{Type}.\text{Type}.A\rightarrow B.B\rightarrow A}$ is equipped with the involution that maps $(A,B,s,t)$ to $(u(A),u(B),s',t')$ with $t'$ such that the following square commutes, 
$$\xymatrix{\underset{~}{\m{B}^{-1}\lbrace{(u(A),u(B),s')}\rbrace}\ar[rr]^{t'}\ar@{^{(}->}[rdd]&&\m{\text{Type}.\text{Type}.A\rightarrow B.B.A} \ar[ldd]^{\m{A}}\\&\circlearrowright&\\&\m{\text{Type}.\text{Type}.A\rightarrow B.B}&\\}$$\\
in other words $t'$ maps an element of the form $(u(A),\dot{u(B)},s')$ to an element of the form $(\dot{u(A)},\dot{u(B)},s')$, to be specific this last element is the one you get by applying the involution on $\m{\text{Type}.\text{Type}.A\rightarrow B.B.A}$ to $t(A,\tilde{u}(\dot{u(B)}),s)$, namely $(\tilde{u}(\dot{A}),\tilde{u}(\dot{u(B)}),s')$. Let determine the involution on 
$$\text{dom}(\m{\underset{b:B}{\textstyle\prod\limits}f(s(b))=b})=\text{dom}({\textstyle\prod\limits}_{\m{B}}\m{f(s(b))=b}).$$ 
First, the involution on $\text{dom}(\m{B})=\m{\text{Type}.\text{Type}.A\rightarrow B.B\rightarrow A.B}$ maps $(A,B,s,t,b)$ to $(u(A),u(B),s',t',\varphi_{B}(b))$ where $B:=(B_{0},B_{1},\varphi_{B})$ and $b\in B_{0}$. The involution on $\text{dom}(\m{f(s(b))=b})=\m{\text{Type}.\text{Type}.A\rightarrow B.B\rightarrow A.B.f(s(b))=b}$ maps $(A,\dot{B},s,t)$ to $(u(A),\tilde{u}(\dot{B}),s',t')$.\\
Last, the involution on $E$ maps $(A,B,\rho)$, where $\rho=(\rho_{0},\rho_{1})$ is an isomorphism in $U$ between $A$ and $B$, to $(u(A),u(B),(\rho_{1},\rho_{0}))=(u(A),u(B),u(\rho)).$
\end{proof}\bigskip

\begin{prop}\label{prop7}
Univalence does not hold for our universe $p_{\Delta}:\widetilde{U}_{\Delta}\rightarrow U_{\Delta}$ in the projective type-theoretic fibration category $\GGpd_{\mathbf{proj}}$.\\
\end{prop}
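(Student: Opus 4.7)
My plan is to use the characterization of projective right homotopy equivalences given in Theorem \ref{thm5}, which requires in particular that any right homotopy equivalence in $\GGpd$ induce a bijection between the sets of $\G$-fixed points of its domain and codomain. I will exhibit a $\G$-fixed object of $E$ that does not lie in the image of the "identity equivalence" morphism $\iota: U_\Delta \to E$ over $U_\Delta\times U_\Delta$, which sends $A$ to $(A, A, \text{id}_A)$; it will follow that $\iota$ is not a right homotopy equivalence, hence that $p_\Delta$ is not univalent.

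First, I unfold the relevant fixed-point sets. From the definition of the involution $u$ recalled in section \ref{sec:tcou}, a fixed point of $U_\Delta$ is a triple $(A_0, A_0, \varphi)$ with $A_0$ a $\kappa$-small set and $\varphi$ an involution of $A_0$. By Proposition \ref{prop:fiberspaceofeq}, objects of $E$ over $(A, B)$ are isomorphisms $\rho: A \xrightarrow{\cong} B$ in $U_\Delta$, and the involution sends $(A, B, \rho)$ to $(u(A), u(B), u(\rho))$. For a fixed $A = (A_0, A_0, \varphi)$, an iso $\rho: A \to A$ in $U_\Delta$ reduces, since $A$ is discrete, to a pair $(\rho_0, \rho_1)$ of bijections of $A_0$ with $\varphi\rho_0 = \rho_1\varphi$, and specializing the explicit formula for $u$ on morphisms yields $u((\rho_0, \rho_1)) = (\rho_1, \rho_0)$. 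Hence a $\G$-fixed point of $E$ above a fixed $(A, A)$ amounts precisely to a bijection $\rho_0: A_0 \to A_0$ that commutes with $\varphi$, paired with itself.

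For the counterexample, I take $A_0 := \{0, 1\}$ and let $\sigma$ denote the swap, so that $A := (A_0, A_0, \sigma)$ is a fixed point of $U_\Delta$. Since $\sigma$ commutes with itself, the pair $(\sigma, \sigma)$ yields a $\G$-fixed point $(A, A, (\sigma, \sigma))$ of $E$. However, the restriction of $\iota$ to fixed points of $U_\Delta$ has image consisting only of triples of the form $(B, B, \text{id}_B)$, and $(\sigma, \sigma) \neq (\text{id}_{A_0}, \text{id}_{A_0}) = \text{id}_A$. Thus $(A, A, (\sigma, \sigma))$ is a fixed point of $E$ not in the image of $\iota$, so the induced map on fixed points is not a bijection.

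By the equivalence $(i)\Leftrightarrow(iv)$ of Theorem \ref{thm5}, $\iota$ is therefore not a right homotopy equivalence, and the universe $p_\Delta$ is not univalent. The only step deserving care is confirming the formula $u((\rho_0, \rho_1, \text{id})) = (\rho_1, \rho_0, \text{id})$ in the discrete case and checking that the natural-isomorphism component $\alpha$ is forced to be identity for maps between discrete groupoids-with-involution; both are routine unfoldings of the definitions in section \ref{sec:tcou}.
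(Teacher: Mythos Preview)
Your proof is correct and follows essentially the same strategy as the paper: show that the canonical map $U_\Delta \to E$ fails to be surjective on $\G$-fixed points, then invoke the characterization of projective homotopy equivalences (Theorem~\ref{thm5}). The only difference is the specific counterexample---the paper uses $(\mathbb{N},\mathbb{N},1_{\mathbb{N}})\to(2\mathbb{N},2\mathbb{N},1_{2\mathbb{N}})$ via $n\mapsto 2n$ (or $(\mathbb{Z},\mathbb{Z},1_{\mathbb{Z}})$ with $n\mapsto -n$), whereas your two-element set with the swap is a smaller and equally valid witness.
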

\begin{proof}\label{proofprop7}
Recall that according to \cite{shulman:invdia} (see part 7 on the univalence axiom) and our proposition \ref{prop:fiberspaceofeq} the morphism $U\to E$ is defined as follows, 
$$\ffour{U}{E}{A}{(A,A,1_{A})}$$\\
\textit{i.e.} it maps an object $A$ to the identity isomorphism of $A$ in $U$. Note that this morphism is not surjective on the fixed points of $E$. Indeed, it is easy to find a fixed point $(A,B,\rho)$ of $E$ such that $\rho\ne$id.\\
For instance, take $A:=(\mathbb{N},\mathbb{N},1_{\mathbb{N}}),~B:=(2\mathbb{N},2\mathbb{N},1_{2\mathbb{N}})$\\
and $\rho=(\mathbb{N}\xrightarrow{\cong}2\mathbb{N},\mathbb{N}\xrightarrow{\cong}2\mathbb{N})$ where $\mathbb{N}\xrightarrow{\cong}2\mathbb{N}$ is the bijection that maps $n$ to $2n$, then $(A,B,\rho)$ is a fixed point of $E$ where $\rho$ is not the identity, so $(A,B,\rho)$ does not belong to the image of the morphism above. Note that we can even take $A=B$ and still find a fixed point of $E$ that does not belong to the image of our map $U\rightarrow E$. Indeed, take $A=B:=(\mathbb{Z},\mathbb{Z},1_{\mathbb{Z}})$ and $\rho=(\mathbb{Z}\xrightarrow{\cong}\mathbb{Z},\mathbb{Z}\xrightarrow{\cong}\mathbb{Z})$ where $\mathbb{Z}\xrightarrow{\cong}\mathbb{Z}$ is the bijection that maps $n$ to $-n$. According to \ref{thm5} the map $U\rightarrow E$ is not a homotopy equivalence, so univalence does not hold in our model.
\end{proof}

\section{The failure of function extensionality}
\label{sec:tffe}\bigskip

For some details about the meaning of function extensionality in a type-theoretic fibration category see (5.8) in \cite{shulman:invdia}. In particular, following Shulman, function extensionality in the internal language of a type-theoretic fibration category means that for every fibrations $f:P\fib X$ and $g:X\fib A$, there is a map
\begin{align*}
  \Pi_g (\text{iscontr}_X(P)) \to \text{iscontr}_A(\Pi_g P).\label{eq:catfunext}
\end{align*}.  \bigskip

\begin{prop}\label{prop8}
Function extensionality does not hold in the internal type theory of our projective type-theoretic fibration category $\GGpd_{\mathbf{proj}}$.\\
\end{prop}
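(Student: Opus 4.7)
My plan is to exhibit an explicit counter-example to function extensionality: a fibration $f:P\fib X$ together with a fibration $g:X\fib\b1!$ in $\GGpd_{\mathbf{proj}}$ such that each fiber of $f$ is internally contractible in its slice (so that $\Pi_g(\iscontr_X P)$ admits a global section) while $\Pi_g P$ itself fails to be internally contractible over $\b1!$. This will make the required term of type $\iscontr(\Pi_g P)$ unavailable, breaking the map involved in function extensionality.

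The guiding principle is the characterization of right homotopy equivalences from Theorem~\ref{thm5}: a morphism of $\GGpd$ is a right homotopy equivalence if and only if it is a levelwise weak equivalence inducing an isomorphism between the full subgroupoids of fixed points of source and target. Consequently, an object $Q$ of $\GGpd$ is internally contractible over $\b1!$ exactly when the unique map $Q\to\b1!$ is a right homotopy equivalence, equivalently when both the underlying groupoid $\underline{Q}$ is contractible AND the fixed subgroupoid $Q\f$ is contractible. So to break function extensionality it suffices to arrange that $\Pi_g P$ has a contractible underlying groupoid (which is automatic once the underlying fibers of $P$ are contractible groupoids) but a non-contractible fixed subgroupoid $(\Pi_g P)\f$.

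Using the explicit formula for the involution on $\Pi_g P$ from Lemma~\ref{thm2}, which sends a section $s$ to $\pi_g f(s)$ with $(\pi_g f(s))(x)=\gamma(s(\alpha(x)))$, the fixed objects of $\Pi_g P$ are exactly the equivariant sections of $P\fib X$, and the fixed morphisms are exactly the equivariant natural transformations between them. The counter-example will therefore use an $X$ with a non-trivial equivariant iso structure (built out of the twisted interval $\cI$ or the groupoid $\tr$, whose involutions act non-trivially on isomorphisms) together with $P\fib X$ whose fibers are contractible groupoids (and equivariantly contractible at fixed points of $X$, if any), but whose contracting data fail to assemble equivariantly into a contractible groupoid of equivariant sections.

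The main obstacle will be calibrating $P$ and $X$ so that the failure actually survives: naive candidates such as $X=S(\b1)$ with a disjoint-union fibration, or constant covers of $\cI$, yield $(\Pi_g P)\f$ isomorphic to a single contractible fiber of $P$, and hence FE is not broken in those cases. The counter-example must therefore combine an $X$ whose equivariant morphism structure forces a naturality constraint on equivariant sections with fibers that are contractible only non-equivariantly. Once appropriate $X$, $P$ and $g$ are fixed, the verification splits into two independent steps: first, checking pointwise internal contractibility of the fibers (direct from the choice of contractible fibers with the required involution at fixed points); second, applying Lemma~\ref{thm2} to compute $(\Pi_g P)\f$ explicitly and then using Theorem~\ref{thm5} to conclude that $\Pi_g P$ is not right-homotopic to $\b1!$, hence not internally contractible over $\b1!$.
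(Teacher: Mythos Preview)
Your overall strategy --- exhibit a fibration whose fibers are contractible but whose $\Pi_g$ is not --- is the right idea, and it is exactly what the paper does (via the equivalent criterion in Shulman's Lemma~5.9 that function extensionality holds iff $\Pi_g$ preserves acyclic fibrations). However, your proposal contains a genuine error that derails it before it produces an example.

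The mistake is in your reading of Theorem~\ref{thm5}. You write that $Q\to\b1!$ is a right homotopy equivalence iff $\underline{Q}$ is contractible and the fixed subgroupoid $Q\f$ is \emph{contractible}. But condition (ii) of Theorem~\ref{thm5} requires the induced map $Q\f\to(\b1!)\f=\b1$ to be an \emph{isomorphism} of groupoids, not merely an equivalence; equivalently (condition (iv)) a bijection on the \emph{set} of fixed points. So the correct criterion is that $Q\f$ has exactly one object, not that it is contractible as a groupoid. These are very different: a two-object groupoid with a single isomorphism between its objects is contractible but not isomorphic to $\b1$.

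This is precisely why you wrongly dismiss $X=S(\b1)$. The paper's counter-example takes $g:S(\b1)\to\b1!$ and $f:S(\I)\to S(\b1)$; here $f$ is an acyclic fibration (levelwise trivial fibration, bijection on the empty sets of fixed points), but $\text{dom}(\Pi_g f)$ has two fixed sections $s_1=(0\mapsto 0,\,1\mapsto 1)$ and $s_2=(0\mapsto 0',\,1\mapsto 1')$, while $\b1!$ has a single fixed point. By Theorem~\ref{thm5}(iv), $\Pi_g f$ is not a homotopy equivalence, and FE fails. Your observation that $(\Pi_g P)\f$ is isomorphic to a contractible fiber of $P$ is correct but irrelevant: what matters is the cardinality of its object set, not its homotopy type.

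Beyond this error, your proposal never commits to a specific $P$, $X$, $g$; it remains a plan rather than a proof. Once you fix the reading of Theorem~\ref{thm5}, the ``naive'' candidate you rejected already works, and no excursion to $\cI$ or $\tr$ is needed.
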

\begin{proof}\label{proofprop8}
According to lemma 5.9 in \cite{shulman:invdia} (be aware that in this lemma an acyclic fibration means a fibration which is also a homotopy equivalence), it suffices to prove that there exists a fibration $g$ and a fibration $f$ such that $f$ is a homotopy equivalence and $\Pi_{g}f$ is not a homotopy equivalence. In order to achieve this, take $g:S(\b1)\rightarrow\mathbf{1}!$ where we recall that $S(\b1)$ is the discrete groupoid with two objects $0$ and $1$ equipped with the involution that swaps the two points. Note that $g$ is a fibration since all objects are fibrant in $\GGpd$. Take $f:S(\I)\rightarrow S(\b1)$ where we recall that $S(\I)$ denotes the following groupoid 
$$\xymatrix{0\ar[d]_\phi & 1\ar[d]^\psi\\
0' & 1'}$$
with the swap involution and $f$ is defined by $f(0)= f(0')= 0,\\
f(\phi)= 1_{0}, f(1)= f(1')= 1~\text{and}~f(\psi)= 1_{1}$. The reader can easily check that $f$ is compatible with the involution, $f$ is fully faithful and surjective (and so is a trivial fibration in $\GGpd$) and $f$ restricted to fixed points is the identity (since $(S(\I))\f= (S(\b1))\f=\mathbf{0}$). So, according to \ref{thm5}, $f$ is a homotopy equivalence. Now, since $\Pi_{g}f:\text{dom}(\Pi_{g}f)\rightarrow\mathbf{1}!$ and $(\mathbf{1}!)\f= \mathbf{1}$ it suffices to prove that $\text{dom}(\Pi_{g}f)$ has at least two fixed points. A fixed point of $\text{dom}(\Pi_{g}f)$ is a couple $(*,s)$ where 
$$\xymatrix{S(\b1)\ar[rr]^s\ar@{=}[rdd]&&**[r]{S(\I)}\ar[ldd]^f\\&\circlearrowright&\\&S(\b1)&}$$ 
with $\pi_{g}f(s)=s$. In other words, $s$ is a section of $f$ such that $\pi_{g}f(s)=s$. But we have two such sections $s_{1}$ and $s_{2}$, indeed take 
$$\fsix{s_{1}:S(\b1)}{S(\I)}{0}{0}{1}{1}$$ 
one has $f\circ s_{1}= 1$ and $\begin{cases}s_{1}(0)= \text{swap}(s_{1}(1))\\s_{1}(1)=\text{swap}(s_{1}(0))\end{cases}$\\
where swap denotes the swap involution on $S(\I)$, so $\pi_{g}f(s_{1})=s_{1}$ and 
$$\fsix{s_{2}:S(\b1)}{S(\I)}{0}{0'}{1}{1'}$$ 
one has $f\circ{s_{2}}= 1$ and $\begin{cases}{s_{2}(0)}=\text{swap}(s_{2}(1))\\s_{2}(1)=\text{swap}(s_{2}(0))\end{cases}$ so $\pi_{g}f(s_{2})=s_{2}$.
\end{proof}\bigskip

\begin{rmk}\label{rmk8}
We recall that the univalence axiom implies function extensionality (\textit{cf} \cite{voevodsky:github}). 
Note that the above proposition involving non-discrete groupoids does not give us a proof that univalence does not hold in our universe of discrete groupoids equipped with involution. This is the reason why our characterization of homotopy equivalences in \ref{thm5} was mandatory. Also note that according to Remark 5.10 in \cite{shulman:invdia} function extensionality holds in the type-theoretic fibration category $\Gpd$, so we have broken function extensionality.  \\
\end{rmk}\bigskip

\section{Conclusion}
This chapter that treats the case of groupoids equipped with an involution gives support for drawing the conclusion that the projective model structure is not suitable for univalence. This is essentially due to homotopy equivalences being very strong in the projective setting.

\setchapterpreamble{
\begin{quote}
So far, known models of Voevodsky's \textit{Univalence Axiom} in functor categories have not allowed the existence of a non-trivial isomorphism in the index category. In this chapter we remedy this limitation by giving a model with one \textit{univalent universe} in the injective type-theoretic fibration category $\GGpd_{\mathbf{inj}}$, namely the type-theoretic fibration structure on $\GGpd$ provided by the subcategory given by the injective fibrations.\\
\end{quote}

}

\chapter{A univalent model in the injective type-theoretic fibration category $\GGpd_{\mathbf{inj}}$}
\label{sec:chp5}

\section{Organization}

We try to remedy the limitation on the index category by looking at one of the simplest index category with a non-trivial automorphism, namely  $\mathbf{B}(\mathbb{Z}/2\mathbb{Z})$ (recall that according to our notations we denote for convenience in the rest of this chapter this category simply by $\G$). We consider the category of groupoids $\mathbf{Gpd}$ as a simple target category. Thus, like the previous chapter we consider the presheaf category $\GGpd$. Recall this presheaf category is nothing but the category of groupoids with involution and equivariant functors between them. We endow this presheaf category with the injective fibrations, in particular in the first section \ref{sec:injttfc} we give a proof of the fact that the full-subcategory of injectively fibrant objects of $\GGpd$ equipped with the subcategory given by injective fibrations between fibrant objects is a type-theoretic fibration category. We denote this injective type-theoretic fibration category by $\GGpd_{\mathbf{inj}}$. In the section \ref{sec:explicit} we give a fairly explicit description of the injective fibrations.
In section \ref{sec:injmodel} we equip the injective type-theoretic fibration category $\GGpd_{\mathbf{inj}}$ with a univalent universe.\\

\section{The injective type-theoretic fibration category $\GGpd_{\mathbf{inj}}$}
\label{sec:injttfc}\bigskip

\begin{prop}\label{lem14}
Assume one has a category $\mathscr{C}$ equipped with a distinguish class of morphisms called fibrations such that
$\mathscr{C}$ is locally presentable and satisfies the additional assumption that for every fibration $g:A\fib B$, the pullback functor $g^{\star}:\mathscr{C}/B\rightarrow\mathscr{C}/A$ has a right adjoint $\Pi_{g}$. Moreover, assume that $\mathcal{D}$ is any small category. Then for any levelwise fibration $g:A\twoheadrightarrow B\text{~in~}[\:\mathcal{D},\mathscr{C}\:]$ the pullback functor $g^{\star}:[\:\mathcal{D},\mathscr{C}\:]/B\rightarrow[\:\mathcal{D},\mathscr{C}\:]/A$ has a right adjoint $\Pi_{g}$.\\
\end{prop}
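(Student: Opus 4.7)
The plan is to invoke the adjoint functor theorem for locally presentable categories. Recall that a colimit-preserving functor between locally presentable categories admits a right adjoint (see, e.g., Adámek--Rosický, \textit{Locally Presentable and Accessible Categories}, Theorem 1.66). So the strategy reduces to two verifications: (i) both slice categories $[\mathcal{D},\mathscr{C}]/B$ and $[\mathcal{D},\mathscr{C}]/A$ are locally presentable, and (ii) the pullback functor $g^{\star}$ preserves small colimits.

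For (i), the standard preservation properties of local presentability do the job: $[\mathcal{D},\mathscr{C}]$ is locally presentable whenever $\mathcal{D}$ is small and $\mathscr{C}$ is locally presentable, and slice categories of locally presentable categories are again locally presentable. I would simply cite these facts rather than reproving them.

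For (ii), the key observation is that both limits and colimits in a presheaf category $[\mathcal{D},\mathscr{C}]$ are computed pointwise (levelwise at each object $d\in \mathcal{D}$). Given a small diagram $F : I \to [\mathcal{D},\mathscr{C}]/B$, its colimit is the levelwise colimit of the underlying diagram in $[\mathcal{D},\mathscr{C}]$, equipped with the induced map to $B$. Similarly, pullback along $g$ is computed levelwise: for each $d\in \mathcal{D}$, one has $(g^{\star}H)(d) \cong g(d)^{\star}(H(d))$ in $\mathscr{C}/A(d)$. Now, by hypothesis each component $g(d):A(d)\fib B(d)$ is a fibration in $\mathscr{C}$, so by assumption the functor $g(d)^{\star} : \mathscr{C}/B(d) \to \mathscr{C}/A(d)$ has a right adjoint $\Pi_{g(d)}$ and thus preserves all small colimits. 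Combining these two facts yields, for every $d\in \mathcal{D}$,
\[
g^{\star}(\mathrm{colim}_{i}\, F(i))(d) \;\cong\; g(d)^{\star}\big(\mathrm{colim}_{i}\, F(i)(d)\big) \;\cong\; \mathrm{colim}_{i}\, g(d)^{\star}(F(i)(d)) \;\cong\; \mathrm{colim}_{i}\,(g^{\star} F(i))(d),
\]
and since colimits in $[\mathcal{D},\mathscr{C}]/A$ are also pointwise, this gives $g^{\star}(\mathrm{colim}_{i}\, F(i)) \cong \mathrm{colim}_{i}\, g^{\star} F(i)$ as required.

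I do not anticipate a serious obstacle here: the proof is essentially a bookkeeping argument once one is willing to cite the existence of right adjoints via preservation of colimits in the locally presentable setting. The only subtlety worth flagging in the writeup is the distinction between a colimit in the slice $[\mathcal{D},\mathscr{C}]/B$ and one in $[\mathcal{D},\mathscr{C}]$ (the slice colimit is the ambient colimit equipped with the induced map to $B$), since it is easy to conflate them; once this is explicit, pointwise computation of both limits and colimits in functor categories makes the verification transparent.
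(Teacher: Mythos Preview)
Your proposal is correct and follows essentially the same approach as the paper: invoke the adjoint functor theorem for locally presentable categories, note that local presentability passes to functor categories and slices, and verify colimit preservation of $g^{\star}$ by reducing to the pointwise statement using that limits and colimits in $[\mathcal{D},\mathscr{C}]$ are computed levelwise and that each $g(d)^{\star}$ is a left adjoint. The paper spells out the slice-versus-ambient colimit bookkeeping a bit more explicitly, but your sketch already flags this and the argument is the same.
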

\begin{proof}\label{prooflem14}
Since $\mathscr{C}$ is locally presentable and local presentability is preserved by taking functor category over any small category and is also preserved by slicing, in order to have a right adjoint it is enough that $g^{\star}:[\:\mathcal{D},\mathscr{C}\:]/B\rightarrow[\:\mathcal{D},\mathscr{C}\:]/A$ preserves small colimits. So, let $\mathcal{E}$ be any small category and $F:\mathcal{E}\rightarrow[\:\mathcal{D},\mathscr{C}\:]/B$ any  functor such that colim$F$ exists in $[\:\mathcal{D},\mathscr{C}\:]/B$. One wants to prove that 
$$g^{\star}(\text{colim}F)\cong\text{colim}(g^{\star}\circ F)~.$$
Due to the nature of a colimit in a slice category it is enough to check that dom$(g^{\star}(\text{colim}F))\in[\:\mathcal{D},\mathscr{C}\:]$ is (isomorphic to) colim(dom$~\circ~g^{\star}\circ F)$, where\\
dom : $[\:\mathcal{D},\mathscr{C}\:]/A\rightarrow[\:\mathcal{D},\mathscr{C}\:]$ is the domain functor.\\
Now, since colimits in a presheaf category are pointwise, \textit{i.e. }for every $d\in\mathcal{D}$,
$$[\:\text{colim~(dom~}\circ~g^{\star}\circ F)\:](d)\cong \underset{e}{\text{colim}~}[\:\text{dom}(g^{\star}(F(e)))(d)\:]~,$$ 
it is enough to check that dom$(g^{\star}(\text{colim}F))(d)\cong \underset{e}{\text{colim}~}[\:\text{dom}(g^{\star}(F(e)))(d)\:]$.\\
But pullbacks are pointwise (like any limit) so
$$\text{dom}(g^{\star}(\text{colim}F))(d)\cong\text{dom}(g^{\star}_d(\text{colim}F)_d)~.$$
Now by assumption on $\mathscr{C}$ and the fact that $g_d$ is a fibration, $g^{\star}_d$ has a right adjoint and so it preserves colimits.\\
Moreover, one has (colim$F)_d\cong\underset{e}{\text{colim}}(F(e))_d$.\\
Indeed, to prove that (colim$F)_d\cong\underset{e}{\text{colim}}(F(e))_d$ in $\mathscr{C}/{B(d)}$ it suffices to prove that dom((colim$F)_d)=\text{dom(colim}F)(d)\in\mathscr{C}$ is the colimit $\underset{e}{\text{colim}}~\text{dom}(F(e)_d)$, but one has dom(colim$F)\cong\text{colim~(dom}\circ F)\in [\:\mathcal{D},\mathscr{C}\:]$, so we conclude\\
dom(colim$F)(d)\cong\underset{e}{\text{colim}}~\text{(dom}(F(e))(d))=\underset{e}{\text{colim}}~(\text{dom}(F(e)_d))$.\\
As a consequence one has
\begin{align*}
g_d^{\star}\ (\text{colim}F)_d&\cong g_d^{\star}[\:\underset{e}{\text{colim}}(F(e))_d\:]\\
&\cong\underset{e}{\text{colim}}~[\:g_{d}^{\star}\ (F(e))_d\:],\\
\end{align*}
and finally, 
\begin{align*}
\text{dom}(g^{\star}_d\ (\text{colim}F)_d)&\cong \text{dom}(\underset{e}{\text{colim}}~g^{\star}_d\ (F(e))_d)\\
&\cong\underset{e}{\text{colim}}~[\:\text{dom}(g^{\star}_d\ (F(e))_d)\:]\\
&\cong\underset{e}{\text{colim}}~[\:\text{dom}(g^{\star}\,F(e))\,(d)\:]~.\\
\end{align*}
\end{proof}\bigskip

\begin{rmk}
By taking $\mathscr{C}= \Gpd$ and $\mathcal{D}= \mathbf{B}(\mathbb{Z}/2\mathbb{Z})$, the proposition \ref{lem14} gives us a proof that the pullback functor along any injective fibration (indeed injective fibrations are in particular levelwise fibrations) has a right adjoint. But note that in this specific case the construction of the right adjoint in chapter \ref{sec:chp4} (see \ref{thm2}) is still valid because right adjointness is a categorical notion, hence does not depend on the choice of the class of fibrations, moreover for any fibration $g:A\fib B$ the right adjoint $\Pi_g$ constructed there still maps injective fibrations over $A$ to injective fibrations over $B$. Indeed, this is equivalent to the fact that the pullback functor $g^{*}$ preserves injective trivial cofibrations, and this is the case since injective trivial cofibrations are levelwise, injective fibrations are in particular levelwise fibrations, pullbacks are pointwise and this fact is true in $\Gpd$ (see \ref{rightproper}). So it proves the required condition (4) in \ref{def:ttfc}.
\end{rmk}\bigskip

\begin{prop}\label{prop13}
Let $\mathscr{C}$ be a type-theoretic model category whose underlying model category is combinatorial, and let $\mathcal{I}$ be any small category. The injective model category $[\mathcal{I},\mathscr{C}]$ is a type-theoretic model category.\\
\end{prop}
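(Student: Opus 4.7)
The plan is to verify the two conditions of Definition~\ref{def:ttmc} for the category $[\mathcal{I},\mathscr{C}]$ equipped with its injective model structure, whose existence is guaranteed by Proposition~\ref{prop7} thanks to the combinatoriality of $\mathscr{C}$. Since both conditions concern only injective fibrations between injectively fibrant objects and acyclic cofibrations between them, the bulk of the work consists in reducing both statements to the pointwise setting in $\mathscr{C}$.

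First I would establish the key technical ingredient: for each object $i$ of $\mathcal{I}$, the evaluation functor $ev_i\colon [\mathcal{I},\mathscr{C}]\to\mathscr{C}$ is right Quillen for the injective model structure. Its left adjoint is the left Kan extension $F_i$ along $\{i\}\hookrightarrow\mathcal{I}$, given pointwise by the copower $F_i(X)(j)=\coprod_{\mathcal{I}(i,j)}X$. As coproducts of (acyclic) cofibrations are (acyclic) cofibrations in any model category, $F_i$ sends (acyclic) cofibrations of $\mathscr{C}$ to levelwise (acyclic) cofibrations, \textit{i.e.}\ to injective (acyclic) cofibrations. Hence $F_i$ is left Quillen and $ev_i$ is right Quillen; in particular it preserves fibrations, the terminal object, and therefore fibrant objects. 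Consequently any injective fibration between injectively fibrant objects is, levelwise, a fibration between fibrant objects of $\mathscr{C}$.

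For condition~(1) of Definition~\ref{def:ttmc}, let $f$ be an injective acyclic cofibration between injectively fibrant objects and $g$ an injective fibration between such objects. Since injective cofibrations and injective weak equivalences are both levelwise, $f$ is levelwise an acyclic cofibration between fibrant objects of $\mathscr{C}$, while by the preceding paragraph $g$ is levelwise a fibration between fibrant objects. Pullbacks in $[\mathcal{I},\mathscr{C}]$ are computed pointwise, so applying condition~(1) of the type-theoretic structure on $\mathscr{C}$ level by level shows that $g^{*}f$ is a levelwise acyclic cofibration, hence an injective acyclic cofibration.

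For condition~(2) of Definition~\ref{def:ttmc} I would invoke Proposition~\ref{lem14}. The category $\mathscr{C}$ is locally presentable since it is combinatorial, and by condition~(2) for $\mathscr{C}$ restricted to fibrant objects, every fibration $h$ between fibrant objects of $\mathscr{C}$ admits a right adjoint $\Pi_{h}$ to $h^{*}$. Given an injective fibration $g\colon A\fib B$ between injectively fibrant objects, the preceding paragraph shows $g$ is levelwise such a fibration. The colimit-preservation argument of Proposition~\ref{lem14} then goes through verbatim in this restricted setting: pullbacks and colimits in $[\mathcal{I},\mathscr{C}]/B$ and $[\mathcal{I},\mathscr{C}]/A$ are pointwise, each pointwise pullback functor $g_{i}^{*}$ preserves small colimits by the hypothesis on $\mathscr{C}$, and local presentability guarantees the existence of an overall right adjoint. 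The main subtlety I anticipate is ensuring that the pointwise right adjoints $\Pi_{g_i}$ assemble into a genuine functor between the functor categories, which should follow from the naturality of the adjunctions and the pointwise character of limits; this is the same trick that allows the proof of Proposition~\ref{lem14} to reduce to a pointwise statement.
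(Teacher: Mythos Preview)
Your proof is correct and follows essentially the same strategy as the paper's. The paper is terser: for condition~(1) it simply notes that injective trivial cofibrations are levelwise, injective fibrations are in particular levelwise fibrations, and pullbacks are pointwise, then applies the hypothesis on $\mathscr{C}$; for condition~(2) it observes that $\mathscr{C}$ is locally presentable (being combinatorial) and invokes Proposition~\ref{lem14} directly.

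Two minor remarks. First, your explicit argument that evaluation is right Quillen (via the copower left adjoint) is a nice way to justify that injective fibrations are levelwise fibrations; the paper takes this as known. Second, your closing worry about ``assembling pointwise right adjoints $\Pi_{g_i}$'' is misplaced: the mechanism in Proposition~\ref{lem14} is the adjoint functor theorem (local presentability plus preservation of small colimits by $g^{*}$), not a pointwise construction, so no assembly step is required. The pointwise $\Pi_{g_i}$'s enter only insofar as their existence witnesses that each $g_i^{*}$ preserves colimits, which is exactly how you used them earlier in the paragraph.
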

\begin{proof}\label{proofprop13}
By assumption trivial cofibrations (between fibrant objects) are preserved by pullbacks along any fibration (between fibrant objects) in the target category $\mathscr{C}$, since pullbacks are pointwise, injective trivial cofibrations are levelwise and injective fibrations are in particular levelwise fibrations, we conclude the first required condition for the injective structure on $[\mathcal{I},\mathscr{C}]$.\\
Last, since the underlying category of $\mathscr{C}$ is locally presentable and injective fibrations are in particular levelwise fibrations, by applying \ref{lem14} we conclude that the pullback functor $g^*$ along any fibration has a right adjoint $\Pi_g$.
\end{proof}\bigskip

\begin{prop}\label{prop17}
The full subcategory of injectively fibrant objects of $\GGpd$, namely $\GGpd\f$, equipped with the subcategory given by injective fibrations between fibrant objects is a type-theoretic fibration category. 
\end{prop}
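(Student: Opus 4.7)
The plan is to deduce this from Proposition \ref{shul2.13} applied to the injective model structure on $\GGpd$, which in turn will be shown to be a type-theoretic model category by invoking Proposition \ref{prop13}. Thus the task reduces to checking the hypotheses of Proposition \ref{prop13}: namely, that $\Gpd$ endowed with its natural model structure (see Example \ref{eg3}) is itself a type-theoretic model category whose underlying model category is combinatorial.

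First, I would recall that $\Gpd$ with the natural model structure is cofibrantly generated by the sets of generators given in Example \ref{eg3}, and since $\Gpd$ is locally presentable, its underlying model category is combinatorial in the sense of \cite{luriehtt}, Definition A.2.6.1. Next, I would verify the two additional properties required by Definition \ref{def:ttmc}. The first property, that acyclic cofibrations between fibrant objects are preserved by pullback along any fibration between fibrant objects, is Lemma \ref{rightproper} (and in fact holds for arbitrary trivial cofibrations and fibrations in $\Gpd$, since all objects are fibrant). The second property, that pullback along any fibration has a right adjoint, is the classical theorem of Giraud (\cite{Giraud64}, lemma 4.3 and theorem 4.4) which was already used in the proof of Lemma \ref{thm2}.

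Having established that $\Gpd$ is a type-theoretic model category with a combinatorial underlying model category, Proposition \ref{prop13} applied to the small category $\mathcal{I}=\G$ yields that the injective model structure on $[\G,\Gpd]=\GGpd$ is a type-theoretic model category. Finally, Proposition \ref{shul2.13} applied to this injective type-theoretic model category gives that its full subcategory of fibrant objects $\GGpd\f$, equipped with the subcategory of injective fibrations between fibrant objects, is a type-theoretic fibration category. This is exactly the statement to be proved.

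The only potential subtlety worth highlighting is that the right adjoint $\Pi_{g}$ to the pullback functor along an injective fibration $g$ must send fibrations (in the injective sense) to fibrations. As noted in the remark following Proposition \ref{lem14}, this is equivalent to the preservation of injective trivial cofibrations by $g^{*}$; since injective trivial cofibrations and injective fibrations are both levelwise-detected for the first two classes involved, and since pullbacks are computed pointwise, this reduces immediately to the corresponding fact in $\Gpd$, namely Lemma \ref{rightproper}. No further obstacle arises, and the proof is therefore essentially an assembly of results already available in the excerpt.
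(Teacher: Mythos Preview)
Your proof is correct and follows essentially the same route as the paper: verify that $\Gpd$ is a combinatorial type-theoretic model category, apply Proposition~\ref{prop13} to obtain that the injective model structure on $\GGpd$ is a type-theoretic model category, and then invoke Proposition~\ref{shul2.13}. The paper's own proof is a two-line compression of exactly this argument; you have simply spelled out the verifications (combinatoriality of $\Gpd$, Lemma~\ref{rightproper}, Giraud's theorem) that the paper leaves implicit, and your closing remark on $\Pi_g$ preserving fibrations correctly mirrors the content of the remark following Proposition~\ref{lem14}.
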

\begin{proof}\label{proofprop17}
It follows from the fact that the injective model structure on $\GGpd$ provides a type-theoretic model category. Indeed, since $\Gpd$ is combinatorial we can apply \ref{prop13} and the proposition \ref{shul2.13} in chapter 3 allows us to conclude.
\end{proof}\bigskip

\begin{rmk}\label{rmk13}
Since Quillen's model category $\mathbf{sSet_{Quillen}}$ is a type-theoretic model category whose underlying category is combinatorial, the proposition \ref{prop13} gives us a whole class of models of type theory with $\sum,\prod$ and Id-types by taking the full subcategory of injectively fibrant objects. Namely we have such a model in $[\:\mathcal{I},\mathbf{sSet_{Quillen}}\:]\f$, where $\mathcal{I}$ is any small category. This class of models has already been noticed by Shulman (last bullet of Examples 2.16 in \cite{shulman:invdia}).\\
\end{rmk}\bigskip

\section{The injective fibrations in $\GGpd$ made explicit}
\label{sec:explicit}\bigskip

\begin{rmk}\label{rmk14}
Note that we cannot hope without further restrictions on the universe $V_\kappa$ in $\Gpd$ that the map $p:\widetilde{U}\rightarrow U$ as defined in \ref{sec:tcou} is an injective fibration between injectively fibrant objects. Indeed, the map $\cI\to\mathbf{1}!$ is a split isofibration with small fibers after the forgetting of the involutions, so this map is a small projective fibration, \textit{i.e.} a pullback of $p$. But this map is not an injective fibration (\textit{cf.} the proof of \ref{prop14} below), hence $p$ cannot be an injective fibration.
\end{rmk}\bigskip

\begin{rmk}\label{prop14}
Consider the presheaf category $\GGpd$. There exists a projective fibration in $\GGpd$ which is not an injective fibration.\\
Indeed, consider the map $\cI\into \tr $. We recall (see \ref{not}) that $\cI$ is the following groupoid $$\xymatrix{0\ar[rr]^\phi&&1}$$ with the involution that maps $\phi$ to $\phi^{-1}$, and $\tr$ is the following groupoid $$\xymatrix{0\ar[rr]^\phi\ar[rd]^{\quad\circlearrowright}&&1\ar[ld]^\psi\\&2&}$$ with the involution that maps $\phi$ to $\phi^{-1}$ and $\psi$ to $\psi\circ\phi$. Since the inclusion $\cI\into \tr $ is an equivalence of groupoids and an injective-on-objects functor, this is an injective trivial cofibration in $\GGpd$. Note also that every object being fibrant in $\GGpd$ with respect to the projective model structure, the morphism $\cI\rightarrow\mathbf{1}!$ is a projective fibration in   $\GGpd$. Now, consider the following lifting problem in $\GGpd$ :
$$\xymatrix@=1,5cm{\cI\ar@{=}[r]\ar@{^{(}->}[d]&\cI\ar[d] \\ 
\tr \ar[r] &\mathbf{1}! \,.}$$
A lift cannot exist, since the fixed point $2$ of $\tr$ should be mapped to a fixed point in $\cI$ but such a fixed point does not exist. So the morphism $\cI\rightarrow\mathbf{1}$ is not an injective fibration in $\GGpd$.
\end{rmk}\bigskip

\begin{notn}\label{notn3}
We denote by $i'$ the inclusion in $\GGpd$ of remark \ref{prop14}:
\begin{center}
 $\cI\hookrightarrow\tr\,.$ 
\end{center}
Also remember we denote by $i$ (see exemple \ref{eg3}) the generating trivial cofibration in $\Gpd$ with its natural model structure and by $S(i)$ (see section \ref{sec:tpmsme} in chapter 4) the corresponding generating  projective trivial cofibration in $\GGpd$.
\end{notn}

\subsection{An explicit set of generating trivial cofibrations}\bigskip

\begin{prop}\label{prop15}
Let $f$ be a map in the injective type-theoretic fibration category $\GGpd_{\mathbf{inj}}$, the following are equivalent :\\
\begin{enumerate}[label=(\roman*)]
\item $f$ is a trivial cofibration.
\item $f$ is a transfinite composition of pushouts with elements in $\lbrace{S(i),i'}\rbrace$.\\
\end{enumerate}
\end{prop}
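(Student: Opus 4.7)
We prove the two implications separately. For $(ii) \Rightarrow (i)$, note that both $S(i)$ and $i'$ are injective trivial cofibrations in $\GGpd$: $S(i) = i \coprod i$ is levelwise an injective-on-objects equivalence of groupoids (a coproduct of the generating trivial cofibration in $\Gpd$), while $i' : \cI \hookrightarrow \tr$ is the inclusion of a full subgroupoid containing a representative of each isomorphism class, so an injective-on-objects equivalence of underlying groupoids. Since the trivial cofibrations in $\GGpd_{\mathbf{inj}}$ form the left class of a weak factorization system (they are exactly ${}^\boxslash(\text{injective fibrations})$), they are automatically closed under pushouts and transfinite composition by Proposition \ref{prop4} and its pushout analogue. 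Hence any transfinite composition of pushouts of elements of $\lbrace S(i), i'\rbrace$ is a trivial cofibration.

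For $(i) \Rightarrow (ii)$, let $f : A \to B$ be a trivial cofibration. Since its underlying map is a levelwise trivial cofibration of groupoids, we may identify $A$ up to isomorphism with a $\be$-stable full subgroupoid of $B$ (where $\be$ denotes the involution on $B$) and assume $f$ is the corresponding inclusion, which is an equivalence of underlying groupoids. Following the pattern of the proof of Proposition \ref{prop3}, consider the set $S$ of pairs $(\mu, X)$ where $\mu$ is an ordinal and $X : [0, \mu] \to \GGpd$ is a $\mu$-sequence with $X_0 = A$, every $X_{\nu, \nu+1}$ a pushout of an element of $\lbrace S(i), i'\rbrace$, and every $X_\nu$ a $\be$-stable full subgroupoid of $B$ containing $A$ with $X_{\nu,\nu'}$ the inclusion. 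Ordered by extension, $S$ is non-empty ($(0,A) \in S$) and every chain has a colimit upper bound, so Zorn's lemma yields a maximal element $(\mu_{\max}, X^{\max})$; set $B_{\max} := X^{\max}_{\mu_{\max}}$.

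Suppose for contradiction that $B_{\max} \ne B$ and pick $x \in \text{Ob}(B) \setminus \text{Ob}(B_{\max})$; by essential surjectivity fix $y \in A$ and an isomorphism $\rho : y \to x$ in $B$. If $x \ne \be(x)$, push out $S(i)$ along the map $S(\b1) \to B_{\max}$ sending the two points to $y$ and $\be(y)$; by the universal-property verification of Lemma \ref{lem2}, this pushout is canonically isomorphic to the full subgroupoid of $B$ on $\text{Ob}(B_{\max}) \cup \lbrace x, \be(x)\rbrace$, with the new isomorphisms in $S(\I)$ identified with $\rho$ and $\be(\rho)$. If $x = \be(x)$, set $\alpha := \rho^{-1} \circ \be(\rho) \in B(y, \be(y))$; a direct computation yields $\be(\alpha) = \alpha^{-1}$, so $0 \mapsto y$, $1 \mapsto \be(y)$, $\phi \mapsto \alpha$ defines an equivariant map $\cI \to B_{\max}$ (when $y$ is $\be$-fixed the two points of $\cI$ collapse onto $y$ and $\alpha$ becomes an automorphism of $y$ with the same property). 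Pushing out $i'$ along this map and sending $2 \mapsto x$, $\psi \mapsto \be(\rho)$, $\psi \phi \mapsto \rho$ (functoriality uses $\be(\rho) \circ \alpha = \rho$, and equivariance uses $\be(\psi) = \psi \phi$) identifies the pushout with the full subgroupoid of $B$ on $\text{Ob}(B_{\max}) \cup \lbrace x\rbrace$. In either case the one-step extension of $X^{\max}$ strictly dominates $(\mu_{\max}, X^{\max})$, contradicting maximality; hence $B_{\max} = B$ and $f$ is (isomorphic to) the transfinite composition $X^{\max}_{0, \mu_{\max}}$.

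The main obstacle is the subcase of $x = \be(x)$ with $y$ already fixed by $\be$: the map $\cI \to B_{\max}$ must then collapse both points of $\cI$ onto $y$, sending $\phi$ to a carefully chosen automorphism $\alpha$. It is precisely the asymmetric involution of $\tr$ (with $\psi \mapsto \psi \circ \phi$ rather than $\psi \mapsto \psi$) that allows the pushout to adjoin a single new fixed point $x$ while accommodating the two isomorphisms $\rho$ and $\be(\rho) : y \to x$, which need not coincide in $B$. Verifying that the pushout is genuinely the claimed full subgroupoid of $B$ (and not some strictly larger quotient) is the universal-property calculation in the spirit of Lemma \ref{lem2}.
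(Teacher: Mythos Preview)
Your proof is correct and follows essentially the same Zorn's-lemma strategy as the paper, with the same case split on whether the new object $x$ is $\be$-fixed (your single formula for $\alpha$ in the fixed case neatly unifies the two subcases the paper distinguishes according to whether $\rho$ itself is $\be$-fixed). One minor slip: as written, $\alpha = \rho^{-1}\circ\be(\rho)$ is a morphism $\be(y)\to y$ rather than $y\to\be(y)$, so the map $0\mapsto y,\ 1\mapsto\be(y),\ \phi\mapsto\alpha$ is ill-typed; the correct formula is $\alpha = \be(\rho)^{-1}\circ\rho$, which is in fact what your own functoriality check $\be(\rho)\circ\alpha=\rho$ already presupposes.
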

\begin{proof}\label{proofprop15}
The implication $(ii)\Rightarrow(i)$ is clear since $S(i)$ and $i'$ are levelwise trivial cofibrations and so they are injective trivial cofibrations. The stability of trivial cofibrations by pushouts and transfinite compositions allows to conclude.\\
Conversely, let $f:A\rightarrow B$ be a trivial cofibration. Note that $f$ is a levelwise trivial cofibration, hence $\underline{f}$ is an injective-on-objects equivalence, meaning that $\underline{f}$ is (isomorphic to) the inclusion of a full subgroupoid of $B$ which is equivalent to $B$.\\
We define a set $S$ whose set of objects are pairs $(\la,X)$ with $\la$ any ordinal and $X$ any $\la$-sequence  in $\text{Push}(\lbrace S(i), i'\rbrace)$ with $X_0=A$, moreover for every $\kappa\leqslant\la$ one has $X_\kappa\subseteq B$ (with the involution on $X_\kappa$ being the restriction of the involution $\be$ on $B$).\\
We provide $S$ with the structure of a preordered set as follows,
\begin{center}
$(\la,X)\leqslant(\la',X')$ \textit{iff} $\la\leqslant\la'$ and $X'_{|[0,\la]} = X$.
\end{center}
Note that $S$ is a non-empty set. Indeed, $(0,X)$ with $X_0 := A$ is an element of $S$.\\
Let $C$ be a non-empty chain of $S$. By using the universal property of the colimit $\underset{(\la,X^{\la})\in C}{\text{colim}\la}$ one gets a sequence denoted $(\displaystyle{\bigcup_{(\la,X^{\la}\in C)}X^{\la}})$ from $[0,\underset{(\la,X^{\la})\in C}{\text{colim}\la}]$ to $\GGpd$,
$$\xymatrix@C=2cm@R=0,5cm{[0,\la]\ar[rdddd]^{X^{\la}}\ar[d] & \\
\vdots\ar[d] & \\
[0,\la']\ar[rdd]^{X^{\la'}}\ar[d] & \\
\vdots\ar[d] & \\
[0,\underset{(\lambda,X^\la)\in C}{\text{colim}}\la]\ar@{-->}[r] & \GGpd\,.}$$
Clearly $(\underset{(\lambda,X^\la)\in C}{\text{colim}}\la, \displaystyle{\bigcup_{(\la,X^{\la})\in C}X^{\la}})$ is an element of $S$ and $(\la,X^\la)\leqslant (\underset{(\lambda,X^\la)\in C}{\text{colim}}\la, \displaystyle{\bigcup_{(\la,X^{\la})\in C}X^{\la}})$ for every $(\la,X^\la)\in C$.\\
By Zorn's lemma one gets a maximal element in $S$ denoted $(\la_{\text{max}},X^{\text{max}})$. It suffices to prove that $X^{\text{max}}_{\la_{max}}$ is (isomorphic to) $B$. Assume this is not the case. Since $X^{\text{max}}_{\la_{\text{max}}}$ is a full subgroupoid of $B$, there exists an object $x$ of $B$ such that $x$ is not an object of $X^{\text{max}}_{\la_{\text{max}}}$.\\
We define a $(\la_{\text{max}}+1)$-sequence $X$ as follows. Take $X_{|[0,\la_{\text{max}}]} := X^{\text{max}}$ and the definition of $X_{\la_{\text{max}}+1}$ depends on $x$. Indeed, by essential surjectivity of $f$ there exists $y\in A\subseteq X^{\text{max}}_{\la_{\text{max}}}$ and a morphism $\phi$ in $B$ from $y$ to $x$ .\\
If $x$ is not a fixed point then $\phi$ is not a fixed morphism and we get $X_{\la_{\text{max}}+1}$ as a subgroupoid of $B$ by taking the following pushout,
$$\xymatrix@=1,5cm{S(\b1)\ar[r]^y\ar@{^{(}->}[d]_{S(i)} & X^{\text{max}}_{\la_{\text{max}}}\ar[d] \\
S(\I)\ar[r] & X_{\la_{\text{max}}+1}\pushoutcorner\,.}$$ 
Otherwise, $x$ is a fixed point then, if $\phi$ is a fixed morphism (hence note that $y$ is a fixed point) one defines $X_{\la_{\text{max}}+1}$ by the following pushout,
$$\xymatrix@=1,5cm{\cI\ar[r]^{id_y}\ar@{^{(}->}[d]_{i'} & X^{\text{max}}_{\la_{\text{max}}}\ar[d] \\
\tr\ar[r] & X_{\la_{\text{max}}+1}\pushoutcorner\,,}$$
otherwise $\phi$ is not a fixed morphism and one defines $X_{\la_{\text{max}}+1}$ by the following pushout,
$$\xymatrix@=1,5cm{\cI\ar[r]\ar@{^{(}->}[d]_{i'} & X^{\text{max}}_{\la_{\text{max}}}\ar[d] \\
\tr\ar[r] & X_{\la_{\text{max}}+1}\pushoutcorner\,,}$$
where the upper horizontal morphism maps the unique non-trivial isomorphism in $\cI$ to $\be(\phi)^{-1}\circ \phi$.\\
In all cases $X_{\la_{\text{max}}+1}$ is a subgroupoid of $B$ and $(\la_{\text{max}}+1,X)$ is an element of $S$ satisfying $(\la_{\text{max}},X^{\text{max}})<(\la_{\text{max}}+1,X)$ contradicting the maximality of $(\la_{\text{max}},X^{\text{max}})$.\\
So one concludes that  $X^{\text{max}}_{\la_{\text{max}}}$ is (isomorphic to) $B$ and $f$ is a transfinite composition of pushouts in $\lbrace S(i), i'\rbrace$.  
\end{proof}\bigskip

\begin{prop}\label{prop16}
Let $f$ be a morphism in the injective type-theoretic fibration category $\GGpd_{\mathbf{inj}}$, the following are equivalent :\\
\begin{enumerate}[label=(\roman*)]
\item $f$ is a fibration.
\item $f$ has the right lifting property with respect to the elements of the set $\lbrace{S(i), i'}\rbrace$.\\
\end{enumerate}
\end{prop}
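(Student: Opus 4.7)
The plan is to treat the two implications as follows.

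For $(i) \Rightarrow (ii)$, I would simply note that both $S(i)$ and $i'$ are injective trivial cofibrations: $S(i)$ is a levelwise trivial cofibration (its underlying morphism is $i \coprod i$), and $i'$ was observed in \ref{prop14} to be a levelwise equivalence which is also injective on objects. Since a fibration in $\GGpd_{\mathbf{inj}}$ is, by the definition of a type-theoretic fibration category together with the characterization of injective trivial cofibrations as $^{\boxslash}\mathcal{F}$, a morphism with the right lifting property against all injective trivial cofibrations, the conclusion is immediate.

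For the converse $(ii) \Rightarrow (i)$, the strategy is the standard saturation argument. Assume $f$ has the right lifting property with respect to $\{S(i), i'\}$. One knows that the class of morphisms with the right lifting property against a fixed set of maps is closed under pushouts along an arbitrary morphism on the opposite side (\textit{i.e.}, pullbacks on the right), retracts, and transfinite compositions on the left — equivalently, if $J$ is any set of morphisms then $J^{\boxslash} = \overline{J}^{\boxslash}$, where $\overline{J}$ denotes the weakly saturated class generated by $J$ (see \cite{luriehtt}, A.1.2.6--A.1.2.7). Therefore $f$ has the right lifting property against every morphism in $\overline{\{S(i), i'\}}$.

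By Proposition \ref{prop15}, every injective trivial cofibration is a transfinite composition of pushouts of elements in $\{S(i), i'\}$, so in particular it lies in $\overline{\{S(i), i'\}}$. Hence $f$ has the right lifting property against all injective trivial cofibrations, which by definition makes it an injective fibration.

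There is essentially no obstacle here: the whole argument is a clean application of Proposition \ref{prop15} combined with the elementary saturation properties of $(-)^{\boxslash}$; the only thing worth stating carefully is the identification of $S(i)$ and $i'$ as trivial cofibrations, which was already implicitly used in the proof of \ref{prop15}.
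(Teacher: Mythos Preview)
Your proof is correct and is precisely the unfolding of what the paper means by ``Straightforward with the previous proposition'': both directions follow from Proposition \ref{prop15} together with the standard fact that $J^{\boxslash} = \overline{J}^{\boxslash}$. Your phrasing of $(i)\Rightarrow(ii)$ via the type-theoretic fibration category axioms is slightly roundabout---it is more direct to invoke the definition of injective fibrations in the model structure---but the content is the same.
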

\begin{proof}\label{proofprop16}
Straightforward with the previous proposition.
\end{proof}\bigskip

\begin{rmk}
The above proposition \ref{prop16} will be a useful tool to prove the fibrancy of our expected universe.
\end{rmk}

\section{A univalent model in the injective type-theoretic fibration category $\GGpd_{\mathbf{inj}}$}
\label{sec:injmodel}\bigskip

\subsection{A universe}
\label{subsec:discreteuniverse}\bigskip

\begin{rmk}
Recall that, according to remark \ref{rmk14} and proposition \ref{prop14}, we need to introduce some restrictions on $V_\kappa$ in the construction of $p:\widetilde{U}\rightarrow U$ to get a universe in our injective setting.
\end{rmk}\bigskip

\begin{notn}\label{notn4}
We recall (see \ref{sec:tcou}) that when $V_\kappa$ is $\text{Gpd}_\Delta(V_\kappa)$ we denote $U_{V_\kappa}$ (respectively $\widetilde{U_{V_\kappa}}$) by $U_\Delta$ (respectively $\widetilde{U}_\Delta$) and $p_\Delta:\widetilde{U}_{\Delta}\rightarrow U_\Delta$ our candidate for universe.
\end{notn}\bigskip

\begin{lem}\label{lem15}
The morphism $p_\Delta:\widetilde{U}_{\Delta}\rightarrow U_\Delta$ in $\GGpd$ is an injective fibration.\\ 
\end{lem}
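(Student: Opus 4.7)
The plan is to invoke Proposition \ref{prop16}, which reduces the problem to checking that $p_\Delta$ has the right lifting property with respect to the two generating injective trivial cofibrations $S(i)$ and $i'$. The first of these is essentially automatic, while the second is the genuine content of the lemma.

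For the RLP with respect to $S(i)\colon S(\bold{1})\into S(\I)$, the key observation is that by construction of the projective model structure (see Proposition \ref{prop6} and the discussion in section \ref{sec:tpmsme}), $S(i)$ is the generating projective trivial cofibration, so having RLP with respect to $S(i)$ is exactly being a projective fibration in $\GGpd$. Since Lemma \ref{lem4} already establishes that $p_\Delta$ is a projective fibration (in fact a levelwise split isofibration), this part is done.

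The hard part, and the heart of the proof, will be the RLP with respect to $i'\colon \cI\into \tr$. Here the strategy is to exploit the fact, established in Lemma \ref{minimalfib}, that $\underline{p_\Delta}$ is a discrete fibration in $\Gpd$. Given an equivariant lifting problem
\[
\xymatrix@=1,5cm{
\cI \ar[r]^h \ar@{^{(}->}[d]_{i'} & \widetilde{U}_\Delta \ar[d]^{p_\Delta} \\
\tr \ar[r]_k & U_\Delta\,,
}
\]
forgetting involutions yields a lifting problem in $\Gpd$ of a trivial cofibration against a discrete fibration, which by Definition \ref{smallfib} admits a \emph{unique} solution $j\colon \tr\to\widetilde{U}_\Delta$.

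It remains to promote $j$ to a morphism in $\GGpd$, i.e.\ to check that $j$ is equivariant: $\tilde{u}\circ j = j\circ\tau$, where $\tau$ denotes the involution on $\tr$. This is where the uniqueness pays off. Consider the composite $\tilde{u}\circ j\circ\tau\colon\tr\to\widetilde{U}_\Delta$. Using equivariance of $p_\Delta$, $h$ and $k$, one checks that $p_\Delta\circ(\tilde{u}\circ j\circ\tau) = u\circ k\circ\tau = k$, and that this composite still extends $h$ along $i'$ (because $h$ itself is equivariant and $\tau$ restricts to the swap involution on $\cI$). Thus $\tilde{u}\circ j\circ\tau$ is another diagonal filler for the same underlying lifting problem in $\Gpd$, and uniqueness forces $\tilde{u}\circ j\circ\tau = j$, which is the required equivariance. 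Concretely this forces $j(2)$ to be a fixed point of $\widetilde{U}_\Delta$, which is possible precisely because $k(2)$ is a fixed point of $U_\Delta$ (so of the form $(B,B,\psi_B)$ with $\psi_B^2 = \mathrm{id}$), and the lift of an equivariant $k(\psi)$ at the equivariant data $j(1) = h(1)$ lands on a fixed element.
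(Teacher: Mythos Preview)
Your proof is correct and takes a genuinely different route from the paper's. Both begin identically, invoking Proposition \ref{prop16} to reduce to the two generators and dispatching $S(i)$ via the projective-fibration fact (Lemma \ref{lem4}). The divergence is in handling $i'$.

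The paper gives a bare-hands construction: it unpacks $f(0)=(A_0,A_1,x,\chi)$, $f(\phi)=(\rho_0,\rho_1,\mathrm{id},\mathrm{id})$, $g(2)=(C_0,C_1,\eta)$, $g(\psi)=(\tau_0,\tau_1,\mathrm{id})$, uses the equivariance of $f$ and $g$ to extract relations among $\rho_0,\rho_1,\tau_0,\tau_1,\chi,\eta$, and then explicitly writes down $j(2)=(C_0,C_0,\tau_0(\rho_0(x)),\eta)$ and $j(\psi)=(\eta\circ\tau_0,\tau_0\circ\chi,\mathrm{id},\mathrm{id})$, verifying by direct computation that $j(2)$ is fixed and $j(\psi\circ\phi)=\tilde u_\Delta(j(\psi))$.

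Your argument replaces all of this with a conceptual one-liner: since $\underline{p_\Delta}$ is a discrete fibration (established in the proof of Lemma \ref{minimalfib}, and extended from the generator $i$ to all trivial cofibrations by Proposition \ref{smallfibbis}), the underlying $\Gpd$-lifting problem has a \emph{unique} filler $j$; then $\tilde u\circ j\circ\tau$ is a second filler for the same problem (using only the equivariance of $i'$, $h$, $k$, $p_\Delta$), so uniqueness forces $j=\tilde u\circ j\circ\tau$, i.e.\ $j$ is equivariant. This is cleaner and more robust --- it would generalise immediately to any group in place of $\mathbb{Z}/2\mathbb{Z}$ and to any universal fibration whose underlying map is discrete --- whereas the paper's explicit computation produces the actual formulas for the lift, which is occasionally useful but not needed here. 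Your final sentence (``Concretely this forces \ldots'') is heuristic commentary rather than part of the argument; the proof is complete before it.
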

\begin{proof}\label{prooflem15}
Thanks to \ref{prop16} it suffices to prove that $p_\Delta$ has the right lifting property with respect to $S(i)$ and $i'$. We know that $p_\Delta$ has the right lifting property against $S(i)$. Indeed, $p_\Delta$ is a projective fibration and $S(i)$ is a (generating) projective trivial cofibration.\\
Assume we have a lifting problem in $\GGpd$ as follows :
$$\xymatrix@=1,5cm{\cI\ar@{^{(}->}[d]_{i'}\ar[r]^f & \widetilde{U}_{\Delta} \ar[d]^{p_\Delta}\\
 \tr \ar[r]_{g} & U_\Delta~.}$$
Assume $f(0)$ is the tuple $(A_0,A_1,x,\chi)$, then $f(1)=\tilde{u}_\Delta(f(0))=(A_1,A_0,\chi(x),\chi^{-1})$. Assume $f(\phi)$ is the tuple $(\rho_0,\rho_1,\text{id}_{\chi(x)},\text{id})$, 
it means that the following square commutes\\
$$\xymatrix@=1,5cm{ \ar[r]^\chi \ar[d]_{\rho_0}& \ar[d]^{\rho_1}\\
\ar[r]_{\chi^{-1}} & \,.}$$ 
Since $f$ is compatible with the involutions, we have\\
$f(\phi^{-1})=\tilde{u}_\Delta(f(\phi))$. But $\tilde{u}_\Delta(f(\phi))=(\rho_1,\rho_0,\text{id}_x,\text{id})$ and\\
$f(\phi^{-1})=f(\phi)^{-1}=(\rho_0,\rho_1,\text{id}_{\chi(x)},\text{id})^{-1}=(\rho_0^{-1},\rho_1^{-1},\text{id}_{\rho_1(\chi(x))},\text{id})$.\\
So one has the equalities $\rho_1=\rho_0^{-1}$ and $\rho_1(\chi(x))=x$.\\
One has $p_\Delta(f(0))= g(0),~p_\Delta(f(1))= g(1),~p_\Delta(f(\phi))= g(\phi)$.\\
Assume $g(2)$ is the tuple $(C_0,C_1,\eta)$ and $g(\psi):g(1)\rightarrow g(2)$ is the tuple $(\tau_0,\tau_1,\text{id})$. It means that the following square commutes,
$$\xymatrix@=1,5cm{ \ar[r]^{\chi^{-1}}\ar[d]_{\tau_0} & \ar[d]^{\tau_1}\\
 \ar[r]_\eta & \,.}$$
Moreover since $g$ is compatible with the involutions, it means that $g(\psi\circ\phi)= u_\Delta(g(\psi))$.\\
But $u_{\Delta}(g(\psi))= u_{\Delta}(\tau_0,\tau_1,\text{id})= (\tau_1,\tau_0,\text{id})$ and one has\\
$g(\psi\circ\phi)= g(\psi)\circ g(\phi)= (\tau_0,\tau_1,\text{id})\circ (\rho_0,\rho_1,\text{id})= (\tau_0\circ\rho_0,\tau_1\circ\rho_1,\text{id})$. So one has the equality $\tau_0\circ\rho_0= \tau_1$. Note that $g(2)$ is fixed in $U_\Delta$ (since $2$ is a fixed point), hence $C_1= C_0$ and $\eta$ is an involution. Now we define a morphism
$$ j: \textstyle\tr \longrightarrow \widetilde{U}_\Delta$$
in $\GGpd$ as follows. Take $j(\phi)= f(\phi)$. We need to define $j(2)$ and $j(\psi)$ such that $p_\Delta(j(2))= g(2),~p_\Delta(j(\psi))= g(\psi)$ and $j(2)$ has to be a fixed point and $j(\psi)$ has to satisfy the equality $j(\psi)\circ j(\phi)= j(\psi\circ\phi)= \tilde{u}_\Delta(j(\psi))$.\\
Take $j(2)=(C_0,C_0,\tau_0(\rho_0(x)),\eta)$. Indeed, one has the following equalities
\begin{align*}
\eta(\tau_0(\rho_0(x)))
&=\tau_1(\chi^{-1}(\rho_0(x)))\\
&=\tau_0(\rho_0(\chi^{-1}(\rho_0(x))))\\
&=\tau_0(\rho_0(\rho_{1}(\chi(x))))\\
&=\tau_0(\rho_0(x))~.\\
\end{align*}
So $j(2)$ is a fixed point in $\widetilde{U}_\Delta$. 
Finally, take $j(\psi)=(\eta\circ\tau_0,\tau_0\circ\chi,\text{id},\text{id})$, indeed $\tau_0\circ\chi= \eta\circ\eta\circ\tau_0\circ\chi$ and $\eta(\tau_0(\chi(x)))= \tau_1(\chi^{-1}(\chi(x)))= \tau_1(x)= \tau_0(\rho_0(x))$. So $j(\psi)$ is a map from $j(1)= f(1)= (A_0,A_1,\chi(x),\chi^{-1})$ to $j(2)= (C_0,C_0,\tau_0(\rho_0(x)),\eta)$.\\
We have the following equalities,
\begin{align*}
j(\psi\circ\phi)&=j(\psi)\circ j(\phi)\\
&=(\eta\circ\tau_0,\tau_0\circ\chi,\text{id},\text{id})\circ f(\phi)\\
&=(\eta\circ\tau_0,\tau_0\circ\chi,\text{id},\text{id})\circ (\rho_0,\rho_1,\text{id},\text{id})\\
&=(\eta\circ\tau_0\circ\rho_0,\tau_0\circ\chi\circ\rho_1,\text{id},\text{id})\\
&=(\eta\circ\tau_1,\eta\circ\tau_1\circ\rho_1,\text{id},\text{id})\\
&=(\eta\circ\tau_1,\eta\circ\tau_0,\text{id},\text{id})\\
&=(\tau_0\circ\chi,\eta\circ\tau_0,\text{id},\text{id})\\
&=\tilde{u}_\Delta(\eta\circ\tau_0,\tau_0\circ\chi,\text{id},\text{id})\\
&=\tilde{u}_\Delta(j(\psi))~.\\
\end{align*}
The map $j$ is a diagonal filler for our lifting problem.
\end{proof}\bigskip

\begin{lem}\label{lem16}
The objects $\widetilde{U_\Delta}$ and $U_\Delta$, equipped with their involutions $\widetilde{u_\Delta}$ and $u_\Delta$ respectively, are two injectively fibrant objects in $\GGpd$.\\
\end{lem}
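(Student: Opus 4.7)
The plan is to apply Proposition \ref{prop16}: to show that an object $X$ of $\GGpd$ is injectively fibrant, it is enough to show that the unique map $X\to\b1!$ has the right lifting property with respect to the two morphisms $S(i)$ and $i'$ in our explicit generating set for injective trivial cofibrations. So I reduce the lemma to two lifting problems, one for $\widetilde{U_\Delta}$ and one for $U_\Delta$, and in each case check them against $S(i)$ and $i'$ separately.

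The case of $S(i)$ will be essentially free: since every object of $\GGpd$ is fibrant for the projective model structure (see the remark following the description of the projective structure in chapter~4), the maps $\widetilde{U_\Delta}\to\b1!$ and $U_\Delta\to\b1!$ are projective fibrations, hence have the right lifting property with respect to the generating projective trivial cofibration $S(i)$. This gives the $S(i)$-part for both $\widetilde{U_\Delta}$ and $U_\Delta$ in one line.

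The substantive step is the right lifting property against $i':\cI\hookrightarrow\tr$. Because the bottom map goes to $\b1!$, a lifting problem with top morphism $f:\cI\to\widetilde{U_\Delta}$ imposes no constraint from below, so the task is purely to extend $f$ to a functor $j:\tr\to\widetilde{U_\Delta}$ compatible with the involution. Writing $f(0)=(A_0,A_1,x,\chi)$ (hence $f(1)=\widetilde{u_\Delta}(f(0))=(A_1,A_0,\chi(x),\chi^{-1})$) and $f(\phi)=(\rho_0,\rho_1,\tau,\alpha)$ with the relations $\rho_1=\rho_0^{-1}$, $\tau=\mathrm{id}$, $\alpha=\mathrm{id}$ forced by equivariance exactly as in the proof of Lemma \ref{lem15}, I will exhibit an explicit fixed point of $\widetilde{U_\Delta}$ to serve as $j(2)$, for instance $j(2):=(A_0,A_0,x,\mathrm{id}_{A_0})$, together with a morphism $j(\psi):f(1)\to j(2)$ built from the components of $f(\phi)$ (a natural candidate being $(\rho_0^{-1}\text{ or }\chi^{-1},\mathrm{id},\mathrm{id},\mathrm{id})$ up to the precise matching of domains and codomains). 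The only verification to carry out is the equivariance identity $j(\psi)\circ j(\phi)=\widetilde{u_\Delta}(j(\psi))$, which reduces to a routine manipulation of the composition law on $\widetilde{U_\Delta}$ together with $\rho_1=\rho_0^{-1}$.

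The case of $U_\Delta$ is then handled by the same recipe, but simpler: an object of $U_\Delta$ has no distinguished point, so the data to extend is lighter, and the same choice $j(2)=(A_0,A_0,\mathrm{id}_{A_0})$ with $j(\psi)$ the obvious morphism works. The main obstacle I expect is purely bookkeeping: matching the four components of a morphism in the universe (two underlying functors, a point witness and a natural-isomorphism witness) against the involution identities that a fixed point must satisfy; once the correct candidate for $j(2)$ is identified there is no further difficulty, and in particular no need to invoke anything beyond the explicit construction of $\widetilde{U_\Delta}$ and $U_\Delta$ from section \ref{sec:tcou} and the characterization \ref{prop16}.
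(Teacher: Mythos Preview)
Your overall strategy (use Proposition~\ref{prop16}, dispatch $S(i)$ by projective fibrancy, extend along $i'$ by hand) matches the paper's. But your candidate fixed point $j(2)=(A_0,A_0,\mathrm{id}_{A_0})$ does not work in general, and this is a genuine gap rather than bookkeeping. Writing $f(\phi)=(\rho_0,\rho_1,\mathrm{id})$, equivariance gives $\rho_1=\rho_0^{-1}$ and, from the commutative square, $\chi^{-1}\circ\rho_0=\rho_0^{-1}\circ\chi$; nothing forces $\rho_0=\chi$. A morphism $(\tau_0,\tau_1,\mathrm{id})$ from $f(1)=(A_1,A_0,\chi^{-1})$ to your $(A_0,A_0,\mathrm{id}_{A_0})$ must satisfy $\tau_0=\tau_1\circ\chi^{-1}$, while the equivariance condition $j(\psi)\circ j(\phi)=u_\Delta(j(\psi))$ forces $\tau_0\circ\rho_0=\tau_1$. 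Combining these yields $\chi^{-1}\circ\rho_0=\mathrm{id}$, i.e.\ $\rho_0=\chi$, which need not hold. The paper takes instead $j(2)=(A_0,A_0,\rho_1\circ\chi)$: the map $\rho_1\circ\chi$ is an involution precisely by the relation $\chi^{-1}\circ\rho_0=\rho_0^{-1}\circ\chi$, and then $j(\psi)=(\chi^{-1},\rho_1\circ\chi,\mathrm{id})$ satisfies the equivariance identity by direct computation. The same obstruction kills your candidate $(A_0,A_0,x,\mathrm{id}_{A_0})$ in the $\widetilde{U_\Delta}$ case. (Incidentally, $\tau$ and $\alpha$ being identities is forced by discreteness of the groupoids in $U_\Delta$, not by equivariance.)

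Separately, you plan to treat $\widetilde{U_\Delta}$ by repeating the argument, but the paper avoids this entirely: once $U_\Delta$ is known to be fibrant, fibrancy of $\widetilde{U_\Delta}$ follows immediately from Lemma~\ref{lem15} ($p_\Delta$ is an injective fibration) by composing $\widetilde{U_\Delta}\fib U_\Delta\fib\b1!$. This is both shorter and avoids a second round of the same error.
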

\begin{proof}\label{prooflem16}
We start with $U_\Delta$. It suffices to prove that the unique morphism from $U_\Delta$ to the terminal object $\mathbf{1}!$ has the right lifting property with respect to $S(i)$ and $i'$.\\ 
First, assume we have the following lifting problem :
$$\xymatrix@=1,5cm{\cI\ar@{^{(}->}[d]_{i'}\ar[r]^f & U_{\Delta}\ar[d]\\ 
 \tr \ar[r] & \mathbf{1}! \,.}$$
Assume $f(0)$ is the tuple $(A_0,A_1,\chi)$, so $f(1)$ is the tuple\\
$u_\Delta(A_0,A_1,\chi)= (A_1,A_0,\chi^{-1})$. Also assume $f(\phi)$ is the tuple $(\rho_0,\rho_1,\text{id})$, it means that the following square commutes
$$\xymatrix@=1,5cm{ \ar[r]^\chi \ar[d]_{\rho_0} & \ar[d]^{\rho_1} \\
\ar[r]_{\chi^{-1}}& \,.}$$ 
By compatibility of $f$ with the involutions, one has $f(\phi^{-1})= u_\Delta(f(\phi))$.\\
But one has the equalities $f(\phi^{-1})= f(\phi)^{-1}= (\rho_0,\rho_1,\text{id})^{-1}= ({\rho_0}^{-1},{\rho_1}^{-1},\text{id})$ and\\
$u_\Delta(f(\phi))= u_\Delta(\rho_0,\rho_1,\text{id})= (\rho_1,\rho_0,\text{id})$. So one has $\rho_1= \rho_0^{-1}$. Take $j(\phi)= f(\phi)$ and $j(2)= (A_0,A_0,\rho_1\circ\chi)$. Indeed, $\rho_1\circ\chi$ is an involution since 
\begin{align*}
(\rho_1\circ\chi)^{-1}&=\chi^{-1}\circ\rho^{-1}_1\\
&=\chi^{-1}\circ\rho_0\\
&=\rho_1\circ\chi~,
\end{align*}
so $j(2)$ is a fixed point in $U_\Delta$. Now, take $j(\psi)=(\chi^{-1},\rho_1\circ\chi,\text{id})$. Indeed, this is a morphism from $j(1)= f(1)$ to $j(2)$. We need to check that $j(\psi\circ\phi)= u_\Delta(j(\psi))$. But one has the following equalities,
\begin{align*}
j(\psi\circ\phi)&=j(\psi)\circ j(\phi)\\
&=(\chi^{-1},\rho_1\circ\chi,\text{id})\circ(\rho_0,\rho_1,\text{id})\\
&=(\chi^{-1}\circ\rho_0,\rho_1\circ\chi\circ\rho_1,\text{id})
\end{align*}
and 
\begin{align*}
u_\Delta(j(\psi))&= u_\Delta(\chi^{-1},\rho_1\circ\chi,\text{id})\\
&=(\rho_1\circ\chi,\chi^{-1},\text{id})\\
&=(\chi^{-1}\circ\rho_0,\rho_1\circ\chi\circ\rho_1,\text{id}).
\end{align*}
So $j$ is a diagonal filler for our lifting problem.\\
Second, $U_\Delta$ is a projectively fibrant object and so $U_\Delta\rightarrow \mathbf{1}!$ has the right lifting problem with respect to $S(i)$ since $S(i)$ is in particular a projective trivial cofibration.\\
Now recall from lemma \ref{lem15} that $p_\Delta:\widetilde{U_\Delta}\rightarrow U_\Delta$ is an injective fibration, moreover fibrations are closed by composition, hence we deduce the fibrancy of $\widetilde{U_\Delta}$ from the fibrancy of $U_{\Delta}$,

\begin{displaymath}
    \xymatrix @!0 @R=3cm @C=3cm {\widetilde{U_{\Delta}}\ar@{-->}[rd] \ar@{->>}[d]_{p_{\Delta}} & \\
              U_{\Delta}\ar@{->>}[r] & \mathbf{1}! \,. }
\end{displaymath}
\end{proof}\bigskip

\begin{thm}\label{thm7}
The map $p_\Delta:\widetilde{U_\Delta}\rightarrow U_\Delta$ is a universe in the injective type-theoretic fibration category $\GGpd_{\mathbf{inj}}$.\\
\end{thm}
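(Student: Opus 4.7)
The plan is to verify the three conditions of Definition \ref{def:univ} by observing that most of the work has already been done in Chapter 4 and only needs to be re-examined in light of the injective structure. The crucial point is that being a pullback of $p_{\Delta}$ is a purely categorical property, independent of any choice of model structure on $\GGpd$. Hence the characterization of small fibrations given by Lemma \ref{minimalfib}—namely, $f$ is a pullback of $p_{\Delta}$ if and only if $\underline{f}$ is a discrete fibration of groupoids with small discrete fibers—remains valid here. Moreover, since $p_{\Delta}$ is itself an injective fibration by Lemma \ref{lem15} between injectively fibrant objects by Lemma \ref{lem16}, and injective fibrations are stable under pullback as the right lifting property against a class of maps, every small fibration in the present sense is automatically an injective fibration between injectively fibrant objects, as required by the definition of a universe.

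First I would dispatch condition (i) by directly invoking Lemma \ref{lem6}: its statement and proof refer only to the characterization \ref{lem5}--\ref{minimalfib} of pullbacks of $p_{\Delta}$, and both identities and composites of discrete fibrations with small discrete fibers are again discrete fibrations with small discrete fibers.

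Second, for condition (ii), I would argue that if $f:B\epi A$ and $g:A\epi C$ are small fibrations then $\Pi_{g}f$ is again small. The right adjoint $\Pi_{g}$ to $g^{\ast}$ exists by Proposition \ref{lem14} applied to $\Gpd$, and moreover the explicit construction of $\Pi_{g}f$ given in the proof of Lemma \ref{thm2} is categorical and does not depend on the choice of fibrations. The computation carried out in Lemma \ref{lem7} then shows that $\underline{\Pi_{g}f} \cong \Pi_{\underline{g}}\underline{f}$ is equipped with a split cleavage and has $\kappa$-small fibers; the discreteness of the fibers follows from that of $\underline{f}$, since a partial section of a discrete fibration is determined by its values on objects. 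By Lemma \ref{minimalfib} this shows $\Pi_{g}f$ is a small fibration.

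For condition (iii) I would follow Lemma \ref{lem8}: for any small fibration $f$, the diagonal $\Delta_{f}: E \to E\times_{B}E$ is itself a small fibration, since $\underline{\Delta_{f}}$ is a discrete fibration by the uniqueness of lifts in a discrete fibration $\underline{f}$, and its fibers are plainly discrete and small. Any morphism over $C$ between small fibrations therefore factors trivially as $\mathrm{id}$ (an acyclic cofibration, since identities are in ${}^{\boxslash}\mathcal{F}$) followed by itself, once composed with an appropriate $\Delta_{f}$-factorization. The expected main obstacle would have been translating the projective-level arguments of Lemmas \ref{lem6}--\ref{lem8} into the injective setting; the observation that all three lemmas are really statements about pullbacks of $p_{\Delta}$ in the bare category $\GGpd$, verified through the characterization \ref{minimalfib}, dissolves this difficulty and lets the same arguments be reused verbatim.
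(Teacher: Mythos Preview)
Your proposal is correct and follows essentially the same approach as the paper: both argue that $p_\Delta$ is an injective fibration between injectively fibrant objects (Lemmas \ref{lem15} and \ref{lem16}), and that conditions (i)--(iii) of Definition \ref{def:univ} carry over from Chapter 4 because ``small fibration'' means ``pullback of $p_\Delta$'', a purely categorical notion independent of the model structure. Your version is more explicit in unpacking why each of Lemmas \ref{lem6}--\ref{lem8} transfers, but the underlying strategy is identical.
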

\begin{proof}\label{proofthm7}
It follows from \ref{lem15} and \ref{lem16}, plus the fact that $p_\Delta~:~\widetilde{U_\Delta}\rightarrow U_\Delta$ is a universe in the projective type-theoretic fibration category $\GGpd$ hence conditions \textit{(i),(ii) and (iii)} of definition \ref{def:univ} have already been checked since pullbacks of $p_\Delta$, namely the small fibrations, are the same in the injective setting. 
\end{proof}\bigskip

\begin{cor}\label{cor3}
The category $(\GGpd)\f$ hosts a model of type theory with $\ssum{}$, $\sprod{}$ and \Id-types plus a universe (for each inaccessible cardinal $\kappa$).
\end{cor}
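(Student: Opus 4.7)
The plan is to assemble the corollary directly from the two main structural results already established in the chapter, rather than doing any new construction. The statement packages together a model of the type-theoretic operations and the presence of a universe; each piece has been built separately and it remains only to observe that they live in the same categorical setting.

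First, I would invoke Proposition \ref{prop17} to obtain that $(\GGpd)\f$, equipped with the subcategory of injective fibrations between fibrant objects, is a type-theoretic fibration category in the sense of Definition \ref{def:ttfc}. By the discussion of Section \ref{sec:catstruct} (and assuming the initiality conjecture for the syntactic category of intensional type theory, as we did already in Theorem \ref{cor1}), this is precisely the categorical structure required to interpret $\ssum{}$-types, $\sprod{}$-types and \Id-types in $(\GGpd)\f$. So the \enquote{model of type theory with $\ssum{}$, $\sprod{}$ and \Id-types} part of the statement is immediate.

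Next, for the universe, I would fix an inaccessible cardinal $\kappa$ and apply Theorem \ref{thm7}, which states that $p_\Delta : \widetilde{U_\Delta}\fib U_\Delta$ is a universe in the type-theoretic fibration category $\GGpd_{\mathbf{inj}}$. Concretely, this relies on Lemmas \ref{lem15} and \ref{lem16}: the former tells us that $p_\Delta$ is an injective fibration, and the latter that both $\widetilde{U_\Delta}$ and $U_\Delta$ are injectively fibrant, so that $p_\Delta$ is actually a fibration between fibrant objects, i.e.\ a morphism of $(\GGpd)\f$. Conditions \ref{item:u1}, \ref{item:u2} and \ref{item:u3} of Definition \ref{def:univ} have already been verified for $p_\Delta$ in the projective setting (Lemmas \ref{lem6}, \ref{lem7}, \ref{lem8}), and since pullbacks of $p_\Delta$ are computed in the underlying category $\GGpd$, the class of small fibrations is the same in the injective and projective settings; the three conditions therefore transfer without change.

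There is no real obstacle here: the work has been done in the preceding sections. The only point that deserves to be flagged is the compatibility between the two structures — namely that the universe $p_\Delta$ supplied by Theorem \ref{thm7} actually belongs to the type-theoretic fibration category of Proposition \ref{prop17}, which is exactly the content of Lemmas \ref{lem15} and \ref{lem16}. Once this is observed, letting $\kappa$ range over inaccessibles produces the family of universes asserted in the statement, completing the proof.
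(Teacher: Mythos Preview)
Your proposal is correct and follows essentially the same approach as the paper: the paper's proof simply reads ``Assuming the initiality of the syntactic category of type theory, this is \ref{prop17} plus \ref{thm7}.'' Your version unpacks the content of Theorem~\ref{thm7} (Lemmas \ref{lem15}, \ref{lem16}, and the transfer of conditions from the projective setting), which is fine but not strictly necessary since those details are already absorbed into the statement of Theorem~\ref{thm7}.
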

\begin{proof}\label{proofcor3}
Assuming the initiality of the syntactic category of type theory, this is \ref{prop17} plus \ref{thm7}.
\end{proof}

\subsection{The univalence property of our universe}
\label{subsec:univdiscreteuniverse}\bigskip

We now move on to constructing some specific injective path objects in $\GGpd$ (more generally in the slice category $\GGpd/C$ for any $C\in\text{Ob}(\GGpd)$).\\
Let $f:A\rightarrow C$ be a morphism in $\GGpd$, by the universal property of the pullback one gets the diagonal morphism $\delta$ as follows :
$$\xymatrix{A\ar@{-->}[rd]^{\delta}\ar@/^2pc/[rrrd]^{\text{id}}\ar@/_2pc/[rddd]_{\text{id}}&&&\\&{A\times_c A}\pullbackcorner\ar[rr]\ar[dd]&&A\ar[dd]^f\\\\&A\ar[rr]_f&&C~.}$$
We define a groupoid $P_cA$ equipped with an involution $\pi_cA$ as follows. The objects of the groupoid $P_cA$ are tuples $(x,y,\varphi)$ where $x,y\in A$ and $\varphi:x\rightarrow y$ is an isomorphism in $A$ such that $f(x)= f(y)$ and $f(\varphi)= 1_{f(x)}= 1_{f(y)}$. A morphism in $P_cA$ between $(x,y,\varphi)$ and $(x',y',\varphi')$ is a couple $(\sigma,\tau)$ where $\sigma: x\rightarrow x'$ is an isomorphism in $A$, and $\tau: y\rightarrow y'$ is an isomorphism in $A$ such that $f(\sigma)= f(\tau)$ and $\varphi'= \tau\circ\varphi\circ\sigma^{-1}$. Let $(\sigma,\tau):(x,y,\varphi)\rightarrow (x',y',\varphi')$ and $(\sigma',\tau'):(x',y',\varphi')\rightarrow (x'',y'',\varphi'')$ be two composable morphisms in $P_cA$. We define the composition $(\sigma', \tau')\circ (\sigma, \tau):(x,y,\varphi)\rightarrow (x'',y'',\varphi'')$ as $(\sigma'\circ\sigma, \tau'\circ\tau)$. The reader can easily check that $f(\sigma'\circ\sigma)= f(\tau'\circ\tau)$ and $(\tau'\circ\tau)\circ\varphi\circ(\sigma'\circ\sigma)^{-1}=\varphi''$. Of course this composition is associative by the associativity of the composition in $A$. The inverse of the morphism $(\sigma,\tau)$ is $(\sigma^{-1},\tau^{-1})$. Define the involution $\pi_cA$ on $P_CA$ as follows : 
$$\fsix{\pi_cA:\qquad P_CA}{P_CA}{(x,y,\varphi)}{(\alpha(x),\alpha(y),\alpha(\varphi))}{(\sigma,\tau)}{(\alpha(\sigma),\alpha(\tau))}$$
where $\alpha$ is the involution on $A$. We define $\delta_1,\delta_2$ in $\GGpd$ as the following morphisms, 
$$\fsix{\delta_1:\qquad A}{P_CA}{x}{(x,x,1_x)}{\varphi}{(\varphi,\varphi)}$$
$$\fsix{\delta_2:\qquad P_CA}{A\times_c A}{(x,y,\varphi)}{(x,y)}{(\sigma,\tau)}{(\sigma,\tau)\,.}$$
The morphisms $\delta_1$ and $\delta_2$ are compatible with the involutions ($\alpha$ on $A$, $\pi_cA$ on $P_cA$, $\alpha\times \alpha$ on $A\times_c A$) and so they are really morphisms in $\GGpd$. Moreover we have the factorization $\delta= \delta_2\circ\delta_1$.\bigskip

\begin{prop}\label{prop20}
Being given $f:A\rightarrow C$ in $\GGpd$, $P_cA$ with its involution $\pi_cA$ as constructed above is a path object in the injective type-theoretic fibration category $\GGpd_{\mathbf{inj}}$.\\
\end{prop}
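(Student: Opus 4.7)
The plan is to verify that the factorization $\delta = \delta_2 \circ \delta_1$, which clearly lives over $C$ via the common projection $(x,y,\varphi) \mapsto f(x) = f(y)$, exhibits $\delta_1$ as an injective acyclic cofibration and $\delta_2$ as an injective fibration, so that $P_cA$ is indeed a (very good) path object for $f: A \to C$ in the slice $\GGpd_{\mathbf{inj}}/C$.

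First I would show that $\delta_1$ is an injective trivial cofibration. By Proposition \ref{prop7} it suffices to see that its underlying functor of groupoids is a trivial cofibration in $\Gpd$. It is clearly injective on objects. It is fully faithful: a morphism $(\sigma, \tau) : (x, x, 1_x) \to (x', x', 1_{x'})$ in $P_cA$ must satisfy $1_{x'} = \tau \circ 1_x \circ \sigma^{-1}$, which forces $\tau = \sigma$, so morphisms between objects in the image correspond bijectively to morphisms $\sigma: x \to x'$ in $A$. Essential surjectivity follows from the observation that for any $(x, y, \varphi) \in P_cA$ the pair $(1_x, \varphi) : (x, x, 1_x) \to (x, y, \varphi)$ is a morphism of $P_cA$, since $f(1_x) = f(\varphi) = 1_{f(x)}$ and $\varphi \circ 1_x \circ 1_x^{-1} = \varphi$.

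Second I would show $\delta_2$ is an injective fibration by checking, via Proposition \ref{prop16}, the right lifting property with respect to $S(i)$ and $i'$. The lift against $S(i)$ is immediate: given $(x_0, y_0, \varphi_0) \in P_cA$ and an isomorphism $(\sigma, \tau) : (x_0, y_0) \to (x_1, y_1)$ in $A \times_C A$, the formula $\varphi_1 := \tau \circ \varphi_0 \circ \sigma^{-1}$ turns $(\sigma, \tau)$ into a morphism $(x_0, y_0, \varphi_0) \to (x_1, y_1, \varphi_1)$ in $P_cA$, and compatibility with the swap involution on $S(\I)$ comes for free from the corresponding compatibility in $A \times_C A$.

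The delicate part, which I expect to be the main obstacle, is the lifting property against $i' : \cI \hookrightarrow \tr$. Given a lift to $P_cA$ sending $0 \mapsto (x_0, y_0, \varphi_0)$ and $\phi \mapsto (\sigma, \tau)$, together with an extension to $\tr$ in $A \times_C A$ sending the fixed object $2$ to a fixed pair $(x_2, y_2)$ and $\psi \mapsto (\sigma', \tau') : (\alpha(x_0), \alpha(y_0)) \to (x_2, y_2)$, I would set
\[ \varphi_2 := \tau' \circ \alpha(\varphi_0) \circ \sigma'^{-1} \]
and extend the lift by $2 \mapsto (x_2, y_2, \varphi_2)$ and $\psi \mapsto (\sigma', \tau')$. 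Three verifications are required: that $f(\varphi_2) = 1_{f(x_2)}$, which follows from $f(\sigma') = f(\tau')$ and $f(\alpha(\varphi_0)) = 1$; that $\alpha(\varphi_2) = \varphi_2$, which unwinds via the compatibility equations $\sigma' \circ \sigma = \alpha(\sigma')$ and $\tau' \circ \tau = \alpha(\tau')$ forced by $(\sigma' \circ \sigma, \tau' \circ \tau) = u(\sigma', \tau')$ on the image of $\psi \circ \phi$ in $A \times_C A$, together with $\tau \circ \varphi_0 \circ \sigma^{-1} = \alpha(\varphi_0)$; and that $\varphi_2$ is correctly defined so that $(\sigma', \tau')$ becomes a morphism in $P_cA$, which is automatic by the very definition of $\varphi_2$. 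The careful bookkeeping of these three equations is the real content of the argument.
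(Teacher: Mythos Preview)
Your proposal is correct and follows essentially the same approach as the paper: verify that $\underline{\delta_1}$ is a trivial cofibration in $\Gpd$, then check the right lifting property of $\delta_2$ against $S(i)$ and $i'$ using Proposition~\ref{prop16}, with the same formula $\varphi_2 = \tau'\circ\alpha(\varphi_0)\circ\sigma'^{-1}$ and the same chain of equalities for $\alpha(\varphi_2)=\varphi_2$. The only cosmetic differences are your choice of $(1_x,\varphi)$ rather than $(\varphi^{-1},1_y)$ for essential surjectivity, and your direct lift against $S(i)$ rather than invoking that $\delta_2$ is already a projective fibration.
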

\begin{proof}\label{proofprop20}
It suffices to prove that $\delta_1$ is an injective trivial cofibration and $\delta_2$ is an injective fibration. We start with $\delta_1$. It suffices to prove that $\underline{\delta_1}$ is a trivial cofibration in $\Gpd$. Clearly it is an injective-on-objects functor. The morphism $\underline{\delta_1}$ is essentially surjective. Indeed let $(x,y,\varphi)$ be an element of $P_cA$ then $(\varphi^{-1},1_y)$ is a morphism in $P_cA$ between $\delta_1(y)= (y,y,1_y)$ and $(x,y,\varphi)$.\\
It remains to prove that $\underline{\delta_1}$ is a fully faithful functor. Since for every morphism $(\sigma,\tau)$ in $P_cA$ between $(x,x,1_x)$ and $(y,y,1_y)$ one has $1_y= \tau\circ 1_x \circ \sigma^{-1}$, we conclude that $\sigma= \tau$. So for every $x,y\in A$ the morphism $A(x,y)\rightarrow P_cA(\delta_1(x),\delta_1(y))$ induced by $\delta_1$ is a bijection.\\
It remains to prove that $\delta_2$ is an (injective) fibration. First, the morphism $\delta_2$ has the right lifting property with respect to $S(i)$. Indeed, $\delta_2$ is a projective fibration, namely $\underline{\delta_2}$ is an isofibration between groupoids. In order to prove this, let $(\sigma,\tau):(x,y)\rightarrow (x',y')$  be a morphism in $A\times_c A$ and $(x,y,\varphi)$ be an element of $P_cA$ such that $\delta_2(x,y,\varphi)= (x,y)$, then $(\sigma,\tau):(x,y,\varphi)\rightarrow (x',y',\tau\circ\varphi\circ\sigma^{-1})$ is an isomorphism in $P_cA$ that lies above $(\sigma,\tau)$ in $A\times_c A$ by $\delta_2$.\\
Last, consider the following lifting problem : 
$$\xymatrix@=1,5cm{\cI\ar@{^{(}->}[d]_{i'}\ar[r]^g & P_cA\ar[d]^{\delta_2}\\ \tr \ar[r]_h & A\times_c A \,.}$$
We define a morphism $j: \textstyle\tr\longrightarrow P_cA$ in $\GGpd$ as follows.
Take $j(\phi)= g(\phi)$. Let us assume that $g(\phi)$ is $(\sigma,\tau):(x,y,\varphi)\rightarrow (\alpha(x),\alpha(y),\alpha(\varphi))$.\\
Note that $(\sigma,\tau)^{-1}= (\sigma^{-1},\tau^{-1})= (\alpha(\sigma),\alpha(\tau))$ and $\alpha(\varphi)= \tau \circ \varphi \circ \sigma^{-1}$, since\\
$g(\phi^{-1})= \pi_cA(g(\phi))$.
Moreover, denote $h(2)$ by $(x'',y'')$ and $h(\psi): h(1)\rightarrow h(2)$ with $\delta_2(g(1))= h(1)$ by $(\sigma'',\tau''): (\alpha(x),\alpha(y),\alpha(\varphi))\rightarrow (x'',y'',\tau''\circ \alpha(\varphi)\circ \sigma''^{-1})$. 
Also, by compatibility of $h$ with the involutions one has 
$$(\sigma'',\tau'') \circ (\sigma,\tau)= h(\psi) \circ h(\phi)= h(\psi\circ \phi)= (\alpha\times \alpha)(h(\psi)) \,.$$ 
Hence one has $(\sigma''\circ \sigma,\tau''\circ \tau)= (\alpha(\sigma''),\alpha(\tau''))$, so $\sigma'' \circ \sigma= \alpha(\sigma'')$ and $\tau''\circ \tau= \alpha(\tau'')$.\\
In particular, $\alpha(x'')= x''$ and $\alpha(y'')= y''$, \textit{i.e. }$x''$, $y''$ are two fixed points in $A$. Take $j(2)= (x'',y'',\tau''\circ \alpha(\varphi)\circ \sigma''^{-1})$, this is a fixed point in $P_cA$ since
\begin{align*}
\alpha(\tau''\circ \alpha(\varphi)\circ \sigma''^{-1})&= \alpha(\tau'')\circ \varphi \circ \alpha(\sigma'')^{-1}\\
&= \tau''\circ \tau \circ \varphi \circ \sigma^{-1} \circ \sigma''^{-1}\\
&= \tau'' \circ \alpha(\varphi) \circ \sigma''^{-1} \,.
\end{align*}
Finally, take $j(\psi)= (\sigma'',\tau''):(\alpha(x),\alpha(y),\alpha(\varphi))\rightarrow (x'',y'',\tau'' \circ \alpha(\varphi) \circ \sigma''^{-1})$, indeed one has $j(\psi \circ \phi)= \pi_cA(j(\psi))$, since $(\sigma'' \circ \sigma, \tau'' \circ \tau)= (\alpha(\sigma''),\alpha(\tau''))= \pi_cA(\sigma'',\tau'') \,.$\\
So $j$ is a diagonal filler for the above lifting problem in $\GGpd$. Hence thanks to \ref{prop16} $\delta_2$ is an injective fibration.
\end{proof}\bigskip

\begin{prop}\label{pathfibrancy}
If $f:A\rightarrow C$ is an injective fibration in $\GGpd$ and $A$ is injectively fibrant then $P_CA$ is injectively fibrant.
\end{prop}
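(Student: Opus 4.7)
The plan is to exhibit the morphism $P_CA \to \mathbf{1}!$ as a composition of three injective fibrations, namely
\[
P_CA \xrightarrow{\delta_2} A\times_C A \xrightarrow{p_1} A \xrightarrow{} \mathbf{1}!,
\]
and then invoke the fact that fibrations are closed under composition.

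First, I would recall that $\delta_2 : P_CA \fib A\times_C A$ has already been shown to be an injective fibration in the course of the proof of Proposition \ref{prop20}. Second, the projection $p_1 : A\times_C A \fib A$ is obtained as the pullback of the injective fibration $f:A\fib C$ along itself, as in the defining square
\[\xymatrix@=1,5cm{A\times_C A \pullbackcorner \ar[r]^-{p_1}\ar[d]_{p_2} & A\ar[d]^f \\ A\ar[r]_f & C\,,}\]
so by condition \ref{item:cat3} of Definition \ref{def:ttfc} applied in $\GGpd_{\mathbf{inj}}$ (which we established as a type-theoretic fibration category in \ref{prop17}), $p_1$ is an injective fibration. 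Third, the map $A\to\mathbf{1}!$ is an injective fibration by the hypothesis that $A$ is injectively fibrant.

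Composing these three injective fibrations yields that $P_CA \to \mathbf{1}!$ is an injective fibration, which is precisely the statement that $P_CA$ is injectively fibrant. There is no real obstacle here: the only ingredients used are the already-verified fibrancy of $\delta_2$ from \ref{prop20}, stability of injective fibrations under pullback (part of the type-theoretic fibration category structure on $\GGpd_{\mathbf{inj}}$), and closure of the class of fibrations under composition.
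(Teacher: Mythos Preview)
Your proof is correct and follows the same strategy as the paper: both factor $P_CA\to\mathbf{1}!$ as the composite of $\delta_2$, the first projection $A\times_C A\to A$, and $A\to\mathbf{1}!$, and then use closure of fibrations under composition; the paper merely spells out the lifting argument for $pr_1$ explicitly rather than citing pullback stability. One small remark: your appeal to condition~\ref{item:cat3} of Definition~\ref{def:ttfc} in $\GGpd_{\mathbf{inj}}$ tacitly assumes $C$ is fibrant, which is not part of the hypothesis---it is cleaner to invoke pullback stability of fibrations in the ambient injective model category on $\GGpd$ (Proposition~\ref{pullbackpushoutstability}), which is what the paper's direct lifting argument amounts to.
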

\begin{proof}
First, we prove that the first projection $pr_1:A\times_C A\to A$ is an injective fibration. Consider  the following lifting problem,
$$\xymatrix@=1,5cm{B\ar[r]^g\ar@{ >->}[d]^{\rotatebox[origin=c]{90}{$\sim$}}_{k} & A\times_C A\ar[d]^{pr_1} \\
D\ar[r]_h & A \,,}$$ 
where $k$ is any injective trivial cofibration. We can display the following lifting problem with respect to $f$,
$$\xymatrix@=1,5cm{B\ar[r]^g\ar@{ >->}[d]^{\rotatebox[origin=c]{90}{$\sim$}}_{k} & A\times_C A\pullbackcorner\ar[d]^{pr_1}\ar[r]^{pr_2}  & A\ar@{->>}[d]^f \\
D\ar[r]_h & A\ar@{->>}[r]_f & C\,.}$$
Since $f$ is a fibration there exists a diagonal filler $j_1$ as follows,
$$\xymatrix@=1,5cm{B\ar[r]^g\ar@{ >->}[d]^{\rotatebox[origin=c]{90}{$\sim$}}_{k} & A\times_C A\pullbackcorner\ar[d]^{pr_1}\ar[r]^{pr_2}  & A\ar@{->>}[d]^f \\
D\ar[r]_h\ar@{-->}[rru] & A\ar@{->>}[r]_f & C\,.}$$
Now using $h$, $j_1$ and the universal property of the pullback square, one gets a morphism $j_2$ satisfying $pr_1\circ j_2 = h$,
$$\xymatrix@=1,5cm{B\ar[r]^g\ar@{ >->}[d]^{\rotatebox[origin=c]{90}{$\sim$}}_{k} & A\times_C A\pullbackcorner\ar[d]^{pr_1}\ar[r]^{pr_2}  & A\ar@{->>}[d]^f \\
D\ar[r]_h\ar[rru]\ar@{-->}[ru]^{j_2} & A\ar@{->>}[r]_f & C\,.}$$
Using the uniqueness part of the universal property of the pullback square (with the universal problem involving $pr_1\circ g$ and $pr_2\circ g$), ones proves that the morphisms $g$ and $j_2\circ k$ are equal. So $j_2$ is a diagonal filler for our first lifting problem.\\
Last, using that $A$ is fibrant, $\delta_2$ and $pr_1$ are fibrations, and the fact that fibrations are closed under compositions, one concludes that $P_CA$ is fibrant,
$$\xymatrix@=1,5cm{P_CA\ar@{->>}[d]_{\delta_2}\ar@{-->}[rdd] & \\
A\times_C A\ar@{->>}[d]_{pr_1} & \\
A\ar@{->>}[r] & \b1! \,.}$$
\end{proof}\bigskip

\begin{rmk}\label{rmk16}
Note that the above proposition \ref{pathfibrancy} proves, under the assumptions that $f:A\to C$ is a fibration and $A$ is (injectively) fibrant, that $P_CA$ is an object of $(\GGpd)\f$, hence it is really a path object in the injective type-theoretic fibration category $\GGpd_{\mathbf{inj}}$, namely the category $(\GGpd)\f$ equipped with the subcategory given by the injective fibrations (between injectively fibrant objects).  
\end{rmk}\bigskip

\begin{lem}\label{lemmaforuniv}
The space of equivalences $E$ in our model is (isomorphic to) the path object $P_{C}A$ of \ref{prop20} where we take $C=\mathbf{1}!$, $A= U_\Delta$ and $f$ is the unique morphism from $U_\Delta$ to $\mathbf{1}!$.
\end{lem}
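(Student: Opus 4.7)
The plan is to produce an explicit isomorphism $\chi : E \xrightarrow{\cong} P_{\b1!}U_{\Delta}$ in the slice category $\GGpd/(U_{\Delta}\times U_{\Delta})$ and then verify that it respects the involutions. Note first that since $C=\b1!$ is terminal, the pullback $U_{\Delta}\times_{C}U_{\Delta}$ coincides with $U_{\Delta}\times U_{\Delta}$; moreover every constraint of the form $f(x)=f(y)$ or $f(\varphi)=1$ in the definition of $P_{\b1!}U_{\Delta}$ is automatic. Thus $P_{\b1!}U_{\Delta}$ has as objects triples $(A,B,\rho)$ with $\rho:A\xrightarrow{\cong}B$ an isomorphism in $U_{\Delta}$, as morphisms pairs $(\sigma,\tau)$ making the evident square commute, and as involution the one induced by $u_{\Delta}$, namely $(A,B,\rho)\mapsto(u_{\Delta}(A),u_{\Delta}(B),u_{\Delta}(\rho))$ and $(\sigma,\tau)\mapsto(u_{\Delta}(\sigma),u_{\Delta}(\tau))$.

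First I would upgrade Proposition \ref{prop:fiberspaceofeq} from a fibrewise statement to a full description of $E$. The key simplification is that the groupoids classified by $p_{\Delta}$ are discrete, so in the interpretation of
\[
\textstyle\sum_{f:A\to B}\bigl[(\sum_{s:B\to A}\prod_{b:B}f(s(b))=b)\times(\sum_{r:B\to A}\prod_{a:A}r(f(a))=a)\bigr]
\]
each identity type reduces to a mere equation of functions between sets, and the product of a contractible dependent type and the projection to the first sum forces $s=r$. Consequently, unwinding the same chain of $\Pi$'s, $\Sigma$'s and pullbacks done in the proof of \ref{prop:fiberspaceofeq}, a morphism of $E$ between $(A,B,\rho)$ and $(A',B',\rho')$ is precisely a pair $(\sigma:A\xrightarrow{\cong}A',\tau:B\xrightarrow{\cong}B')$ of isomorphisms in $U_{\Delta}$ satisfying $\rho'\circ\sigma=\tau\circ\rho$ (the equations involving the chosen inverses being automatic by discreteness). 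This is exactly the data of a morphism in $P_{\b1!}U_{\Delta}$.

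Next, I would define
\[
\chi:E\longrightarrow P_{\b1!}U_{\Delta},\qquad (A,B,\rho)\longmapsto(A,B,\rho),\qquad (\sigma,\tau)\longmapsto(\sigma,\tau),
\]
and observe three things. First, $\chi$ is strictly a morphism over $U_{\Delta}\times U_{\Delta}$: indeed $\delta_{2}\circ\chi$ sends $(A,B,\rho)$ to $(A,B)$, which coincides with the projection $E\to U_{\Delta}\times U_{\Delta}$ from the interpretation of $(A:\text{Type}),(B:\text{Type})\pr\text{Equiv}(A,B)$. Second, $\chi$ is compatible with the involutions, since by Proposition \ref{prop:fiberspaceofeq} the involution on $E$ sends $(A,B,\rho)$ to $(u_{\Delta}(A),u_{\Delta}(B),u_{\Delta}(\rho))$, which is exactly $\pi_{\b1!}U_{\Delta}$ evaluated on $\chi(A,B,\rho)$, and a parallel check works on morphisms. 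Third, $\chi$ is an isomorphism of groupoids, since its inverse is given tautologically by $(A,B,\rho)\mapsto(A,B,\rho)$, this being well-defined by the fibrewise-and-morphism-wise description above.

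The only real obstacle is the bookkeeping for morphisms of $E$: unfolding the long composite of pullbacks, $\Pi$-constructions and identity types gives objects and morphisms with a great deal of redundant data, and one has to check that all this data, once the discreteness of the fibres of $p_{\Delta}$ is used, reduces precisely to the compatibility square $\rho'\circ\sigma=\tau\circ\rho$. Once this reduction is carried out the identification $\chi$ is forced, and the verification that $\delta_{2}\circ\chi$ is the canonical projection and that $\chi$ intertwines the two involutions is immediate from the formulas already derived in the projective chapter.
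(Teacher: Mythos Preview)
Your proposal is correct and follows essentially the same approach as the paper: both rely on the description of $E$ obtained in Proposition~\ref{prop:fiberspaceofeq} and on the discreteness of the fibres of $p_{\Delta}$ to identify $E$ with $P_{\b1!}U_{\Delta}$. The paper's proof is much terser---it simply observes that the computation of $E$ from the projective chapter carries over unchanged to the injective setting (same $p_{\Delta}$, same small fibrations, same $\Sigma$- and $\Pi$-interpretations) and that $PU_{\Delta}$ is fibrant by Proposition~\ref{pathfibrancy}---whereas you spell out the isomorphism $\chi$ explicitly on objects, morphisms, and involutions, which is a welcome elaboration of what the paper leaves implicit.
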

\begin{proof}
Note that $U_\Delta \to \b1!$ is a fibration since we proved in \ref{lem16} that $U_\Delta$ is injectively fibrant. So by \ref{pathfibrancy} $PU_\Delta$ is a fibrant object, namely is an element of $(\GGpd)\f$.
Also note that we have already computed $E$ in the proof of proposition \ref{prop:fiberspaceofeq}  of chapter 4, moreover this interpretation has not changed with the change of the type-theoretic fibration category under consideration (from the projective one to the injective one) since the morphism $p_\Delta$ is the same (hence small fibrations are the same, thus the interpretations of the identity types involved in the type of equivalences are still the same) as well as the categorical interpretations of $\Sigma$-types and $\Pi$-types.
\end{proof}\bigskip

\begin{thm}\label{thm8}
The universe $p_\Delta:\widetilde{U_\Delta}\rightarrow U_\Delta$ in the injective type-theoretic fibration category $(\GGpd)\f$ satisfies the univalence property.\\ 
\end{thm}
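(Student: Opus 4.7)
The plan is to identify the canonical univalence map with the map $\delta_1$ coming from our construction of the path object, and then exploit that in the injective setting $\delta_1$ is a trivial cofibration between fibrant objects, hence a right homotopy equivalence.

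More precisely, by Lemma \ref{lemmaforuniv} the space of equivalences $E \to U_\Delta \times U_\Delta$ is (isomorphic over $U_\Delta\times U_\Delta$ to) the path object $P_{\b1!}U_\Delta \to U_\Delta \times U_\Delta$ from Proposition \ref{prop20} (applied with $A = U_\Delta$, $C = \b1!$ and $f$ the unique morphism $U_\Delta \to \b1!$). Under this isomorphism the canonical map $U_\Delta \to E$ sending a small type $A$ to its identity equivalence is sent to the map $\delta_1: U_\Delta \to P_{\b1!}U_\Delta$, $A \mapsto (A, A, 1_A)$. The first step of the proof will be to make this identification explicit and to check it is compatible with the involutions and lies over the diagonal $\Delta : U_\Delta \to U_\Delta \times U_\Delta$, so that the univalence condition reduces to showing $\delta_1$ is a right homotopy equivalence.

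The second step is to observe that $\delta_1$ is by Proposition \ref{prop20} an injective trivial cofibration, and that both its source and target are injectively fibrant: for $U_\Delta$ this is Lemma \ref{lem16}, and for $P_{\b1!}U_\Delta$ this follows from Proposition \ref{pathfibrancy} (using Lemma \ref{lem16} again to see that $U_\Delta \to \b1!$ is a fibration between fibrant objects). Thus $\delta_1$ is a trivial cofibration in the injective model structure on $\GGpd$ whose domain is fibrant.

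The third step is then a direct application of Proposition \ref{prop5}: an injective trivial cofibration with (injectively) fibrant domain is a right homotopy equivalence in the injective model structure, and in particular in the type-theoretic fibration category $\GGpd_{\mathbf{inj}}$. This yields that the canonical map $U_\Delta \to E$ is a right homotopy equivalence, which is the categorical univalence axiom as formulated in Section~\ref{sec:catunivaxiom}. The contrast with Chapter~\ref{sec:chp4} is instructive: the map $A \mapsto (A,A,1_A)$ is exactly the same morphism as before, but in the projective setting it failed to be a homotopy equivalence because projective homotopy equivalences are forced by Theorem~\ref{thm5} to induce a bijection on fixed points, which is a very strong condition; in the injective setting, this rigid fixed-point constraint disappears, and no subtle obstruction beyond Propositions \ref{prop20}, \ref{pathfibrancy}, \ref{lem16} and \ref{prop5} is needed. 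The only point requiring care is therefore the identification in the first step, ensuring that the path-object map $\delta_1$ really is the type-theoretic map $(A : \mathrm{Type}) \vdash \mathrm{id}\text{-}\mathrm{equiv}(A) : \mathrm{Equiv}(A,A)$; this amounts to unfolding the interpretation performed in the proof of Proposition~\ref{prop:fiberspaceofeq} and checking that the reflexivity term there coincides with $(A,A,1_A)$.
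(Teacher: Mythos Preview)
Your proposal is correct and follows essentially the same approach as the paper: identify the canonical univalence map $U_\Delta \to E$ with $\delta_1$ via Lemma~\ref{lemmaforuniv}, observe that $\delta_1$ is an injective trivial cofibration by Proposition~\ref{prop20}, and conclude it is a right homotopy equivalence. The only cosmetic difference is that you invoke Proposition~\ref{prop5} (trivial cofibration with fibrant domain is a homotopy equivalence), whereas the paper argues that in $(\GGpd)\f$ every object is both fibrant and cofibrant (cofibrancy being automatic since injective cofibrations are levelwise), so weak equivalences and homotopy equivalences coincide; both arguments are valid and amount to the same thing.
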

\begin{proof}\label{proofthm8}
Recall from \ref{sec:catunivaxiom} that it suffices to prove the upper horizontal arrow in the following commutative diagram,
$$\xymatrix{\underset{~}{U_\Delta}\ar[rr]\ar@{ >->}[dd]_*[@]{\hbox{$\sim$}}&&E\ar@{->>}[dd]\\\\PU_\Delta\ar@{->>}[rr]&&**[r]{U_\Delta\times U_\Delta}\,,}$$
that maps a small type to its identity equivalence is a homotopy equivalence. But this morphism is $\delta_1$, hence it is a weak equivalence (even a trivial cofibration see \ref{prop20}). Moreover in $(\GGpd)\f$ all objects are fibrant and cofibrant, thus the homotopy equivalences are the weak equivalences.

\end{proof}

\section{Conclusion}

In contrast with the previous chapter and the projective setting this chapter reinforces the suitability of the injective setting with respect to univalence (indeed, note that in the case of elegant Reedy categories as index categories the Reedy model structure on functors categories, used with success in \cite{shul13}, coincides with the injective model structure). However, the injective setting is not very tractable due to the difficulty of making fibrations and fibrant objects explicit.






\nocite{clef}
\addcontentsline{toc}{chapter}{Bibliography}
\printbibliography
\end{document}